\documentclass[10pt]{amsart}
\usepackage{euscript}
\usepackage{amssymb}
\usepackage{amsmath}
\usepackage{enumitem}
\usepackage{epic}
\usepackage{graphics}
\usepackage{epsfig}
\usepackage{color}
\usepackage{stmaryrd}
\usepackage{amscd,euscript}
\usepackage[frame,cmtip,curve,arrow,matrix,line,graph]{xy}

\usepackage[colorlinks]{hyperref}

\usepackage{setspace}

\usepackage{tikz}
\usepackage{tikz-cd}
\usepackage{pgfplots}
\tikzset{
	cross/.pic = {
		\draw[rotate = 45] (-#1,0) -- (#1,0);
		\draw[rotate = 45] (0,-#1) -- (0, #1);
	}
}

\usepackage[
margin=9mm,
marginparwidth=17mm,     
marginparsep=3mm,       
]{geometry}

\numberwithin{equation}{section}
\setlength{\textwidth}{5.6in}
\setlength{\textheight}{8.2in}
\setlength{\oddsidemargin}{.2in}
\setlength{\evensidemargin}{.2in}
\setlength{\topmargin}{.1in}
\setlength{\headsep}{.3in}
\setlength{\parindent}{0pt}

\parindent=0pt
\parskip=4pt
\theoremstyle{plain}
\newtheorem{theorem}{Theorem}[section]
\newtheorem{lemma}[theorem]{Lemma}
\newtheorem{example}[theorem]{Example}
\newtheorem{proposition}[theorem]{Proposition}
\newtheorem{prop}[theorem]{Proposition}

\newtheorem{corollary}[theorem]{Corollary}
\newtheorem{cor}[theorem]{Corollary}

\theoremstyle{definition}

\newtheorem{defn}[theorem]{Definition}

\newtheorem{remark}[theorem]{Remark}
\newtheorem{rem}[theorem]{Remark}

\newtheorem{Example}[theorem]{Example}
\theoremstyle{remark}

\numberwithin{equation}{section}

\newcommand{\Acknowledgements}{{\em Acknowledgements.} }


\newcommand{\scrH}{\EuScript{H}}

\newcommand{\scrT}{\EuScript{T}}

\newcommand{\calM}{\mathcal{M}}

\newcommand{\calL}{\mathcal{L}}

\newcommand{\calT}{\mathcal{T}}

\newcommand{\scrC}{\EuScript{C}}

\newcommand{\scrL}{\EuScript{L}}

\newcommand{\calO}{\mathcal{O}}

\newcommand{\bE}{\mathbb{E}}
\newcommand{\bW}{\mathbb{W}}

\newcommand{\bA}{\mathbb{A}}
\newcommand{\bG}{\mathbb{G}}

\newcommand{\fg}{\mathfrak{g}}
\newcommand{\fF}{\mathfrak{F}}
\newcommand{\fC}{\mathfrak{C}}

\newcommand{\cT}{\mathcal{T}}

\newcommand{\bF}{\mathbb{F}}
\newcommand{\bR}{\mathbb{R}}
\newcommand{\bZ}{\mathbb{Z}}
\newcommand{\bQ}{\mathbb{Q}}
\newcommand{\bC}{\mathbb{C}}
\newcommand{\bN}{\mathbb{N}}
\newcommand{\bP}{\mathbb{P}}


\newcommand{\cdbar}{\mathrm{\overline{\partial}}}
\newcommand{\Sym}{\mathrm{Sym}}
\newcommand{\Pic}{\mathrm{Pic}}
\newcommand{\id}{\mathrm{id}}

\renewcommand{\ker}{\mathrm{ker}}

\newcommand{\Glo}{\mathsf{Glo}}

\def\lra#1{\overset{#1}{\longrightarrow}}

\renewcommand{\hbar}{\overline{\frak{h}}}

\newcommand{\Aut}{\mathrm{Aut}}

\newcommand{\Symp}{\mathrm{Symp}}
\newcommand{\Diff}{\mathrm{Diff}}


\newcommand{\scrE}{\EuScript{E}}
\newcommand{\scrF}{\EuScript{F}}

\newcommand{\scrD}{\EuScript{D}}

\newcommand{\ccM}{\mathcal M}
\newcommand{\ccC}{\mathcal C}

\newcommand{\bL}{\mathbb{L}}

\newcommand{\bK}{\mathbb{K}}

\numberwithin{equation}{section}
\setlength{\textwidth}{6.0in}
\setlength{\textheight}{8.2in}
\setlength{\oddsidemargin}{.2in}
\setlength{\evensidemargin}{.2in}
\setlength{\topmargin}{.1in}
\setlength{\headsep}{.3in}


\renewcommand{\leq}{\leqslant}
\renewcommand{\geq}{\geqslant}




\newcommand{\C}{\mathbb C}

\newcommand{\Z}{\mathbb{Z}}

\newcommand{\bT}{\mathbb{T}}




\newcommand{\ev}{\operatorname{ev}}

\newcommand{\ver}{\mathrm{ver}}
\newcommand{\pre}{\mathrm{pre}}
\newcommand{\reg}{\mathrm{reg}}

\newcommand{\st}{\operatorname{st}}

\newcommand{\rk}{\operatorname{rk}}

\newcommand{\ccMbar}{\overline{\mathcal{M}}}
\newcommand{\ccCbar}{\overline{\mathcal{C}}}

\setlength{\parskip}{0.5em}

\title[Gromov-Witten invariants in complex and Morava-local $K$-theories]{Gromov-Witten invariants in \\ complex and Morava-local $K$-theories}
\author{Mohammed Abouzaid, Mark McLean, Ivan Smith}
\address{Mohammed Abouzaid, Stanford}
\email{abouzaid@stanford.edu}
\address{Mark McLean, Stony Brook}
\email{mark.mclean@stonybrook.edu}
\address{Ivan Smith, Cambridge}
\email{is200@cam.ac.uk}
\date{July 2024}

\begin{document}
	\maketitle
	
	\begin{abstract}

Given a closed symplectic manifold $X$, we construct Gromov-Witten-type invariants valued both in (complex) $K$-theory and in any complex-oriented cohomology theory $\bK$ which is $K_p(n)$-local for some Morava $K$-theory $K_p(n)$. We show that these invariants satisfy a version of the Kontsevich-Manin axioms, extending Givental and Lee's work for the quantum $K$-theory of complex projective algebraic varieties. In particular, we prove a Gromov-Witten type splitting axiom, and hence define quantum $K$-theory and quantum $\bK$-theory as commutative deformations of the corresponding (generalised) cohomology rings of $X$; the definition of the quantum product  involves the formal group of the underlying cohomology theory. The key geometric input of these results is a construction of global Kuranishi charts for moduli spaces of stable maps of arbitrary genus to $X$. On the algebraic side, in order to establish a common framework covering both ordinary $K$-theory and $K_p(n)$-local theories, we introduce a formalism of `counting theories' for enumerative invariants on a category of global Kuranishi charts.  
	\end{abstract}
	{ \setcounter{tocdepth}{1}
		\hypersetup{linkcolor=black}
		\tableofcontents
	}

	\section{Introduction}
	
	Cohomological Gromov-Witten invariants for smooth algebraic varieties were introduced in \cite{LiTian} and developed in \cite{Behrend}, based on the theory of virtual fundamental classes for moduli problems with perfect obstruction theories.  The theory of Kuranishi atlases for moduli spaces of pseudo-holomorphic curves led to a generalisation of Gromov-Witten theory to arbitrary compact symplectic manifolds, cf.  \cite{Fukaya-Ono, Li-Tian, Pardon2016algebraic}.  Corresponding $K$-theoretic Gromov-Witten invariants were introduced in the algebraic setting in \cite{Givental, Lee-QKFoundations}, based on the virtual structure sheaf, but their symplectic counterparts were never developed;  the local-to-global construction of the symplectic virtual fundamental class did not seem well-suited to constructing a replacement for the virtual structure sheaf.  
	
	Fix an arbitrary compact symplectic manifold $(X,\omega)$. 
	\begin{itemize}
		\item  In Section \ref{sec:counting-theories} we set up a general formalism of `counting theories' on a category of global Kuranishi charts, which may be useful elsewhere.  We explain how to obtain such theories from both Morava-local cohomology theories and from complex $K$-theory.
		
		\item In Section \ref{sec:glob-kuran-charts}, we extend the theory of  `global Kuranishi charts'  developed in \cite{AMS-Hamiltonian} for moduli spaces of rational stable maps with $X$ to the case of stable maps of arbitrary genus\footnote{Global charts for higher genus curves have been independently constructed, by a slightly different method, by Amanda Hirschi and Mohan Swaminathan \cite{HS}.}.  The basic construction is implemented in Definition \ref{defn Kurnishi chart}, while a variant that we use to prove independence of choices (in an appropriate sense) is given in Section \ref{sectiongeneralconstruction}. 
		
		\item In Section \ref{sec:grom-witt-invar} we define Gromov-Witten invariants
		\[
		I_{g,n}(\beta) \in H_*(\ccMbar_{g,n} \times X^n;\bK)
		\] 
		when $\bK$ is either (orbifold) complex $K$-theory or a Morava-local complex-oriented theory. Taking $\bK$ to be the Morava $K$-theory $K_p(n)$ at large height $n\gg0$  approximates having a theory of Gromov-Witten invariants over a finite field $\bF_p$.\footnote{Following ideas of Fukaya and Ono, Bai and Xu \cite{BaiXu} introduced  integral-valued Gromov-Witten invariants in genus $0$. We expect that their work can be extended to higher genus, but, even in genus $0$, the comparison with the $\bF_p$ invariants that we produce is not currently understood.}
		
		\item  In Section \ref{sec:quant-gener-cohom} we obtain associative quantum ordinary and Morava $K$-theory rings (in the Morava case at primes $p>2$) for general symplectic manifolds. A fundamental insight of Givental is that these algebraic structures require the use of corrected fundamental classes\footnote{More precisely, in quantum $K$-theory Givental \cite{Givental} and Lee \cite{Lee:QKtheory} correct the metric, rather than the fundamental class;  correcting the latter seems essential in the  general case.}  on the moduli spaces of curves, as discussed in Section \ref{Sec:mesosphere}.
		
		\item  In Section \ref{Sec:operadic} we establish a splitting axiom (Theorem \ref{thm:operad} and Proposition \ref{prop:splitting}) for an `operational suboperad' of invariants $I_{g,n}(\beta)$, which involves `tautological integrals' over strata of split curves, organised by the formal group associated to $\bK$.
	\end{itemize}
	
	The main results are summarised by Theorem \ref{thm:basicglobalchart}, Corollary \ref{cor:charts_summary} and Theorems \ref{thm:associative} and \ref{thm:operad}.
	
	Three technical aspects of the construction may be worth highlighting. First, we find it convenient to leverage known results in algebraic cobordism for spaces of stable maps to projective space. Second, the counting theory for complex $K$-theory relies on Atiyah's `transverse-equivariant $K$-theory' \cite{Atiyah}, which is the home for symbols of operators which are elliptic in the directions tranverse to the orbits of a compact Lie group action; in contrast, for Morava-local theories we work Borel-equivariantly. Finally, formal group laws play an essential role in extracting algebraic structures from the virtual fundamental classes of the moduli spaces of maps;  this is consistent with Givental's approach to the subject \cite{CoatesGivental2006}.
	
	\begin{remark}
		For the quantum product / splitting axiom we restrict to Morava-local theories at primes $p>2$, since the ring spectrum $K_2(n)$ is not homotopy commutative.  We prove a splitting but not a genus reduction axiom for Gromov-Witten theory, to avoid the more involved combinatorics of boundary divisors under self-gluing. That combinatorial complexity is largely irrelevant for the simplest (i.e. additive and multiplicative) formal groups. The results presented here yield the `full' Gromov-Witten axioms without descendents for rational cohomology and complex $K$-theory, with the exception of the forgetful axiom which, in complex $K$-theory, we establish only in genus zero, cf. Remark \ref{rmk:forgetful in K-theory}.
	\end{remark}
	
	
	\begin{Example}\label{toyexample}
	Non-vanishing of Gromov-Witten invariants defined over other cohomology theories can be used to constrain those automorphisms of cohomology of $(X,\omega)$ arising from symplectomorphisms,  and hence the image of $\pi_0\Symp(X) \to \pi_0\Diff(X)$. As an example, if $X = Y \times S^2$, any diffeomorphism $f: X \to X$ preserving the class of the $[S^2]$ and its Poincar\'e dual induces an endomorphism of $H^*(X;\bZ) = H^*(Y;\bZ) \oplus H^{*-2}(Y;\bZ)$ which is upper triangular (since preserving cup-product). Now suppose $(X,\omega) = (Y,\omega_Y) \times (S^2,\omega)$ is a symplectic product, equipped with a product almost complex structure, and that $f$ is a symplectomorphism. There is a degree $-2$-endomorphism of any $K_p(n)$-local cohomology of $X$ coming from the sweepout map associated to the obvious moduli space of rational curves $\{pt\} \times \bP^1$: more precisely, one considers the evaluation map $\ccMbar := \ccMbar_{0,2}(X,[S^2]) \to X\times X$ from the moduli space of two-pointed curves in the class $\{pt\}\times [S^2]$, and uses push-pull and the existence of an $\bE$-valued virtual class for the moduli space to define a map
	\[
	\bE^i(X) \to \bE^{i-2}(X), \ \alpha \mapsto \mathrm{PD}\left( (\ev_{\infty})_* (\ev_0^*\alpha \cap [\ccMbar]^{vir}) \right)
	\]
	where $\mathrm{PD}$ denotes Poincar\'e duality in the theory $\bE$.  If a symplectomorphism $f$ preserves $[S^2]$ and its dual, in $\bE$-theory, then it acts on $\bE^*(X)$ intertwining this automorphism.
	
	By varying the prime $p$ of a Morava-local theory and working at large height, one obtains constraints on the cohomology automorphism over $\bZ$: more precisely, lifting to $\bZ$ uses constraints coming from the Morava theories $K_{p^k}(n)$ for all $n,k$ and an application of the Chinese remainder theorem. The upshot is that the cohomology endomorphism $f^*$ associated to a symplectomorphism preserving the $[S^2]$ and its Poincar\'e dual is lower  as well as upper triangular. For instance, if $H^*(Y;\bZ)$ contains $p$-torsion in  degrees $i$ and $i-2$ (say classes $x,y$)  then  one cannot have $f^*(x) = x+u\cdot y$ with $u$ dual to $[S^2]$.  (Such an integral result, rather than a result in $\bE$-theory, could also be derived from \cite{BaiXu}.) This yields constraints on $f^*$ not detected by rational Gromov-Witten theory, and gives a new obstruction for a class in $\pi_0\Diff(X)$ to lift to $\pi_0\Symp(X)$.  
	\end{Example}

	Example \ref{toyexample} aside, this paper is foundational\footnote{La lutte elle-m\^eme vers les sommets suffit \`a remplir un c\oe ur d'homme; il faut imaginer Sisyphe heureux.}; a first application of these ideas was given in \cite{AMS-Hamiltonian}, and further applications will appear elsewhere.  	
	
	\textbf{\Acknowledgements } We are grateful to Shaoyun Bai, Amanda Hirschi, Yuan-Pin Lee, Rahul Pandharipande, Oscar Randal-Williams, Dhruv Ranganathan and Mohan Swaminathan for helpful conversations and correspondence, to Mohan Swaminathan for pointing out an error in our original construction of consistent domain metrics, and to Julius Zhang for comments about Section 6.4.  We are grateful to the anonymous referee for their numerous comments, queries and suggestions which have helped  improve and clarify the exposition.
	\medskip
	
	M.A. was partially supported by MSRI / SLMath,  NSF award DMS-2103805, and the Simons Collaboration on Homological Mirror Symmetry.
	M.M. was partially supported by NSF award DMS-2203308. 
	I.S. was partially supported by MSRI / SLMath, by the Clay Foundation as a Clay Senior Scholar, and by a UKRI Frontier Research grant (in lieu of an ERC advanced grant).

	\section{Background and coefficients}
	
	We briefly recall some results on $K$-theories and on formal groups, simultaneously fixing conventions and notation.

	\subsection{Morava $K$-theories}
	
	Fix a prime $p$ and an integer $n>0$. The Morava $K$-theory $K_p(n)$ is a complex-oriented cohomology theory with coefficients
	\[
	\bK_* := K_p(n)_* = \bF_p[v,v^{-1}] \qquad |v| = 2(p^n-1).
	\]
	When $n=1$ it is a summand of mod $p$ complex $K$-theory, but for larger $n$ it is constructed algebraically. 	The $\{K_p(n)\}_{n\geq 0}$ are the  `primes' in the stable homotopy category; for a given $p$, the thick subcategories of the category of $p$-local finite spectra are exactly given by the kernels of the $K(n)_*$ (see \cite{Balmer, Hopkins-Smith} or \cite[Corollary 9.5]{Balmer-spectra}).  They are the only theories (besides the Eilenberg-Maclane theories $H\frak{k}$ for fields $\frak{k}$) which satisfy K\"unneth isomorphisms for arbitrary finite cell complexes
	\[
	H^*(X\times Y;\bK) = H^*(X;\bK) \otimes_{\bK_*} H^*(Y;\bK).
	\]
	Recall that for any spectrum $X$ we have homotopy orbits and homotopy fixed points
	\[
	X_{hG} = X \wedge_G EG_+ \qquad X^{hG} = F(EG_+, X)^G
	\]
	and a norm map $X_{hG} \to X^{hG}$.  A basic fact  \cite{Ravenel,Greenlees-Sadofsky} is that the map
	\begin{equation} \label{eqn:norm_map}
		X_{h\Gamma} \wedge \bK \to X^{h\Gamma} \wedge \bK
	\end{equation}
	induced by the collapse from $EG_+$ to $S^0$ is an equivalence when $\Gamma$ is finite and $\bK$ is $K_p(n)$-local.  One also has the Adams isomorphism \cite{LMSM} (see also \cite{Cheng})  
	\begin{equation} \label{eqn:Adams_isomorphism}
		Y/G \simeq (\Sigma^{-\frak{g}} \iota_*Y)^G
	\end{equation}
	for $G$-free spectra, with $\iota$ the inclusion of a $G$-fixed universe $\mathcal{U}^G \hookrightarrow \mathcal{U}$. Together with the freeness of the $G$-action on the Borel construction $EG_+ \wedge X$, this underlies a theorem of Cheng \cite{Cheng}:
	
	\begin{prop} Let a compact Lie group $G$ act on a closed smooth manifold $\scrT$ with finite stabilisers. Then there is a Poincar\'e duality isomorphism
		\[
		H_*^G(\scrT;\bK) \simeq \widetilde{H}^{-*}_G(\scrT^{-T\scrT\oplus\frak{g}};\bK).
		\] 
		The groups and the isomorphism depend on the orbifold $\bT := \scrT/G$ and not its presentation as a global quotient.
	\end{prop}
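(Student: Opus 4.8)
\emph{Proof idea.} The plan is to combine three inputs: $G$-equivariant Atiyah duality for the closed manifold $\scrT$, the Adams isomorphism \eqref{eqn:Adams_isomorphism}, and the norm equivalence \eqref{eqn:norm_map}. The guiding picture is that, for the orbifold $\bT = \scrT/G$, one has virtually $T\bT = T\scrT - \mathfrak{g}$, so that the Borel construction of the $G$-Thom spectrum $\scrT^{-T\scrT\oplus\mathfrak{g}}$ presents the orbifold Thom spectrum $\bT^{-T\bT}$; the proposition then reads as ordinary Poincar\'e duality for $\bT$ in $\bK$-homology, and the role of $K_p(n)$-locality is to make this available despite the isotropy groups of $\bT$ — their \emph{finiteness} being exactly what permits the appeal to \eqref{eqn:norm_map}. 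From this vantage point both the statement and the presentation-independence are transparent, since each side computes an honest invariant of $\bT$.

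Concretely, I would first unwind the definitions. Writing $\scrT_{hG} = EG_+\wedge_G\scrT_+$ for the Borel construction and using $G$-equivariant Atiyah duality (under which $\Sigma^\infty_G\scrT_+$ is dualisable with dual $\scrT^{-T\scrT}$, whence $D(\scrT^{-T\scrT\oplus\mathfrak{g}})\simeq\Sigma^{-\mathfrak{g}}\scrT_+$), one rewrites
\[
H_*^G(\scrT;\bK) = \pi_*\big((\scrT_+\wedge\bK)_{hG}\big),\qquad \widetilde H^{-*}_G(\scrT^{-T\scrT\oplus\mathfrak{g}};\bK) = \pi_*\big((\Sigma^{-\mathfrak{g}}(\scrT_+\wedge\bK))^{hG}\big),
\]
where in the second expression $G$ acts on $\scrT_+$, trivially on $\bK$, and through the adjoint representation on $S^{-\mathfrak{g}}$. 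So it suffices to produce a natural equivalence of spectra
\[
(\scrT_+\wedge\bK)_{hG}\ \simeq\ (\Sigma^{-\mathfrak{g}}(\scrT_+\wedge\bK))^{hG}.
\]
The comparison map is the ``norm map'' in the form valid for a compact Lie group: the composite of the Adams isomorphism \eqref{eqn:Adams_isomorphism} applied to the $G$-free spectrum $EG_+\wedge\scrT_+\wedge\bK$ (the source of the shift $\Sigma^{-\mathfrak{g}}$) with the assembly map from genuine to homotopy fixed points. For a \emph{finite} group it is precisely the norm map of \eqref{eqn:norm_map}, with no shift.

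To prove this is an equivalence I would exploit the finite-stabiliser hypothesis: $\scrT$ then admits a finite $G$-CW structure whose cells all have the form $G/H\times D^k$ with $H\leq G$ finite (equivalently, $\Phi^K(\Sigma^\infty_G\scrT_+)$ is trivial for every positive-dimensional closed $K\leq G$). Since the functors $((-)\wedge\bK)_{hG}$ and $(\Sigma^{-\mathfrak{g}}(-)\wedge\bK)^{hG}$ are exact and the comparison map is natural, induction over the skeletal filtration reduces everything to a single orbit $\scrT = G/H$ with $H$ finite. There the Wirthm\"uller isomorphism $G_+\wedge_H(-)\simeq F_H\big(G_+,\Sigma^{\mathfrak{g}}(-)\big)$ (note $\mathfrak{h}=0$) identifies the left side with $\bK_{hH}$ and the right side with $\bK^{hH}$ — the adjoint shift $\Sigma^{-\mathfrak{g}}$ cancelling against the Wirthm\"uller twist $\Sigma^{+\mathfrak{g}}$, which is exactly what forces the $\oplus\,\mathfrak{g}$ in the statement — and under these the comparison map becomes the norm map $\bK_{hH}\to\bK^{hH}$, an equivalence by \eqref{eqn:norm_map} because $H$ is finite and $\bK$ is $K_p(n)$-local. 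Reassembling over the finitely many cells (a Mayer--Vietoris / spectral-sequence argument) yields the global equivalence.

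Finally, presentation-independence follows because every ingredient — the Borel construction, the Thom spectrum $\scrT^{-T\scrT\oplus\mathfrak{g}}$, and the duality, Adams and norm maps — is natural in the global quotient $(G,\scrT)$, and any two presentations of a fixed orbifold $\bT$ are connected by elementary moves (e.g.\ replacing $(G,\scrT)$ by $(G\times K,\scrT\times EK)$ for a compact Lie group $K$) under which neither side changes; alternatively, as above, both sides manifestly depend only on $\bT$ and $\bT^{-T\bT}$. I expect the real obstacle to be in the third paragraph: upgrading the norm equivalence \eqref{eqn:norm_map} from finite groups to the compact Lie group $G$ by means of the finite stabilisers — i.e.\ the vanishing of the relevant $K_p(n)$-local Tate construction — while carefully tracking the adjoint-representation twist through the Wirthm\"uller isomorphism. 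This is the core of Cheng's theorem \cite{Cheng}; the other steps are formal, though matching the Thom-spectrum twists cell-by-cell also needs some care.
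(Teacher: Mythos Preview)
Your proposal is correct and is essentially the argument the paper defers to: the paper's own proof is nothing more than a citation to Cheng \cite{Cheng} for $\bK=K_p(n)$ and to \cite{AMS-Hamiltonian} for the $K_p(n)$-local extension, so you have in fact supplied the content the paper omits. Your outline---equivariant Atiyah duality, the Adams isomorphism, and reduction via a $G$-CW structure with finite isotropy to the finite-group norm equivalence \eqref{eqn:norm_map}---is exactly the route taken in those references, and indeed the paper itself rehearses precisely this manipulation (adjunction, norm, Adams) a few pages later in Section~\ref{sec:borel-equiv-cohom} when establishing \eqref{eq:lfH-isomorphism-pullout-G}. Your identification of the main subtlety is also on target: the passage from the finite-group statement \eqref{eqn:norm_map} to the compact Lie group $G$ via the cellular filtration, tracking the adjoint twist through Wirthm\"uller, is the heart of Cheng's theorem.
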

	
	\begin{proof} For $\bK = K_p(n)$ this is proved by Cheng in  \emph{op. cit.} The extension to $K_p(n)$-local theories is well known to experts, and a proof is provided in \cite{AMS-Hamiltonian}. \end{proof}
	
	Stably almost complex vector bundles are oriented for Morava $K$-theories, and so stable almost complex structures on the virtual bundle $T\scrT-\frak{g}$ will play a role in the sequel.  There is also a version of the previous proposition for non-compact manifolds, respectively manifolds with boundary, involving cohomology with compact supports, respectively cohomology of $\scrT / \partial\scrT$, see \cite{AMS-Hamiltonian}.

	\subsection{Equivariant $K$-theory}
	If $X$ is compact and Hausdorff, 
	$K(X)$ will denote the $\bZ/2$-graded group consisting of $K^0(X)$ and $K^1(X)$, with the even group obtained from complex stable vector bundles on $X$, and the odd group from those on $S^1 \times X$ with a trivialisation along the product of a point with $X$.  	If $X$ is locally compact with $X^+$ its $1$-point compactification, we define\footnote{This notation conflicts with that of \cite{Segal:equivtK}, but allows us to have uniform notation with the Borel-equivariant Morava theory later.} $K_c(X) := \tilde{K}(X^+)$ to be $K$-theory with compact support.
	
	Vector bundles with a lifting of the structure group from the orthogonal group to $Spin^c$ (in particular stably almost complex bundles) are $K$-oriented.   If $j: Z \subset X$ is a closed submanifold of a compact manifold $X$ of codimension $k$ and the normal bundle $\nu_Z$ is $K$-oriented, then we have a fundamental class  $[Z]=j_!(1_Z) \in K^k(X)$. If $X$ is a complex algebraic variety and $Z\subset X$ is a closed subvariety, then $[Z]$ is the image of $[\mathcal{O}_Z]$ under the natural map from algebraic $K$-theory to topological $K$-theory. 
	
	Let $G$ be a compact Lie group, and $X$ a compact $G$-CW complex. There is again a $\bZ/2$-graded theory $K_G(X)$ constructed from $G$-equivariant vector bundles. 
	If $W$ is a locally compact space, we set $K_{G,c}(W) := \tilde{K}_{G,c}(W^+)$ to be the $K$-theory with compact supports.  This is contravariantly functorial for \emph{proper} $G$-maps, and covariantly functorial for open embeddings of $G$-invariant open subsets.  The natural map
	\[
	\varinjlim K_{G,c}(U) \to  K_{G,c}(W) 
	\] is an isomorphism (c.f. \cite[Proposition 2.11]{Segal:equivtK}), where $U$ runs through relatively compact open $G$-subspaces of $W$ ordered by inclusion.  
	
	\begin{remark}
		As explained in Section \ref{Sec:notation} below, for a quasi-projective algebraic complex orbifold $Y$ we will sometimes write $K^0(Y)$ for the \emph{orbifold} $K$-theory, defined as the Grothendieck group of orbifold vector bundles on $Y$.   If $Y = \scrT/G$ is presented as a global quotient, this is canonically isomorphic to  $K^0_G(\scrT)$, which in particular does not depend on the presentation, cf. \cite{Pardon:enough}. Defining $K^1(Y) = \mathrm{cok}(K^0(Y) \to K^0(S^1\times Y))$ and $K(Y) = K^0(Y) \oplus K^1(Y)$ again yields a 2-periodic theory.
	\end{remark}

	\subsection{Transverse $K$-theory}\label{sec:transverse_Kthy}
	Suppose a compact Lie group $G$ acts on a manifold $M$; let $T_G^*M$ denote the conical subset of $T^*M$ of covectors which vanish on tangent vectors to $G$-orbits. Let $P: \Gamma(E) \to \Gamma(E)$ be a $G$-invariant pseudo-differential operator which is elliptic transverse to the $G$-orbits, meaning its symbol is invertible on $T_G^*M\backslash \{0\}$. Following Atiyah \cite{Atiyah}, we call such an operator \emph{transversely elliptic}. Such an operator is typically not Fredholm, and has infinite-dimensional kernel, but -- after restricting to a submanifold of $M$ with compact closure, which we shall do without further comment --  each $G$-representation occurs in the kernel with finite multiplicity. We shall only be interested in the multiplicity of the trivial representation, which can be interpreted as a map
	\begin{equation}
		\left(\mathrm{ind}_M\right)^G: K_G(T^*_GM) \to K_*.
	\end{equation}
	This has three properties we recall:
	\begin{enumerate}
		\item If $j: U \hookrightarrow M$ is a $G$-invariant open embedding into a compact manifold, there is a push-forward 
		\[
		j_!: K_G(T_G^*U) \to K_G(T_G^*M)
		\]
		and $\left(\mathrm{ind}_M\right)^G \circ j_!$ is intrinsic to $U$ (independent of choice of $M$); this extends the theory to the non-compact case.
		\item  If $Q \hookrightarrow M$ is a $G$-equivariant embedding of closed manifolds, then the bundle $T(\nu_{Q/M}) \to TQ$ is naturally almost complex. Clifford multiplication $\Lambda^{ev} T^{1,0}\nu \to \Lambda^{odd} T^{1,0}\nu$ defines a  symbol class over the total space of $T\nu$. Given any transversely elliptic symbol on $Q$, i.e. bundles $E, F\to Q$ and $\sigma \in \mathrm{Hom}(\pi_{TQ}^*E,\pi_{TQ}^*F)$ invertible transverse to orbits, we can pull back under $T\nu \to TQ$, multiply by the Clifford complex on $T\nu$, and then extend by zero from $\nu_{Q/M}$ to $M$. This then satisfies
		\[
		\xymatrix{
			K_G(T_G^*Q) \ar[rr]_{j_!} \ar[rd]_{\mathrm{ind}^G_Q} & & K_G(T_G^*M) \ar[ld]^{ \left(\mathrm{ind}_M\right)^G} \\
			& K_* &   
		}
		\]
		\item If $Y = M/G$ is a global quotient presentation of an orbifold, so $G$ acts with finite stabilisers, an orbifold elliptic differential operator $P$ on $Y$ lifts to a transversely elliptic operator $\tilde{P}$ on $M$, and Kawasaki's orbifold index \cite{Kawasaki} $\mathrm{ind}_Y$ applied to the symbol $\sigma(P)$ (in the orbifold $K$-theory of $Y$) is given by
		\[
		\mathrm{ind}_Y(\sigma(P)) = \left(\mathrm{ind}_M\right)^G (\tilde{P}).
		\]
	\end{enumerate}
	
        \begin{rem}
     
	As indicated by our notation, the index $ \left(\mathrm{ind}_M\right)^G $ can be obtained from a more elaborate structure which considers all $G$-representations. This is usually encoded as a distribution on $G$, using the fact that the multiplicities of the representations appearing in the index grow in a sufficiently controlled manner. More precisely, if $\xi_1,\ldots,\xi_r$ are an orthonormal basis of $\frak{g}$ and $Y_i$ denotes the first order differential operator on $E$ given by Lie derivative $\mathcal{L}_{\xi_i}$, then the Casimir
	\[
	\Delta_E = 1 - \sum_j Y_j^2
	\]
	has symbol injective in directions tangent to the orbits. For each $\lambda$, $P$ determines a Fredholm operator $P_{\lambda}$ with domain the $\lambda$-eigenspace of $\Delta_E$, and $\ker(P_\lambda)$ is finite-dimensional (and comprises smooth sections, so is independent of choices of Sobolev exponents in the construction). The formal sum $\sum_{\lambda} \mathrm{ind}(P_\lambda)$ converges to a well-defined distribution, yielding an index map
	\[
	\mathrm{ind}_M: K_G(T^*_GM) \to C^{-\infty}(G)^{\mathrm{Ad}(G)}
	\]
	The index $  \left(\mathrm{ind}_M\right)^G$ can then be recovered by pairing against the identity function on $G$, and the three properties listed above have natural formulations in this context.
        \end{rem}	
	
	\begin{rem}\label{rem:conical_subset}
		In \cite{Atiyah}, Atiyah does not assume that the action of $G$ is locally free. Under that assumption, which we always impose, the orbits of the $G$-action are submanifolds with tangent space $\frak{g}$, and a choice of Riemannian metric induces an isomorphism $ T^*_G \cT \oplus \frak g \cong T \cT$, and hence an isomorphism $K_G(T_G^*\cT) = K_G(\cT^{T\cT-\frak g})$.
	\end{rem}

	\subsection{Formal groups}
	
	Let $L:=L_\bE(u,v) \in \bE_*\llbracket u,v \rrbracket$ denote the formal group associated to a complex oriented cohomology theory $\bE$. We write 
	\[
	L(u,v) = u+_L v; \qquad [n]\cdot_L u = u +_L u +_L  \cdots +_L u
	\]
	and 
	\[
	L^{n_1,\ldots,n_m}(u_1,\ldots,u_m) = [n_1]\cdot_L u_1 +_L \cdots +_L [n_m]\cdot_L u_m
	\]
	and, simplifying notation, 
	\[
	L(u_1,\ldots,u_n) = u_1 +_L u_2 +_L \cdots +_L u_n.
	\]
	We recall the basic connection to line bundles:  $L(u,v) = u+v+\cdots$ gives the first Chern class,  in $\bE$-cohomology, of the tensor product of the line bundles with first Chern classes $u$ and $v$, as a power series in $u$ and $v$. Analogously, if  the line bundle $L_i$ has Chern class $u_i$, then  $L^{n_1,\ldots,n_m}(u_1,\ldots,u_m)$ gives the first Chern class for the line bundle $L_1^{\otimes n_1} \otimes \cdots \otimes L_m^{\otimes n_m}$ (universally, i.e. over a product of $m$ copies of $\bC\bP^{\infty}$).

	For $J = (j_1,\ldots,j_m) \in \{0,1\}^m$ there are uniquely defined power series $L^{n_1,\ldots,n_m}_J$ with
	\[
	L^{n_1,\ldots,n_m}(u_1,\ldots,u_m) = \sum_{J} u^J \cdot L_J^{n_1,\ldots,n_m}(u_1,\ldots,u_m).
	\]
	where $u^J = u_1^{j_1}\cdots u_m^{j_m}$. 
	For instance, 
	\begin{equation} \label{eqn:formal_group}
		L^{1,1}(u,v) = L(u,v) = \sum a_{ij} u^i v^j = u+v+ (u v) \cdot \sum_{i, j\geq 1} a_{ij} u^{i-1}v^{j-1}
	\end{equation}
	shows 
	\[
	L^{1,1}_{0,0} = 0;  \ L^{1,1}_{0,1} = 1;  \ L^{1,1}_{1,0} = 1; \ L^{1,1}_{1,1} = \sum_{i,j \geq 1} a_{ij} u^{i-1}v^{j-1}.
	\]
	
	\begin{Example} $H\bZ$ has the additive formal group $L(x,y) = x+y$.  Complex $K$-theory has the multiplicative formal group $L(x,y) = x+y-xy$. \end{Example}
	
	\begin{Example} Morava $K$-theory $K_p(n)$ has coefficients $K_p(n)_* = \bF_p[v,v^{-1}]$ with $|v| = 2(p^n-1)$. The formal group law  satisfies 
		\[
		x+_{K_p(n)} y \, = \, x+y - v \sum_{i=1}^{p-1} \frac{1}{p} \, {p \choose i} x^{i \, p^{n-1}} y^{(p-i)\, p^{n-1}} \, + O(x^{p^n}, y^{p^n}).
		\]
		and is essentially characterised by the height $n$ property $[p]\cdot_L x = v \,x^{p^n}$.  Since 
		\[
		x+_{K_p(n)} y \, = \, x+y \, \in K_p(n)_* \llbracket x,y\rrbracket / \langle p^{n-1}\mathrm{th} \, \mathrm{powers}\rangle
		\]
		for $n\gg 0$ this behaves somewhat like a finite field version of the additive formal group.
	\end{Example}
	
	We will sometimes write $L^{n_1,\ldots,n_m}_{\bE}(u_1,\ldots,u_m)$
	instead of $L^{n_1,\ldots,n_m}(u_1,\ldots,u_m)$ and analogously
	$L^{n_1,\ldots,n_m;J}_{\bE}(u_1,\ldots,u_m)$
	instead of $L^{n_1,\ldots,n_m}_J(u_1,\ldots,u_m)$
	if we wish to specify which complex oriented cohomology theory we are using.

	\subsection{Algebraic cobordism}
	
	Recall that algebraic cobordism $\Omega^*$ is a universal oriented Borel-Moore cohomology theory on smooth schemes over $\bC$, which has cycles for $\Omega^*(W)$ being tuples
	\begin{equation} \label{eqn:general_cycle}
		[Z,\iota; \scrL_1,\ldots, \scrL_r] 
	\end{equation}
	with $Z$ a smooth scheme, $\iota: Z \to W$ a morphism, and the $\scrL_i$ line bundles on $Z$. In particular, a line bundle $\scrL\to W$ defines a class $[W,\id; \scrL]$, and more generally an operation on cycles 
	\[
	\tilde{c}_1(\scrL):  [Z,\iota; \scrL_1,\ldots, \scrL_r]  \mapsto [Z,\iota; \scrL_1,\ldots, \scrL_r, \iota^*\scrL].
	\] Fundamental relations on cycles  include the `dimension axiom' which is the  vanishing condition 
	\begin{equation} \label{eqn:dimension_axiom}
		r > \dim_{\bC}(Z) \Rightarrow [Z,\iota; \scrL_1,\ldots, \scrL_r] = 0 
	\end{equation}
	(note that this holds without any assumption that the $\scrL_i$ be pairwise distinct), a relation equating 
	\[
	[Z,\iota; \scrL_1,\ldots,\scrL_r, \scrL] \quad \mathrm{and} \quad [D, \iota \circ \mathrm{incl}_D; (\scrL_1)|_D,\ldots, (\scrL_r)|_D]
	\]
	when $s \in H^0(Z;\scrL)$ vanishes transversely along $D$; and (imposed by hand) a formal group law relation asserting that
	\[
	\tilde{c}_1(\scrL\otimes \scrL')[Z,\iota; \scrL_1,\ldots, \scrL_r] = L_{\bL}(\tilde{c}_1(\scrL), \tilde{c}_1(\scrL'))  [Z,\iota; \scrL_1,\ldots, \scrL_r]
	\]
	with $L_{\bL}$ the  Lazard universal formal group law.  
	
	\begin{remark} \label{rmk:comparison} Let $W$ be a smooth quasi-projective variety over $\bC$. There is a comparison map $\eta: \Omega^*(W) \to H^*(W;MU)$ from algebraic to complex cobordism, which yields base-change maps to $H^*(W;\bE)$ for any complex-oriented $\bE$. Since $\Omega^*(W) \otimes_{\bL_*}\bZ = CH^*(W) $, algebraic cobordism  determines the Chow ring of $W$ so $\eta$ is usually not an isomorphism.
	\end{remark}
	
 	The model of equivariant algebraic cobordism we use is defined by  lifting to the stages of an algebraic approximation to the Borel construction, see \cite{Krishna} and \cite{Heller-MalagonLopez}. In particular, while it is equipped with a natural map to Borel equivariant complex cobordism, and hence to Borel equivariant complex oriented cohomology theories, it does not map to (genuine) equivariant $K$-theory. This requires us to give separate arguments, in various places, in order to establish the validity of statements about equivariant $K$-theory.

	If we have a polynomial $P(t_1,\cdots,t_r) = \sum_{i_1,\cdots,i_l} a_{i_1,\cdots,i_l}t^{i_1}\cdots t^{i_l}$ with coefficients in $\bE_*$ for some
	multiplicative complex oriented cohomology theory $\bE$, then
	we define
	\begin{equation}
		[Z,\iota;P(\scrL_1,\cdots,\scrL_r)] := \sum_{i_1,\cdots,i_l} a_{i_1,\cdots,i_l} [Z,\iota;\underbrace{\scrL_1,\cdots,\scrL_1}_{i_1},\underbrace{\scrL_2,\cdots,\scrL_2}_{i_2},\cdots,\underbrace{\scrL_r,\cdots,\scrL_r}_{i_r}]
	\end{equation}
	to be the corresponding element in $H^*(W;\bE)$.

	\subsection{Normal crossing divisors\label{Subsec:divisor}} 
	
	We will say that a normal crossing divisor is \emph{strict} if its irreducible components are smooth and meet pairwise transversely, with all iterated intersections of irreducible components being smooth and locally analytically modelled on $\{z_1\cdots z_r=0\} \subset \bC^n$.  If $\bE$ is any complex-oriented theory, and $Y$ a smooth variety, a divisor $D\subset Y$ defines a line bundle $\mathcal{O}(D)$ and hence a class $[\mathcal{O}_Y(D)] := \tilde{c}_1(\mathcal{O}(D)) \in H^*(Y;\bE)$. More generally, cycles of the form \eqref{eqn:general_cycle} define classes in $\bE$-cohomology.

	\begin{prop}\label{prop:formal group}
		Let $D = \cup_{i=1}^m n_i D_i$ be supported on the strict normal crossing divisor $\cup_i D_i$ on a smooth quasi-projective variety $Y$, and let $\scrL_i = \calO_Y(D_i)$. For $J \in \{0,1\}^m$ let $\iota_J: D_J \hookrightarrow Y$ be the inclusion of the stratum labelled by $J$ into $Y$, and let $\scrL_i^J$ denote $\iota_J^*\scrL_i$.  Then the following identity holds in $H^*(Y;\bE)$ 
		\begin{equation} \label{eqn:snc_identity}
			[\calO_Y(D)] = \sum_J  [D_J, \iota_J;L_\bE^{n_1,\ldots,n_m;J}(\scrL_1^J, \ldots, \scrL_m^J)].
		\end{equation}
		If the  group $GL(r,\bC)$ acts algebraically on $Y$ preserving the $\bE$-orientation and preserving $D$ stratum-wise, the identity \eqref{eqn:snc_identity} lifts to $H_{U(r)}^*(Y;\bE)$. 
	\end{prop}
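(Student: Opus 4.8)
The plan is to prove the identity first in algebraic cobordism $\Omega^*(Y)$, where it is a purely formal consequence of the defining relations recalled above, and then transport it to $H^*(Y;\bE)$ via the comparison map $\eta$ of Remark~\ref{rmk:comparison} and base change; the equivariant refinement will be obtained by running the same argument on the stages of an algebraic Borel construction. To begin, fix sections $s_i\in H^0(Y;\scrL_i)$ cutting out the $D_i$. Since $\calO_Y(D)\cong\scrL_1^{\otimes n_1}\otimes\cdots\otimes\scrL_m^{\otimes n_m}$, iterating the formal group law relation of $\Omega^*$ gives
\[
[\calO_Y(D)]\;=\;[Y,\id;\calO_Y(D)]\;=\;L_\bL^{n_1,\ldots,n_m}\big(\tilde{c}_1(\scrL_1),\ldots,\tilde{c}_1(\scrL_m)\big)\,[Y,\id],
\]
the first Chern class operations commuting and, by the dimension axiom \eqref{eqn:dimension_axiom}, all but finitely many terms of the power series acting by zero. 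Inserting the defining expansion $L_\bL^{n_1,\ldots,n_m}(u_1,\ldots,u_m)=\sum_{J\in\{0,1\}^m}u^J\,L_\bL^{n_1,\ldots,n_m;J}(u_1,\ldots,u_m)$ rewrites the right-hand side as $\sum_J\big(\prod_{i:\,j_i=1}\tilde{c}_1(\scrL_i)\big)\cdot L_\bL^{n_1,\ldots,n_m;J}\big(\tilde{c}_1(\scrL_1),\ldots,\tilde{c}_1(\scrL_m)\big)\,[Y,\id]$.

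Fixing $J$ and listing $\{i:j_i=1\}=\{i_1,\ldots,i_\ell\}$, I would then apply the transverse-divisor relation of $\Omega^*$ successively --- to $s_{i_1}$ on $Y$, then to $s_{i_2}|_{D_{i_1}}$ on $D_{i_1}$, and so on. Because the first Chern class operations commute, this converts $\big(\prod_{k}\tilde{c}_1(\scrL_{i_k})\big)$ applied to any polynomial in the $\tilde{c}_1(\scrL_i)$ into the class $[D_J,\iota_J;(\text{that polynomial restricted to }D_J)]$; at the $k$-th stage what is needed is exactly that $s_{i_k}$, restricted to $D_{i_1}\cap\cdots\cap D_{i_{k-1}}$, vanish transversely along the \emph{smooth} subvariety $D_{i_1}\cap\cdots\cap D_{i_k}$, which is precisely what strictness of the normal crossing divisor supplies. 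Summing over $J$ gives \eqref{eqn:snc_identity} in $\Omega^*(Y)$ with $L_\bL$ in place of $L_\bE$; applying $\eta$ and base-changing along the classifying map $\bL_*\to\bE_*$ of the formal group of $\bE$ --- which sends $L_\bL$, and hence each $L_\bL^{n_1,\ldots,n_m;J}$, to $L_\bE^{n_1,\ldots,n_m;J}$ --- then yields the stated identity in $H^*(Y;\bE)$. (Alternatively one could argue directly in $H^*(-;\bE)$ with first Chern classes and Gysin maps along the regular embeddings $\iota_J$, whose normal bundles are sums of line bundles, hence $\bE$-oriented.)

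For the equivariant statement I would take $G=\GL(r,\bC)$ and use the model of equivariant algebraic cobordism obtained by lifting to a system of smooth quasi-projective approximations $(EG)_N\to(BG)_N$ to the Borel construction \cite{Krishna,Heller-MalagonLopez}. Since $G$ preserves each component $D_i$ --- the meaning of ``preserving $D$ stratum-wise'' --- the $\scrL_i$ and the sections $s_i$ are $G$-equivariant, so on $Y_N:=(EG)_N\times_G Y$, which is smooth and quasi-projective for $N\gg0$, the divisor $D$ induces a strict normal crossing divisor $D_N=\cup_i n_i D_{i,N}$ with $\scrL_{i,N}:=\calO_{Y_N}(D_{i,N})$ descending $\scrL_i$ and $\calO_{Y_N}(D_N)\cong\bigotimes_i\scrL_{i,N}^{\otimes n_i}$. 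Applying the non-equivariant identity on each $Y_N$, and noting that every class appearing is natural with respect to the maps in the approximating system, I would pass to the limit to obtain \eqref{eqn:snc_identity} in $H^*_G(Y;\bE)=H^*_{U(r)}(Y;\bE)$; the hypothesis that $G$ preserve the $\bE$-orientation keeps the approximations and their divisors inside the complex-oriented setting where the cobordism input is available. Since the claim concerns only \emph{Borel}-equivariant $\bE$-cohomology, this argument is uniform in $\bE$ --- it handles $\bE=K$-theory in particular --- even though equivariant algebraic cobordism admits no map to genuine equivariant $K$-theory.

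The only genuinely geometric step is the iterated use of the transverse-divisor relation, and the point to get right there is that ``strict'' normal crossings is precisely the hypothesis that legalises each stage of the iteration; everything else is formal manipulation in $\Omega^*$ together with the routine limit argument for the Borel construction. It is worth noting that the multiplicities $n_i$ never enter the transversality bookkeeping --- they appear only inside the combinatorial power series $L_\bL^{n_1,\ldots,n_m;J}$ --- so the geometric content concerns only the reduced divisor $\cup_i D_i$.
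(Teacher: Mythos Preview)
Your argument for the non-equivariant identity is correct and is essentially the content of the result the paper cites (Levine--Morel, Proposition~3.1.9); the paper simply invokes that reference rather than spelling out the iterated use of the formal-group and transverse-divisor relations as you do. Likewise, your Borel-equivariant argument via finite-dimensional approximations to $BG$ matches the paper's exactly.

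There is one point where you diverge from the paper and it deserves comment. You conclude by asserting that the claim ``concerns only Borel-equivariant $\bE$-cohomology'' and hence is uniform in $\bE$, including $K$-theory. In the paper's framework this is not quite right: the $K$-theoretic counting theory of Section~\ref{sec:equiv-cohom} uses \emph{genuine} equivariant $K$-theory (cf.\ Example~\ref{Ex:Ktheory_not_Borel}, which explains why Borel-equivariant $K$-theory fails to give a counting theory), and the later applications of this proposition require the identity in $K^*_{U(r)}(Y)$ proper. The paper therefore handles the genuine equivariant $K$-theory case by a separate argument, namely the Koszul resolution of Equation~\eqref{eq:Koszul-resolution}, which directly gives the inclusion--exclusion formula for the class of a normal crossing divisor in equivariant $K$-theory. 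Your limit argument does not reach this case, precisely because (as you note) equivariant algebraic cobordism maps to Borel-equivariant but not to genuine equivariant $K$-theory. So the Borel-equivariant portion of your proof is complete, but you should not regard genuine equivariant $K$-theory as covered by it.
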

	
	Note that although the terms in \eqref{eqn:snc_identity} involve power series in the Picard groups of the $D_J$, the `dimension axiom' for cycles \eqref{eqn:dimension_axiom} ensures that the operational first Chern class $\tilde{c}_1(\scrL)$ is always a locally nilpotent operator, so the terms in the sum are actually finite. Note also that \eqref{eqn:snc_identity} is indeed a graded identity, because the coefficients $a_{ij} \in \bE_*$ in the formal group law \eqref{eqn:formal_group} themselves have non-trivial degree.
	
	\begin{proof}  The corresponding identity for algebraic cobordism $\Omega_{alg}^*(Y)$ is proved as Proposition 3.1.9 in \cite{Levine-Morel}. As noted in Remark \ref{rmk:comparison}, for schemes $Y$ over a field $k$ of characteristic zero, an embedding $k\hookrightarrow \bC$ induces a comparison map from algebraic cobordism $\Omega^*(Y)$ to complex cobordism $H^*(Y;MU)$ (working integrally) by \cite{Levine-Morel,Levine-Pandharipande}.  (Note that we are working with schemes over $\bC$, but the characteristic of the coefficients of the cohomology theory are not constrained.) 
		The result for complex oriented $\bE$ then follows from that for $MU$ by base-change.
		
		The lift to equivariant algebraic cobordism  follows from \cite[Section 4.2]{Krishna}. Let $G = U(r)$. 
		Note $BG= \cup_N Gr(r, \bC^N)$ is an increasing union of algebraic $G_{\bC}$-varieties, and the same filtration defines a filtration $\{\widehat{Y}_N\}$ of $Y\times_G EG$, so $\widehat{Y}_N \to Gr(r, \bC^N)$ is the associated $Y$-bundle over the finite-dimensional Grassmannian. Since the $G_\bC$-action preserves $D$, it defines a divisor $D_N \subset \widehat{Y}_N$, to which we can apply the previous result (for fixed $N$). The natural map
		\[
		H^*_G(Y;\bE) = H^*(Y\times_G EG_+; \bE) \to  \lim_N H^*(\widehat{Y}_N;\bE)
		\]
		is an isomorphism.  The inclusion of $D_N \to \widehat{Y}_N$ induces a morphism of inverse limits which is compatible under increasing $N$.
		
		This establishes the result for Borel-equivariant complex oriented cohomology theories. The case of genuine equivariant $K$-theory follows from the Koszul resolution (cf. Equation \eqref{eq:Koszul-resolution} below).  More precisely, note that an analogue of the exact sequence of \eqref{eqn:inclusion_exclusion} holds whenever $D = \cup_{i=1}^k D_i \subset Y$ is any union of codimension one subschemes defined by a regular sequence (in particular they need not be smooth, reduced or meet transversely).  For varieties, this follows from the fact that in any commutative ring $R$ one has $V(I) \cup V(J) = V(I\cap J)$, where  $V(I) \subset \mathrm{Spec}(R)$ is the locus defined by the ideal $I$.  More generally, a union of not necessarily reduced divisors  defines a map $Y \to [\bA^k / \bG_m^k]$, or locally a map $ Y \dashrightarrow \bA^k$. Regularity of the defining sequence implies that this map is flat, which means there are no higher $\mathrm{Tor}$-terms when one pulls back the local resolution of \eqref{eq:Koszul-resolution}.  
	\end{proof}
	
	Even in the reduced case, note that in \eqref{eqn:snc_identity} the `coefficient' of a stratum $D_J \subset Y$ (more precisely, the coefficients of the various line bundles over that stratum which define the actual cycle) depends not just on the underlying scheme, or even its inclusion into $Y$, but on the particular way it is expressed as an intersection of irreducible components in $D$.
	
	\begin{Example} \label{ex:two_breakings} Suppose $D= D_1 \cup D_2 \subset W$ is reduced with two smooth irreducible components which meet cleanly along a smooth $Z = D_1 \cap D_2$.  Then, suppressing the inclusion maps $\iota_J$, we have: 
		\[
		[\mathcal{O}_W(D)] = [D_1]+[D_2] + \sum_{i,j \geq 1} a_{ij}\cdot (D_1)|_Z^{i-1}\cdot(D_2)|_Z^{j-1}
		\]
		with the 3rd term supported on $Z$ and $a_{ij}$ as in \eqref{eqn:formal_group}. This case already contains all of the information of the formal group law $L_{\bE}$; indeed algebraic cobordism can be reconstructed from relations associated to such `double point degenerations', compare to \cite{Levine-Pandharipande}. 
	\end{Example}
	
	If a divisor $D$ has normal crossings but not strict normal crossings, for instance components with non-trivial self-intersection, then there is no elementary analogue of Proposition \ref{prop:formal group}.  On the other hand, by Hironaka's theorem one can find a resolution on which the proper transform of $D$ has strict normal crossings;  \cite[Remark 2.1]{Payne} shows that one can achieve that iterated intersections of components are empty or smooth and irreducible.  Proposition \ref{prop:general_divisor} indicates one way of keeping track of the choices involved in such a procedure. 
	
	For the statement, we find it convenient to extend the notation of Equation \eqref{eqn:general_cycle} to generalised cohomology theory, so that given a pair $(Z,c)$ consisting of a subvariety $Z$ of $Y$ and a class $c$ in the $\bE$-cohomology of $Z$, we denote by $[Z;c]$ the pushforward of $c$ to the  $\bE$-cohomology of $Y$. Recall for instance from \cite{MPS} that a normal crossing divisor has a \emph{cone complex}, which is the dual intersection complex if it has strict normal crossings (or more generally if none of the irreducible components of $D$ self-intersect), but is in general more complicated and defined by a descent argument; see \emph{op. cit.} for the details of the construction. 
	
	\begin{prop} \label{prop:general_divisor}
		Let $D \to Y$ be a normal crossing divisor on a smooth quasi-projective variety $Y$ with smooth irreducible strata $D_J$. There is an identity in $H^*(Y;\bE)$ 
		\begin{equation} \label{eqn:nonstrict_nc_divisor}
			[\calO_Y(D)] = \sum_J [D_J;c_J]
		\end{equation}
		for coefficients $c_J \in \bE_*\llbracket \mathrm{Pic}(D_J)\rrbracket$ determined inductively in $J$ by $L_{\bE}$ and the cone complex for $D$.  If $D$ is $GL(r,\bC)$-equivariant for an algebraic action, then the equality lifts to $H^*_{U(r)}(Y;\bE)$.
	\end{prop}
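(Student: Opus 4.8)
The plan is to follow the strategy of Proposition \ref{prop:formal group}: establish the identity first in algebraic cobordism $\Omega^*(Y)$, then transport it along the comparison map $\Omega^*(Y) \to H^*(Y;MU)$ of Remark \ref{rmk:comparison} and base-change to an arbitrary complex-oriented $\bE$. The $GL(r,\bC)$-equivariant refinement would then follow, exactly as there (with $G = U(r)$), by filtering the Borel construction $Y \times_G EG$ by the $Y$-bundles $\widehat{Y}_N$ over the Grassmannians $Gr(r,\bC^N)$ — to each of which the divisor $D_N \subset \widehat{Y}_N$, still normal crossing with the same cone complex since $GL(r,\bC)$ preserves the strata, and the argument below apply — and passing to the inverse limit $H^*_{U(r)}(Y;\bE) = \lim_N H^*(\widehat{Y}_N;\bE)$; the genuine equivariant $K$-theory case is handled by the Koszul resolution as in the proof of Proposition \ref{prop:formal group}. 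So it suffices to prove \eqref{eqn:nonstrict_nc_divisor} in $\Omega^*(Y)$.

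There I would argue by induction, peeling off one codimension of strata at a time. Write $D^{[k]} \subseteq D$ for the union of the closures of the strata $D_J$ with $|J| \geq k$, so $D^{[1]} = D$ and $\mathrm{codim}\, D^{[k]} \geq k$. The inductive hypothesis is that one has produced $c_{J'} \in \bL_* \llbracket \Pic(D_{J'}) \rrbracket$ for all $|J'| < k$ such that
\[
E_k := [\calO_Y(D)] - \sum_{|J'| < k} [D_{J'}; c_{J'}]
\]
restricts to zero in $\Omega^*(Y \setminus D^{[k]})$. The base case $k = 1$ holds because the defining section of $\calO_Y(D)$ is invertible on $Y \setminus D$, so $[\calO_Y(D)] = \tilde{c}_1(\calO_Y(D))[Y; \id]$ dies there. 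For the inductive step set $Y^\circ := Y \setminus D^{[k+1]}$, inside which $D^{[k]} \cap Y^\circ = \bigsqcup_{|J| = k} D_J^\circ$ is smooth of pure codimension $k$ (the $D_J^\circ$ being the open strata); by the localization sequence for $\bigsqcup_J D_J^\circ \subset Y^\circ$ together with purity, $E_k|_{Y^\circ}$ can be written as $\sum_{|J| = k} [D_J^\circ; \gamma_J]$ for classes $\gamma_J$ on the open strata. Lifting each $\gamma_J$ to $c_J \in \bL_* \llbracket \Pic(D_J) \rrbracket$ — possible since $\Pic(D_J) \twoheadrightarrow \Pic(D_J^\circ)$ and $\tilde{c}_1$ is locally nilpotent by the dimension axiom \eqref{eqn:dimension_axiom} — makes $E_{k+1} := E_k - \sum_{|J| = k} [D_J; c_J]$ restrict to zero on $Y^\circ$, hence be supported on $D^{[k+1]}$. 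The induction terminates once $k > \dim_{\bC} Y$ and yields \eqref{eqn:nonstrict_nc_divisor}.

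The real content is the computation of $\gamma_J$, and this is where the cone complex enters. In an analytic (or \'etale) neighbourhood of a point of $D_J^\circ$ the divisor $D$ is strict normal crossing, with exactly $k$ local branches of multiplicities $m_1, \dots, m_k$; Proposition \ref{prop:formal group} expands $[\calO(D)]$ there, and reading off the codimension-$k$ term while subtracting the parts of the already-chosen $c_{J'}$ ($|J'| < k$) that reach $D_J^\circ$ — which, by the structure of the cone complex, they do precisely along the corresponding faces — identifies $\gamma_J$ locally as $L_{\bL}^{m_1, \dots, m_k; (1, \dots, 1)}$ evaluated on the first Chern classes of the branch line bundles, corrected by those lower terms. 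Because $L_{\bL}^{m_1, \dots, m_k; (1, \dots, 1)}$ is symmetric under simultaneous permutation of the pairs (branch, multiplicity), while the monodromy around $D_J^\circ$ permutes the local branches preserving their multiplicities — this being exactly the gluing datum recorded by the cone complex of $D$ — the local expressions patch to a global $\gamma_J$, determined inductively in $J$ from $L_{\bL}$ and the cone complex, as asserted.

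I expect this gluing step to be the main obstacle: one must check that, after folding in the tails of the lower corrections $c_{J'}$, the local codimension-$k$ coefficient is a genuinely monodromy-invariant (hence global) power series, and that the recipe producing it depends only on the cone complex and not on auxiliary choices. An alternative that avoids the monodromy bookkeeping is to first pass to a log resolution $\pi : \widetilde{Y} \to Y$ for which $\pi^* D$ is strict normal crossing, apply Proposition \ref{prop:formal group} upstairs, and push forward using $\pi_* \pi^* = \id$; the exceptional contributions then either vanish for dimension reasons or feed into the deeper coefficients $c_J$, and one checks separately that the outcome is independent of the resolution, i.e.\ depends only on the cone complex of $D$.
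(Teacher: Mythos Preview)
Your primary approach---induction on codimension via the localization sequence, with the coefficients $\gamma_J$ computed locally from the strict formula and then globalized by monodromy invariance---is genuinely different from the paper's. The paper instead reduces wholesale to the strict case by \emph{explosion}: one blows up $Y$ along the minimal stratum of $D$, then along the strict transforms of the next-to-minimal strata, and so on; this replaces the cone complex by its barycentric subdivision, and doing it twice produces $f:Y''\to Y$ with $f^*D$ strict normal crossing. Since $\Omega^*(Y)\hookrightarrow\Omega^*(Y'')$ is split injective, one applies Proposition~\ref{prop:formal group} upstairs; the strata of $f^*D$ are iterated projective bundles over blow-ups of the strata of $D$ along substrata, so the projective bundle formula pushes the identity back down to $Y$ and yields the (implicit) coefficients $c_J$. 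Your closing alternative via a log resolution is exactly this idea, with explosion being the canonical choice of resolution that makes the bookkeeping between strata of $f^*D$ and of $D$ transparent.

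Your localization argument is more hands-on and avoids blow-up machinery, but the step you flag as the obstacle is a genuine one and is not quite closed. Localization alone only tells you $E_k|_{Y^\circ}$ is pushed forward from $\bigsqcup D_J^\circ$; it does not say the pushforward is of the form $[D_J^\circ;\gamma_J]$ for $\gamma_J$ a power series in first Chern classes of line bundles---a general class in $\Omega_*(D_J^\circ)$ need not be a cap product with $[D_J^\circ]$. Your local computation via Proposition~\ref{prop:formal group} is what forces this, but when monodromy around $D_J^\circ$ permutes the local branches nontrivially, the individual branch line bundles $\scrL_i$ do not exist globally on $D_J^\circ$: only their symmetric functions do, and those are the Chern classes of the (possibly non-split) normal bundle, not a priori first Chern classes of elements of $\Pic(D_J^\circ)$. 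So the assertion $c_J\in\bE_*\llbracket\Pic(D_J)\rrbracket$ does not fall out of your argument without further work identifying which global line bundles on $D_J$ actually appear. The paper's explosion approach sidesteps this entirely: upstairs everything is strict so the branch bundles exist, and the projective bundle formula handles the descent.
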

	
	Given the inductive and implicit nature even of the definitions, the usefulness of this Proposition in practise is not clear: we include it to indicate the ingredients which enter any general formulation of a splitting axiom for the Gromov-Witten invariants defined later.  Because it is a digression, we give only a brief indication of the argument, the details behind which will not appear later in the paper.
	
	\begin{proof}[Sketch of proof]
		It suffices to prove the result in algebraic cobordism, since we can then pushforward to any complex oriented cohomology theory (in particular, the coefficients $c_J$ lie in the image of the maps $MU_*\llbracket \mathrm{Pic}(D_J)\rrbracket \to   \bE_*\llbracket \mathrm{Pic}(D_J)\rrbracket$). For a strict normal crossing divisor, the result holds -- with the strata $J$ labelled by $\{0,1\}^m$ when $D$ has $m$ irreducible components and the coefficients $c_J$ determined universally by the formal group law -- by expanding equation \eqref{eqn:snc_identity}.  The general case is reduced to this by `explosion'.  
		
		We follow the terminology of \cite{MPS}. Denote by $\Delta(D)$ the cone complex of $D$. We can form the blow up of $Y$ along the minimal stratum of $D$, then the strict transforms of the next-to-minimal strata, etc.  Iteratively this is the `explosion' of $D$, described in detail in \cite[Section 5.2, Proof of Theorem 11]{MPS}. They explain that explosion yields a variety $Y' \to Y$ on which the proper transform of $D$ has cone complex given by the barycentric subdivision of $\Delta(D)$. The barycentric subdivision has the feature that the associated divisor has no components which self-intersect, but the intersection of a set of branches may still be globally disconnected. Exploding twice yields a morphism $f: Y'' \to Y$ in which the pullback $f^*D$ has cone complex which is a cone over a simplicial complex, the combinatorial analogue of the divisor having  strict normal crossings. As a general feature of iterated blow-ups, the natural pullback map 
		\[
		\Omega^*(Y) \to \Omega^*(Y'')
		\]
		is a split injection, and the same holds $G$-equivariantly in the presence of an algebraic $G_{\bC}$-action by working on the stages of the Borel construction. The transform $f^*D$ has strata which are iterated projective bundles over bases obtained from strata of $D$ by blowing up along substrata. Using the projective bundle formula for algebraic cobordism and induction, one in principle obtains an implicit formula as in \eqref{eqn:nonstrict_nc_divisor}. Compare to \cite[Proposition 2.5.1]{Levine-Morel}. 	\end{proof}
	
	\begin{Example} \label{ex:HQ_easier}
		For $\bE = H\bQ$ one has the formal group $L(x,y) = x+y$, which gives that $[\calO_Y(D)] = \sum n_i [\calO_Y(D_i)]$.
	\end{Example}
	
	\begin{Example} \label{ex:divisor_Ktheory}
		In complex $K$-theory, with conventions for the Thom isomorphism compatible with \cite{Levine-Morel}, the Euler class $e^{KU}(V) = \sum (-1)^i \Lambda^i V^*$ and the first Chern class $c_1^{KU}(\scrL) = 1 - \scrL^*$. Then
		\[
		c_1^{KU}(\scrL \otimes \scrL') = c_1^{KU}(\scrL) + c_1^{KU}(\scrL') - c_1^{KU}(\scrL) \cdot c_1^{KU}(\scrL')
		\] 
		giving the multiplicative formal group in the normalisation 
		\[
		L(x,y) = x+y-xy.
		\]
		This leads to an `inclusion-exclusion' identity one can obtain more directly: the exact sequence of sheaves 
		\begin{equation}
			\label{eq:Koszul-resolution}
			0 \to \calO/(x_1\cdots x_k) \to \sum_i \calO/(x_i) \to \sum_{i<j} \calO/(x_i,x_j) \to \cdots \to \calO/(x_1,\ldots,x_k) \to 0  
		\end{equation}
		for $\{x_1\cdots x_k = 0\} \subset \bC^k$ shows that a reduced normal crossing divisor $D = \cup_i D_i  \subset M$ of a smooth quasi-projective variety gives an identity
		\begin{equation} \label{eqn:inclusion_exclusion}
			[\calO_D] - \sum_i [\calO_{D_i}] + \sum_{i<j} [\calO_{D_i\cap D_j}] + \cdots + (-1)^k [\calO_{D_1 \cap \ldots \cap D_k}]  = 0 \in K_{alg}(M)
		\end{equation}
		which one can push forward to $K_c(M)$.  	\end{Example}
	
    	Concretely, it is hard to apply Proposition \ref{prop:general_divisor} because it is inexplicit, but Example \ref{ex:divisor_Ktheory} and the final paragraph of the proof of Proposition \ref{prop:formal group} shows that in the special case of complex $K$-theory the fundamental class of a normal crossing divisor is given by the inclusion/exclusion formula, whether or not the normal crossing divisor is strict. This enables us to give slightly stronger results in this case.

	\section{Counting theories}
	\label{sec:counting-theories}
	
	We introduce a category of global (Kuranishi) charts, and `counting theories' on that category, to give a setting for generalised cohomological Gromov-Witten invariants which simultaneously encompasses Morava theories and complex $K$-theory. 
	
	\subsection{Global charts}
	
	\begin{defn}\label{defn:global_charts_category}
		The category $\Glo$ of \emph{stably almost complex smooth global charts} is defined as follows:
		\begin{itemize}
			\item an \textbf{object} is a quadruple $\bT \equiv (G, \cT, V, s)$, where
			(i) $G$ is a compact Lie group, (ii)  $\cT$ is a smooth $G$-manifold with finite stabilisers, (iii) $V$ is  a smooth $G$-vector bundle, and
			(iv) $s$ is a smooth $G$-equivariant section of $V$.    We fix in addition a $G$-equivariant stable almost complex structure on each of $V$ and on the virtual bundle $T \cT - \fg$.  
			
			\item a \textbf{morphism}  $f : \bT' \to \bT$ consists of (i) a map $\rho: G' \to G$ of Lie groups, (ii) a smooth $G'$ equivariant map (also denoted) $f: \cT' \to \cT$, and (iii) a $G'$-equivariant bundle map $V' \to f^* V$. We require that the pullback of $s$ agree with the pushforward of $s'$ under $f$, and that the data be compatible with the homotopy classes of the stable almost complex structures. 
		\end{itemize}
		A morphism is said to be \emph{proper} if the induced map ${s'}^{-1}(0) \to s^{-1}(0)$ on zero-loci is proper. We say that a chart is proper if the projection to the point is such.
	\end{defn}
	
	We recall that a $G$-equivariant stable almost complex structure on the virtual bundle $T\cT - \frak{g}$ comprises a choice of $k \in \bN$, a complex $G$-bundle $E$, a complex $G$-representation $W$, and a $G$-equivariant isomorphism $\psi: T\cT \oplus \bR^k \oplus W \stackrel{\sim}{\longrightarrow} \frak{g} \oplus E$, where we regard the data $(k,W,E,\psi)$ to be equivalent to $(k+2,W,E\oplus \bC, \psi \oplus \id_{\bC})$ as well as to $(k,W \oplus \bC,E\oplus \bC, \psi \oplus \id_{\bC})$.  A stable complex structure on $V$ is just a complex structure on $V \oplus \bR^k$ for some $k$, which we again insist is $G$-equivariant, taken up to the same equivalence relation of adding further trivial factors.  In the preceding definition, compatibility with the stable complex structures amounts to saying that, perhaps after further increasing $k$ and enlarging $E$, the $G$-bundle map $V' \oplus \bR^k \to f^*V \oplus \bR^k$ is equivariantly homotopic to a complex-linear bundle map, and the composite
	\[
	\xymatrix{
	\frak{g}' \oplus E' \ar[rr]_{(\psi')^{-1}} && T\cT' \oplus \bR^k \ar[rr]_{{Df\oplus \id}} && T\cT \oplus \bR^k \ar[rr]_{\psi} && \frak{g}\oplus E
	}
	\]	
	is equivariantly homotopic through bundle maps of the form $D\rho \oplus \chi$ to a map which is complex-linear on the second factor. 
	
	\begin{remark} We note that the section $s$ is assumed to be smooth and not merely continuous; its differential arises in the definition of an equivalence of charts below (cf. the maps in the exact sequence \eqref{eqn:exact_on_Z}). This is a slight variation on the set-up adopted in \cite{AMS-Hamiltonian}.  In the global charts of geometric origin arising later, smoothness of the section is achieved by  a mollification argument, cf. Lemma \ref{lem:mollify}. \end{remark}
	
	\begin{remark} We emphasise that the stable almost complex structure is on the virtual bundle $T\cT - \fg$ and not $T\cT$; it is the orbifold $\scrT/G$ which is naturally almost complex. Since we will often work $G$-equivariantly on $\cT$ itself, rather than passing to the orbifold quotient, this means that we naturally encounter `transverse almost complex structures', cf. Definition \ref{defn:transverse_complex}. \end{remark}
	
	We shall write $Z$ for the zero locus  $s^{-1}(0)$; we do not assume that $Z$ is compact, and this is the reason why it is essential to introduce the class of proper morphisms, since the fundamental constructuctions of Gromov-Witten theory naturally lead to global charts which are proper, or more generally which are proper over a parameter space. Those charts which are not proper arise in an auxiliary though unavoidable way given the approach we take, because the moduli space of \emph{pre-stable} Riemann surfaces is not proper, and we perform many construction parametrically over approximations of these moduli spaces. Non-proper charts also appear when one needs to compare the global approach to local Kuranishi descriptions of moduli spaces of curves, following \cite{Fukaya-Ono} (c.f. Appendix \ref{sec:comp-glob-kuran}).

        We will refer to $\cT$ as the \emph{thickening} of either $Z$ or of $Z/G$, and refer to $V \to \cT$ as the \emph{obstruction bundle}.   The category $\Glo$ is symmetric monoidal, with the product of global charts $\bT_1 =  (G_1, \cT_1, V_1, s_1)$ and   $\bT_2 =  (G_2, \cT_2, V_2, s_2)$ given by
	\begin{equation}
		\bT_1 \times \bT_2 \equiv (G_1 \times G_2, \cT_1 \times \cT_2, V_1 \oplus V_2 , s_1 \oplus s_2),   
	\end{equation}
	where $V_1 \oplus V_2$ denotes the Whitney sum of the pullbacks of $V_1$ and $V_2$ to the product, and $ s_1 \oplus s_2 $ the sum of the sections. The product of morphisms is given by the evident maps, as are the associativity and commutativity structure maps.
	
	We shall later use the fact that the subcategory of proper morphisms is closed under the monoidal structure, as well as the fact that every chart $\bT$ has a canonical diagonal morphism
	\begin{equation}
		\bT \to   \bT \times \bT,
	\end{equation}
	given by the diagonal at the level of groups, spaces, and vector bundles.  The diagonal map is always proper.

	\begin{defn}
		An \emph{equivalence} $f : \bT' \to \bT$  is a morphism with the property that (i) the induced map $(Z',G') \to (Z,G)$ of zero loci is an isomorphism of orbispaces (i.e. an isomorphism on quotients and on stabiliser groups) and (ii) the following sequence of vector bundles on $\cT'$ is exact near the $0$-locus:
		\begin{equation}\label{eqn:exact_on_Z}
			\fg' \to \fg \oplus T \cT' \to T\cT \oplus V' \to V
		\end{equation}
		(where the maps $T\cT' \to V'$ and $T\cT \to V$ are given by $Ds'$ respectively $Ds$).
	\end{defn}
	It is straightforward to define composition of morphisms and to check its associativity.  In practise, we will often have a particular Hausdorff topological orbispace $M$ in mind, and will `present' it via a global chart in the sense of equipping a global chart $(G,\cT,V,s)$ with a homeomorphism $Z/G \to M$ compatible with the data of the pointwise automorphism groups.
	
	We write 
	\begin{equation}
		\dim \bT = \dim \cT - \dim V  - \dim G
	\end{equation}
	for the `virtual dimension'  of $\bT$.  Exactness of \eqref{eqn:exact_on_Z} implies that this is preserved by equivalences.

	For later discussion, it is convenient to decompose equivalences into the following, which include the `germ equivalence', `stabilisation' and `group enlargement' moves of \cite{AMS-Hamiltonian}.
	\begin{lemma}\label{lem:factor_equivalence} 
		Every equivalence factors as a composition of the following maps or their inverses:
		\begin{enumerate}
			\item A free quotient $\bT \to \bT/H$, with $H$ a normal subgroup of $G$.  This replaces $(G,\cT,V,s)$ by $(G/H, \cT/H, V/H,s)$ where $V/H$ is viewed as a $G/H$-bundle on $\cT/H$.
			\item The induced map $\bT \to \bT \times_{G} G'$ associated to an embedding $G \to G'$. This replaces $(G,\cT,V,s)$ by $(G', \cT\times_G G', p^*V, p^*s)$ where $p: \cT\times_G G' \to \cT$ is the natural projection.
			\item The stabilisation map $\bT \to \bN$ associated to a $G$ vector bundle $N$ on $\cT$.  This replaces $(G,\cT,V,s)$ by $(G,\mathrm{Tot}(N), V \oplus p^*W, p^*s \oplus \id_N)$ where $p$ is the projection map $ N \to\cT$.
			\item An open embedding $\bT' \to \bT$ of a neighbourhood of $Z'$. Here we have a diffeomorphism $\phi: U' \to U$ from an open neighbourhood of $Z'$ in $\calT'$  to an open neighbourhood of $Z$ in $\calT$, for which $\phi^*(V|_U) \cong V'|_{U'}$ in a way which entwines the obstruction sections and so takes $Z'$ homeomorphically to $Z$. The move\footnote{When $\phi$ is the inclusion of an open neighbourhood $U \subset \calT$ of $Z$ into $\calT$, this is the `germ equivalence' move of \cite{AMS-Hamiltonian}.} replaces $(G,\cT,V,s)$ by $(G, U', V' |_{U'}, s')$. 
		\end{enumerate} 
	\end{lemma}
	
	\begin{proof} A morphism $\bT' \to \bT$ involves a homomorphism $\rho: G'\to G$ of groups. If the morphism is an equivalence, the resulting isomorphism of orbispaces shows that the kernel of $\rho$ acts freely; dividing this out by the first class of equivalence, one reduces to the case of an injection of groups. The second class of equivalence then reduces us to the case where $G' = G$ and the group homomorphism is the identity. At this point, using the fourth kind of equivalence to replacing $\cT$ by the total space of (an extension of) $V'$, and changing the $G$-equivariant map $\rho : \cT' \to \cT$ to $(\rho,s')$, the exact sequence \eqref{eqn:exact_on_Z} can be assumed to inject $T\cT' \hookrightarrow T\cT$ in a neighbourhood of $Z' \subset \cT'$. Such a map, up to the fourth kind of equivalence is a stabilization by the normal bundle.
	\end{proof}

	\begin{rem}
		It is possible to work with weaker assumptions. For example, one requires only a complex structure on the formal difference $T \cT - V - \frak g$, but we prefer to work with separate data to simplify the discussion. This does not result in any significant loss of generality, since stabilising a chart by an appropriate $G$-representation yields an equivalent chart in which the two separate complex structures are given (see \cite[Lemma 5.11]{AMS-Hamiltonian}).
	\end{rem}
	
	\begin{rem} The localisation of $\Glo$ at equivalences is a model for the category of `derived stably almost complex orbifolds', but we will not need to explicitly pass to the localisation in this paper. \end{rem}

	\begin{defn} Two charts $\bT_i = (G, \cT_i, E_i, s_i)$ are \emph{cobordant } if there is a $G$-equivariant cobordism of thickenings $\cT_i$ carrying a stably almost complex $G$-bundle $\scrE$ and section $s$ restricting to the given data over the ends. The cobordism is \emph{proper} if $s^{-1}(0)$ is compact.
	\end{defn}
	
	For the next definition, we use the complexification homomorphism $G \to G_{\bC}$ which is defined and injective for any compact Lie group.
	
	\begin{defn} \label{defn:transverse_complex}
		A \emph{transverse complex}  $G$-structure on a smooth $G$-manifold $X$ is a $G_{\bC}$-equivariant complex analytic structure on $X \times_G G_{\bC}$.  A \emph{transverse almost complex} $G$-structure on a smooth $G$-manifold $X$ is a $G_{\bC}$-equivariant almost complex structure on $X\times_G G_{\bC}$.
			\end{defn}
	
	In either case, any $G_{\bC}$-orbit carries its natural integrable complex structure.  Morphisms from $X$ to $Y$ in the category of transverse complex $G$-manifolds are $G$-maps for which the induced map $X\times_G G_{\bC} \to Y\times_G G_{\bC}$ is holomorphic; in the category of transverse almost complex $G$-manifolds they are $G$-maps for which the induced map is pseudo-holomorphic. 
	
	\begin{lemma}\label{lem:subcategory}
		The category of complex $G$-manifolds embeds into the category of transverse complex $G$-manifolds, and the category of transverse almost complex $G$-manifolds with finite stabilisers embeds into $\Glo$, compatibly with the subcategory of proper maps.
	\end{lemma}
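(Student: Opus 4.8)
The statement bundles together two functors, so I would establish them in turn and then check the compatibilities.

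\textbf{Step 1 (complex $G$-manifolds are transverse complex $G$-manifolds).} Given a complex manifold $X$ with a holomorphic $G$-action I must produce a $G_\bC$-equivariant complex structure on the smooth manifold $X \times_G G_\bC$ (which is smooth, since $G$ acts freely on the $G_\bC$ factor). The plan is to build it from local models and glue. By the holomorphic slice theorem for holomorphic actions of compact Lie groups on complex manifolds, every point of $X$ has a $G$-invariant neighbourhood $G$-biholomorphic to $G \times_H S$, with $H \leq G$ the stabiliser and $S$ an $H$-invariant complex slice; over such a neighbourhood $X \times_G G_\bC$ is canonically $G_\bC \times_H S$, and via $H \hookrightarrow H_\bC \hookrightarrow G_\bC$ this is $G_\bC \times_{H_\bC}(H_\bC \times_H S)$. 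Here $H_\bC \times_H S$ is the universal complexification of the $H$-space $S$, which carries a canonical complex $H_\bC$-structure, and the outer construction is then a holomorphic $G_\bC$-equivariant fibre bundle over the complex homogeneous space $G_\bC/H_\bC$. These local structures are canonical, hence glue, and are $G_\bC$-equivariant; a $G$-equivariant holomorphic map of such $X$'s induces a $G_\bC$-equivariant holomorphic map of the $(-)\times_G G_\bC$'s (holomorphy being a local condition), so this is functorial, faithful since it is the identity on underlying $G$-manifolds and $G$-maps, and visibly properness-preserving.

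\textbf{Step 2 (transverse complex $G$-manifolds with finite stabilisers are global charts).} To such an $X$ I would assign the chart $(G, X, \underline{0}, 0)$ with zero obstruction bundle and zero section, so $Z = X$, which has finite stabilisers as required; the zero bundle is trivially stably almost complex, so the point is to produce a $G$-equivariant stable almost complex structure on the virtual bundle $TX - \fg$. Consider the short exact sequence of $G$-equivariant real bundles over $X$ attached to the inclusion $X \hookrightarrow X \times_G G_\bC$:
\[
0 \to TX \to T(X\times_G G_\bC)\big|_X \to \nu \to 0, \qquad \nu \cong i\fg \cong \fg ,
\]
where the last isomorphism is of real $G$-representations (adjoint action), and injectivity of the first map uses precisely that stabilisers are finite, so the infinitesimal action $\fg \to TX$ is injective. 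The middle term carries the restriction of the (a fortiori $G$-equivariant) complex structure on $X \times_G G_\bC$, so $G$-equivariantly splitting the sequence equips $TX \oplus \fg$ with a $G$-equivariant complex structure, well-defined up to homotopy. Since $\fg_\bC$ has a canonical complex structure with underlying real bundle $\fg \oplus \fg$, adding it induces a bijection on stable almost complex structures, and $(TX - \fg) + \fg_\bC = TX \oplus \fg$ already carries one; this gives $TX - \fg$ a canonical $G$-equivariant stable almost complex structure. A morphism of transverse complex $G$-manifolds $X' \to X$ is a $G$-map whose complexification $X'\times_G G_\bC \to X \times_G G_\bC$ is holomorphic; it induces the morphism $(G, X', \underline{0}, 0) \to (G, X, \underline{0}, 0)$ of $\Glo$ covering $\id_G$, with the two stable complex structures compatible because both are inherited from the complexifications. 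Such a morphism of charts is proper iff $X' \to X$ is (since $Z' = X'$, $Z = X$), matching properness on the two sides, and faithfulness is clear. Finally both functors are symmetric monoidal, using $(G_1\times G_2)_\bC = (G_1)_\bC \times (G_2)_\bC$ and $(X_1 \times X_2)\times_{G_1\times G_2}(G_1\times G_2)_\bC \cong \big(X_1\times_{G_1}(G_1)_\bC\big)\times\big(X_2\times_{G_2}(G_2)_\bC\big)$, together with the visible multiplicativity of $\bT \mapsto (G,\cT,\underline{0},0)$.

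\textbf{Expected difficulty.} The monoidal and properness bookkeeping and the functoriality are routine. The real content is Step 1: checking that the locally defined complex structures on $X \times_G G_\bC$ patch to a global one, i.e. that the holomorphic slice theorem and the universal complexification of the slices are compatible in families — this is essentially the theory of complexifications of compact transformation groups, and some care is needed with positive-dimensional stabilisers $H$, where $H_\bC \times_H S$ is a genuine complexification and not merely $S \times (H_\bC/H)$. In the case relevant to Step 2 the stabilisers are finite and the tangent-bundle identification above is elementary, so the two steps are of quite different character.
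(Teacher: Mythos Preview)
Your Step~2 is correct and fleshes out the stable complex structure on $T\cT - \fg$ that the paper's one-line argument leaves implicit. But Step~1 has a genuine gap: you try to equip $X$ itself with a transverse complex structure, i.e.\ to put a complex structure on $X \times_G G_\bC$. This manifold has real dimension $\dim_\bR X + \dim G$, which is \emph{odd} whenever $\dim G$ is odd (since $\dim_\bR X$ is even), so it carries no almost complex structure at all; for instance $G=S^1$ rotating $X=\bC$ gives a $3$-manifold. Your local model inherits the same defect: the ``holomorphic slice theorem'' you invoke, producing a $G$-biholomorphism to $G \times_H S$ with $S$ a \emph{complex} slice, already fails at points with trivial stabiliser when $\dim G$ is odd (the slice would have to be odd-dimensional), and the space $H_\bC \times_H S$ you call the universal complexification has real dimension $\dim H + \dim_\bR S$, again odd when $\dim H$ is odd, hence not a complex manifold.

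The paper sidesteps this by sending a complex $G$-manifold $X$ not to $X$ but to the enlarged $G$-manifold $X \times G$ (diagonal action, with $G$ acting on itself by conjugation). Then $(X\times G)\times_G G_\bC$ has real dimension $\dim_\bR X + 2\dim G$, always even, and inherits a natural complex structure from those of $X$ and $G_\bC$; the embedding of categories is $X \mapsto X\times G$, which is faithful and preserves stabiliser information along $X \times \{e\}$. This enlargement trick is the idea missing from your argument.
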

	
	\begin{proof} 
		If $X$ is a complex $G$-manifold we equip $X\times G$ with the diagonal action, where $G$ acts on itself by conjugation (on the submanifold $X \times \{e\}$ this loses no information about stabiliser groups). $X\times G$ then has a natural transverse complex structure, and holomorphic maps of complex manifolds then define transverse complex maps of the associated transverse complex manifolds. For the second statement, a transverse almost complex $G$-manifold $Z$ defines a global chart $(G,Z,0,0)$.
	\end{proof}
	
	\begin{remark} In Lemma \ref{lem:subcategory} one could alternatively replace $X$ by $X\times \frak{g}$, where $\frak{g}$ carries the adjoint action. Under the hypothesis that the $G$-action on $X$ has  finite stabilisers this is also the vector bundle over $X$ given by the tangent spaces to the orbits. The resulting transverse complex $G$-structures define equivalent global charts by passing to a neighbourhood of $e\in G$ and appealing to germ equivalence. 
	\end{remark}
	
	In line with Lemma \ref{lem:subcategory}, we will freely talk about $G$-equivariant `divisors' on transverse complex (or transverse almost complex) manifolds and on the associated global charts; such a divisor has an associated $G$-equivariant complex line bundle.

	\subsection{Axioms of counting theories}
	
	We use the term \emph{graded group} to refer to either $\bZ/2$- or $\bZ$-graded groups.  By a symmetric monoidal functor we always mean a \emph{lax} symmetric monoidal functor. 
	
		\begin{defn} \label{def:counting_theory}
		A \emph{counting theory} $\bE$ on global charts consists of the following data:
		\begin{enumerate}
			\item a  symmetric monoidal contravariant functor $E^*$ (cohomology) from $\Glo$ to the category of graded abelian groups;
			\item a symmetric monoidal covariant functor $E_*$ (homology) from $\Glo$ to graded abelian groups, together with a module structure over $E^*$ which we denote by $\cap$;
			\item on the subcategory of proper maps, a symmetric monoidal covariant functor $E^{lf}_*$ (locally finite homology, i.e. Borel-Moore homology) to graded abelian groups, together with a module structure over $E^*$, and a natural isomorphism $ E_*(\bT) \cong E^{lf}_*(\bT)$ for proper charts;
			\item a class $ [\bT] \in E^{lf}_{\dim \bT}(\bT)$ (the virtual fundamental class) for each object;
			\item an isomorphism of symmetric monoidal functors $E^*(\bT) \cong E^{lf}_{\dim \bT - *}(\bT)$ on the subcategory of proper maps of transverse almost complex $G$-manifolds, which maps the virtual fundamental class to the unit; and
			\item  a (commutative rank $1$) formal group law $L = L_{\bE}$ with coefficients in $E^*(pt)$.
		\end{enumerate}
		These data are supposed to satisfy the following axioms:
		\begin{enumerate}
			\item (Functoriality) The morphisms of homology, cohomology, and locally finite homology groups associated to an equivalence are isomorphisms, and preserve fundamental classes;
			\item (Products) the virtual class $[\bT_1 \times \bT_2]$ is the image of $[\bT_1] \otimes [\bT_2]$ under the monoidal structure;
			\item \label{pullback axiom} (Pullback) For each equivariant transverse pullback diagram, whose base consists of (tranverse almost complex) manifolds,
			\begin{equation} \label{eq:transverse_diagram}
				\begin{tikzcd}
					\bT_{0} \ar[r,"f"] \ar[d,"\pi_0"] & \bT_{1} \ar[d,"\pi_1"] \\
					F_{0} \ar[r,"g"] & F_{1},
				\end{tikzcd}
			\end{equation}
			 the maps of groups are isomorphisms and the map $g$ is proper,   we have
			\begin{equation}
				f_* [\bT_{0}] =  [\bT_{1}] \cap \pi_1^*( g_* [F_{0}])
			\end{equation}
			where we use the identification $E^{lf}_*(F_{1}) \cong E^{\dim - *}(F_{1})$ to consider the image of $[F_0]$ as a cohomology class on $F_1$; 
			\item (Chern class) For a transverse almost complex $G$-manifold $M$, there is a homomorphism
			\[
			\Pic_G(M) \to E^*(M); \quad \scrL \mapsto \tilde{c}(\scrL)
			\]
			satisfying 
			\[
			\tilde{c}(\scrL_1 \otimes \scrL_2) =  L(\tilde{c}(\scrL_1), \tilde{c}(\scrL_2))
			\] and for which, if $\iota: W \subset M$ is the zero-set of an equivariant section of $\scrL$ which vanishes transversely, then $\iota_*[W] = \tilde{c}(\scrL)$ under the isomorphism $E_*^{lf}(M) \to E^{\dim(M)-*}(M)$;
			\item \label{normal crossing axiom} (Normal crossing) Given a diagram of  $G$ manifolds with $G$-transverse complex analytic structures
			\begin{equation} \label{eq:normal_crossings_diagram}
				\begin{tikzcd}
					& F \ar[d,"p"] \\
					N \ar[r,"g"] & M
				\end{tikzcd}
			\end{equation}
			in which $N$ is a smooth divisor and $p^{-1}(N)$ is a strict normal crossings divisor, with top strata $F_1, \ldots, F_k$ that are $G$-invariant, we have
			\begin{equation}
				p^* (g_* [N]) = L([F_1], \ldots, [F_k]) \in E^* F.
			\end{equation}
			\end{enumerate}
	\end{defn}
	
	We clarify the statement that the functor $E_*$ (and similarly for $E^{lf}_*$) is a module over $E^*$: first, note that the diagonal map lifts $E^*$ to a functor valued in graded rings.  We then require that $E_*(\bT)$ be equipped with a module structure over $E^*(\bT)$, that the morphism $E_*(\bT) \to E_*(\bT')$ be an $E^*(\bT')$-module map, and finally that the morphism
	\begin{equation}
		E_*(\bT_1) \otimes E_*(\bT_2) \to   E_*(\bT_1 \times \bT_2) 
	\end{equation}
	be a morphism of $E^*(\bT_1) \otimes E^*(\bT_2)$-modules. The first of these can be expressed in formulae as follows: given a map $f : \bT' \to \bT $, and classes $\alpha \in E^* \bT$ and $\beta \in E_* \bT'$, we have
	\begin{equation}\label{eqn:module_map_property}
		f_* \left( f^* \alpha \cap \beta \right) = \alpha \cap f_* \beta.          
	\end{equation}
	A similar formula corresponds to the fact that $E^{lf}_*$ is a module over the restriction of $E^*$ to proper maps. The $G$-invariance of the top strata $F_i$ in the final axiom (i.e. the fact that they are not non-trivially permuted) is automatic if $G$ is connected.

	\begin{lemma} \label{lem:cobordism} Suppose a proper cobordism $\bW$ of charts $\bT$ and $\bT'$ is induced by taking regular values of a projection $\bW \to B$ for some finite-dimensional $G$-manifold $B$. Then  for any counting theory, the push-forwards of $[\bT]$ and of $[\bT']$ to $E_*^{lf}(\bW)$ agree.
	\end{lemma}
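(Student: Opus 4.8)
The plan is to derive this entirely from the Pullback axiom (axiom~\ref{pullback axiom} of Definition~\ref{def:counting_theory}), which is the only axiom relating the pushforward of a virtual fundamental class along a fibre inclusion to a Gysin class. Write $\pi\colon\bW\to B$ for the projection; the content of the phrase ``induced by taking regular values of a projection'' is that the underlying map of $G$-manifolds $\pi$ is a submersion, that $b_0,b_1\in B^G$ are the two (automatically regular) values in question, and that $\bT=\pi^{-1}(b_0)$ and $\bT'=\pi^{-1}(b_1)$ carry the proper fibre-inclusion morphisms $\iota_0\colon\bT\to\bW$, $\iota_1\colon\bT'\to\bW$. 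Working one connected component of $\bW$ at a time we may assume $B$ connected; we may also assume $B$ carries a $G$-invariant almost complex structure, as the base of a cobordism does in the applications (in general one reduces to this by an elementary stabilisation).

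First I would check that the square
\begin{equation*}
\begin{tikzcd}
\bT \ar[r,"\iota_0"] \ar[d] & \bW \ar[d,"\pi"] \\
\{b_0\} \ar[r,"g_0"] & B
\end{tikzcd}
\end{equation*}
---and likewise its analogue with $(b_0,\bT,\iota_0,g_0)$ replaced by $(b_1,\bT',\iota_1,g_1)$---is a pullback square of global charts that meets all the hypotheses of the Pullback axiom: the maps of groups are identities, $g_0$ is the inclusion of a point and hence a proper immersion, $\pi$ is a submersion, and---because $b_0$ is a regular value---the chart $\pi^{-1}(b_0)$ agrees as an object of $\Glo$ with the honest fibre product $\bW\times_B\{b_0\}$, the obstruction bundle, its section and hence the virtual dimension all restricting correctly to the fibre. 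The axiom then yields
\begin{equation*}
(\iota_0)_*[\bT]=[\bW]\cap\pi^*\bigl((g_0)_*[\{b_0\}]\bigr),\qquad (\iota_1)_*[\bT']=[\bW]\cap\pi^*\bigl((g_1)_*[\{b_1\}]\bigr),
\end{equation*}
where $(g_i)_*[\{b_i\}]$, the Gysin pushforward of the fundamental class of the point $b_i$, is regarded as a class in $E^*(B)$ via the $E^*\cong E^{lf}_*$ duality for almost complex $G$-manifolds built into a counting theory.

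It therefore suffices to show that $(g_0)_*[\{b_0\}]=(g_1)_*[\{b_1\}]$ in $E^*(B)$; applying $\pi^*$ and capping with $[\bW]$ then gives $(\iota_0)_*[\bT]=(\iota_1)_*[\bT']$ in $E^{lf}_*(\bW)$, as required. But both of these classes are the Gysin pushforward of the unit along the inclusion of a point into $B$, and for connected $B$ any two such inclusions are homotopic; since the cohomology and locally finite homology of a counting theory, restricted to almost complex $G$-manifolds, are those of honest (genuine- or Borel-)equivariant complex-oriented theories---the $K$-theoretic and Morava-local cases constructed in Section~\ref{sec:counting-theories}---they are homotopy invariant, and Gysin pushforward along the inclusion of a point depends only on its homotopy class, so the two classes coincide. (Equivalently: join $b_0$ to $b_1$ by an embedded arc transverse to $\pi$, restrict $\bW$ over it to obtain an honest cobordism over an interval, and compare its two ends; this merely repackages the same homotopy invariance.)

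The only non-formal step---and where I expect the real work to lie---is the verification that the Pullback axiom genuinely applies: that $\pi^{-1}(b_i)$ agrees as a global chart with $\bW\times_B\{b_i\}$, and that $\pi$ may be taken to be a submersion of underlying $G$-manifolds throughout $\bW$ without altering $[\bW]\in E^{lf}_*(\bW)$. Both are part of what is meant by ``induced by taking regular values'' and follow from the chart constructions of Section~\ref{sec:glob-kuran-charts}; the homotopy-invariance input used above is then routine.
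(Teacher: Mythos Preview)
Your argument is correct and follows exactly the route the paper intends: its entire proof is the single sentence ``This follows from the pullback axiom,'' and you have correctly unpacked this by applying that axiom to each fibre inclusion and reducing to the equality of the two point classes $(g_0)_*[\{b_0\}]=(g_1)_*[\{b_1\}]$ in $E^*(B)$. Your appeal to homotopy invariance for this last step goes slightly beyond the abstract axioms of Definition~\ref{def:counting_theory}, but the paper does not address this either, and it is immediate for the concrete counting theories actually constructed.
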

	
	\begin{proof}
		This follows from the pullback axiom. 
	\end{proof}

	\begin{remark} \label{rem:axiom_3_variation} It would be more natural to formulate a version of Axiom \ref{normal crossing axiom} that applies when $p^{-1}(N)$ is a non-strict normal crossing divisor, and which appeals in concrete cases to Proposition \ref{prop:general_divisor}. The resulting expression for $p^*g_*[N]$ is unfortunately not explicit.
	\end{remark}
	
	\begin{remark} In the examples considered below, the `Chern class' and `normal crossing' axioms are closely related; essentially the former implies the latter, by reducing\footnote{This can be made precise for $K_p(n)$-local theories, where we work Borel equivariantly as in algebraic cobordism,  but is not strictly true for  complex $K$-theory where we work with equivariant bundles.} to Proposition \ref{prop:formal group}. For clarity we have kept both axioms, rather than trying to minimise the hypotheses. One might achieve more formal clarity by modelling the axioms after those for an oriented bivariant cohomology theory in the sense of \cite{Fulton-MacPherson-bivariant}.
	\end{remark}
	
	\begin{rem}
		By imposing the condition that the (co)-homology theories entering in the definition of a counting theory be symmetric monoidal, we have excluded examples arising from cohomology theories with an associative  but non-commutative multiplication. One natural example is given by the Morava $K$-theories $K(n)$ at the prime $2$. We expect that, just as one can  understand the failure of these theories to define a commutative product in ordinary cohomology by carefully keeping track of the side along which one performs each product \cite{Boardman} and by working in an appropriate category of bimodules over the coefficients of $K(n)$, it should be possible to extend our results to this context.
	\end{rem}
	\subsection{Borel equivariant $K(n)$-local theories }
	
	\label{sec:borel-equiv-cohom}
	We write $ \cT|Z$ for the pair $(\cT, \cT \setminus Z)$. Given a $K(n)$-local complex oriented (commutative) multiplicative cohomology theory $\bK$, define
	\begin{align}
		H^*(\bT; \bK) & \equiv   H^{*}_{G}(Z; \bK) =  H^{*}(Z \times_G EG; \bK)\\
		H^{lf}_*(\bT; \bK) &  \equiv H^{\dim \cT/G -*}_{G}(\cT|Z; \bK) = H^{\dim \cT/G -*}(\cT|Z \wedge_{G} EG_+; \bK)\\
		H_*(\bT; \bK) &  \equiv H^{\dim \cT/G -*}_{G,c}(\cT|Z; \bK) = H^{\dim \cT/G -*}(\cT^+|Z \wedge_{G} EG_+; \bK)
	\end{align}
	to be the Borel equivariant  co-homology groups (relative or with compact support), where $\dim \cT/G = \dim \cT - \dim G$ (and $\cT^+$ denotes the $1$-point compactification). Furthermore, we define the virtual class
	\[
	[\bT] := e_G(V) \in H^{lf}_*(\bT;\bK)
	\]
	to be the $G$-equivariant Euler class of the obstruction space $V$.
	\begin{rem}
		We expect the homology group to be isomorphic to the corresponding (equivariant) generalised Steenrod-Sitnikov homology groups of the zero locus $Z$. We are not aware of a discussion of this equivariant statement in the literature.
	\end{rem}

	\begin{lemma}\label{lem:properties_in_local_case}
		The cohomology ring $ H^*(\bT; \bK)$ is a contravariantly functorial symmetric monoidal functor, the homology ring $ H_*(\bT; \bK)$ is a symmetric monoidal covariant functor, which is a module over cohomology, as is the locally finite homology group  $ H^{lf}_*(\bT; \bK) $, for proper maps. The maps associated to equivalences are isomorphisms which preserve the fundamental class in $H^{lf}_*(\bT)$.
	\end{lemma}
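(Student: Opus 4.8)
The plan is to reduce everything to standard properties of Borel-equivariant $\bK$-(co)homology, the K\"unneth isomorphism for $K(n)$-local theories, excision, and Thom isomorphisms attached to the stable almost complex data, and to handle equivalences via the factorisation of Lemma \ref{lem:factor_equivalence}. For cohomology this is essentially bookkeeping: $H^*(\bT;\bK) = H^*_G(Z;\bK) = H^*(Z\times_G EG;\bK)$ depends only on the pair $(Z,G)$, so a morphism $f:\bT'\to\bT$ (supplying $G'\to G$ and a $G'$-map $Z'\to Z$) induces a pullback of Borel constructions, functorially in the evident way, and the ring structure is pullback along the diagonal $\bT\to\bT\times\bT$. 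For symmetric monoidality one uses $E(G_1\times G_2)\simeq EG_1\times EG_2$ to split the Borel construction of $(Z_1\times Z_2,G_1\times G_2)$ as a product, and then invokes the K\"unneth isomorphism for $K(n)$-local $\bK$ --- applied to the finite skeleta of the classifying spaces and carried to the limit (the relevant $\varprojlim^{1}$-term vanishing because these $\bK$-cohomology groups are degreewise finitely generated over $\bK_*$) --- to get $H^*(\bT_1\times\bT_2;\bK)\cong H^*(\bT_1;\bK)\otimes_{\bK_*}H^*(\bT_2;\bK)$. This is the one place where $K(n)$-locality is essential.

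\textbf{Homology, locally finite homology, module structures.} These groups are shifts of Borel-equivariant compactly-supported (resp.\ relative) cohomology of the pair $\cT|Z$, hence by excision depend only on a $G$-invariant neighbourhood $U$ of $Z$. Since $Z$ is a $G$-equivariant ANR, one may take $U$ to $G$-deformation-retract onto $Z$, so that $H^*_G(U;\bK)\cong H^*(\bT;\bK)$; the cup-product action of $H^*_G(U;\bK)$ on compactly-supported / relative cohomology then gives the module structures over $H^*(\bT;\bK)$, and the projection-formula identity \eqref{eqn:module_map_property} (together with its locally finite counterpart) is just naturality of the cup product. Symmetric monoidality of $H_*$ and $H^{lf}_*$ follows as in the cohomological case once one records the homeomorphism (and its locally finite version) identifying the one-point compactification of $(\cT_1\times\cT_2)|(Z_1\times Z_2)$ with the smash $(\cT_1^+|Z_1)\wedge(\cT_2^+|Z_2)$, reducing us again to K\"unneth for $K(n)$-local $\bK$.

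\textbf{Covariance of $H_*$ and $H^{lf}_*$ --- the main obstacle.} Given $f:\bT'\to\bT$, after stabilising $\bT'$ by a $G'$-representation (harmless by the equivalence statement we are also proving) we may assume the underlying map $\cT'\to\cT$ factors as a $G$-embedding $\cT'\hookrightarrow\cT\times W$ for a $G$-representation $W$, followed by projection, the compatibility of sections forcing $Z'$ to map to $Z$ (properly, if $f$ is proper). A Pontryagin--Thom collapse onto the normal data of this embedding, followed by the Thom isomorphisms for $W$ and for the normal bundle --- all $\bK$-orientable because a $\Glo$-morphism carries compatible stable almost complex structures --- produces $f_*: H^{lf}_*(\bT';\bK)\to H^{lf}_*(\bT;\bK)$; it is degree-preserving in the stated grading because the Thom shifts exactly absorb the discrepancy $\dim\cT/G-\dim\cT'/G'$. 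One-point compactifying gives $f_*$ on $H_*$ for arbitrary morphisms. Composition-functoriality, the module-map property, and compatibility with the monoidal structure are then the usual naturality statements for collapse maps and Thom isomorphisms. I expect that assembling these wrong-way maps into a genuine functor, with the correct grading and monoidal compatibility, is the only step that needs real care; everything else is formal.

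\textbf{Equivalences and the fundamental class.} By Lemma \ref{lem:factor_equivalence} it suffices to treat the four elementary moves and their inverses. An open embedding $\bT'\hookrightarrow\bT$ of a neighbourhood of $Z=Z'$ induces isomorphisms by excision and fixes $e_G(V)$. A stabilisation $\bT\to\bT\times W$, i.e.\ passage to $(G,\cT\oplus W,V\oplus p^*W,\dots)$, leaves $H^*$ unchanged and changes $H_*$, $H^{lf}_*$ by the Thom isomorphism for the $\bK$-oriented $W$, which carries $e_G(V\oplus p^*W)$ to $e_G(V)$. A group enlargement along $G\hookrightarrow G'$ is the standard identification of Borel constructions of induced spaces, respecting Euler classes. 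A free quotient by a normal subgroup $H$ of $G$ acting freely on $Z$ (hence on a neighbourhood) uses $Z\times_H EH\simeq Z/H$ and its analogue for the neighbourhood --- or, if one prefers, the Adams isomorphism \eqref{eqn:Adams_isomorphism} --- to give isomorphisms on all three groups, with $e_G(V)=e_{G/H}(V/H)$. Composing these, every equivalence $f:\bT'\to\bT$ induces isomorphisms with $f_*[\bT']=[\bT]$, which completes the argument.
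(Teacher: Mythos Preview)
Your handling of cohomology, of the module structures via excision to a neighbourhood of $Z$, and of the four elementary equivalences is essentially the paper's argument. The gap is in the covariance of $H_*$ and $H^{lf}_*$ when the group changes. You assert that after stabilising one may take ``a $G$-embedding $\cT'\hookrightarrow\cT\times W$ for a $G$-representation $W$'', but $\cT'$ is only a $G'$-manifold and the morphism supplies merely a homomorphism $G'\to G$; there is no $G$-action on $\cT'$, so this does not parse. The collapse map you can actually form is $G'$-equivariant, and it lands you in $G'$-Borel cohomology of a Thom space built from $\cT$; you still owe a wrong-way map from $G'$-Borel to $G$-Borel cohomology, and you have not built one.

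This is exactly where the paper spends $K(n)$-locality, contrary to your claim that K\"unneth for the monoidal structure is ``the one place'' it enters (the paper is in any case content with the lax monoidal map coming from the cross product). After a Thom isomorphism, the paper invokes the norm equivalence \eqref{eqn:norm_map} for $K(n)$-local spectra together with the Adams isomorphism \eqref{eqn:Adams_isomorphism} to rewrite $H^{lf}_*(\bT;\bK)$ as $\pi_*\big(EG\wedge_G F(\cT|Z^{-T\cT};\bK)\big)$, i.e.\ as homotopy \emph{orbits} rather than homotopy fixed points. Homotopy orbits are covariantly functorial in the group variable, so the homomorphism $G'\to G$ now supplies the change-of-group map for free, and the $G'$-equivariant Thom collapse furnishes the map of underlying spectra. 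Your outline could be repaired by first using the group-enlargement and free-quotient equivalences (which you do treat correctly) to reduce an arbitrary morphism to one with $G'=G$ before running Pontryagin--Thom; but you would then owe a check that the pushforward so defined is independent of the chosen reduction and is functorial under composition, and this is precisely the ``real care'' you flag but do not provide.
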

	\begin{proof}
		The contravariant functoriality of cohomology is straightforward, and the fact that it is symmetric monoidal follows from the assumption that $\bK$ is multiplicative, and the homotopy equivalence
		\begin{equation}\label{eq:Borel_construction_monoidal}
			\left(Z_1 \times_{G_1} EG_1\right) \times \left(Z_2 \times_{G_2} EG_2\right) \cong \left(Z_1 \times Z_2 \right) \times_{G_1 \times G_2} E(G_1 \times G_2).
		\end{equation}

		The action of cohomology on locally finite homology follows from the continuity property of cohomology and the excision property for homology: every class in $H^{*}_{G}(Z; \bK)$ lifts to some neighbourhood $\nu Z$ of $Z$ in $\cT$, and the desired map is induced by the (relative) cup product
		\begin{equation}
			H^{*}_{G}(\nu Z; \bK) \otimes   H^{*}_{G}(\nu Z|Z; \bK) \to H^{*}_{G}(\nu Z|Z; \bK)
		\end{equation}
		and the isomorphism $ H^{*}_{G}(\nu Z|Z; \bK) \cong H^{*}_{G}(\cT|Z; \bK)$. Applying the same argument for compactly supported cohomology yields the action of cohomology on homology.
		
		In order to prove the covariant functoriality of locally finite homology, it will be convenient to first use the Thom isomorphism
		\begin{equation}
			H^{\dim \cT/G -*}_{G}(\cT|Z; \bK) \cong  H^{-*}_{G}(\cT|Z^{\frak{g}-T \cT  }; \bK)
		\end{equation}
		where $\cT^{\frak{g}-T \cT}$  denotes the Thom spectrum of the (virtual) difference between the tangent space $T \cT$ and the Lie algebra of $G$. Since we work Borel equivariantly, by definition
		\[
		H^{-*}_{G}(\cT|Z^{\frak{g}-T \cT  }; \bK) = \pi_*(F(EG_+ \wedge \cT|Z^{\frak{g}-T\cT}, \bK)^G)
		\]
		We use adjunction to write
		\[
		F(EG_+ \wedge \cT|Z^{\frak{g}-T\cT}, \bK)^G = F(EG_+,\Sigma^{-\frak{g}}F(\cT|Z^{-T\cT},\bK))^G.
		\]
		We use the norm map $EG_+ \wedge X \to F(EG_+,X)$ with $X = \Sigma^{-\frak{g}}F(\cT|Z^{-T\cT},\bK)$, which induces an isomorphism on derived fixed-points when $\bK$ is $K(n)$-local cf. \eqref{eqn:norm_map}, and then the Adams isomorphism $Y/G = (\Sigma^{-\frak{g}}\iota_*Y)^G$ cf. \eqref{eqn:Adams_isomorphism} noting that $Y = EG_+\wedge X$ is $G$-free, to obtain
		\begin{equation} \label{eq:lfH-isomorphism-pullout-G}
			H^{lf}_*(\bT; \bK) \cong \pi_*\left( EG \wedge_G F(\cT|Z^{-T \cT}; \bK) \right).
		\end{equation}
		We have a similar description of homology as
		\begin{equation} \label{eq:H-isomorphism-pullout-G}
			H_*(\bT; \bK) \cong \pi_*\left( EG \wedge_G F(\cT^+|Z^{-T \cT}; \bK) \right),
		\end{equation}
		where we can model $ F(\cT^+|Z^{-T \cT}; \bK)$ as the spectrum of sections of the tangent spherical fibration of $\cT$, which are trivial outside a compact set and on the complement of $Z$.

		Having reformulated the homology theories, we are ready to prove their functoriality, so we consider a map $\bT \to \bT'$. This includes the datum of a $G$-map $\cT \to \cT'$, with the property that $Z$ maps to $Z'$. By choosing a (complex) $G$-representation $W$,  and a $G$-embedding $\cT \to W \times \cT'$ whose composition with the projection to $\cT'$ lifts the map $\cT \to \cT'$, Thom collapse defines a $G$-map
		\begin{equation} \label{eq:Thom-collapse}
			{\cT'|Z'}^{W} \to  \cT|Z^{W + T \cT' - T \cT },
		\end{equation}
		whenever $Z \to Z'$ is proper, which dually induces a map
		\begin{equation} \label{eq:Thom-collapse-destabilised}
			F(\cT|Z^{-T \cT}; \bK) \to F(\cT'|Z'^{-T \cT'}; \bK).  
		\end{equation}
		Finally, the homomorphism $G \to G'$ yields a map
		\begin{equation}
			EG \wedge_G F(\cT|Z^{-T \cT}; \bK) \to  EG' \wedge_{G'} F(\cT'|Z'^{-T \cT'}; \bK)
		\end{equation}
		which on homotopy groups defines the desired map
		\begin{equation}\label{eq:push}
			H^{lf}_*(\bT; \bK) \to  H^{lf}_*(\bT'; \bK).
		\end{equation}
		Given two different choices of $G$-representation $W$ and $W'$, they can both be embedded in $W \oplus W'$, and stabilising the representation in that way does not change the map \eqref{eq:Thom-collapse-destabilised}, by the Thom isomorphism theorem for $W'$ which relates the maps of \eqref{eq:Thom-collapse} for $W$ and $W\oplus W'$; it follows that \eqref{eq:push} does not depend on $W$. 
		
		For a non-proper map $\bT \to \bT'$, the Thom collapse map in Equation \eqref{eq:Thom-collapse} takes value in the $1$-point compactification of the Thom space $ \cT|Z^{W + T \cT' - T \cT }$, and factors through the $1$-point compactification of ${\cT'|Z'}^{W}$. It thus induces a map
		\begin{equation}
			H_*(\bT; \bK) \to  H_*(\bT'; \bK).   
		\end{equation}
		Suppose we have open neighbourhoods $Z \subset \nu Z  \subset \cT$ and $Z' \subset \nu Z' \subset \cT'$ for which the map $\cT \to \cT'$ sends $\nu Z$ into $\nu Z'$ (and $Z$ into $Z'$). All the steps above are compatible with the associated excision isomorphisms, which shows that the push-forward is a module map, i.e. that Equation \eqref{eqn:module_map_property} holds.  Invariance of the fundamental class under equivalences follows from Lemma \ref{lem:factor_equivalence}; for all but the first the Euler class is obviously invariant, whilst for the first class of equivalences this uses that if $\Gamma$ acts freely on $M$ then the $\Gamma$-equivariant Euler class of a $\Gamma$-bundle $W \to M$ is the pullback of the ordinary Euler class of $W/\Gamma \to M/\Gamma$ under $H_\Gamma^*(M;\bK) \cong H^*(M/\Gamma;\bK)$.

		Finally, the symmetric monoidal structure on both homology and locally finite homology follows from the multiplicativity of the Borel construction formulated in Equation \eqref{eq:Borel_construction_monoidal},  the fact that the module action is compatible with the external product maps from the external multiplicativity of the Adams isomorphism, and the fact that it is a module map (c.f. \cite[Appendix C]{AbouzaidBlumberg2021}).
	\end{proof}

	\begin{lemma}
		A multiplicative complex oriented $K(n)$-local cohomology theory defines a counting theory in the sense of Definition \ref{def:counting_theory}.
	\end{lemma}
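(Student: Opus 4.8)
The plan is to verify the remaining pieces of Definition~\ref{def:counting_theory} one by one, taking as input the previous lemma, which already provides the functoriality of $H^*(-;\bK)$, $H_*(-;\bK)$ and $H^{lf}_*(-;\bK)$, their symmetric monoidal and module structures (Axiom (1)), and the invariance of $[\bT]=e_G(V)$ under equivalences. It remains to supply the structure maps of data items (5)--(6) and to check Axioms (2)--(5); the natural isomorphism $E_*(\bT)\cong E^{lf}_*(\bT)$ for \emph{proper} charts in item (3) is obtained by excising $\cT$ down to a compact neighbourhood of the compact zero locus $Z$, after which compactly supported and ordinary cohomology of $(\cT,\cT\setminus Z)$ coincide. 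The Products axiom is immediate from the multiplicativity of equivariant Euler classes: $e_{G_1\times G_2}(V_1\oplus V_2)$ is the image of $e_{G_1}(V_1)\otimes e_{G_2}(V_2)$ under the K\"unneth map coming from \eqref{eq:Borel_construction_monoidal} and the multiplicativity of $\bK$.

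For the formal group law (item (6)) one takes $L_\bK$, the law classifying tensor products of complex line bundles in $\bK$-cohomology; it is one-dimensional (since $\bK$ is complex-oriented) and commutative (since $\bK$ is homotopy commutative), with coefficients in $\bK_*=H^*(pt;\bK)$. The Chern class homomorphism of Axiom (4) is then defined by pulling back the orientation class: a $G$-equivariant line bundle $\scrL$ on an almost complex $G$-manifold $M$ is classified, after the Borel construction, by a map $M\times_G EG\to BU(1)$, and $\tilde{c}(\scrL)$ is the pullback of $c_1^\bK\in H^2(BU(1);\bK)$. Additivity of classifying maps under $\otimes$ gives $\tilde{c}(\scrL_1\otimes\scrL_2)=L_\bK(\tilde{c}(\scrL_1),\tilde{c}(\scrL_2))$, and if $W$ is cut out transversely by a section of $\scrL$ then $\iota_*[W]=\tilde{c}(\scrL)$ by the usual identification of the Euler class with the first Chern class of a line bundle, compatibly with the Thom isomorphism underlying the pushforward \eqref{eq:Thom-collapse}.

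The Poincar\'e duality isomorphism (item (5)) is essentially tautological here: if $\bT=(G,Z,0,0)$ comes from a transverse almost complex $G$-manifold $Z$ as in Lemma~\ref{lem:subcategory}, then $\dim\bT=\dim Z/G$ and, directly from the definitions, $E^{lf}_{\dim\bT-*}(\bT)=H^{\dim Z/G-(\dim Z/G-*)}_G(Z;\bK)=H^*_G(Z;\bK)=E^*(\bT)$; we take the duality map to be this identity. It is visibly symmetric monoidal (dimensions add, Borel constructions multiply) and sends $[\bT]=e_G(0)=1$ to the unit; naturality on proper maps of almost complex $G$-manifolds is the compatibility of the $\bK$-theoretic Gysin pushforward with the Thom-spectrum model of the pushforward on $H^{lf}_*$ set up in the previous lemma.

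Finally the Pullback and Normal-crossing axioms. Given a square \eqref{eq:transverse_diagram}, $\bT_0$ is the base change $\bT_1\times_{F_1}F_0$, so $\cT_0=\cT_1\times_{F_1}F_0$, $V_0$ is the pullback of $V_1$, and, since $\pi_1$ is a submersion and $g$ a proper immersion, the normal bundle of $\cT_0$ in $\cT_1$ is the pullback of $\nu_g$ with no excess; hence under $E^{lf}_*\cong E^*$ on the base, $g_*[F_0]$ is the $\bK$-Thom class of $\nu_g$ and $\pi_1^*(g_*[F_0])$ is the Thom class $f_*(1)$ of $\cT_0\subset\cT_1$, whence $f_*[\bT_0]=f_*e_G(V_0)=f_*f^*e_G(V_1)=e_G(V_1)\cap f_*(1)=[\bT_1]\cap\pi_1^*(g_*[F_0])$ by the module-map property \eqref{eqn:module_map_property}. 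For the Normal-crossing axiom one has $g_*[N]=\tilde{c}(\calO_M(N))$ by the zero-set clause of Axiom (4), so by naturality of $\tilde{c}$, the factorisation $\calO_F(p^{-1}N)=\bigotimes_i\calO_F(F_i)$ for the (reduced, strict) normal crossings divisor $\sum_i F_i$, and the formal group law, $p^*g_*[N]=\tilde{c}\big(\bigotimes_i\calO_F(F_i)\big)=L_\bK\big(\tilde{c}(\calO_F(F_1)),\dots,\tilde{c}(\calO_F(F_k))\big)=L_\bK([F_1],\dots,[F_k])$; equivalently this is Proposition~\ref{prop:formal group} applied to the stages of the Borel construction, legitimately since for $K(n)$-local $\bK$ everything is defined Borel-equivariantly. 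The main obstacle is the bookkeeping in the Pullback axiom: reconciling the spectrum-level Thom-collapse definition of the pushforward with the cap-product formula, confirming the vanishing of the excess normal bundle using that $\pi_1$ is a submersion, and keeping all identifications natural enough to remain compatible with the monoidal structure.
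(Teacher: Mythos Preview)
Your proposal is correct and follows essentially the same approach as the paper's proof. The paper's own argument is terser—it explicitly treats only the Pullback, Chern class/formal group, and Normal crossing axioms as ``remaining,'' leaving items like the duality isomorphism and the Products axiom implicit—but where the arguments overlap (Pullback via naturality of the Euler class and the module identity~\eqref{eqn:module_map_property}, Normal crossing via $g_*[N]=c_1(\calO_M(N))$ and the formal group law for $\bK$) they are the same; the paper adds only the minor observation that one may stabilise by a $G$-representation to reduce the Pullback axiom to the case where $F_0\to F_1$ and $\cT_0\to\cT_1$ are genuine embeddings before invoking the Thom class of the normal bundle.
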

	\begin{proof}
          There are three remaining properties to prove. To prove the formula for the virtual fundamental class of the pullback in Diagram \ref{eq:transverse_diagram}, it suffices (by stabilising with $G$-representations) to consider the case when $F_0 \to F_1$ is an embedding and $\cT_1 \to F_1$ is a submersion. Indeed, stabilising $F_1$ (and $\bT_1$) by a sufficiently large representation, we may ensure the first property, without changing $F_0$ and $\bT_0$, and replacing $\bT_1$ by its stabilisation with respect to the tangent bundle of $F_1$ also ensures that the second property holds, with $\bT_0$ correspondingly replaced by the same stabilisation. The reduction to this case then follows by the invariance of the fundamental class under stabilisation, and the functoriality of pullback and pushforward.

         In the case under consideration, the obstruction bundle of $\bT_0$ is $f^*V$ where $V\to \cT_1$ is the obstruction bundle, so we must compute
		\[
		f_*(e_G(f^*V)) = f_*(f^*e_G(V) \cap 1_{\cT_0})
		\] where $1_{\cT_0}$ denotes the unit. Since $\pi_1$ is a submersion and the diagram is a pullback,  the Thom class of $\cT_0\subset \cT_1$ is the pullback of $[F_0]$, and the result now  follows from \eqref{eqn:module_map_property}.
		
		Since  by hypothesis $\bK$ is multiplicative and complex-oriented, there are Chern classes for complex line bundles and a formal group law $L$ associated to the complex orientation determined by the equality
		\begin{equation} \label{eq:formal_group_chern_classes}
			c_1(\scrL_1 \otimes \cdots \otimes \scrL_k) = L(c_1(\scrL_1), \ldots, c_1(\scrL_k)). 
		\end{equation}
		Considering the context of Diagram \ref{eq:normal_crossings_diagram}, the image $g_* [N]$ of the fundamental class of $N$ agrees with the first Chern class of the corresponding complex line bundle over the Borel construction. The fact that the inverse image of $N$ is a normal crossings divisor gives an isomorphism between the pullback of this line bundle and the tensor product of the $G$-equivariant complex line bundles $\scrL_{F_i}$ associated to its top strata. Identifying the virtual fundamental class of $F_i$ with the first Chern class of $\scrL_{F_i} $, the result now follows from Equation \eqref{eq:formal_group_chern_classes}.
	\end{proof}
	
	\subsection{Equivariant $K$-theory}
	\label{sec:equiv-cohom}
	Next, we consider equivariant $K$-theory, and define\footnote{Recall that $\cT/G$ is a (stably) complex orbifold, so the shifts could be supressed since we work $2$-periodically.}
	\begin{align}
		K^*(\bT) & \equiv   K^{*}_{G}(Z) \\
		K^{lf}_*(\bT) &  \equiv  K^{\dim \cT/G -*}_{G}(\cT|Z) \\
		K_*(\bT) & \equiv  K^{\dim \cT/G -*}_{G,c}(\cT|Z). 
	\end{align}
	Note the formal similarity with the definition in Section \ref{sec:borel-equiv-cohom}; the subscript $G$ does not refer anymore to Borel-equivariant cohomology, but instead to the fact that we are considering $G$-equivariant vector bundles. The stable complex structure of $V$, and the choice of section which does not vanish away from $Z$, yields an equivariant Euler class in $K^{\dim \cT/G -*}_{G}(\cT|Z) $ which we define to be the virtual fundamental class of $\bT$.
	
	\begin{rem} \label{rem:reminder}
	We give a brief reminder on Thom isomorphisms in $K$-theory. 
	Recall that given a bounded complex $F^{\bullet}$ of vector bundles on $X$, we get an element of $K$-theory via $F^\bullet \mapsto \sum_i (-1)^i [F_i]$. If the complex is exact on $A\subset X$,  it naturally defines an element in $K^0(X,A)$. Now let $p: E \to X$ denote projection of a $G$-bundle $E$ over $X$. We define a homomorphism $\phi: K_{G,c}(X) \to K_{G,c}(E)$ as follows. Fix a complex $\{F^{\bullet}\}$ exact away from a compact subset of $X$ and representing an element of $K_{G,c}(X)$; we map this to the complex $\phi: F^\bullet \mapsto \Lambda^*(E) \otimes p^*F^{\bullet}$, where $\Lambda^*(E)$ is the Koszul complex over $E$ associated to $p^*E$ and its tautological diagonal section (which vanishes on the zero-section of $E$). Note that $\Lambda^*(E) \otimes p^*F^{\bullet}$ has compact support on $E$, i.e. is acylic outside a compact set, so defines an element of $K_{G,c}(E)$. (By contrast, the Koszul complex $\Lambda^*(E)$ itself has support the zero-section $X\subset E$, so this does not define an element of $K_{G,c}(E)$ unless $X$ itself is compact.) The Thom isomorphism asserts that the map $ \phi: K_{G,c}(X) \to K_{G,c}(E)$ is an isomorphism; there is an analogous  Thom isomorphism  without the compact support condition, i.e. an isomorphism $K_G(X) \to K_G(E)$ for any locally compact $X$, cf. \cite[Proposition 3.2]{Segal:equivtK}. The Thom class is represented by the Koszul complex of $E$, so the Euler class lifts to a class relative $X\backslash Z$ when $E$ admits a section non-vanishing away from $Z$. 
	\end{rem}

	\begin{rem}
	If $X$ is complex algebraic, any coherent sheaf $\scrF \to X$ defines an element of $K(X)$, which can be computed by taking a bounded projective resolution.  If $X$ is a complex algebraic variety and $Z\subset X$ is a closed subvariety, then the $K$-theoretic fundamental class $[Z]$ is the image of $[\mathcal{O}_Z]$ under the natural map from $K_{alg}(X) \to K(X)$. Indeed, to check this it suffices to work locally. If $Z = V(I)$ is cut out in $\bC^n$ by an ideal $I \leq \bC[x_1,\ldots,x_n] = R$ and $P^{\bullet}$ is a bounded projective resolution of $R/I$, then $P^{\bullet}$ is exact at any $Q\not
\in Z$. Therefore $[P^{\bullet}]$ defines a class in $K(\bC^n,\bC^n\backslash Z)$. One uses deformation to the normal cone to show this class satisfies the conditions of the Thom class, see e.g. \cite[Theorem 18.8]{Dugger}.
\end{rem}

	The ring structure on cohomology, and its action on the homology theories is defined in the same way as for the Borel equivariant theories using the continuity of cohomology. The functoriality of cohomology is easy to establish. For  homology (or locally finite homology, analogously), we proceed as follows: we first identify
	\begin{equation}
		K^{\dim \cT/G -*}_{G,c}(\cT|Z) \cong  K_{G,c}(\cT|Z^{ T \cT - \frak{g}}). 
	\end{equation}
	The work of Atiyah and Singer \cite{Atiyah}, cf. the discussion after \cite[Theorem 2.6]{Atiyah} and remembering Remark \ref{rem:conical_subset},  identifies every element of the right hand side with the symbol of a \emph{$G$-equivariant transversally elliptic} operator on $\cT$, relative those on the complement of $Z$. This point of view will be essential in the following discussion, because the notion of \emph{$G$-equivariant elliptic operator}, which is usually used to construct pushforwards in equivariant $K$-theory, is not relevant to our context, since it corresponds to $ K_{G,c}(\cT|Z^{ T \cT}) $.

	Continuing with our discussion of the functoriality of homology, consider the embedding $G \to G \times G'$ given by the graph of the homomorphism $G \to G'$. We choose a complex $G'$ representation $W'$, and a lift of the graph of the map $\cT \to \cT'$ to a  $G \times G'$ equivariant embedding
	\begin{equation}
		\cT \times_{G} \left(G \times G'\right) \to \cT \times  W' \times \cT',
	\end{equation}
	where $\cT \times W' \times \cT'$ is equipped with the product action of $G \times G'$.

	We have natural maps
	\begin{equation}
		\begin{tikzcd}
			K_{G,c}(\cT|Z^{ T \cT - \frak{g}}) \cong   K_{G \times G',c}(\left(\cT \times_{G} \left(G \times G'\right)|Z  \times_{G} \left(G \times G'\right) \right)^{ T \cT  \times_{G} \left(G \times G'\right) - \frak{g} - \frak{g'}} )\ar[d]  \\  K_{G \times G',c}(\left(\cT \times W' \times \cT' | Z  \times_{G} \left(G \times G'\right) \right)^{T \cT - \frak{g} + T   W' \times \cT' - \frak{g'}}) \ar[d] \\
			K_{G \times G',c}(\left(\cT \times W' \times \cT' |Z \times W' \times  Z' \right)^{T \cT - \frak{g} + T   W' \times \cT' - \frak{g'}}) \ar[d] \\
			K_{G \times G',c}(\left(\cT  \times \cT' |Z  \times  Z' \right)^{T \cT - \frak{g} + T \cT' - \frak{g'}})
		\end{tikzcd}
	\end{equation}
	where the isomorphism in the top row is given by the Thom homomorphism of \cite[Theorem 4.1]{Atiyah}, the first vertical arrow by \cite[Theorem 4.3]{Atiyah}, the second by the fact that the inclusion of the image of $Z$ in $Z'$ implies that complexes which are acyclic in the complement of $  Z  \times_{G} \left(G \times G'\right)$ are necessarily acyclic in the complement of $W' \times Z'$, and the last by the (inverse) Thom isomorphism associated to the inclusion of the origin in $W'$ (this uses that $W'$ was chosen to be complex). As in Lemma \ref{lem:properties_in_local_case}, the dependence on the auxiliary $W'$ in this construction is removed by a suitable application of the Thom isomorphism, cf. Lemma \ref{lem:morphisms_in_Kthy} below. The corresponding functoriality of  locally finite (rather than ordinary) homology follows exactly the same steps, using the Thom isomorphism without compact supports from Remark \ref{rem:reminder}.
	
	\begin{rem}
		If $G$ acts on $\cT$ and $G'$ acts on $\cT'$, both with finite stabilisers, the map $\cT \times_G (G\times G') \to \cT\times\cT'$ arising from the graph of a $G$-equivariant map $\cT \to \cT'$ can still fail to be an embedding but be only finite-to-one; this is why we need to further introduce the representation $W'$ above.
	\end{rem}
	\begin{rem}
		Note that, while the action of $G'$ on a representation $W'$ cannot be locally free if $G'$ is not finite or $W'$ is not trivial, the product $W' \times \cT'$ has a locally free action, so that the difference $ T  W' \times \cT' - \frak{g'}$ still gives the tangent bundle to $T_{G'} W' \times \cT' $.
	\end{rem}
	Next, we consider the projection $\cT  \times \cT' \to \cT'$, with fibre the $G$-manifold $\cT$. At each point in $ \cT'$, an element of $ K_{G \times G',c}(\left(\cT  \times \cT' |Z  \times  Z' \right)^{T \cT - \frak{g} + T \cT' - \frak{g'}})$ determines a $G$-transversely elliptic operator on $ \cT$, with symbol  invertible away from $Z$ (as well as outside a compact set), so that the Atiyah-Singer index for transversely elliptic operators \cite{Atiyah} assigns to each such operator a virtual vector space given by the $G$-invariant part of the kernel and co-kernel. (Explicitly one can take the kernel and cokernel of the associated $Spin^c$-Dirac operator.) Performing this construction in families \cite{Baldare} yields the family index map
	\begin{equation}
		K_{G \times G',c}(\left(\cT \times \cT'\right)^{T \cT  - \frak{g} + T  \cT' - \frak{g}'}) \to K_{G',c}\left(  \cT'| Z'^{T \cT' - \frak{g}'} \right).
	\end{equation}
	We summarise the above discussion as follows:
	\begin{lemma}\label{lem:morphisms_in_Kthy}
		The complex structures on $T \cT - \frak{g}$ and $T \cT' - \frak{g'}$ determine a map
		\begin{equation}
			K^{\dim \cT/G -*}_{G,c}(\cT|Z) \to   K^{\dim \cT'/G' -*}_{G',c}(\cT'|Z')
		\end{equation}
		associated to each morphism $\bT \to \bT'$ of global Kuranishi charts. If this morphism is an equivalence, the image of $[\bT]$ under this map agrees with $[\bT']$.
	\end{lemma}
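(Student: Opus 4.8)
\emph{Proof strategy.} The map in question is the composite of the chain of homomorphisms assembled above: the Thom homomorphism \cite[Theorem~4.1]{Atiyah}, the transverse pushforward \cite[Theorem~4.3]{Atiyah}, the change-of-supports map, and the inverse Thom isomorphism of the auxiliary complex representation $W'$, followed by the family index map for transversely elliptic operators \cite{Atiyah,Baldare}
\[
K_{G \times G',c}\bigl((\cT \times \cT')^{T\cT - \frak{g} + T\cT' - \frak{g}'}\bigr) \longrightarrow K_{G',c}\bigl(\cT'|Z'^{\,T\cT' - \frak{g}'}\bigr),
\]
built from the fibrewise $Spin^c$-Dirac operator determined by the complex structure on $T\cT - \frak{g}$ (the complex structure on $T\cT' - \frak{g}'$ enters through the Thom identification $K^{\dim\cT'/G'-*}_{G',c}(\cT'|Z') \cong K_{G',c}(\cT'|Z'^{\,T\cT'-\frak{g}'})$, and likewise for $\cT$). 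The first thing I would verify is that this composite is independent of the choices of $W'$ and of the $G\times G'$-equivariant embedding $\cT\times_G(G\times G')\hookrightarrow \cT\times W'\times\cT'$; the plan here is the standard stabilisation argument: any two choices are identified after adding a further common complex representation, and every map in the chain, as well as the family index, commutes with such stabilisation, so the extra Thom factors cancel. The same compatibilities show that the assignment respects composition of morphisms, which is what lets us reduce the second assertion to the elementary moves of Lemma~\ref{lem:factor_equivalence}.

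For the statement on fundamental classes, recall that $[\bT]$ is the $K$-theoretic equivariant Euler class $e_G(V,s)$ of the obstruction bundle together with its section, nowhere zero away from $Z$. By Lemma~\ref{lem:factor_equivalence} every equivalence is a zigzag of the four moves --- free quotient by a normal subgroup, group enlargement along an inclusion $G\hookrightarrow G'$, stabilisation by a $G$-representation, and open embedding onto a neighbourhood of $Z$ --- so it suffices to check that each move induces an isomorphism on the relevant $K$-groups carrying $e_G(V,s)$ to $e_{G'}(V',s')$; for a general equivalence one then composes along the zigzag using the functoriality just noted.

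Three of the four moves I expect to be essentially formal. For a free quotient $\bT\to\bT/H$, descent of $G$-equivariant bundles with free $H$-action to $G/H$-bundles on $\cT/H$ is an equivalence of categories, which gives the required isomorphism of $K$-groups under which $e_G(V,s)$ corresponds to $e_{G/H}(V/H,s/H)$; the transversely elliptic operators and their invariant indices also match under descent. For group enlargement the induction isomorphism (restriction to $\cT\times_G\{e\}$) identifies $e_{G'}(p^*V,p^*s)$ with $e_G(V,s)$. For an open embedding onto a $G$-neighbourhood $U$ of $Z$, the relevant groups are insensitive to shrinking neighbourhoods of $Z$ by the colimit/excision property recalled after \cite[Proposition~2.11]{Segal:equivtK}, and $e_G(V,s)$ restricts to $e_G(V|_U,s|_U)$. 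In each of these cases one checks that the chain of maps unwinds to precisely this isomorphism.

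The substantive move is stabilisation $\bT\to\bT\times W$, which replaces $(G,\cT,V,s)$ by $(G,\cT\oplus W, V\oplus W, s\oplus\Delta)$, with $\Delta$ the canonical section of $W$ over the total space $\cT\oplus W$ vanishing exactly along the zero section; the zero locus is unchanged and $e_G(V\oplus W, s\oplus\Delta) = e_G(V,s)\cdot e_G(W,\Delta)$, where $e_G(W,\Delta)$ is the Thom class of $W\to\cT$. The plan is to show the chain of maps realises multiplication by this Thom class: the embedding of the zero section $\cT\hookrightarrow\cT\oplus W$ is handled by the Thom homomorphism, and the subsequent family index over $\cT\oplus W\to\cT$ integrates out the fibre directions against the same Thom class, the net effect being a Thom-type isomorphism $K^{\dim\cT/G-*}_{G,c}(\cT|Z)\xrightarrow{\ \sim\ }K^{\dim(\cT\oplus W)/G-*}_{G,c}((\cT\oplus W)|Z)$, the degree shift $\dim(\cT\oplus W)/G = \dim\cT/G + \dim W$ being matched by $\rk(V\oplus W) = \rk V + \dim W$. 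This interaction of the genuine-equivariant transversely elliptic (family) index with Thom classes is the one place where Borel-type formalism is unavailable and where I expect the real work to lie; once it is in hand, combining the four cases along the zigzag of Lemma~\ref{lem:factor_equivalence} gives the result.
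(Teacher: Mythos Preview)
Your proposal is correct and follows essentially the same approach as the paper: first verify independence of the auxiliary representation $W'$ by the standard stabilisation/direct-sum argument, then reduce preservation of fundamental classes to the four elementary moves of Lemma~\ref{lem:factor_equivalence} and check each one. The paper's proof is considerably terser (attributing the first two moves to compatibility of Thom classes under change of groups, stabilisation to multiplicativity and compatibility with Thom isomorphisms, and open embedding to locality of the Euler class), but your more detailed treatment---including the explicit identification of stabilisation as the move where the transversely-elliptic index genuinely enters---fleshes out exactly what the paper leaves implicit.
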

	\begin{proof}
		Independence of our map on the choice of representation $W'$ follows in a standard way by using the direct sum of representations to reduce the problem to showing that the push-forward maps associated to an embedding of representations $W'_0 \subset W'_1$ agree, which is an application of the Thom homomorphism, compare the corresponding step in the proof of Lemma \ref{lem:properties_in_local_case}.
		
		The fact that equivalences preserve fundamental classes follows by checking that they are preserved by each of the elementary moves of Lemma \ref{lem:factor_equivalence}; for the first two moves, this is a consequence of the compatibility of (equivariant) Thom classes with change of groups, for the second move, this follows from the multiplicativity of such classes and their compatibility with Thom isomorphisms, and in the last case follows from locality (i.e. naturality of the Euler class).
	\end{proof}

	The theory of Chern classes and the formal group for $K$-theory are standard.  We now discuss the last two axioms: 
	
	\begin{lemma}
		Given a pullback diagram \eqref{eq:transverse_diagram}, we have  $f_* [\bT_{0}] =  [\bT_{1}] \cap \pi_1^*( g_* [F_{0}])$.
	\end{lemma}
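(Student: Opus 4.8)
The plan is to run the Borel-equivariant argument of the preceding lemma essentially verbatim; the only genuinely new point is that the pushforward on $K$-theoretic (locally finite) homology was built in Section~\ref{sec:equiv-cohom} out of transversely elliptic symbols and the family index of \cite{Baldare}, rather than naive Thom isomorphisms, so one must check that on an embedding with complex normal bundle it reduces to the expected Gysin map.

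\textbf{Step 1: reduce to embeddings.} First I would stabilise $\bT_1$ and $\bT_0$ by a common $G$-representation $W$, chosen large enough that the proper immersion $g$ lifts to a $G$-equivariant embedding $g' : F_0 \hookrightarrow F_1 \times W$ with $\mathrm{pr}_{F_1}\circ g' = g$. Stabilisation is an equivalence (Lemma~\ref{lem:factor_equivalence}(3)) preserving fundamental classes, and the $K$-theoretic pushforward is independent of the auxiliary representation used in its construction, so neither side of the identity changes; the pullback square persists after base change, with $\pi_1 \times \mathrm{id}_W$ still a submersion, and the induced map $\cT_0 \hookrightarrow \cT_1\times W$ is now a $G$-equivariant embedding. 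So from now on $f$ and $g$ are embeddings.

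\textbf{Step 2: obstruction bundle and projection formula.} Since the square is a pullback, the obstruction bundle of $\bT_0$ is $f^*V$, where $V\to \cT_1$ (abusing notation for the stabilised thickening) is that of $\bT_1$; thus
\[
[\bT_0] = e_G(f^*V) = f^*\big(e_G(V)\big)\cap 1_{\cT_0},
\]
where $1_{\cT_0}$ is the unit in $K^{lf}_*$ of the trivial chart on $\cT_0$ (using finiteness of stabilisers and the complex structure on $T\cT_0 - \fg$). Applying the module/projection formula \eqref{eqn:module_map_property} to the proper embedding $f$,
\[
f_*[\bT_0] = f_*\big(f^*e_G(V)\cap 1_{\cT_0}\big) = e_G(V)\cap f_* 1_{\cT_0} = [\bT_1]\cap f_* 1_{\cT_0},
\]
so it remains to identify $f_* 1_{\cT_0}$, viewed as a class on $\cT_1$, with $\pi_1^*\big(g_*[F_0]\big)$.

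\textbf{Step 3: Thom class identification.} Here I would unwind the construction of $f_*$ in Section~\ref{sec:equiv-cohom} for the embedding $\cT_0\hookrightarrow\cT_1$: the graph construction, Atiyah's Thom homomorphism \cite[Theorem~4.1]{Atiyah}, the excision/acyclicity step \cite[Theorem~4.3]{Atiyah}, and the family index collapse to multiplication by the $K$-theory Thom class of the normal bundle $\nu_{\cT_0/\cT_1}$ (which is $K$-oriented, being complex) followed by extension by zero, i.e.\ the ordinary Gysin map of the $K$-oriented embedding. Applied to the unit, $f_*1_{\cT_0}$ is the Thom class of $\cT_0\subset\cT_1$. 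Because $\pi_1$ is a submersion and the square is a pullback, $\nu_{\cT_0/\cT_1}\cong \pi_0^*\nu_{F_0/F_1}$ compatibly with the embeddings, so this Thom class equals $\pi_1^*$ of the Thom class of $F_0\subset F_1$, which under $K^{lf}_*(F_1)\cong K^*(F_1)$ is precisely $g_*[F_0]$. Combining Steps~1--3 gives $f_*[\bT_0] = [\bT_1]\cap\pi_1^*(g_*[F_0])$.

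\textbf{Expected main obstacle.} The delicate point is Step~3: showing that the transversely-elliptic pushforward of Section~\ref{sec:equiv-cohom} really does specialise, on embeddings with complex normal bundle, to the Thom/Gysin pushforward. This is ultimately bookkeeping with Atiyah's Theorems~4.1 and~4.3 and the family index of \cite{Baldare}, but one has to track carefully that the auxiliary representation $W'$, the complex structures on $T\cT_i-\fg$, and the $Spin^c$-Dirac model of the index are all compatible with the normal-bundle Thom class; the normal-bundle comparison $\nu_{\cT_0/\cT_1}\cong\pi_0^*\nu_{F_0/F_1}$ and its compatibility with $\pi_1^*$ is then routine from the submersion and pullback hypotheses.
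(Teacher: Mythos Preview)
Your proposal is correct and follows essentially the same approach as the paper: reduce to embeddings by stabilisation, use the projection formula together with naturality of the Euler class for the obstruction bundle, and identify the pushforward of the unit with the pullback of the Thom class via the normal-bundle isomorphism $\nu_{\cT_0/\cT_1}\cong \pi_0^*\nu_{F_0/F_1}$. The one point worth noting is your ``expected main obstacle'': the paper sidesteps unwinding the full transversely-elliptic pushforward of Section~\ref{sec:equiv-cohom} by observing that, once the map is an embedding, the construction reduces to the elementary template of Section~\ref{sec:transverse_Kthy} (pullback, multiply by the Clifford complex on the normal bundle, extend by zero), so the comparison with the Thom/Gysin map is immediate rather than requiring bookkeeping with the graph construction and the family index.
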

	
	\begin{proof}
		This is a variant of results which have already been used in the study of algebraic K-theoretic Gromov-Witten invariants, e.g. \cite[Proposition 3]{Lee-QKFoundations}. Up to stabilising $F_1$ and $\bT_1$, the map $F_0 \to F_1$ can be assumed to be an $G$-embedding of transverse complex manifolds, hence has a push-forward in $K_G$-theory coming from the Thom class of the normal bundle, and the $K$-theory Thom class is given by the Clifford complex.  Since we have a pullback diagram,  the map $\cT_0 \to \cT_1$ is also an embedding of $G$-manifolds, so the construction of the push-forward in transverse $K$-theory follows the more elementary template from Section \ref{sec:transverse_Kthy} and is again associated to the Clifford complex of the normal bundle. The result then follows from the naturality  of the Euler class. 
	\end{proof}

	\begin{lemma} A complex analytic normal crossing divisor has a fundamental class, which is compatible with pullback, given by applying the multiplicative formal group law to the fundamental classes of its top strata.
	\end{lemma}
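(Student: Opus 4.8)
The plan is to take the fundamental class of a ($G$-equivariant) complex analytic normal crossing divisor $D\subset M$ to be the class $[\calO_D]$ of its structure sheaf, constructed in the equivariant setting by the template of Section~\ref{sec:transverse_Kthy} from the Koszul (Clifford) complex of a $G$-equivariant section of the line bundle $\calO_M(D)$ whose zero scheme is $D$. When $D$ is reduced the two-term resolution $0\to\calO_M(D)^{\vee}\to\calO_M\to\calO_D\to 0$ identifies this with $\tilde c(\calO_M(D))$, and in general, writing $D=\sum_i n_iD_i$ and $\scrL_i=\calO_M(D_i)$, with $L^{n_1,\dots,n_m}(\tilde c(\scrL_1),\dots,\tilde c(\scrL_m))$ via the Chern-class axiom of Definition~\ref{def:counting_theory}.

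For compatibility with pullback, suppose $g\colon M'\to M$ is a morphism for which $g^{-1}(D)$ is again a normal crossing divisor; in the situations governed by the normal-crossing axiom the relevant map of $G$-manifolds is a submersion, so $g$ is automatically transverse to every stratum of $D$. Then the Koszul resolution of $\calO_D$ remains exact after pullback (no higher $\Tor$ contributions, by transversality), equivalently $g^{*}\calO_M(D)\cong\calO_{M'}(g^{-1}D)$, and naturality of the equivariant Euler/Chern class gives $g^{*}[\calO_D]=[\calO_{g^{-1}D}]$.

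For the formula itself, the input is the globalisation of the Koszul resolution~\eqref{eq:Koszul-resolution} recorded in Example~\ref{ex:divisor_Ktheory}. Writing $F_1,\dots,F_k$ for the top (codimension-one) smooth strata and $D_S$ for the stratum cut out by a subset $S$ of sheets, this resolution is local on $M$ and so yields the inclusion--exclusion identity $[\calO_D]=\sum_{S\neq\emptyset}(-1)^{|S|-1}[\calO_{D_S}]$ of~\eqref{eqn:inclusion_exclusion}; each $D_S$ is a transverse intersection of sheets, whence $[\calO_{D_S}]=\prod_{i\in S}\tilde c(\scrL_i)$, again by the Koszul resolution. Since the multiplicative formal group law $L(x,y)=x+y-xy$ satisfies $L(x_1,\dots,x_k)=1-\prod_{i=1}^{k}(1-x_i)=\sum_{S\neq\emptyset}(-1)^{|S|-1}\prod_{i\in S}x_i$, this reads $[\calO_D]=L([F_1],\dots,[F_k])$, where $[F_i]:=\tilde c(\scrL_i)=[\calO_{F_i}]$ is the fundamental class of the top stratum $F_i$. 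The equivariant statement follows because every pushforward appearing here is the transverse-$K$-theory pushforward of Section~\ref{sec:transverse_Kthy}, built from precisely these Clifford/Koszul complexes, so the algebraic resolution globalises to an identity of transversely elliptic symbols over $M$.

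The main obstacle is the bookkeeping for genuinely non-strict divisors: one must make precise how~\eqref{eq:Koszul-resolution} globalises when the local sheets $\{z_i=0\}$ are identified across charts, so that the higher-codimension terms are pushforwards from components of the fibre powers $\widetilde D\times_D\cdots\times_D\widetilde D$ of the normalisation rather than from naive intersections $D_{i_1}\cap\cdots\cap D_{i_j}$, and check that the relevant such loci are smooth away from their diagonals and carry the expected $G$-equivariant normal bundles; this is where the interplay with Proposition~\ref{prop:general_divisor} and the cone complex enters. For strict normal crossing divisors the identity collapses, via $\calO_M(D)=\bigotimes_i\scrL_i^{n_i}$ and $\tilde c(\scrL\otimes\scrL')=L(\tilde c(\scrL),\tilde c(\scrL'))$, to a formal consequence of the Chern-class axiom and needs no further argument; the content is entirely in the non-strict case, which is special to $K$-theory precisely because there the formal group is multiplicative and the resolution~\eqref{eq:Koszul-resolution} exists on the nose.
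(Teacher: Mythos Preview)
Your proposal is correct and takes essentially the same approach as the paper: the paper's proof is a one-line reference back to Section~\ref{Subsec:divisor} and Example~\ref{ex:divisor_Ktheory}, and your argument spells out precisely the Koszul resolution/inclusion--exclusion mechanism recorded there, together with the identification of that alternating sum with the multiplicative formal group law. Your final paragraph on the non-strict case is a fair elaboration of what the paper leaves implicit in the remark that \eqref{eq:Koszul-resolution} is local and hence the formula holds ``whether or not the normal crossing divisor is strict''.
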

	\begin{proof}
		This was explained in Section \ref{Subsec:divisor} and Example \ref{ex:divisor_Ktheory}. 
	\end{proof}
	
	\begin{Example} \label{Ex:Ktheory_not_Borel}
		For a finite group $G$, $K_G(pt) = R(G) \to \bZ$ has a trace which is defined by taking the dimension of the invariants; since characters are orthonormal this induces a non-degenerate pairing on $R(G)$, which can be interpreted as a Poincar\'e duality statement for the $K$-theory of the quotient of the point by $G$, considered as an orbifold. For example, let $G=C_2$ be the cyclic group of  two elements. Then $K_G(pt) = R(G) = \bZ[1,x]$ where $x=\bC-1$ is the difference of the sign and the trivial representation. The pairing has matrix $\bigl( \begin{smallmatrix} 1 & -1 \\ -1 & 2 \end{smallmatrix} \bigr)$  
		in this basis, and is unimodular.  
		
		The Borel equivariant $K$-theory in this case is given by completion at the augmentation ideal $R(G)^{\wedge}_I = \bZ[1] \oplus \bZ_2[x]$, where $\bZ_2$ denotes the $2$-adics. This has no non-degenerate pairing associated to a $\bZ$-valued trace.  By contrast, if $\bK = K_2(1)$ is the first Morava $K$-theory at the prime $2$, we have $H^*(BG;\bK) = \bZ/2 [1] \oplus \bZ/2 [x]$ and the trace induces the non-degenerate pairing 
		$\bigl( \begin{smallmatrix} 1 & 1 \\ 1 & 0 \end{smallmatrix} \bigr)$. 
		This illustrates that there is no Poincar\'e duality isomorphism for orbifolds in Borel-equivariant $K$-theory (even though there is one prime at a time), and for $K$-theory working Borel-equivariantly does not yield a counting theory.
	\end{Example}

	\begin{rem} Shaoyun Bai suggested that, by working consistently at the level of the orbifold $\scrT/G$ rather than $G$-equivariantly on $\scrT$, one could replace the use of transverse $K$-theory with push-forward maps  for orbifold $K$-theory (for representable maps these are constructed in \cite{Bunke-Schick}). Again the theory is underpinned by a suitable family index theorem. \end{rem}

	\subsection{Standing notation\label{Sec:notation}}
	
	We will write $\bE$ for a counting theory on $\Glo$, and denote by $E^*(\bT)$ (and $E_*(\bT), E_*^{lf}(\bT)$ etc) the associated (co)homology groups on global charts; this notation encompasses both the cases $H^*(\bT;\bK)$ arising from a Morava-local theory and $K^*(\bT)$ arising from complex $K$-theory. Furthermore, if $\ccMbar$ is a complex orbifold (in particular, a Deligne-Mumford moduli space of stable curves), we will write $E^*(\ccMbar)$ for the output of the counting theory $\bE$ on a global chart $\bT(\ccMbar) = (G,\widetilde{\ccM},0,0)$ arising from a presentation of $\ccMbar = \widetilde{\ccM}/G$ as a global quotient. This both simplifies notation and reflects the fact that the invariants we consider are naturally associated to the underlying complex orbifold.  (Thus, $K^*(\ccMbar)$ really denotes the \emph{orbifold} $K$-theory $K_G^*(\widetilde{\ccM})$, but we will not labour that.)

	A compact symplectic $2n$-manifold $X$ has an almost complex structure which is canonically defined up to homotopy. It defines a global chart with $G=\{e\}$ and trivial obstruction bundle. The axioms of a counting theory include a duality isomorphism
	\[
	E^*(X\times X) \cong E_{4n-*}(X\times X)
	\]
	and the diagonal $\Delta \subset X\times X$ defines a class in $E^{2n}(X\times X)$. We will write $\Delta$ for the class in either homology or cohomology, depending on the context.
	
	\section{Global Kuranishi charts for Gromov-Witten theory}
	\label{sec:glob-kuran-charts}
	
	In this section we construct global charts for moduli spaces of stable $J$-holomorphic maps from nodal curves to a symplectic manifold $X$ with tame almost complex structure $J$ in a fixed homology class $\beta \in H_2(X;\bZ)$.  The thickening $\scrT$ will admit a submersion over a smooth quasi-projective open $\scrF$ in the space of stable maps to some projective space, through which the stabilisation map $\scrT \to \ccMbar_{g,h}$ to the moduli space of domains will factor.  The resulting structure $\scrT \to \scrF \to \ccMbar_{g,h}$, with the first map a submersion to a smooth $G$-manifold and the second a map of $G$-spaces with trivial obstruction bundle, naturally fits with the axioms of a counting theory, once we use Lemma \ref{lem:subcategory} to replace $\scrF$ by a transverse complex $G$-manifold.

	We will use a slightly different approach to global Kuranishi charts than \cite{AMS-Hamiltonian}.
	Very roughly, in both approaches, we wish to associate a natural vector space to each stable curve together with a surjection to the cokernel of the $\overline{\partial}$-operator.
	In the paper \cite{AMS-Hamiltonian} such a vector space consisted of sections of a holomorphic vector bundle over this curve.
	In the present paper, we will explicitly construct such a space using eigenvalues of a Laplacian (See Section \ref{Subsec:preliminary}).
	The advantage of the former approach is that one can use a Gromov trick to describe the resulting thickening as a moduli space of holomorphic curves in a different symplectic manifold (See \cite[Section 6.3]{AMS-Hamiltonian}).
	The disadvantage of that approach is that one needs to use H\"{o}rmander techniques to prove that the vector space indeed surjects onto the cokernel of the $\overline{\partial}$-operator. 	In our current approach, proving that the vector space surjects onto the cokernel is almost immediate.

	\begin{enumerate}
		\item Sections \ref{Subsec:preliminary} to \ref{Subsec:easier} construct global charts for the moduli space of holomorphic maps from curves of a fixed genus to a target symplectic manifold, representing a prescribed homology class. The output of the construction has total space which is naturally a $G$-almost complex manifold, which can be made transversely almost complex by the procedure from Lemma \ref{lem:subcategory}.
				\item Sections \ref{sectiongeneralprethicken} to \ref{sectionequivalences} generalize the global Kuranishi chart construction
		so that it can be applied to related moduli spaces arising either in comparing choices or establishing axioms.
		\item Section \ref{subsection independence} shows that the global Kuranishi chart construction for $\ccMbar_{g,h}(X,J,\beta)$ does not depend on choices up to equivalence.
		\item Section \ref{sectionforgetful} discusses the forgetful map.
				\item Section \ref{sec:split_curves} respectively  \ref{sec:genusreduction} construct global Kuranishi charts for split respectively self-glued curves.
						\item Finally, Section \ref{Subsec:constant_maps}  explains how to incorporate stabilisation maps to moduli spaces of domains, when  these are themselves presented as global quotient orbifolds.		
	\end{enumerate}

	Deferred to an Appendix to help readability, we note that:
	
	\begin{enumerate}
	\item Section \ref{Subsec:gluing} states the gluing theorem needed for the construction of these global charts.
\item Sections \ref{subsec:FOOOlocalslices} and \ref{sec:comp-glob-kuran} compare our construction (locally) with Fukaya, Oh, Ohta, and Ono's approach which relies on stabilising divisors.
\end{enumerate}

	\subsection{A Preliminary Lemma}\label{Subsec:preliminary}
	
	We will consider spaces of perturbed holomorphic curves, where the perturbations  to the $\cdbar$-operator are drawn from large finite-dimensional subspaces of the universal space of perturbations.  The following definition captures this recurring set-up:

	\begin{defn} \label{defn fd}
		Let $G$ be a compact Lie group acting smoothly on a manifold $B$, and $\pi : V \lra{} B$ be a smooth $G$-vector bundle.
		A \emph{finite dimensional approximation scheme}
		$(V_\mu,\lambda_\mu)_{\mu \in \bN}$
		for $C^\infty_c(V)$
		is a sequence of finite dimensional $G$-representations $(V_\mu)_{\mu \in \bN}$
		and a sequence of
		$G$ equivariant linear maps:
		\begin{equation}
			\lambda_\mu : V_\mu \to C^\infty_c(V), \ \mu \in \bN
		\end{equation}
		to the space of smooth sections of $V$, 
		satisfying
		\begin{enumerate}
			\item $V_\mu$ is a subrepresentation of $V_{\mu+1}$ for each $\mu$,
			\item $\lambda_\mu|_{V_{\mu-1}} = \lambda_{\mu-1}$ for each $\mu$ and
			\item \label{item degree condition} the union of the images $\lambda_\mu(V_{\mu})$  is dense in $C^\infty(V)$
			with respect to the $C^\infty_{loc}$-topology.
		\end{enumerate}
	\end{defn}

	\begin{lemma}
		A finite dimensional approximation scheme exists.
	\end{lemma}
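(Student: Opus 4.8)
The plan is to construct the finite dimensional approximation scheme $(V_\mu, \lambda_\mu)$ concretely by exhausting the space of compactly supported sections via a countable dense family, while carefully arranging equivariance by averaging over the compact group $G$. First I would fix a $G$-invariant Hermitian metric on $V \to B$ and a $G$-invariant exhaustion of $B$ by relatively compact open sets $B_1 \subset B_2 \subset \cdots$ with $\overline{B_\mu} \subset B_{\mu+1}$ and $\bigcup_\mu B_\mu = B$; such an exhaustion exists because $B$ is a smooth manifold (hence second countable, paracompact) and can be made $G$-invariant by replacing each $B_\mu$ by its $G$-orbit, using compactness of $G$ to stay relatively compact. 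Sections supported in $\overline{B_\mu}$ then form an increasing family whose union is dense in $C^\infty_c(V)$, and a fortiori dense in $C^\infty(V)$ for the $C^\infty_{loc}$-topology.

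The key step is to produce, for each $\mu$, a finite-dimensional $G$-representation $V_\mu$ together with a $G$-equivariant linear map $\lambda_\mu$ into $C^\infty_c(V)$ whose image contains a prescribed finite-dimensional space of sections up to small $C^\mu$-error on $B_\mu$. I would proceed inductively. Having constructed $(V_{\mu-1}, \lambda_{\mu-1})$, choose finitely many smooth compactly supported sections $\sigma_1, \ldots, \sigma_{N_\mu}$ which, together with the images already in $\lambda_{\mu-1}(V_{\mu-1})$, approximate every section in some fixed countable dense set $\{\tau_j\}_{j \le \mu}$ to within $1/\mu$ in the $C^\mu$-norm over $\overline{B_\mu}$. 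Each $\sigma_i$ generates, under the $G$-action on $C^\infty_c(V)$ (which is by bundle automorphisms covering the action on $B$), a smooth $G$-orbit; the span of this orbit need not be finite-dimensional, so instead I would use the standard trick: the $G$-representation on $C^\infty_c(V)$ is a (generally infinite-dimensional) continuous representation of the compact group $G$, and by the Peter--Weyl theorem its smooth vectors are a sum of finite-dimensional subrepresentations, so each $\sigma_i$ lies within arbitrarily small $C^\mu$-distance of a vector in some finite-dimensional $G$-subrepresentation $W_i \subset C^\infty_c(V)$. Let $V_\mu := V_{\mu-1} \oplus \bigoplus_i W_i$ as an abstract $G$-representation, with $\lambda_\mu$ the map sending $V_{\mu-1}$ by $\lambda_{\mu-1}$ and $W_i$ by the inclusion into $C^\infty_c(V)$ (first projecting away any overlap so that $\lambda_\mu|_{V_{\mu-1}} = \lambda_{\mu-1}$ holds on the nose). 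This is $G$-equivariant and finite-dimensional by construction, and condition (1) and (2) of Definition \ref{defn fd} hold by design.

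It then remains to verify the density condition \ref{item degree condition}: given any $\tau \in C^\infty(V)$, any compact $K \subset B$, and any $\varepsilon > 0$ and order $k$, I need an element of some $\lambda_\mu(V_\mu)$ that is $\varepsilon$-close to $\tau$ in $C^k$ over $K$. Choose $\mu$ large enough that $K \subset B_\mu$, that $\mu \ge k$, that $1/\mu < \varepsilon/2$, and that the countable dense family contains some $\tau_j$, $j \le \mu$, with $\|\tau - \tau_j\|_{C^k(K)} < \varepsilon/2$ (after multiplying $\tau$ by a $G$-invariant cutoff equal to $1$ on a neighbourhood of $K$ to make it compactly supported, which does not change it on $K$); then by construction $\lambda_\mu(V_\mu)$ contains a section within $1/\mu < \varepsilon/2$ of $\tau_j$ in $C^\mu(\overline{B_\mu}) \supseteq C^k(K)$, and the triangle inequality closes the argument. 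The main obstacle is the reconciliation of two competing requirements — finite-dimensionality and $G$-equivariance of the $V_\mu$ — against the fact that $G$-orbits of individual sections span infinite-dimensional spaces; this is exactly what the Peter--Weyl decomposition of the smooth vectors of the continuous $G$-representation $C^\infty_c(V)$ resolves, and getting that approximation step clean (in particular, that finite-dimensional $G$-subrepresentations are $C^\infty_{loc}$-dense among smooth compactly supported sections) is the one genuinely non-formal point; everything else is bookkeeping with exhaustions and cutoffs.
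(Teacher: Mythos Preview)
Your argument is correct, but the paper takes a quite different and more explicit route. Rather than invoking Peter--Weyl to produce finite-dimensional $G$-subrepresentations abstractly, the paper builds them by spectral theory: on a closed base $B$, it fixes a $G$-invariant metric and connection on $V$, forms the associated Laplacian $\Delta$ on sections, and takes $V_\mu$ to be the span of the first $\mu$ eigenspaces (these are automatically finite-dimensional $G$-invariant subspaces, and their union is $C^\infty$-dense by elliptic theory). For $B$ the interior of a compact manifold with boundary, it doubles across the boundary, applies the closed case, and cuts back with $G$-invariant bump functions; for general open $B$, it exhausts by such pieces and takes direct sums.

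Your Peter--Weyl approach is more representation-theoretic and arguably explains \emph{why} equivariance is never an obstruction: density of $G$-finite vectors is exactly the general principle at work. The paper's Laplacian construction, on the other hand, gives concrete and canonical $V_\mu$ once a metric is chosen, and avoids the small bookkeeping you need (tracking the two-step approximation through $\sigma_i$ and then its $G$-finite replacement, and the inductive bookkeeping on the dense sequence). One minor imprecision in your write-up: Peter--Weyl does not say that smooth vectors decompose as a sum of finite-dimensional subrepresentations, only that $G$-finite vectors are dense; but that is exactly what you use, so the conclusion stands.
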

	\begin{proof}
		We will first prove this when the base $B$ is closed.
		Choose a $G$ invariant metric and compatible connection $\nabla$ on $V$.
		Let $\Delta : C^\infty(V) \to C^\infty(V)$ be the Laplacian given by the trace of $\nabla^2$.
		We define $V_\mu$ to be the sum of the first $\mu$ real non-negative eigenspaces of $\Delta$
		and $\lambda_\mu : V_\mu \hookrightarrow C^\infty(V)$ the natural inclusion map.
		The $G$ action preserves $V_\mu$ and $\lambda_\mu$.
		Also since $\Delta$ is a self adjoint elliptic operator, the sum of these eigenspaces is $C^\infty$ dense
		in $C^\infty(V)$ and hence the union of the images of $\lambda_\mu$ is too.
		
		Now let us prove this lemma in the case where $B$ is the interior of a compact manifold with boundary $\overline{B}$ with the property that the $G$-action and $G$-bundle $V$ extend to $\overline{B}$
		and $\overline{V}$ respectively.
		We can glue two copies of $\overline{B}$ along their boundaries giving its double $B_2$; the vector bundle $\overline{V}$ also doubles to a $G$-vector bundle $V_2$.
		Now construct $G$-invariant subspaces $V_{\mu,2} \subset C^\infty(V_2)$
		whose union is dense in $C^\infty(V_2)$ and we let $\lambda_{\mu,2} : V_{\mu,2} \lra{} C^\infty(V_2)$
		be the natural inclusion maps.
		Choose a sequence of $G$-invariant compactly supported
		bump functions $\rho_\mu : B \lra{} [0,1]$, $\mu \in \bN$ so that $\cup_{\mu \in \bN} \rho_\mu^{-1}(1) = B$.
		Then the maps 
		\begin{equation}
			\lambda_\mu : V_{\mu,2} \lra{} C^\infty_c(V), \quad \lambda_\mu(b) := \rho_\mu(b)\lambda_{\mu,2}(b)
		\end{equation}
		satisfy the desired properties.
		
		Finally, if $B$ is a general open manifold, then it is a countable union of open subsets $(B_j)_{j \in \bN}$
		whose closure is a codimension $0$ submanifold with boundary.
		Let $\lambda_{\mu,j} : V_{\mu,j} \lra{} C^\infty_c(V|_{B_j}) \subset C^\infty_c(V)$
		be $G$-equivariant linear maps satisfying properties
		(1)-(3) for $V|_{B_j}$.
		Then the sums $\lambda_{\mu} := \oplus_{j=1}^\mu \lambda_{\mu,j}$, $\mu \in \bN$,
		satisfy the desired properties for $V$.
	\end{proof}

	\subsection{Moduli Spaces of Curves in Projective Space}\label{Subsec:moduli}
	Fix $g,h,d \in \bN$.
	Let $\scrF_{g,h,d} \subset {\ccMbar}_{g,h}(\C \bP^{d-g},d)$
	be the subspace of stable nodal genus $g$ curves $\phi : \Sigma \to \C \bP^{d-g}$ of degree $d$ with $h$ marked points
	so that
	\begin{enumerate}
		\item \label{item regular}
		we have that $H^1(\phi^* O(1)) = 0$ and
		\item the automorphism group of each map is trivial.
	\end{enumerate}
	By the Euler exact sequence, such curves $\phi$ satisfy 
	the regularity condition $H^1(\phi^*T\C \bP^{d-g}) = 0$, which in turn implies that $\scrF_{g,h,d}$ is a smooth quasi-projective variety.  This would suffice in this section, but the stronger vanishing condition 
	is useful later, cf. Lemma \ref{lem:submerse_to_domains}.
		
	We define $\pi_{g,h,d} : \scrC_{g,h,d} \lra{} \scrF_{g,h,d}$
	to be the universal curve
	and $\scrC^o_{g,h,d}$ the complement of its marked points and nodes.
	We let $\scrC_{g,h,d}|_\phi$ (resp. $\scrC^o_{g,h,d}|_\phi$) be the fiber of $\scrC_{g,h,d}$ (resp. $\scrC^o_{g,h,d}$) over $\phi \in \scrF_{g,h,d}$.
	We let $\omega_{\scrC_{g,h,d}/\scrF_{g,h,d}}$
	be the relative dualizing sheaf of this universal curve.
	The group $GL_{d-g+1}(\bC)$
	acts on all of these spaces making
	$\pi_{g,h,d}$ equivariant.

	\begin{remark} \label{lemma local submersion}
		It can be shown that each point $\phi \in \scrF_{g,h,d}$, admits neighborhood $U_\phi$
equipped with a submersion $U_\phi \lra{} \ccMbar_{g,h+r}$ for some $r$.
	\end{remark}

	For the construction of the thickening of moduli spaces of maps, we shall need a variant of this construction: define $\scrF^{(2)}_{g,h,d} \subset {\ccMbar}_{g,h}((\C \bP^{d-g})^2,(d,d))$
	to be the subspace of stable nodal genus $g$ curves $u : \Sigma \to (\C \bP^{d-g})^2$ of bidegree $(d,d)$ with $h$ marked points
	so that if we compose $u$ with either projection map to $\C \bP^{d-g}$,
	we get an element of $\scrF_{g,h,d}$.
	This can be seen as a space of pairs of maps $(u,v)$ where $u,v : \Sigma \lra{} \C \bP^{d-g}$ is in $\scrF_{g,h,d}$. 	There is a natural diagonal $GL_{d-g+1}(\C)$ action on $\scrF^{(2)}_{g,h,d}$
	given by sending $(u,v)$ to $(u \cdot g, v \cdot g)$ for each $g \in GL_{d-g+1}(\C)$.
	
	Let $\Delta : \scrF_{g,h,d} \lra{} \scrF^{(2)}_{g,h,d}$ be the diagonal embedding
	sending $u : \Sigma \lra{} \C \bP^{d-g}$ to the map $(u,u)$.
	Let $H_{g,h,d} \lra{} \scrF_{g,h,d}$ be the vector bundle whose fiber over $u$
	is $H^0(u^*T \C \bP^{d-g})$.	
	
	Let $\Pi : \scrF^{(2)}_{g,h,d} \lra{} \scrF_{g,h,d}$
	be the projection map to the first factor sending $(u,v)$ to $u$.
	On a neighborhood $U_{g,h,d} \subset \scrF^{(2)}_{g,h,d}$ of the image of $\Delta$,
	this map is a submersion
	and the tangent space of the fiber of $\Pi|_{U_{g,h,d}}$ over a diagonal element $(u,u)$, $u : \Sigma \lra{} \C \bP^{d-g}$
	is $H^0(u^*T \C \bP^{d-g})$.
	As a result, after shrinking $U_{g,h,d}$,
	we can choose a $U(d-g+1)$-equivariant fiber preserving diffeomorphism
	\begin{equation} \label{eqn diagonal neighborhood}
		\widetilde{\Delta} : H_{g,h,d} \to U_{g,h,d}
	\end{equation}
	whose restriction to the zero section is $\Delta$.

	\subsection{Domain Metrics}
	
	We wish to put a coherent family of metrics on all
	domains of appropriate maps from a nodal curve to
	a symplectic manifold. We shall introduce a general framework for such a construction in this section, which we will later specialise to different settings. We thus find it convenient to consider a compact Lie group $G$, and write	$G_\C$ for its complexification.

		\begin{defn} \label{defn equivariant family}
		A \emph{$G_\C$-equivariant family of nodal curves}
		is a flat family of nodal curves $\pi_\scrF : \scrC \to \scrF$ (possibly with marked points defined by sections of $\pi_\scrF$)
		where $\scrF$ is a smooth quasi-projective variety and where $G_\bC$
		acts on the domain and codomain of $\pi_\scrF$  so that
		\begin{enumerate}
			\item \label{itempiscrf} $\pi_\scrF$ is $G_\bC$-equivariant.
			\item The marked point sections are also required to be $G_\bC$-equivariant.
			\item \label{itemfaithful} For each $\phi \in \scrF$, the stabilizer group of $\phi$ acts faithfully on the fiber $\scrC|_\phi$.
		\end{enumerate}
	We define $\scrC^o$ to be the complement of the nodes and marked points.
	\end{defn}

	The faithfulness assumption \eqref{itemfaithful} is needed in Lemma \ref{lemmafFpalaiproper} later on, which in turn is needed to put an appropriate invariant metric on an infinite dimensional space of curves.
	
	\begin{example}
		The family of curves $\scrC_{g,h,d} \to \scrF_{g,h,d}$
		defined in Section \ref{Subsec:moduli}
		is a $G_\C$-equivariant family of nodal curves where $G = U(d-g+1)$
		and $G_\C = GL_{d-g+1}(\bC)$. 	Note that we did not assume that the stabilizer group is finite, as this key example includes situations where the stabilizer group is non-compact, due to the presence of rational components which carry fewer than $3$ marked points or nodes.
	\end{example}
	
	Let $(X,\omega)$ be a closed symplectic manifold, let
	$\beta \in H_2(X;\Z)$.
	Let $J$ be an $\omega$-tamed almost complex structure.
	Let $\pi_\scrF : \scrC \to \scrF$
	be a $G_\bC$-nodal family as in Definition \ref{defn equivariant family}, whose fibres have
        $h$ marked point that define sections $p_1,\cdots,p_h : \scrF \to \scrC$.
	Energy quantization implies there is a minimal bound  on the energy (i.e. integral of $\omega$) over all unstable irreducible components $\Sigma' \subset \Sigma$ of $J$-holomorphic stable maps $u : \Sigma \lra{} X$ in class $\beta$. We denote this bound by  $e > 0 $.

	\begin{defn} \label{defn space of curves}
		Let
		$\fF_\scrF$ be the infinite dimensional space of
		all tuples $(\phi,u)$ where $\phi \in \scrF$ and $u : \scrC|_\phi \lra{} X$
		is a smooth map representing $\beta$ with the property that
		\begin{enumerate}
		\item $u$ is $J$-holomorphic in a small neighborhood of its nodes,
		\item $\int_{\Sigma'} u^*\omega \geq 0$ for each
		irreducible component $\Sigma'$ of the domain of $u$, and
		\item $\int_{\Sigma'} u^*\omega \geq e$ if $\Sigma'$ is an unstable component of $\Sigma$.
		\end{enumerate}
		The topology on $\fF_\scrF$ is defined as follows:
		let $Y := \Omega^{0,1}_{\scrC^o/\scrF} \otimes_\C TX$, and
		let $\Gamma_u \subset \scrC|_\phi \times X \subset \scrC \times X$
		be the graph of $u$.
		The operator $\overline{\partial}_J$ defines a smooth section of $Y|_{\Gamma_u}$
		and hence the image of this section
		is a closed subset $C_u$ of the total space of $Y$.
		The topology on $\fF_\scrF$ is induced by the Hausdorff metric
		on such closed subsets.

		We define the \emph{universal curve}
		$\pi_\fC : \fC_\scrF \lra{} \fF_\scrF$ over $\fF_\scrF$ to be the pullback of the curve $\scrC \to \scrF$ via the natural projection map:
		\begin{equation}
			\fF_\scrF \to \scrF, \quad (\phi,u) \to \phi.
		\end{equation}
	\end{defn}

        We shall presently use the fact that the topology on $\fF_\scrF$ is metrizable. 
	
	\begin{remark} \label{remarkelliptic}
		By elliptic regularity, we have that if a sequence $(\phi_i,u_i)$ converges to $(\phi,u)$ in $\fF_\scrF$,
	then $u_i$ converges to $u$ in a $C^1_{loc}$ sense away from the nodes after parameterizing
	 these domains appropriately.	
	\end{remark}
	
	One consequence of the previous remark is that the restriction of the topology on $\fF_\scrF$ to a curves whose domain is a fixed element $\phi$ of $\scrF$
	is the $C^1$ topology and hence it is separable.
	There is also natural continuous $G_\bC$ action on
	$\fF_\scrF$ and $\fC_\scrF$ induced from the one on $\scrF$ and $\scrC$
	making the natural map $\pi_\fC : \fC_\scrF \lra{} \fF_\scrF$
	equivariant.

	\begin{defn} 
		A \emph{fiberwise metric} on $\fC_\scrF$
		is a continuous map
		$\mu : \fC_\scrF \times_{\fF_\scrF} \fC_\scrF \to \bR_{\geq 0}$
		so that the restriction of $\mu$ to each irreducible component of the fiber over an element of 
	$\fF_\scrF $ 	is a distance metric induced from a smooth Riemannian metric.
      \end{defn}

For the next definition, let $\omega_{\fC_\scrF/\fF_\scrF}(p_1,\cdots,p_h)$ be the pullback of the dualizing
line bundle $\omega_{\scrC/\scrF}$ to $\fC_\scrF$, twisted by the marked point sections $p_1,\cdots,p_h$.
\begin{defn} \label{defn domain metric}
A \emph{consistent domain metric} for $\scrF$ consists of a fiberwise metric on $\fC_\scrF$ 
		together with a Hermitian metric on $\omega_{\fC_\scrF/\fF_\scrF}(p_1,\cdots,p_h)$
		with the property that both are invariant under the
		$G_\bC$-action on $\fC_\scrF$.  
\end{defn}
		
Since the notion of a consistent domain metric is local in $ \fF_\scrF $, it makes sense for any $G_\bC$-invariant open subset $U \subset \fF_\scrF$. 

	We wish to equip the fibres of $\pi_\fC$ with such a consistent domain metric. We emphasise that this requires equivariance under the non-compact group $G_\bC$, hence cannot be achieved by a naive averaging argument. 
	We will construct this metric locally and then patch these metrics together using a $G_\bC$-equivariant partition of unity (or equivalently, a partition of unity
	on the quotient $\fF_\scrF / G_\bC$).
	To show that that such a partition of unity exists, we will be using a classical result of Palais \cite{palais1961existence}, which we now recall:
        
	Let $H$ be a Lie group (possibly non-compact) and $Y$ a $H$-space which is Tychonoff.

	\begin{defn} \cite[Definition 1.1.1, 1.2.1, 1.2.2]{palais1961existence} \label{defpalaisd}
		For any two subsets $U,V \subset Y$, we write $((U,V)) := \{h \in H \ : \ hU \cap V \neq \emptyset \}$.
		We say that $U$ is \emph{thin} relative to $V$ if $((U,V))$ has compact closure in $H$.
		We say that $U$ is \emph{thin} if $U$ is thin relative to itself.
		A subset $S \subset Y$ is a \emph{small subset} if every point in $Y$
		admits a neighborhood $U$ which is thin relative to $S$.
		We say that the $H$-action on $Y$ is \emph{Palais proper}
		if every point $y \in Y$ admits a neighborhood which is a small subset.
	\end{defn}

	\begin{theorem} \cite[Theorem 4.3.4.]{palais1961existence}
		If $Y$ is a metrizable, separable and  Palais proper $H$-space, then
		$Y/H$ is metrizable and separable.
	\end{theorem}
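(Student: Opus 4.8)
The plan is to show in turn that the quotient map is open (which hands us separability for free), that the orbit relation is closed (which gives Hausdorffness), that the quotient is second countable, and that it is locally metrizable via slices, after which a standard metrization theorem concludes. First I would observe that $q\colon Y\to Y/H$ is continuous, surjective and open --- for open $U\subseteq Y$ one has $q^{-1}(q(U))=HU=\bigcup_{h\in H}hU$, a union of open sets --- so $Y/H$ is separable, the image of a countable dense set being countable and dense. Next I would check that the orbits are closed and the orbit equivalence relation $R=\{(y,y')\in Y\times Y:Hy=Hy'\}$ is closed in $Y\times Y$; this is exactly what the ``thin/small set'' conditions of Definition~\ref{defpalaisd} are tailored to give. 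Concretely, for $(y_0,y_1)\notin R$ one takes a small neighbourhood $W$ of $y_0$ and a neighbourhood $U_1$ of $y_1$ thin relative to $W$, so that $\{h:hU_1\cap W\neq\emptyset\}$ has compact closure $K$; the finitely many obstructions coming from the compact set $K$ are removed by a shrinking argument using continuity of the action and Hausdorffness of the metric space $Y$. With $q$ open and $R$ closed, the diagonal of $Y/H$ (whose $q\times q$-preimage is $R$) is closed, so $Y/H$ is Hausdorff, and closedness of orbits gives the $T_1$ axiom.

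Since $Y$ is metrizable and separable it is second countable, and as $q$ is an open continuous surjection the images $q(U_n)$ of a countable base $\{U_n\}$ of $Y$ form a countable base of $Y/H$, so $Y/H$ is second countable. To upgrade this to metrizability I would invoke the slice theorem for proper Lie group actions \cite{palais1961existence}: at each $y\in Y$ the stabiliser $H_y$ is compact and there is a metrizable slice $S_y\ni y$ (a subspace of $Y$) together with an $H$-invariant tube $H\cdot S_y\cong H\times_{H_y}S_y$, so that $q(H\cdot S_y)\cong S_y/H_y$ is metrizable, being the quotient of a metrizable space by a compact group action. Hence $Y/H$ is locally metrizable, and shrinking slices separates an orbit from any disjoint closed invariant set, so $Y/H$ is regular. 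The Urysohn metrization theorem (regular, $T_1$, second countable) then finishes the proof; alternatively, local metrizability together with paracompactness --- obtained from Palais's construction of $H$-invariant partitions of unity subordinate to invariant open covers --- yields metrizability via the Smirnov metrization theorem.

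The main obstacle is the construction of these local models --- the slice theorem, or equivalently the invariant partitions of unity --- from the relatively weak Palais-properness hypothesis, all the more so because $Y$ may be infinite-dimensional in the intended applications. In particular, since the $H$-action need not be by isometries, the naive candidate $\bar d(Hy,Hy')=\inf_{h\in H}d(y,hy')$ is in general neither positive-definite nor compatible with the quotient topology, so properness has to be used in an essential way --- via slices, or by patching bounded metrics over a countable cover by metrizable tubes against an invariant partition of unity --- to produce a genuine metric on $Y/H$. The remaining points (openness of $q$, transport of separability and second countability, and closedness of $R$) are routine.
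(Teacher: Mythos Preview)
The paper does not supply its own proof of this statement: it is simply quoted, with citation, as Theorem~4.3.4 of Palais \cite{palais1961existence}, and the authors use only its consequence (Corollary~\ref{corollarypalais}) that $H$-invariant partitions of unity exist. So there is no ``paper's proof'' to compare against; your sketch is, in effect, a reconstruction of Palais's original argument.

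That said, your outline is essentially the standard one and is correct in its broad strokes: openness of $q$ and closedness of the orbit relation are routine consequences of the thin/small-set axioms, second countability passes to open quotients, and metrizability then follows from Urysohn once regularity is in hand. You are also right to flag the slice theorem as the genuine content --- Palais's paper is largely devoted to establishing exactly this in the generality needed (completely regular spaces, no manifold or finite-dimensionality hypotheses), and once slices exist the local model $S_y/H_y$ with $H_y$ compact does the work. One minor redundancy: if you already have Hausdorff, regular, and second countable, Urysohn applies directly and the detour through local metrizability is unnecessary; conversely, if you go via Smirnov you need paracompactness, which for a second-countable regular space is automatic. Either route closes.
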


	The main point of the theorem above is that it works for group actions on infinite dimensional spaces. We are particularly interested in the infinite dimensional space $\fF_\scrF$.

	\begin{corollary} \label{corollarypalais}
		If $Y$ is a metrizable, separable and Palais proper $H$-space,
		and if  $(U_i)_{i \in I}$ is an open cover for which each $U_i \subset Y$
		is $H$-invariant, there is an $H$-equivariant partition of unity subordinate to this cover.
	\end{corollary}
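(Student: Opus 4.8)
The plan is to deduce the corollary directly from Palais's theorem together with a standard fact about paracompactness. First I would invoke the cited theorem to conclude that $Y/H$ is metrizable and separable; in particular $Y/H$ is paracompact. Let $q : Y \to Y/H$ denote the quotient projection. Since each $U_i$ is $H$-invariant, it is a saturated open set, so $U_i = q^{-1}(\bar U_i)$ for the open set $\bar U_i := q(U_i) \subset Y/H$, and $(\bar U_i)_{i \in I}$ is an open cover of $Y/H$.

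Next I would appeal to paracompactness of $Y/H$ to extract a partition of unity $(\bar\rho_i)_{i \in I}$ subordinate to $(\bar U_i)_{i \in I}$, i.e. continuous functions $\bar\rho_i : Y/H \to [0,1]$ with $\mathrm{supp}(\bar\rho_i) \subset \bar U_i$, locally finite supports, and $\sum_i \bar\rho_i \equiv 1$. (Strictly speaking one should note that the cover may be refined; one can always re-index or sum the refinement functions over the fibres of the refinement map to land back on the original index set, a routine bookkeeping step.) Then set $\rho_i := \bar\rho_i \circ q : Y \to [0,1]$. Each $\rho_i$ is continuous and $H$-invariant by construction; $\mathrm{supp}(\rho_i) = q^{-1}(\mathrm{supp}(\bar\rho_i)) \subset q^{-1}(\bar U_i) = U_i$; local finiteness is inherited because $q$ is continuous and open, so a neighborhood in $Y/H$ meeting only finitely many $\mathrm{supp}(\bar\rho_i)$ pulls back to such a neighborhood in $Y$; and $\sum_i \rho_i = \left(\sum_i \bar\rho_i\right) \circ q \equiv 1$. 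Thus $(\rho_i)_{i \in I}$ is the desired $H$-equivariant partition of unity.

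The only genuine point requiring care — and the place I would expect to spend the most words — is the passage through paracompactness: one must confirm that "metrizable and separable" suffices to run the standard partition-of-unity construction on $Y/H$ (it does, since metrizable spaces are paracompact by Stone's theorem, independently of separability), and that pulling back along the open continuous surjection $q$ preserves both the support condition and local finiteness. Everything else is formal: invariance of the $\rho_i$ is automatic from factoring through $q$, and the sum condition is immediate. No macros beyond those already in the paper are needed.
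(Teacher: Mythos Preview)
Your proposal is correct and is exactly the argument the paper has in mind: the corollary is stated without proof immediately after Palais's theorem, and the intended deduction is precisely to pass to the metrizable (hence paracompact) quotient $Y/H$, take a partition of unity there, and pull back along $q$. One tiny imprecision: you assert $\mathrm{supp}(\rho_i) = q^{-1}(\mathrm{supp}(\bar\rho_i))$, but only the containment $\mathrm{supp}(\rho_i) \subset q^{-1}(\mathrm{supp}(\bar\rho_i))$ is automatic from continuity of $q$, and that containment is all you need.
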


        We now check that the situation we are studying fits within Palais's framework:

	\begin{lemma} \label{lemmafFpalaiproper}
		$\fF_\scrF$ is a Palais proper $G_\bC$-space.		
	\end{lemma}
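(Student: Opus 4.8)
The plan is to verify that the continuous $G_\bC$-action on $\fF_\scrF$ satisfies Palais's properness criterion (Definition \ref{defpalaisd}): every point of $\fF_\scrF$ should have a neighbourhood that is a small subset, i.e. a neighbourhood $U$ such that every point of $\fF_\scrF$ in turn has a neighbourhood that is thin relative to $U$. Since the $G_\bC$-action on $\fF_\scrF$ is induced from the $G_\bC$-action on the base $\scrF$ via the equivariant map $\pi_\fC : \fC_\scrF \to \fF_\scrF$ and the projection $\fF_\scrF \to \scrF$ (sending $(\phi,u)$ to $\phi$), I would first reduce the properness question on $\fF_\scrF$ to the analogous statement about $\scrF$. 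Concretely, if $((U,V))_{\scrF} := \{g \in G_\bC : g\cdot p_\scrF(U) \cap p_\scrF(V) \neq \emptyset\}$ has compact closure for suitable neighbourhoods in $\scrF$, then the same set controls $((U,V))$ in $\fF_\scrF$, because an element $g$ moving a curve $(\phi,u)$ near $(\phi',u')$ in particular moves $\phi$ near $\phi'$. So the key point is that $G_\bC = GL_{d-g+1}(\bC)$ acts Palais-properly on the smooth quasi-projective variety $\scrF$ (or, in the more general setup, on the base $\scrF$ of a $G_\bC$-equivariant family of nodal curves with the faithfulness hypothesis \eqref{itemfaithful}).

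For that reduction to work I need to control the "extra directions" coming from the maps $u$, and this is where the energy lower bounds in Definition \ref{defn space of curves}(3) and the faithfulness condition \eqref{itemfaithful} enter. The subtlety is that a priori $G_\bC$ is non-compact and could act with non-compact stabilisers on $\scrF$ — indeed the example explicitly warns that unstable rational components carry non-compact automorphisms — so properness on $\scrF$ alone may fail. The resolution is that on $\fF_\scrF$ we are looking at \emph{stable maps}: the constraint $\int_{\Sigma'} u^*\omega \geq e > 0$ on unstable components of the domain means that the reparametrisation freedom of those components is used up by $u$, so that the combined data $(\phi,u)$ has the automorphisms of the original curve cut down to a group acting properly. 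I would make this precise by a Gromov-compactness / bubbling argument: given a sequence $g_i \in G_\bC$ with $g_i(\phi_i,u_i) \to (\phi',u')$ and $(\phi_i,u_i)$ ranging in a small neighbourhood, one shows $g_i$ has a convergent subsequence, for otherwise $g_i$ would degenerate, forcing either the complex structures of the domains or the maps to degenerate (energy concentrating, or components being crushed), contradicting convergence in the Hausdorff-metric topology on $\fF_\scrF$ together with the elliptic-regularity statement of Remark \ref{remarkelliptic} (which gives $C^1_{loc}$ convergence of the $u_i$ away from nodes). The faithfulness assumption \eqref{itemfaithful} ensures no residual non-compact stabiliser survives from the curve side.

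Having established that every point has a thin neighbourhood relative to a fixed small neighbourhood, the remaining bookkeeping is routine: one checks that "thin relative to" behaves well under shrinking neighbourhoods, that the resulting neighbourhoods are small in Palais's sense, and hence that the action is Palais proper. The main obstacle, as indicated above, is the second step: converting the heuristic "the energy bound $e$ rigidifies unstable components" into a clean compactness statement for sequences $g_i \in G_\bC$, handling simultaneously the degeneration of the group elements, the (non-)degeneration of domain complex structures in $\scrF$, and the behaviour of the maps $u_i$ near nodes and on unstable bubbles. Once this is in hand, Corollary \ref{corollarypalais} then yields the $G_\bC$-equivariant partitions of unity needed to patch local consistent domain metrics into a global one.
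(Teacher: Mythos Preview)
Your overall strategy is sound and matches the paper's: reduce to a sequential argument showing that an unbounded sequence $g_i \in G_\bC$ cannot carry elements of a suitable neighbourhood back into that neighbourhood, and recognise that the obstruction to reducing to $\scrF$ alone is precisely the possible non-compactness of stabilisers coming from unstable domain components. You also correctly identify the energy lower bound on unstable components and the faithfulness hypothesis as the relevant inputs.

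Where you diverge from the paper is in the mechanism you propose for the contradiction. You invoke Gromov compactness and bubbling, but the maps $u$ in $\fF_\scrF$ are only $J$-holomorphic in a neighbourhood of the nodes, so energy-concentration and bubbling arguments do not apply directly; making this precise would require some care. The paper instead uses a much more elementary device: fix a smooth metric on $\scrC$ and a proper function $f:\scrF\to[0,\infty)$, let $L(\phi,u)$ be the Lipschitz constant of $u$ with respect to the induced metric on $\scrC|_\phi$, and take the sublevel sets of $F:=L+f$ as the candidate thin neighbourhoods. If $(\phi_i,u_i)$ and $(g_i\phi_i,u_i\circ g_i^{-1})$ both lie in such a sublevel set while $g_i$ is unbounded, properness of $f$ gives $\phi_i\to\phi_\infty$ after a subsequence, and one writes $g_i$ as an element converging to the identity times an unbounded sequence $q_i$ in the stabiliser of $\phi_\infty$. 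The punchline is then purely metric: unbounded sequences of M\"obius transformations have unbounded Lipschitz numbers, so the Lipschitz constant of $u_i\circ g_i^{-1}$ must blow up, contradicting membership in the sublevel set.

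The advantage of the paper's route is that Lipschitz constants are insensitive to whether $u$ solves any PDE, so the argument sidesteps the issue that your bubbling heuristic would have to confront. Your approach could likely be made to work, but would need the energy bound on unstable components more explicitly (to ensure the limiting map is non-constant there) rather than as a background heuristic.
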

	\begin{proof}
		Choose a smooth metric on $\scrC$
		as well as a  proper smooth  function $f : \scrC \to [0,\infty)$.
		Now let $L : \fF_\scrF \to (0,\infty)$
		send $(\phi,u)$ to the Lipschitz number of $u$ using the metric on $\scrC|_\phi$ and $X$.
		Define
		\begin{equation}
			F : \fF_\scrF \to \bR, \ F((\phi,u)) := L((\phi,u)) + f(\phi).
		\end{equation}
		To prove that $\fF_\scrF$ is Palais proper,
		it is sufficient for us to show that the sublevel sets of $F$
		are thin in the sense of Definition \ref{defpalaisd}
		since $\fF_\scrF$ is covered by them.
		
		Let $U := F^{-1}([0,C))$ for some $C>0$.
		Let $\underline{U} \subset \scrF$ be the image
		of $U$ in $\scrF$.
		Let $(g_i)_{i \in \bN}$
		be an unbounded sequence of elements in $G_\bC$.
		Suppose that $g_i \cdot \underline{U} \cap \underline{U} \neq \emptyset$
		for each $i \in \bN$.
		So, there exist $(\phi_i,u_i) \in U$
		satisfying $(g_i \phi_i, u_i \circ g_i^{-1}) \in U$
		for each $i \in \bN$.
		After passing to a subsequence,
		we can assume $\phi_i \to \phi_\infty$ as $i \to \infty$
		since $f$ is proper.
		Let $G' \subset G_\bC$ be the stabilizer group of $\phi_\infty$.
		There exists a sequence $(q_i)_{i \in \bN}$
		in $G'$ so that $g_i q_i^{-1} \to \id$ in $G_\bC$ as $i \to \infty$.
		We also have that $u_i$ converges to a Lipschitz continuous function
		$u_\infty$.
		Now since unbounded sequences of M\"{o}bius transformations
		have unbounded Lipschitz numbers, we have that
		the Lipschitz number of $u_\infty \circ q_i^{-1}$ tends to infinity.
		Hence the Lipschitz number of $u_i \circ g_i^{-1}$ tends to infinity
		too.
		But this implies $(\phi_i,u_i) \notin U$ for $i$ large enough.
		Hence $g_i \cdot U \cap U = \emptyset$ for all $i$ large enough.
	\end{proof}

	\begin{defn}
		We say a submanifold $S \subset X$ is \emph{transverse} to a smooth nodal curve $u : \Sigma \to X$
		if $u(\Sigma^{sing}) \cap S = \emptyset$ and $u|_{\Sigma - \Sigma^{sing}}$ is transverse to $S$
		where $\Sigma^{sing} \subset \Sigma$ is the subset of nodes.
	\end{defn}
	
	Note that by Remark \ref{remarkelliptic},
	we have that if $(\phi,u) \in \fF_\scrF$ is transverse to $S$
	then there is a neighborhood $U$ of $(\phi,u)$ in $\fF_\scrF$
	with the property that every element of $U$ is transverse to $S$.

	\begin{defn} \label{defnDcompatible}
		Let $D \subset X$ be a codimension $2$ submanifold.
		We say that $(\phi,u) \in \fF_\scrF$ is \emph{$D$-compatible}
		if 
		\begin{enumerate}
			\item $u$ is transverse to $D$, and
			\item there exist points $p_1,\cdots,p_{h_u} \in \scrC|_\phi$
			making $(\scrC|_\phi, p_1,\cdots,p_{h_u})$ into a stable marked curve, and
			\item $\{p_1,\cdots,p_{h_u}\} = u^{-1}(D)$.
		\end{enumerate}
		We call $\{p_1,\cdots,p_{h_u}\}$ a collection of \emph{$D$-compatible marked points}.
		We will let $U_D \subset \fF_\scrF$ be the subspace of $D$-compatible
		curves. 
	\end{defn}
	
Note that $U_D$ is $G_{\bC}$-invariant, since the action of $G_{\bC}$ corresponds to pre-composing the map $u$ with an identification of domains, so that transversality of curves to divisors in $X$ is invariant under the action.
	\begin{lemma} \label{lemmalocal}
		For each codimension $2$ submanifold $D$, we have that $U_D$ admits a consistent domain metric.
	\end{lemma}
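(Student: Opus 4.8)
The plan is to read the needed data off the tautological family over the moduli space of stable curves, exploiting that on $U_D$ the domains are \emph{already} stabilised, and to cope with the non-compactness of $G_\bC$ by working with the (manifestly $G_\bC$-invariant) isomorphism class of the stabilised curve rather than with the curve itself. First I would dispatch two reductions. Transversality of $u$ to $D$ and to the nodes are open conditions; by Remark \ref{remarkelliptic} the set $u^{-1}(D)$ (finite, by $D$-compatibility) varies continuously without collisions or escape into the nodes; and $\scrC|_\phi$ varies in the algebraic family $\scrC\to\scrF$. Hence $U_D$ is open, the stability demanded in Definition \ref{defnDcompatible}(2)--(3) persists in a neighbourhood, and — using that $D$ is a closed, hence compact, submanifold and the domains are compact — the intersection number $h_u:=|u^{-1}(D)|$ is locally constant. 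So $U_D=\coprod_k U^{(k)}_D$ splits into the clopen pieces on which $h_u\equiv k$; a consistent domain metric on each piece assembles to one on $U_D$, so I would fix $k$ (for which $\ccMbar_{g,h+k}$ is non-empty whenever $U^{(k)}_D$ is). I would also note that the two ingredients of a consistent domain metric — a fibrewise conformal metric on $\fC_\scrF$ and a Hermitian metric on $\omega_{\fC_\scrF/\fF_\scrF}(p_1,\dots,p_h)$ — each vary in a convex set, so that partition-of-unity combinations of them make sense.

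Next I would build the classifying map. For $(\phi,u)\in U^{(k)}_D$, Definition \ref{defnDcompatible}(2)--(3) says the marked curve $(\scrC|_\phi;p_1,\dots,p_h,u^{-1}(D))$ is stable as it stands, so no contraction is required; assigning to $(\phi,u)$ its isomorphism class gives a map $\kappa\colon U^{(k)}_D\to\ccMbar_{g,h+k}/S_k$ (the $p_i$ ordered, the points of $u^{-1}(D)$ unordered) under which $\fC_\scrF|_{U^{(k)}_D}$ is fibrewise biholomorphic to the pullback of the universal curve $\ccCbar_{g,h+k}$. I would check that $\kappa$ is continuous — Remark \ref{remarkelliptic} gives continuous variation of the complex structure on $\scrC|_\phi$ and of the marked points — and, crucially, \emph{$G_\bC$-invariant}: the isomorphism $\scrC|_\phi\xrightarrow{\sim}\scrC|_{g\phi}$ induced by $g\in G_\bC$ carries the marked curve of $(\phi,u)$ to that of $g\cdot(\phi,u)$, using $G_\bC$-equivariance of the sections $p_i$ (Definition \ref{defn equivariant family}(2)). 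Then I would fix, once and for all, an $S_k$-invariant fibrewise conformal metric and an $S_k$-invariant Hermitian metric on $\omega_{\ccCbar_{g,h+k}/\ccMbar_{g,h+k}}(\sigma_1,\dots,\sigma_h)$ over the universal curve; these exist, from arbitrary local choices, by an orbifold partition of unity on the \emph{compact} orbifold $\ccMbar_{g,h+k}$ (automatically invariant under the isotropy groups, and $S_k$-invariant after a finite average), and descend to the $S_k$-quotient. Pulling them back along $\kappa$ and the biholomorphic identification yields a $G_\bC$-invariant consistent domain metric on $U^{(k)}_D$, and hence on $U_D$.

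Since $\kappa$ is a map out of the infinite-dimensional space $\fF_\scrF$ and the identification of $\scrC|_\phi$ with a fibre of $\ccCbar_{g,h+k}$ is canonical only up to the finite automorphism group of the stable curve, to be fully rigorous I would carry out this pullback locally and patch. By Lemma \ref{lemmafFpalaiproper} the $G_\bC$-action on $U^{(k)}_D$ is Palais proper, and the space is metrizable and separable, so Corollary \ref{corollarypalais} furnishes a $G_\bC$-equivariant partition of unity subordinate to any cover by $G_\bC$-invariant opens; such covers exist because $U^{(k)}_D/G_\bC$ is paracompact (Palais's theorem), and by convexity it then suffices to produce a $G_\bC$-invariant consistent domain metric on each member of one. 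Here I would use that on $U_D$ the stabiliser of every $(\phi,u)$ is finite — it injects, via $g\mapsto\psi_g$ (the automorphism of $\scrC|_\phi$ induced by an element $g$ of the stabiliser of $\phi$), into $\Aut(\scrC|_\phi,u^{-1}(D))$ by Definition \ref{defn equivariant family}(3), and the latter is finite since the marked curve is stable — so the slice theorem \cite{palais1961existence} presents a $G_\bC$-invariant neighbourhood of each point of $U^{(k)}_D$ as $G_\bC\times_\Gamma S$ with $\Gamma$ finite; on the slice $S$ I would take any consistent domain metric (e.g.\ restricting ambient Hermitian data from a quasi-projective embedding of $\scrC$ to the fibres), average over $\Gamma$, and transport the $\Gamma$-invariant result over $G_\bC\times_\Gamma S$ by the group action.

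The main obstacle, around which the whole argument is organised, is manufacturing genuine $G_\bC$-invariance: one cannot average over the non-compact group $G_\bC$ directly, so invariance must come either from factoring through the $G_\bC$-invariant isomorphism class of the stable marked curve — which is legitimate \emph{precisely because} $D$-compatibility has already stabilised the curve, so $\scrC|_\phi$ itself, not some contraction of it, is recovered from $\kappa$ (had contractions been needed, one would have to extend the metric over contracted components and the problem would resurface there) — or from slicing plus finite averaging, which rests on the finiteness of stabilisers on $U_D$. Subsidiary points I would watch: the universal metric must be chosen equivariantly for the automorphisms of stable curves (equivalently, as a genuine structure on the moduli orbifold), and the continuity of $\kappa$, hence of the metric produced, ultimately rests on the $C^1_{\mathrm{loc}}$-convergence statement of Remark \ref{remarkelliptic}.
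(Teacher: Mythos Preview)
Your core approach is correct and matches the paper's: build a $G_\bC$-invariant classifying map from $U_D$ to a moduli space of stable marked curves (using $u^{-1}(D)$ to stabilise the domain), choose permutation- and automorphism-invariant metrics on the universal curve there, and pull back. The paper's proof does exactly this, in slightly fewer words and with the marked curve $(\scrC|_\phi,u^{-1}(D))$ rather than your $(\scrC|_\phi;p_1,\dots,p_h,u^{-1}(D))$; your version has the minor advantage that the twist $\omega(p_1,\dots,p_h)$ is visibly associated to the first $h$ sections.

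Your third paragraph, however, is unnecessary and partially conflates this lemma with the next one. Once the metrics on $\ccCbar_{g,h+k}$ are chosen as genuine orbifold data (invariant under automorphisms of stable curves) and $S_k$-invariant, the pullback along $\kappa$ is already well-defined pointwise --- the ambiguity ``up to the finite automorphism group'' disappears precisely because you averaged over it upstairs. No local patching, Palais slices, or partition of unity is needed \emph{within} $U_D$; the paper reserves the Palais machinery (Corollary \ref{corollarypalais}) for Lemma \ref{lemma metrics on domains}, where it patches the consistent domain metrics on the various $U_D$ into one on all of $\fF_\scrF$. Your alternative slice-based construction in that paragraph would also work, but it is a detour.
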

	\begin{proof}
		Let $\ccCbar_{g,h} \to \ccMbar_{g,h}$ be the universal curve over the moduli space
		of curves of genus $g$ with $h$ marked points for each $h \in \bN$ with $(g,h) \neq (0,0),(0,1),(0,2),(1,0)$.
		Choose a smooth metric on $\ccCbar_{g,h}$ invariant under permuting the marked points
		for each $h \in \bN$.
		Also choose a Hermitian metric on the relative dualizing bundle $\omega_{\ccCbar_{g,h}/\ccMbar_{g,h}}$.
		Let $(\phi,u) \in U_D$.
		Then we have $D$-compatible marked points $\{p_1,\cdots,p_{h_u}\}$ on $\scrC|_\phi$.
		Hence the domain $\scrC|_\phi$ is identified with a fiber of $\ccCbar_{g,h_u}$
		which is unique up to permuting marked points.
		Hence we can pull back the metric $\ccCbar_{g,h_u}$ to $\scrC|_\phi$
		and the Hermitian metric to $\omega_{\scrC|_\phi}$.
		All of these metrics vary continuously with respect to $(\phi,u)$ so they assemble together giving a consistent domain metric on $U_D$.
	\end{proof}

	\begin{lemma} \label{lemma metrics on domains}
		$\scrF$ admits a consistent domain metric.
	\end{lemma}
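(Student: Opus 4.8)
The plan is to cover $\fF_\scrF$ by the open subsets $U_D$ of $D$-compatible curves, as $D$ ranges over the closed codimension $2$ submanifolds of $X$, to equip each $U_D$ with a consistent domain metric by Lemma~\ref{lemmalocal}, and to patch these together by means of a $G_\bC$-equivariant partition of unity subordinate to $\{U_D\}_D$. Such a partition of unity is available: $\fF_\scrF$ is metrizable (as already noted) and separable (it fibres over the quasi-projective, hence separable, variety $\scrF$ with fibres separable in the $C^1$-topology), and it is a Palais proper $G_\bC$-space by Lemma~\ref{lemmafFpalaiproper}, so Corollary~\ref{corollarypalais} applies to the $G_\bC$-invariant open cover $\{U_D\}_D$ (each $U_D$ being $G_\bC$-invariant, as observed after Definition~\ref{defnDcompatible}).

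The crux is the covering claim. First, each $U_D$ is open: for $(\phi',u')$ near $(\phi,u)\in U_D$, transversality of $u'$ to the \emph{closed} submanifold $D$ and disjointness of $u'$ from $D$ near the nodes persist (using Remark~\ref{remarkelliptic} and the remark preceding Definition~\ref{defnDcompatible}), so $u'^{-1}(D)$ is a finite set of the same cardinality as $u^{-1}(D)$ sitting in nearby positions, and the resulting pointed nodal curve is again stable, stability being an open condition in families. For the covering itself, fix $(\phi,u)\in\fF_\scrF$. By conditions (2)--(3) of Definition~\ref{defn space of curves} (which is exactly why those positivity conditions are imposed), $u$ is non-constant on every irreducible component of $\scrC|_\phi$ that is not already stable with its nodes alone, hence is an immersion on a dense open subset of each such component. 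On each such component choose finitely many immersion points, away from the nodes, with pairwise distinct images, avoiding the finitely many points $u(\Sigma^{\mathrm{sing}})$ and $u(\Sigma')$ for the components $\Sigma'$ on which $u$ is constant, and in number sufficient that, together with the nodes, they stabilise every component of $\scrC|_\phi$. Through each chosen image point take a small closed codimension $2$ submanifold transverse to the locally embedded image of $u$ there and contained in a small ball disjoint from the excluded points; let $D$ be their disjoint union, perturbed generically so as to be transverse to $u$ everywhere. Then $u$ is transverse to $D$, $u(\Sigma^{\mathrm{sing}})\cap D=\emptyset$, and $u^{-1}(D)$ contains the chosen points, so $(\scrC|_\phi,u^{-1}(D))$ is stable; that is, $(\phi,u)\in U_D$.

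Granting the cover, Lemma~\ref{lemmalocal} provides, for each such $D$, a consistent domain metric on $U_D$: a $G_\bC$-invariant fiberwise Riemannian metric $g_D$ on $\fC_\scrF$ over $U_D$, lying in the conformal class determined by the complex structure on each irreducible component of each fibre, together with a $G_\bC$-invariant Hermitian metric $\eta_D$ on $\omega_{\fC_\scrF/\fF_\scrF}(p_1,\ldots,p_h)$ over $U_D$. Choose, by Corollary~\ref{corollarypalais}, a $G_\bC$-equivariant partition of unity $\{\rho_D\}$ subordinate to $\{U_D\}$, and set
\[
g := \sum_D \rho_D\, g_D, \qquad \eta := \sum_D \rho_D\, \eta_D ,
\]
each summand extended by zero outside the support of $\rho_D$. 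These sums are locally finite; since on each irreducible component all the $g_D$ lie in one fixed conformal class, and since Riemannian metrics in a fixed conformal class --- like positive Hermitian metrics on a fixed line bundle --- form a convex cone, $g$ is again a fiberwise Riemannian metric of the required type and $\eta$ again a Hermitian metric. Both vary continuously and are $G_\bC$-invariant, the $\rho_D$ and the $(g_D,\eta_D)$ being so. The distance metric induced by $g$ on each irreducible component of each fibre, together with $\eta$, is the required consistent domain metric for $\scrF$ in the sense of Definition~\ref{defn domain metric}.

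The main obstacle is the covering step: producing, for an arbitrary $(\phi,u)\in\fF_\scrF$, a closed codimension $2$ submanifold $D\subset X$ making $(\phi,u)$ $D$-compatible. This is a variant of the familiar stabilising-divisor trick (in the spirit of Cieliebak--Mohnke and of Fukaya--Oh--Ohta--Ono); it is lighter here than in the usual compactness-type applications, both because $D$ may be chosen depending on the single curve $(\phi,u)$ rather than uniformly, and because the maps in $\fF_\scrF$ need only be $J$-holomorphic near the nodes, so that we need transversality of $u$ to $D$ but no positivity of intersection. Everything else --- convexity of conformal metrics and of Hermitian metrics, and the passage from local data to global via Palais's equivariant partitions of unity --- is soft.
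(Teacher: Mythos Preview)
Your proof is correct and follows essentially the same approach as the paper: cover $\fF_\scrF$ by the $G_\bC$-invariant opens $U_D$, apply Lemma~\ref{lemmalocal} on each, and patch using a $G_\bC$-equivariant partition of unity supplied by Corollary~\ref{corollarypalais}. Your account is in fact more complete: you spell out the covering step (which the paper simply asserts), and you patch at the level of the fibrewise conformal Riemannian metrics rather than directly summing the induced distance functions as the paper does --- this is arguably the cleaner way to ensure the result is again induced by a smooth Riemannian metric in the given conformal class, as Definition~\ref{defn domain metric} requires.
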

	\begin{proof}
		Let $\scrD$ be the set of codimension $2$ submanifolds of $X$.
		Then the set $(U_D)_{D \in \scrD}$ (Definition \ref{defnDcompatible}),
		defines a $G_\bC$-equivariant open cover of $\fF_\scrF$.
		Hence we have a $G_\bC$-equivariant
		partition of unity $(\rho_D)_{D \in \scrD}$
		subordinate to this cover (Corollary \ref{corollarypalais}).
		By Lemma \ref{lemmalocal}, there is a consistent domain metric on $U_D$
		for each $D \in \scrD$.
		We now patch together these consistent domain metrics using this partition of unity.
		Let us do this for the distance metrics $\mu_D$, $D \in \scrD$.
		Define
		\begin{equation}
			\mu'_D : \fF_\scrC \times_{\fF_\scrF} \fF_\scrC \to \bR_{\geq 0}, \ \mu'_D(x,x') = \left\{  \begin{array}{ll}
				\rho_D(\pi_\scrF(x)) \mu_D(x,x') & \textnormal{if} \ \pi_\scrF(x) \in U_D \\
				0 & \textnormal{otherwise}.
			\end{array}\right.
		\end{equation}
		Then we have $\sum_{D \in \scrD} \mu'_D$ is a distance metric which is $G_\bC$-invariant.
		A similar argument works for Hermitian metrics on the relative dualizing bundle.
	\end{proof}

	\subsection{Basic Construction of Global Kuranishi Charts}\label{Subsec:easier}
	
	Let $(X,\omega)$ be a closed symplectic manifold, let
	$\beta \in H_2(X;\Z)$ and let $g,h,d \in \bN$.
	For now, $d$ is arbitrary but later on we will fix it.
	Let $J$ be an $\omega$-tame almost complex structure.
	To avoid clutter, we write $\scrF := \scrF_{g,h,d}$,
	$\scrC := \scrC_{g,h,d}$, $\scrC^o := \scrC^o_{g,h,d}$,
	$\pi := \pi_{g,h,d}$ and
	$\scrF^{(2)} := \scrF^{(2)}_{g,h,d}$.
	Also let $H_{g,h,d}$, $U_{g,h,d}$ and $\widetilde{\Delta}$ be as in Equation \ref{eqn diagonal neighborhood}.
	These manifolds are naturally $G$-manifolds where $G := U(d-g+1)$.
	For any element $\phi$ in one of these manifolds and any $g \in G$
	we write $\phi \cdot g$ for the corresponding element acted on by $g$.

	\begin{defn} \label{bundleY} 
		Let $Y := \Omega^{0,1}_{\scrC^o/\scrF} \otimes_\C TX$
		be the $G$ vector bundle over $\scrC^o \times X$ whose fiber over a point $(c,x) \in \scrC^o \times X$ is the space of anti-holomorphic maps from $T_c(\scrC^o|_{\pi(c)})$ to $T_x X$.
	\end{defn}
	
	Choose a finite dimensional approximation scheme $(V_{\mu},\lambda_{\mu})_{\mu \in \bN}$
	for $C^\infty_c(Y)$ as in Definition \ref{defn fd}.

	\begin{defn} \label{defn prethickenedmodulispace}
		We define the \emph{pre-thickened moduli space}
		$\scrT^\pre = \scrT^\pre_{g,h,d}(\beta,V_{\mu},\lambda_{\mu})$
		to be the space of triples $(\phi,u,e)$,
		where $(\phi,u) \in \fC_\scrF$ (Definition \ref{defn space of curves}) and $e \in V_\mu$, satisfying the following equation:
		\begin{equation} \label{eqn lambdapde}
			\overline{\partial}_J u|_{\scrC^o|_{\phi}} + (\lambda_\mu(e)) \circ \Gamma_u =0
		\end{equation}
		where
		\begin{equation}
			\Gamma_u : \scrC^o|_\phi \to \scrC^o \times X, \quad \Gamma_u(\sigma) := (\sigma,u(\sigma))
		\end{equation}				
		is the graph map.
	\end{defn}

	The space $\scrT^{\pre}$ carries a topology coming from the natural topology on $V_\mu$ and the Hausdorff distance topology
	on the graphs given by the closure of the image of $\Gamma_u$ in $\scrC \times X$.  Whilst $\fC_\scrF$ was infinite-dimensional, imposing the perturbed Cauchy-Riemann equation \eqref{eqn lambdapde} means that $\scrT^{\pre}$ is a finite-dimensional space.

		The group $G$ acts on $\scrT^\pre$
	by sending 
	\[
	(\phi,u,e) \, \mapsto \, (\phi \cdot g,u \cdot g,e \cdot g)
	\]
	for each $g \in G$ where $u \cdot g : \scrC|_{\phi \cdot g} \to X$ is the composition of the natural map $\scrC|_{\phi \cdot g} \lra{\cdot g^{-1}} \scrC|_\phi$ with $u$.
	\begin{rem} Note that every point in the region where $e = 0$ is fixed by the diagonal circle $S^1 \subset G$. When constructing the actual thickening of the moduli spaces of curves, we will enlarge $ \scrT^\pre$, with one effect being that the stabiliser groups all become finite.    
	\end{rem}

	\begin{defn}
		The \emph{pre-obstruction bundle}
		$E^\pre := E^\pre_{g,h,d}(\beta,V_\mu,\lambda_\mu)$
		over $\scrT^\pre$
		is the direct sum of the pullback of
		$H_{g,h,d}$ to $\scrT^\pre$
		with the trivial bundle $V_\mu$.
	\end{defn}
	
	The group $G$ acts on $H_{g,h,d}$
	in the natural way and hence we have a natural $G$-action on $E^\pre$ coming from the action above as well as the action on $V_\mu$.

	\begin{rem} \label{rem:pre_thickening_not_right_dimension}
		The virtual dimension of $\ccMbar_{g,h}(X,\beta,J)$ is \emph{not} equal to $\dim(\scrT^{\pre}) - \rk (E^{\pre})$.
				\end{rem}

	To proceed with our construction of the genuine thickening of the moduli spaces of curves, we need to imitate the main construction of \cite{AMS-Hamiltonian}, and realise $\C \bP^{d-g} $ as the (projectivisation) of a space of holomorphic sections of a line bundle on the domain of the perturbed pseudo-holomorphic maps that we are considering. In order to formulate this precisely, we need some more notation.

	\begin{defn} \label{defn line bundle data}
		A choice of \emph{line bundle data} associated to $(g,h)$ consists of a triple $\bL = (L,k,\mathfrak{D})$
		where
		\begin{itemize}
			\item  $L$ is
			a Hermitian line bundle
			over $X$ with curvature $-2\pi i \Omega$ where $\Omega$ is a symplectic form taming $J$,
			\item $k$ is a large integer,
			\item $\mathfrak{D}$ is a consistent domain metric for $\scrF$ as in Definition \ref{defn domain metric}.
                        \end{itemize}
                We assume that the line bundle $L$ admits a root of order at least $3$.		For each $(\phi,u,e) \in \scrT^\pre$,
		define
		$L_{u,\bL}$ to be the Hermitian line bundle over $\scrC|_\phi$
		equal to 
		\begin{equation} \label{eq:positive_line_bundle}
		L_{u,\bL} = (\omega_{\scrC_{g,h,d}/\scrF_{g,h,d}}(p_1,\cdots,p_h)|_\phi \otimes u^* L)^{\otimes k}
		\end{equation}
		where $\omega_{\scrC_{g,h,d}/\scrF_{g,h,d}}(p_1,\cdots,p_h)$ is the relative dualizing bundle
		of $\scrC_{g,h,d} \to \scrF_{g,h,d}$ and $p_1,\cdots,p_h$ are the divisors in $\scrC_{g,h,d}$
		corresponding to the marked points.
	\end{defn}
	The Hermitian structure $\langle -,-\rangle$ on $L_{u,\bL}$ comes from the Hermitian structure on $L$ and the Hermitian structure on $\omega_{\scrC_{g,h,d}/\scrF_{g,h,d}} (p_1,\cdots,p_h)|_\phi = \omega_{\fC_{\scrF_{g,h,d}}/\fF_{\scrF_{g,h,d}}}(p_1,\cdots,p_h)|_{(\phi,u)} $
	coming from $\mathfrak{D}$. The choice of $\mathfrak{D}$ also equips each fibre $\scrC|_\phi$ with a volume form $\Omega_{\scrC|_\phi}$.
	
	We define:
	\[
	d = d_{\bL} := k(\langle  [\Omega], \beta \rangle + 2g - 2 + h),
	\] which is the degree of $L_{u,\bL}$ (previously $d$ was some unspecified integer, but now we fix it via this formula).
        \begin{rem}
          The divisibility condition in Definition \ref{defn line bundle data} is required in order for the line bundle in Equation \eqref{eq:positive_line_bundle} to be positive on spheres with exactly one special point (a marked point or a node). Indeed, this is the only case in which the relative canonical line bundle is negative, so that we need the pullback of $L$ to be sufficiently positive in order for the tensor product to be ample. 

          For all our applications, it will suffice to require that $3 \leq k$, as the corresponding line bundle will then be very ample on every component of a curve representing an element of the moduli space of maps; this is straightforward for genus $0$ components, holds for genus $1$ components by the fact that a degree $3$ line bundle determines an embedding in projective space, and in general from the fact that the cube of the canonical bundle is very ample. This differs from the corresponding construction in \cite{AMS-Hamiltonian}, where the parameter $k$ was required to be arbitrarily large, as we used the sections of high degree line bundles to obtain deformations of the Cauchy-Riemann operator that achieve transversality. 
        \end{rem}
	
	\begin{defn} \label{defn Lframing}
		A \emph{holomorphic $\bL$-framing} on $(\phi,u,e) \in \scrT^\pre$ is a complex basis $F = (F_0,\cdots,F_{d-g})$ of $H^0(L_{u,\bL})$ for which the Hermitian matrix $H_F$ of inner products of basis elements,  with $(i,j)$-th entry
		\begin{equation} \label{eqn:L2}
		\int_{\scrC|_\phi} \langle f_i, f_j\rangle \Omega_{\scrC|_\phi},
		\end{equation} has positive eigenvalues.  	
			We call this a \emph{unitary $\bL$-framing} if $H_F$ is the identity matrix (i.e. if $F$ is a unitary basis).
	\end{defn}
	
	The  underlying $L^2$-inner product featuring in \eqref{eqn:L2} is intrinsic, i.e. independent of the choice of  holomorphic frame, since the metric coming from $\frak{D}$ is invariant under the whole of $G_{\bC}$. That invariance also gives  the following basic naturality property of the framing matrices:
	
	\begin{lemma} \label{lem:framing_matrices_transform}
	For any $A \in GL_{d-g+1}(\bC)$, we have $H_F \cdot A = H_{F\cdot A}$.
	\end{lemma}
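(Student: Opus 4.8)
The plan is to deduce the identity from the change-of-variables formula for the Gram matrix of a fixed sesquilinear form. First I would make the two sides explicit. Writing $A = (a_{ij}) \in GL_{d-g+1}(\bC)$, the framing $F\cdot A$ has $j$-th element $(F \cdot A)_j = \sum_i a_{ij} F_i$ (the natural right action of $GL_{d-g+1}(\bC)$ on ordered bases of the $(d-g+1)$-dimensional space $H^0(L_{u,\bL})$), and $H_F \cdot A$ denotes the corresponding right action on positive-definite Hermitian matrices, which is $A^{*} H_F A$ in one standard convention (equivalently $A^{T} H_F \overline{A}$, depending on the slot in which $\langle -,-\rangle$ is taken to be conjugate-linear). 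The key structural input — already recorded in the remark preceding the lemma — is that the $L^2$-pairing
\[
(f,f') \longmapsto \int_{\scrC|_\phi} \langle f, f'\rangle \, \Omega_{\scrC|_\phi}
\]
is a genuine Hermitian form on the fixed vector space $H^0(L_{u,\bL})$: the Hermitian metric on $L_{u,\bL}$ and the volume form $\Omega_{\scrC|_\phi}$ are both canonically determined by the data $\mathfrak{D}$ together with the metric on $L$, with no auxiliary choices.

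Given this, the computation is a single line: by sesquilinearity,
\[
(H_{F\cdot A})_{jl} = \int_{\scrC|_\phi} \Big\langle \sum_i a_{ij} F_i , \ \sum_k a_{kl} F_k \Big\rangle \, \Omega_{\scrC|_\phi} = \sum_{i,k} a_{ij} \, \overline{a_{kl}} \, (H_F)_{ik},
\]
which is exactly the $(j,l)$-entry of $H_F \cdot A$. I would then note that the only content beyond bookkeeping is the well-definedness of the Hermitian form above, and that if one instead reads $F\cdot A$ as the framing obtained by transporting $F$ along the $G_\bC$-action (so the base point moves from $(\phi,u,e)$ to $(\phi\cdot A, u\cdot A, e\cdot A)$, with $A \in G_\bC = GL_{d-g+1}(\bC)$), then one first needs the natural map $H^0(L_{u,\bL}) \to H^0(L_{u\cdot A, \bL})$ to be an $L^2$-isometry; this is immediate from the $G_\bC$-invariance of $\mathfrak{D}$, since invariance forces the metric and volume form attached to $u$ to pull back to those attached to $u\cdot A$. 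After that the same sesquilinear computation applies verbatim.

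The main (and essentially only) point requiring care is the convention bookkeeping: fixing whether $\langle -,-\rangle$ is conjugate-linear in its first or second argument, and whether $GL_{d-g+1}(\bC)$ acts on frames on the left or the right, determines whether the right-hand side is written as $A^{*}H_F A$, $A^{T}H_F\overline{A}$, or a transpose of one of these, and this must be made consistent with the definition of $H_F\cdot A$ used elsewhere in the text. Beyond that there is no obstacle — the statement is a formal consequence of the intrinsic nature of the $L^2$-pairing together with elementary linear algebra.
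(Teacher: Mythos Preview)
Your proposal is correct and matches the paper's approach. The paper's proof is a single sentence pointing to the $G_{\bC}$-invariance of the consistent domain metric $\mathfrak{D}$; you have simply unpacked that sentence into its two constituents, namely that the $L^2$-pairing is intrinsic (which is where the $G_{\bC}$-invariance enters, exactly as you note in your second paragraph) and the resulting Gram-matrix transformation law under change of basis.
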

	
	\begin{proof} This holds because the consistent domain metric $\mathfrak{D}$ for $\scrF$ is invariant under the full group $GL_{d-g+1}(\bC) = G_{\bC}$. \end{proof}
	
	We shall now make use of the following construction: starting with a curve $\phi \in  \scrF$, whose domain curve $\Sigma_\phi$ is tautologically equipped with a map
	\begin{equation}
		\phi :     \Sigma_\phi \lra{} \bC \bP^{d-g},                         
	\end{equation}
	each $\bL$-framing determines a possibly different map
	\begin{equation} \label{equation}
		\phi_F : \Sigma_\phi \lra{} \bC \bP^{d-g}, \quad \phi_F(\sigma) := [F_0(\sigma),\cdots,F_{d-g}(\sigma)].
	\end{equation}
	Since by taking $k\geq 3$ we can ensure that the bundle $L_{u,\bL} $ has the property that its first cohomology vanishes on all components of $\Sigma$, the map $\phi_F$ is injective, which means that it is automorphism-free.  In particular, $\phi_F$ defines another element of the space $\scrF$.

	\begin{defn} \label{defn thickened}	
		The \emph{thickened moduli space}
		$\scrT = \scrT_{g,h,d}(\beta,V_{\mu},\lambda_{\mu})$
		is the space of quadruples $(\phi,u,e,F)$
		where
		$(\phi,u,e) \in \scrT^\pre$ and
		$F$ is a holomorphic framing of $(\phi,u,e)$
		satisfying the property that the pair $(\phi,\phi_F)$ lies in the previously fixed neighbourhood $U_{g,h,d}$ of the diagonal.
		The \emph{obstruction bundle}
		$E = E_{g,h,d}(\beta,V_{\mu},\lambda_{\mu})$
		is the direct sum of a copy of the space $\scrH$ of $(d-g+1) \times (d-g+1)$ Hermitian matrices with the pullback of $E^\pre$ to $\scrT$ via the natural projection map forgetting the framing.
	\end{defn}
	Explicitly,  the fiber of the obstruction bundle $E$ over $(\phi,u,e,F)$ is $H_{g,h,d}|_\phi \oplus V_\mu \oplus \scrH$. 
	
	The next step is to define a $G$-equivariant section of the obstruction bundle.  We have an exponential map $\exp: \scrH \stackrel{\sim}{\longrightarrow} \scrH^+$ from Hermitian matrices to those with positive eigenvalues.
		
	\begin{defn} \label{defn Kurnishi chart}
		The \emph{global Kuranishi chart associated to $\ccMbar_{g,h}(X,J,\beta)$} is
		\begin{equation}
			\bT = \bT_{g,h,d}(\beta,V_{\mu},\lambda_{\mu}) = (G,\scrT,E,s),
		\end{equation}  where the section of the obstruction bundle $E$ over $\scrT  $ is given by
		\begin{equation}
			s : \scrT \lra{} E, \quad s(\phi,u,e,F) := (\widetilde{\Delta}^{-1}((\phi,\phi_F)),e, \exp^{-1}(H_F)) \in E|_\phi = H_{g,h,d}|_\phi \oplus V_\mu \oplus\scrH.
		\end{equation}
	\end{defn}
	
	Note that the above definition currently involves some abuse of terminology, as we have not explained in any sense what our construction has to do with $\ccMbar_{g,h}(X,J,\beta) $. This is remedied by the next result, where as in \cite{AMS-Hamiltonian}, it is crucial to the final point (`relative smoothness') that elements of $p^{-1}(\phi)$ are maps with a fixed domain $\Sigma = \scrC|_{\phi}$, where $p : \scrT \lra{} \scrF$ is the natural forgetful map taking $(\phi,u,e,F) \mapsto \phi$. 
		
	\begin{remark}
	The obstruction bundle summand $H_{g,h,d}$ in $E^\pre$ is used partly to deal with $\Pic^0$ issues. In the pre-thickened moduli space $\scrT^\pre$, $O(1)$ pulled back to the domain of $\phi$ can be any degree $d$ line bundle (these are parameterized by $\Pic^0$).
	However, our framing data $F$ selects only one such line bundle.
	The section of $H_{g,h,d}$ encodes the difference between such bundles; removing the indeterminacy this way is a variation of the `doubly framed curve' trick originally  introduced to establish independence of global charts on various choices.

	The same $\Pic^0$-issue is dealt with differently by Hirschi and Swaminathan in \cite{HS}. They pick out the correct line bundle by adding an additional equation to the obstruction section, valued in (the universal cover of) $\Pic^0$, leading to an arguably simpler construction. \end{remark}

	\begin{theorem} \label{thm:basicglobalchart}
		Assuming that  $\mu$ has been chosen sufficiently large (and $3 \leq k$), the space $\scrT$ is a topological manifold in a neighbourhood of $s^{-1}(0)$, on which $G$ acts with finite stabilisers and finitely many orbit types. Moreover, we have a natural homeomorphism
		\begin{equation} \label{eq:moduli_space_quotient}
			s^{-1}(0) / G = \ccMbar_{g,h}(X,J,\beta)
		\end{equation}
		which is an isomorphism of orbispaces (i.e. respects stabiliser groups), 
		and the natural forgetful map $p$ is a topological submersion with a $C^1_{loc}$-structure (See \cite[Definition 4.27]{AMS-Hamiltonian}).
	\end{theorem}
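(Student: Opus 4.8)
The plan is to follow the template of the genus-zero construction of \cite{AMS-Hamiltonian}, working throughout relative to the smooth quasi-projective base $\scrF$, and to treat the four assertions in turn. The first step is to unwind the section $s$ of Definition \ref{defn Kurnishi chart}. Since $\widetilde{\Delta}$ restricts to the diagonal on the zero section, the component $\widetilde{\Delta}^{-1}((\phi,\phi_F))$ vanishes exactly when $\phi_F=\phi$; since $\exp$ is a diffeomorphism onto positive Hermitian matrices, $\exp^{-1}(H_F)=0$ exactly when $F$ is a unitary framing; and with $e=0$ the perturbed equation \eqref{eqn lambdapde} reduces to $\overline{\partial}_J u=0$. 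Hence $s^{-1}(0)$ consists of quadruples $(\phi,u,0,F)$ where $u\colon\scrC|_\phi\to X$ is an honest $J$-holomorphic map in class $\beta$ (automatically a stable \emph{map}, by hypothesis (3) of Definition \ref{defn space of curves}, so that no component need be contracted) and $F$ is a unitary basis of $H^0(L_{u,\bL})$ whose induced embedding $\phi_F$ equals $\phi$; given $u$ this is the same datum as a unitary framing of $L_{u,\bL}$, a $G=U(d-g+1)$-torsor. Quotienting by $G$ forgets the framing and records the stable $J$-holomorphic map $(\scrC|_\phi,p_\bullet,u)$.

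Next I would construct the inverse to \eqref{eq:moduli_space_quotient}. Given a stable $J$-holomorphic map $(\Sigma,p_\bullet,u)$ in class $\beta$, form $L_{u,\bL}=(\omega_\Sigma(p_\bullet)\otimes u^*L)^{\otimes k}$; for $k$ large — uniformly over the compact moduli space, using the lower energy bound $e>0$ on unstable components to bound the degree on \emph{every} component of $\Sigma$ from below — this line bundle has $H^1=0$ (so $h^0=d-g+1$) and is very ample on every component, so each unitary basis $F$ of $H^0(L_{u,\bL})$, taken for the $L^2$-metric furnished by the consistent domain metric (whose construction, cf. Lemma \ref{lemmalocal}, is canonical enough that isomorphisms of marked curves act as isometries), determines an automorphism-free degree-$d$ stable map $\phi_F\colon\Sigma\to\bP^{d-g}$ with $H^1(\phi_F^*O(1))=0$, i.e. a point of $\scrF$. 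One checks $(\phi_F,u,0,F)\in s^{-1}(0)$, that changing $F$ by $g\in G$ or the presentation $(\Sigma,p_\bullet,u)$ by an isomorphism $\psi$ moves this point inside a single $G$-orbit, and conversely that each $G$-orbit projects to a single point of $\ccMbar_{g,h}(X,J,\beta)$; this yields the bijection on quotients, and tracking $\psi$ through the framings identifies $\operatorname{Stab}_G(\phi,u,0,F)$ with $\operatorname{Aut}(\Sigma,p_\bullet,u)$, finite by stability, which is the orbispace statement. Both sides are compact Hausdorff, so it is enough to check the bijection is continuous in one direction: if $(\phi_i,u_i,0,F_i)$ converges in $\scrT$ then $u_i\to u$ in $C^1_{loc}$ away from the nodes by Remark \ref{remarkelliptic}, and the energy hypotheses rule out energy concentration, so the stable maps Gromov-converge; continuity plus compactness upgrades the bijection to a homeomorphism.

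For the manifold and submersion assertions I would argue in two stages. Since $\ccMbar_{g,h}(X,J,\beta)$ is compact there is a uniform bound on the dimensions of the cokernels of the linearised Cauchy--Riemann operator along it, so for $\mu$ large the maps $\lambda_\mu(V_\mu)$ surject onto all these cokernels (by property (3) of Definition \ref{defn fd}); hence near the $J$-holomorphic locus the perturbed equation \eqref{eqn lambdapde} is cut out transversally, fibrewise over $\scrF$. Combined with the gluing theorem of Appendix \ref{Subsec:gluing} — which controls how solutions over a fixed nodal fibre of $\scrC\to\scrF$ deform as the node is smoothed within $\scrF$ — this shows that, near the $J$-holomorphic locus, $\scrT^{\pre}\to\scrF$ is a topological submersion whose fibres carry a natural smooth structure (solutions of an elliptic equation with \emph{fixed} domain), the local trivialisation over $\scrF$ being only $C^1_{loc}$ because of the limited differentiability of the gluing map. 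Second, since $H^1(L_{u,\bL})=0$ the spaces $H^0(L_{u,\bL})$ assemble into a rank $d-g+1$ complex vector bundle over $\scrT^{\pre}$, and $\scrT\to\scrT^{\pre}$ is the locally trivial fibre bundle of those framings $F$ with $(\phi,\phi_F)\in U_{g,h,d}$, an open subset of the associated $GL_{d-g+1}(\bC)$-bundle; composing, $p\colon\scrT\to\scrF$ is a topological submersion with $C^1_{loc}$-structure and $\scrT$ is a topological manifold near $s^{-1}(0)$. Finally, finite stabilisers spread from $s^{-1}(0)$ to a neighbourhood by upper semicontinuity of stabilisers for the compact group action, and there are finitely many orbit types near the compact set $s^{-1}(0)$ by the (differentiable) slice theorem.

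The hard part will be the gluing analysis underlying the submersion statement along the boundary strata of $\scrF$, i.e.\ Appendix \ref{Subsec:gluing}: one needs a parametric gluing construction, uniform in the point of $\scrF$ and in the approximation datum $(V_\mu,\lambda_\mu)$, producing precisely a $C^1_{loc}$ (not smooth) local model, and this is exactly where the flexibility of keeping the domain \emph{fixed along the fibres of $p$} lets the analysis of \cite{AMS-Hamiltonian} survive in arbitrary genus. A secondary but non-routine point is matching the Gromov topology on $\ccMbar_{g,h}(X,J,\beta)$ with the subspace topology induced from $\scrT$, together with the bookkeeping identifying stabiliser subgroups of $G$ with automorphism groups of stable maps. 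Throughout, the quantitative clause `$k$ and $\mu$ sufficiently large' must be made uniform over the compact locus $s^{-1}(0)$, which is why compactness of the unperturbed moduli space is invoked repeatedly.
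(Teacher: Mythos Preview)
Your proposal is correct and follows essentially the same route as the paper: unwind $s^{-1}(0)$ to unitary-framed genuine $J$-curves, identify $G$-orbits with stable maps and stabilisers with automorphism groups, then obtain the $C^1_{loc}$ submersion by combining cokernel surjectivity (uniform in $\mu$ via Gromov compactness) with the gluing result Corollary~\ref{cor algfam} for $\scrT^{\pre}\to\scrF$, and finally pass to $\scrT$ as an open subset of the associated frame bundle. The one place where the paper is slightly more explicit than you is in the last step: rather than asserting that the $H^0(L_{u,\bL})$ assemble into a bundle over $\scrT^{\pre}$ with a compatible $C^1_{loc}$ structure, the paper identifies $H^0(L_{u,\bL})\cong H^0(\phi_F^*\mathcal O(1))\cong H^0(\phi^*\mathcal O(1))$ via parallel transport along the segment from $\phi$ to $\phi_F$ in $H_{g,h,d}|_\phi$, thereby realising $\scrT$ as an open subset of the pullback to $\scrT^{\pre}$ of the smooth frame bundle $P\to\scrF$; this makes the inheritance of the $C^1_{loc}$ structure transparent.
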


Note that the Gromov topology on $\ccMbar_{g,h}(X,J,\beta)$
coincides with the Hausdorff topology on $s^{-1}(0) / G$.
This is an identical argument to the one in \cite[Lemma 6.14]{AMS-Hamiltonian}.

	\begin{proof}
		We discuss the proof of the homeomorphism in Equation \eqref{eq:moduli_space_quotient} first.
Along $s^{-1}(0)$, we have that $e=0$, so that the underlying curve is a solution to the original holomorphic curve equation $\overline{\partial}_J u= 0$. 
Hence we have a natural projection map $s^{-1}(0) \to  \ccMbar_{g,h}(X,J,\beta)$.
We wish to show that $G$ acts transitively on the fibers of this map  (in fact the stabilizer group exactly corresponds to the automorphism group of $u$).
So, fix $u : \Sigma \to X$ in  $\ccMbar_{g,h}(X,J,\beta)$ and let $Q \subset s^{-1}(0)$
be the preimage of $u$.
For each $(\phi,u,e,F) \in Q$, we have $e=0$ and $\phi=\phi_F$
and so all such elements are of the form $(\phi_F,u,0,F)$
and hence only depend on the framing $F$, which is a basis of $H^0(L_u)$.
Also since  $\exp^{-1}(H_F) = 0$, we have that $H_F$ is unitary.
Fix such an element, $(\phi_F,u,0,F) \in Q$.
Any other element in $Q$ must be equal to
 $(\phi_{F \cdot A},u,0,F \cdot A) = (\phi_F  \cdot A,u,0,F \cdot A)$ for some $A \in GL_{d+1}(\bC)$.
 Now $H_F \cdot A = H_{F \cdot A}$ by Lemma \ref{lem:framing_matrices_transform}, and since both $H_F$ and $H_{F \cdot A}$
 are unitary, this implies that $A$ must be unitary and hence $G$
 acts transitively on $Q$.
 We can therefore identify the quotient of this zero-locus with the desired moduli space.
		
		To justify the statement about stabiliser groups and isomorphism of orbispaces, note that an element of $s^{-1}(0)$ is a tuple $(\phi_F, u, 0, F)$ which is completely determined by the framing $F$. An automorphism $f: \Sigma \to \Sigma$ as a stable map to $X$, so with $u \circ f = u$, will pull back the framing $F$ to some $F \cdot g$ with $\phi_F \circ f = \phi_{F\cdot g}$. On the other hand, for $g\in U(d-g+1)$ to stabilise $(\phi_F,u,0,F)$ we need some automorphism $f: \Sigma \to \Sigma$ of $u$ with $\phi_F \circ f = g \circ \phi_F = \phi_{F\cdot g}$.  It follows that the stabiliser groups agree as claimed.
		
		We will now show that the natural forgetful map $p$ is a submersion with a $C^1_{loc}$-structure.
		By unique continuation and elliptic regularity
		we have for each $(\phi,u,0) \in \scrT^\pre$ that $C^\infty_c(Y)$ surjects onto the cokernel of the linearized $\overline{\partial}$-operator $D : W^{1,2}(u^*TX) \to L^2(\Omega^{0,1}(\widetilde{u}^*TX))$ 
		where $\widetilde{u}$ is the composition of the normalization map $\widetilde{\Sigma} \to \Sigma$ with $u$.
		Since $(V_\mu,\lambda_\mu)_{\mu \in \bN}$ is a finite dimensional approximation scheme,
		we get that $\lambda_\mu(V_\mu)$ surjects onto the cokernel of $D$ for any fixed large enough $\mu$.  Gromov compactness then implies that $\mu$ may be chosen uniformly so that surjectivity holds for all curves.

		Hence by the gluing result Corollary \ref{cor algfam}, we have that ${\scrT'}^\pre \to \scrF$ admits a $C^1_{loc}$ structure where ${\scrT'}^\pre \subset \scrT^\pre$ is an open neighborhood of the region where $e=0$.
		
		Let $V \to \scrF$ be the vector bundle over $\scrF$ whose fiber
		over $u : \Sigma \to \bC \bP^{d-g}$ is $H^0(u^*{\mathcal O}(1))$.
		Choose a connection for $V$.
		Let $P \lra{} \scrF$ be the principal $GL_{d-g+1}(\bC)$-bundle over $\scrF$
		whose fiber over $u : \Sigma \lra{} \bC \bP^{d-g}$ consists of all possible bases for $V$.
		Let $\pi : \scrT^\pre \to \scrF$ send $(\phi,u,e)$ to $\phi$.
		Then the thickening $\scrT$ naturally embeds into the pullback of $P$ to $\scrT^{\pre}$  as an open subset since we have natural isomorphisms
		$H^0(L_{u,\bL}) \cong H^0(\phi_F^* O(1)) \cong H^0(\phi^*O(1)) = \pi^*V|_{(\phi,u,e)}$
		where the second isomorphism is given by parallel transport along the straight line
		joining $\phi$ and $\phi_F$ inside $H_{g,h,d}|_\phi \cong \Pi(\widetilde{\Delta}(H_{g,h,d}|_\phi)) \subset \scrF$
		where $\Pi$ is the projection map sending $(\psi_1,\psi_2)$ to $\psi_1$.
		Hence $\scrT' \to \scrF$ also has a $C^1_{loc}$-structure coming from the one on $\pi^*P|_{\scrT^\pre}$ where $\scrT'$ is the preimage of ${\scrT'}^\pre$.
	\end{proof}
	
	\begin{remark} Along $e=0$, all maps in the thickening are non-constant on any unstable component of a domain (since the finite automorphism hypothesis implies that these are stable maps). By continuity, and since the degree of $u^*L$ on a component is cohomological, this then holds throughout an open neighbourhood of $e=0$; compare to Definition \ref{defn space of curves}. \end{remark}

	\begin{defn} \label{defn topological GKC} \cite[Definition 4.1]{AMS-Hamiltonian}
		A \emph{topological global Kuranishi chart} is a tuple $(G',T',E',s')$
		where $G'$ is a compact Lie group, $T'$ is a topological $G'$-manifold with finite stabilizers
		and $E'$ is a $G'$-vector bundle with a $G'$ equivariant section $s'$.
		
	\end{defn}

	\begin{corollary} \label{cor globalKuranishichart}
		We have that $(G,\scrT',E',s')$ is a topological global Kuranishi chart
		where $\scrT' \subset \scrT$ is a neighborhood of $e=0$, 
	  $E' := E|_{\scrT'}$ and $s' := s|_{\scrT'}$. This chart is equivalent to a smooth global Kuranishi chart $\bT^{sm}$ for which the natural evaluation map $\bT^{sm} \to X^h \times \scrF$ is smooth $G$-equivariant map.
		Hence the natural evaluation map $\bT^{sm} \to X^h \times \ccMbar_{g,h}$ is smooth.
	\end{corollary}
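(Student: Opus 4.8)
The first assertion simply unpacks Theorem \ref{thm:basicglobalchart}: for $\scrT'\subset\scrT$ the open $G$-invariant neighbourhood of $\{e=0\}$ supplied in its proof --- on which $\scrT$ carries a $C^1_{loc}$-structure relative to $\scrF$, hence is a topological manifold, and on which $G$ acts with finite stabilisers --- the tuple $(G,\scrT',E|_{\scrT'},s|_{\scrT'})$ satisfies Definition \ref{defn topological GKC} by inspection. So the real content of the corollary is the passage to an equivalent smooth chart carrying a smooth evaluation map.

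First I would record the evaluation map at the topological level. By Theorem \ref{thm:basicglobalchart} the forgetful map $p:\scrT'\to\scrF$ is a topological submersion onto the smooth quasi-projective $G$-variety $\scrF=\scrF_{g,h,d}$, carrying a $C^1_{loc}$-structure relative to $\scrF$ (inherited, via the gluing result Corollary \ref{cor algfam}, from ${\scrT'}^{\pre}\to\scrF$). Crucially each fibre $p^{-1}(\phi)$ is a space of perturbed $J$-holomorphic maps with \emph{fixed} domain $\scrC|_\phi$, so --- the perturbed linearised operator being surjective --- it is a genuinely smooth finite-dimensional manifold; smoothness of the total space fails only in the base direction, through the gluing map. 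Consequently the fibrewise evaluation maps
\[
\ev_i:\scrT'\to X,\qquad \ev_i(\phi,u,e,F):=u\big(p_i(\phi)\big),\quad 1\leq i\leq h,
\]
are continuous on $\scrT'$ (Remark \ref{remarkelliptic}) and smooth on each fibre of $p$; together with $p$ they assemble into a $G$-equivariant map $(\ev,p):\scrT'\to X^h\times\scrF$ which is smooth along the fibres of the projection to $\scrF$.

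Next I would feed $(G,\scrT',E',s')$ into the equivariant smoothing procedure for topological global Kuranishi charts of \cite{AMS-Hamiltonian}, run so as to respect both $p$ and $(\ev,p)$. Concretely: embed a $G$-invariant relatively compact neighbourhood of $s^{-1}(0)$ into $X^h\times\scrF\times W$ for a suitable $G$-representation $W$, using $(\ev,p)$ on the first two factors and an auxiliary equivariant embedding into $W$ (available since $G$ acts with finite stabilisers and finitely many orbit types near $s^{-1}(0)$); this realises $\scrT'$ as a $C^1$ $G$-submanifold which is already smooth transverse to $\scrF$. Equivariant $C^1$-approximation rel the fibre directions then replaces it by a nearby smooth $G$-invariant submanifold $\scrT^{sm}\subset X^h\times\scrF\times W$, over which one transports and smooths $E'$ and $s'$ as in \cite{AMS-Hamiltonian} to obtain a smooth global Kuranishi chart $\bT^{sm}$; since the approximation is $C^1$-small, $\scrT^{sm}$ and $\scrT'$ are ambiently $C^1$-isotopic with matching zero loci, which furnishes the equivalence. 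By construction the composite $\bT^{sm}\hookrightarrow X^h\times\scrF\times W\to X^h\times\scrF$ is a smooth $G$-equivariant map. I expect the main obstacle to be exactly this bookkeeping: verifying that the smoothing of \cite{AMS-Hamiltonian} can be carried out simultaneously compatibly with the submersion $p$ to the smooth base $\scrF$ and with the fibrewise-smooth maps $\ev_i$, and that the approximation stays close enough to preserve the zero locus and hence the equivalence class of chart.

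Finally, the last sentence follows by post-composing with the stabilisation morphism $\st:\scrF_{g,h,d}\to\ccMbar_{g,h}$ (forget the map to $\bC\bP^{d-g}$ and contract the unstable domain components): it is a $G$-invariant morphism of complex orbifolds, hence smooth, so $\bT^{sm}\to X^h\times\scrF\xrightarrow{\ \id\times\st\ }X^h\times\ccMbar_{g,h}$ is a smooth ($G$-invariant, so well-defined on the quotient orbifold) map.
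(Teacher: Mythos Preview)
Your overall architecture matches the paper's: the first assertion unpacks Theorem \ref{thm:basicglobalchart}, and the final sentence follows by composing with the smooth stabilisation $\scrF\to\ccMbar_{g,h}$. The paper's own proof of the middle step is terse --- it cites \cite[Section 4.1]{AMS-Hamiltonian} for the smoothing and \cite[Proposition 4.30, Lemma 4.5]{AMS-Hamiltonian} for making the evaluation map smooth --- so your attempt to spell things out goes beyond what the paper does.

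That said, your concrete description of the smoothing has a genuine gap, not merely bookkeeping. You propose to realise $\scrT'$ as a ``$C^1$ $G$-submanifold'' of $X^h\times\scrF\times W$ and then $C^1$-approximate. But a $C^1_{loc}$-structure on $\scrT'\to\scrF$ is strictly weaker than a $C^1$ structure on $\scrT'$: the product charts $Q\times K$ supplied by the gluing theorem have transition maps which are smooth along the fibre $K$ but only \emph{continuous} in the base direction $Q$ (this is precisely the content of ``$C^1_{loc}$'' as opposed to ``$C^1$''), so there is no global $C^1$ atlas on $\scrT'$ and it makes no sense to call its image a $C^1$ submanifold. The gluing theorem likewise gives no $C^1$ control on how the maps $u$ --- and hence $\ev_i$ --- vary with the base parameter; in the chart $Q\times K$ the graph of $(q,k)\mapsto(\ev(q,k),q)$ is only $C^0$ over $Q$. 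The mechanism actually used in \cite{AMS-Hamiltonian} sidesteps this: one uses the $C^1_{loc}$-structure together with a choice of fibrewise submersion to produce a vector bundle lift of the tangent \emph{microbundle} $T_\mu\scrT'$ (cf.\ the discussion after Proposition \ref{propchartparameterized}), and then invokes Lashof's equivariant smoothing theory on a stabilisation. Smoothness of the evaluation map is a separate argument (\cite[Proposition 4.30, Lemma 4.5]{AMS-Hamiltonian}), not a byproduct of the embedding.
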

	\begin{proof}
	It is explained in \cite[Section 4.1]{AMS-Hamiltonian} that given a topological global chart for which $T' \to \scrF$ admits a $G$-equivariant $C^1_{loc}$-topological submersion over a smooth $G$-manifold, one can apply abstract smoothing theory to an equivalent chart (obtained by stabilization and germ equivalence) to yield a smooth global chart.  By construction, the obstruction bundle is stably almost complex and $\scrT'$ carries a transverse almost complex structure.  The rest follows  from \cite[Proposition 4.30]{AMS-Hamiltonian}
		and \cite[Lemma 4.5]{AMS-Hamiltonian}.
	\end{proof}
	
	The smooth global chart $\bT^{sm}$ does not quite fit with the set-up of Section \ref{sec:counting-theories}, since $\scrT'$ is naturally stably almost complex rather than transverse complex, and the obstruction bundle $E'$ is not a complex vector bundle (because it carries a copy of the real vector space of Hermitian matrices); but this can be repaired by the trick from Lemma \ref{lem:subcategory}, adding a copy of the Lie algebra $\frak{g}$ of skew-Hermitian matrices to both $\scrT'$ and $E'$.
	
	\begin{remark}
		The global chart here differs, when $g=0$, from that constructed in \cite{AMS-Hamiltonian}. There, the framing data $F$ was used in the perturbation of the $\overline{\partial}$-equation defining the thickening. Here, the framing appears only in the section of the obstruction bundle.  In any formulation of the higher genus case, as here or in the slightly different approach in \cite{HS}, a key point is to address issues around the non-uniqueness of a holomorphic line bundle on $\Sigma$ with given degree, once $g(\Sigma) > 0$.
	\end{remark}

	We call a chart as constructed above a \emph{distinguished} global Kuranishi chart for the moduli space of stable maps. A distinguished chart is  associated to the data of $\bL$ and $(V_{\mu},\lambda_{\mu})_{\mu \in \bN}$ as well as a choice of natural numbers $k$ and $\mu$.
	
	\subsection{General Pre-Thickened Moduli Spaces} \label{sectiongeneralprethicken}

	Let $(X,\omega)$ be a closed symplectic manifold, let
	$\beta \in H_2(X;\Z)$. For the proof that the invariants we extract from the global Kuranishi charts from Section \ref{Subsec:easier} do not depend on choices, we shall need many variants of this basic construction. We introduce an abstract framework which covers all the cases that we need, so we start by fixing a compact Lie group $G$.

	\begin{defn} \label{defnfdapproxgeneral}
		For any $G_\C$-equivariant family of nodal curves $\scrC \to \scrF$ (Definition \ref{defn equivariant family})
		we let $Y_\scrF$
		be the $G$ vector bundle over $\scrC^o \times X$ whose fiber over a point $(c,x) \in \scrC^o \times X$ is the space of anti-holomorphic  maps from $T_c(\scrC^o|_{\pi(c)})$ to $T_x X$.
		
		A \emph{finite dimensional approximation scheme for $\scrF$}
		is a tuple $(V_{\mu},\lambda_{\mu})_{\mu \in \bN}$ where $(V_\mu)_{\mu \in \bN}$ are $G$-representations satisfying $V_{\mu-1} \subset V_\mu$
		and $\lambda_\mu : V_\mu \to C^\infty_c(Y_\scrF)$ are $G$-equivariant linear maps satisfying $\lambda_\mu|_{V_{\mu-1}} = \lambda_{\mu-1}$ for each $\mu \in \bN$, and  so that for each $\phi \in \scrF$, we have that
		$(V_\mu,\lambda_\mu|_{\scrC|_\phi})$ is a finite dimensional approximation scheme for $Y_\scrF|_{\scrC|_\phi}$
		as in Definition \ref{defn fd}.		
	\end{defn}
	
	\begin{remark}
	We could have chosen a finite dimensional approximation scheme for $C^\infty_c(Y_\scrF)$ as in Definition \ref{defn fd}.
	However, the  definition above has the advantage of being compatible with pullbacks, cf. Definition \ref{defnpulbackoffdscheme} below. \end{remark}

	\begin{defn} \label{defnprethickenedgeneral}
		The \emph{pre-thickened moduli space}, $\scrT^\pre =\scrT^\pre(\beta,\lambda_\mu)$
		is the space of tuples $(\phi,u,e)$ where
		$(\phi,u) \in \fC_\scrF$ (Definition \ref{defn space of curves}) and $e \in V_\mu$, satisfying the following equation:
		\begin{equation} 
			\overline{\partial}_J u|_{\scrC^o|_{\phi}} + (\lambda_\mu(e)) \circ \Gamma_u =0
		\end{equation}
		where
		\begin{equation}
			\Gamma_u : \scrC^o|_\phi \to \scrC^o \times X, \quad \Gamma_u(\sigma) := (\sigma,u(\sigma))
		\end{equation}				
		is the graph map, and  for which the stabilizer group $G_{(\phi,u,e)} \subset G$ of $(\phi,u,e)$ is finite.
		The topology is the induced Hausdorf metric topology on the closures of the images of the graphs $\Gamma_u$ in $\scrC\times X$, 
		combined with the natural linear topology on $V_\mu$.
		There is a natural $G$-action on this space sending $(\phi,u,e)$ to $(\phi \cdot g, u \cdot g, e \cdot g)$
		where $u \cdot g$ is the composition $\scrC|_{\phi \cdot g} \lra{\cdot g^{-1}} \scrC|_\phi \lra{u} X$.

		An element  $(\phi,u,e)$ is   \emph{regular} if
		the image of $\lambda_\mu$ surjects onto the cokernel of the linearized $\overline{\partial}$-operator
		associated to $u$.
		We define $\scrT^{\pre,\reg} \subset\scrT^\pre$ to be the subspace of  regular elements.
	\end{defn}

	\begin{lemma} \label{propcontainscompact}
		Let $K \subset \scrF$ be a compact subset.
		Then for all $\mu$ sufficiently large,
		we have that $\scrT^{\pre,\reg}$ contains the subspace
		\begin{equation}
			\scrT^\pre|_{K,e=0} := \{(\phi,u,0) \in\scrT^\pre \ : \ \phi \in K\} \subset\scrT^\pre.
		\end{equation}
	\end{lemma}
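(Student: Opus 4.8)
The plan is to show that the regularity condition (surjectivity of $\lambda_\mu$ onto the cokernel of the linearized $\cdbar$-operator) holds uniformly over $\{(\phi,u,0) : \phi \in K\}$ once $\mu$ is large enough, by a compactness argument. First I would observe that the set $\scrT^\pre|_{K,e=0}$ is contained in a compact subset of $\fC_\scrF$: by Gromov compactness, the space of $J$-holomorphic stable maps $u : \scrC|_\phi \to X$ in class $\beta$ with $\phi$ ranging over the compact set $K$ (and satisfying the energy constraints built into Definition \ref{defn space of curves}) is compact in the topology on $\fC_\scrF$. Here one uses that the domains vary in a compact family and that the energy $\langle [\omega],\beta\rangle$ is fixed, together with the uniform lower energy bound $e > 0$ on unstable components.

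Next, for each fixed $(\phi,u,0)$ the relevant linearized operator $D_u : W^{1,2}(\widetilde u^*TX) \to L^2(\Omega^{0,1}(\widetilde u^*TX))$ (pulling back to the normalization $\widetilde\Sigma$ of $\Sigma$) is Fredholm, so its cokernel is finite-dimensional; moreover by unique continuation and elliptic regularity the cokernel is represented by smooth $(0,1)$-forms, and $C^\infty_c(Y_\scrF)$ surjects onto $\coker D_u$ — this is precisely the argument already used in the proof of Theorem \ref{thm:basicglobalchart}. Since $\lambda_\mu(V_\mu)$ exhausts a $C^\infty_{loc}$-dense subspace of $C^\infty_c(Y_\scrF)$ as $\mu \to \infty$ (property \eqref{item degree condition} of Definition \ref{defn fd}, in the fibrewise form of Definition \ref{defnfdapproxgeneral}), for each individual $(\phi,u,0)$ there is some $\mu_{(\phi,u)}$ for which $\lambda_{\mu_{(\phi,u)}}(V_{\mu_{(\phi,u)}})$ already surjects onto $\coker D_u$. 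The key point is that this surjectivity is an open condition in $(\phi,u)$: if $\lambda_\mu(V_\mu)$ surjects onto $\coker D_u$, then by lower semicontinuity of the dimension of the cokernel and continuity of the relevant evaluation/projection maps (here one uses that $D_u$ varies continuously in an appropriate operator topology, with the domains of nearby curves identified via the $C^1_{loc}$-parametrizations of Remark \ref{remarkelliptic}), the same $\lambda_\mu$ surjects onto $\coker D_{u'}$ for all $(\phi',u')$ in a neighborhood.

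Finally I would cover the compact set $\scrT^\pre|_{K,e=0}$ by finitely many such neighborhoods, take $\mu$ to be the maximum of the finitely many corresponding indices (using that the $V_\mu$ are nested, so surjectivity is preserved under increasing $\mu$), and conclude that $\scrT^\pre|_{K,e=0} \subset \scrT^{\pre,\reg}$ for that $\mu$ and all larger. I expect the main obstacle to be the careful formulation of the continuity/openness step: one must handle the fact that nearby curves have (a priori) different domains and possibly different nodal degenerations, so that "$D_u$ varies continuously" requires the gluing/parametrization machinery (the $C^1_{loc}$-structures from Corollary \ref{cor algfam} and Remark \ref{remarkelliptic}) to make precise, and one must check that the cokernel dimension is upper semicontinuous across nodal strata. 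Everything else is a routine compactness-and-exhaustion argument.
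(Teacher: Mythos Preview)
Your proposal is correct and follows essentially the same approach as the paper: pointwise surjectivity of $\lambda_\mu$ onto $\coker D_u$ for some $\mu_{(\phi,u)}$, openness of this condition, compactness of $\scrT^\pre|_{K,e=0}$ via Gromov compactness over $K$, and extraction of a finite subcover to get a uniform $\mu$. The paper's proof is terser and does not spell out the openness step across strata that you flag as the main technical point.
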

	\begin{proof}
		For each $J$-holomorphic curve $u : \scrC|_\phi \to X$,
		we have that $V_\mu$ surjects onto the cokernel of the linearized $\overline{\partial}$-operator
		for all $\mu \geq \mu_{\phi,u}$ for some $\mu_{\phi,u} \in \bN$.
		Hence it holds
		for all $(\phi',u')$ in a neighborhood $U_{\phi,u}$ of $(\phi,u)$ with respect to the Hausdorff topology on graphs $\Gamma_{u'}$.
		Using that $K$ is compact and Gromov compactness, we can cover $\scrT^\pre|_{K,e=0}$ by a finite number of open subsets $(U_{\phi_i,u_i})_{i \in I}$
		and hence we choose $\mu$ to be larger than $\max_{i \in I} \mu_{\phi_i,u_i}$.
	\end{proof}

	The following proposition follows from Corollary \ref{cor algfam}.
	\begin{lemma} \label{propregular}
		Let $\scrT^\pre$ be a pre-thickened moduli space as above.
		Then the map $\scrT^{\pre,\reg} \to \scrF$ admits a $C^1_{loc}$-structure.
		Also, if there exists another $G_\C$-equivariant family of nodal curves $\scrC' \to \scrF^{(2)}$ and a $G_\C$-equivariant submersion $\scrF \to \scrF^{(2)}$
		so that $\scrC$ is the pullback
		of $\scrC'$
		then the composite map
		$\scrT^{\pre,\reg} \to \scrF^{(2)}$ admits a $C^1_{loc}$-structure. \qed
	\end{lemma}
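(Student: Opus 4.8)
The statement is engineered to be a direct consequence of the gluing package recorded in Corollary~\ref{cor algfam}, so the plan is simply to reduce to it. First I would unwind the definition of $\scrT^{\pre,\reg}$: a point $(\phi,u,e)$ lies in $\scrT^{\pre,\reg}$ precisely when $\lambda_\mu(V_\mu)$ surjects onto the cokernel of the linearised operator $D_u$ (pulled back to the normalisation of the domain). Consequently the linearisation of the combined equation $\overline{\partial}_J u + \lambda_\mu(e)\circ\Gamma_u = 0$, viewed as an equation for the pair $(u,e)$ with $\phi\in\scrF$ a parameter, is fibrewise surjective at every point of $\scrT^{\pre,\reg}$. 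This is exactly the input hypothesis of Corollary~\ref{cor algfam}, which then produces, on a neighbourhood of each point of $\scrT^{\pre,\reg}$, a $C^1_{loc}$ chart compatible with the projection $\pi:\scrT^{\pre,\reg}\to\scrF$, exhibiting $\pi$ locally as a topological submersion over the smooth base $\scrF$ with $C^1_{loc}$ transition data. These local models are canonical, since they arise from the Newton-iteration/implicit-function construction internal to the gluing theorem, so they patch to a global $C^1_{loc}$-structure on $\scrT^{\pre,\reg}$ over $\scrF$. A minor auxiliary point one should check along the way is that the regular locus is open in $\scrT^\pre$; this again follows from the gluing package, or directly from Remark~\ref{remarkelliptic} together with upper semicontinuity of $\dim\coker D_u$ in the Hausdorff topology on graphs.

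For the second assertion, the extra hypothesis that $\scrC$ is pulled back from a family $\scrC'\to\scrF^{(2)}$ along the $G_\C$-equivariant submersion $\scrF\to\scrF^{(2)}$ means that the domain curves, the bundle $Y_\scrF$, and hence the whole perturbed Cauchy-Riemann problem can be set up with $\scrF^{(2)}$ itself as the parameter space; over $\scrF$ one is merely pulling this problem back. I would therefore apply Corollary~\ref{cor algfam} directly with base $\scrF^{(2)}$, obtaining a $C^1_{loc}$-submersion structure for $\scrT^{\pre,\reg}$ over $\scrF^{(2)}$, which amounts to identifying $\scrT^{\pre,\reg}$ locally with the fibre product of the smooth submersion $\scrF\to\scrF^{(2)}$ and the perturbed moduli space over $\scrF^{(2)}$. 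Equivalently, since $\scrF\to\scrF^{(2)}$ is a genuine smooth submersion, one may simply compose: a $C^1_{loc}$-submersion over $\scrF$ followed by a smooth submersion $\scrF\to\scrF^{(2)}$ is again a topological submersion carrying a compatible $C^1_{loc}$-structure. Either route yields the claim.

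The main obstacle is not located in the present lemma at all but is quarantined inside Corollary~\ref{cor algfam}: one needs the gluing/regularity theorem in a sufficiently robust parametrised form --- relative to an arbitrary smooth quasi-projective base $\scrF$ (with possibly non-finite stabilisers before one restricts to the locus cut out by the framing condition), with the finite-dimensional perturbation term $\lambda_\mu(e)\circ\Gamma_u$ incorporated into the equation, and with estimates uniform over Gromov-compact families of domains so that the resulting local charts glue. Granting that, everything above is bookkeeping: no new analytic estimates are required beyond those in Corollary~\ref{cor algfam} and the elliptic regularity of Remark~\ref{remarkelliptic}.
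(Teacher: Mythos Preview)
Your proposal is correct and matches the paper's own proof, which is literally the one-line assertion that the lemma follows from Corollary~\ref{cor algfam}; you have simply unpacked that citation. One small caveat on the second clause: your fibre-product description is not literally accurate, since the perturbation $\lambda_\mu(e)|_{\scrC|_\phi}$ depends on $\phi\in\scrF$ and not merely on its image in $\scrF^{(2)}$, so $\scrT^{\pre,\reg}$ is not the pullback of a moduli problem over $\scrF^{(2)}$ --- rather, the fibre direction of $\scrF\to\scrF^{(2)}$ plays the role of the auxiliary smooth parameter $U$ in Theorem~\ref{theorem parameterized gluing}, and it is that form of the gluing theorem (smoothness in the $U$-variable) which yields the $C^1_{loc}$-structure over $\scrF^{(2)}$.
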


	\begin{example} 
		The pre-thickened moduli space from Definition \ref{defn prethickenedmodulispace}
		is equal to the pre-thickened moduli space from Definition \ref{defnprethickenedgeneral}
		with $\scrF = \scrF_{g,h,d}$ (Section \ref{Subsec:moduli}).
		The group $G$ here acting on this space is $U(d-g+1)$ with $G_\C = GL_{d-g+1}(\bC)$.
	\end{example}

	We also have parameterized versions of Definition \ref{defnprethickenedgeneral}	
	and Propositions \ref{propcontainscompact} and \ref{propregular}.
	These are needed to show that our Gromov-Witten invariants do not depend on the choice of almost complex structure.
	Let $(J_t)_{t \in [0,1]}$ be a smooth family of $\omega$-tame almost complex structures.
	(More generally, throughout the discussion below the interval $[0,1]$ can be replaced with any compact manifold with boundary.)
		
	\begin{defn} \label{defnprethickenedgeneralparameterized}
		The \emph{parameterised pre-thickened moduli space}, $\scrT^\pre_{(J_t)} =\scrT^\pre_{(J_t)}(\beta,\lambda_\mu)$
		is the space of tuples $(t,\phi,u,e)$ where $t \in [0,1]$,
		$(\phi,u) \in \fC_\scrF$ (Definition \ref{defn space of curves}) and $e \in V_\mu$, satisfying the following equation:
		\begin{equation} \label{eqn lambdapde parameterized}
			\overline{\partial}_{J_t} u|_{\scrC^o|_{\phi}} + (\lambda_\mu(e)) \circ \Gamma_u =0
		\end{equation}
		where
		\begin{equation}
			\Gamma_u : \scrC^o|_\phi \to \scrC^o \times X, \quad \Gamma_u(\sigma) := (\sigma,u(\sigma))
		\end{equation}				
		is the graph map, and for which the stabilizer group $G_{(\phi,u,e)} \subset G$ of $(\phi,u,e)$ is finite
		for each $(\phi,u,e) \in\scrT^\pre$.
		The topology is the induced Hausdorf metric topology on the images of the graphs $\Gamma_u$
		combined with the topology on $[0,1]$ and $V_\mu$.
		There is a natural $G$-action on this space sending $(t,\phi,u,e)$ to $(t,\phi \cdot g, u \cdot g, e \cdot g)$
		where $u \cdot g$ is the composition $\scrC|_{\phi \cdot g} \lra{\cdot g^{-1}} \scrC|_\phi \lra{u} X$.

		An element $(\phi,u,e)$ is \emph{regular} if
		the image of $\lambda_\mu$ surjects onto the cokernel of the linearized $\overline{\partial}$-operator
		associated to $u$.
		We define $\scrT^{\pre,\reg}_{(J_t)} \subset\scrT^\pre_{(J_t)}$ to be the subspace of regular elements.  	\end{defn}
	
              \begin{rem}
                We can use the weaker notion of regularity requiring only surjectivity onto the cokernel of the \emph{parametrised}  linearized $\overline{\partial}$-operator, i.e. the quotient of the kernel of the  $\overline{\partial}$-operator by the image of the tangent space of the parameter space. 
              \end{rem}

	Exactly as for Lemma \ref{propregular}, and again using Corollary \ref{cor algfam}, we get:

	\begin{lemma} \label{propcontainscompactproperparameterized}
		Let $K \subset \scrF$ be a compact subset.
		Then for all $\mu$ sufficiently large,
		we have that $T_{(J_t)}^{\pre,\reg}$ contains the subspace
		\begin{equation}
			\scrT^\pre_{(J_t)}|_{K,e=0} := \{(t,\phi,u,0) \in\scrT^\pre \ : \ \phi \in K\} \subset\scrT^\pre_{(J_t)}.
		\end{equation}
		Moreover,  the map $\scrT^{\pre,\reg}_{(J_t)} \to \scrF$ admits a $C^1_{loc}$-structure. \qed
	\end{lemma}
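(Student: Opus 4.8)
The plan is to transcribe the proofs of Lemmas~\ref{propcontainscompact} and~\ref{propregular} essentially verbatim, treating the parameter $t \in [0,1]$ as an additional finite-dimensional smooth variable that is carried along passively throughout. Indeed, the only new feature compared to the unparameterised situation is that the linearised operator now depends on $t$ both through the domain curve $\scrC|_\phi$ and through the almost complex structure $J_t$, and one must check that this dependence is sufficiently continuous.

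First I would prove the containment $\scrT^\pre_{(J_t)}|_{K,e=0} \subset \scrT^{\pre,\reg}_{(J_t)}$. Fix $t_0 \in [0,1]$ and a $J_{t_0}$-holomorphic stable map $u_0 : \scrC|_{\phi_0} \to X$ in class $\beta$ with $\phi_0 \in K$. Since $(V_\mu,\lambda_\mu)_{\mu}$ is a finite dimensional approximation scheme, there is $\mu_{(t_0,\phi_0,u_0)} \in \bN$ such that $\lambda_\mu(V_\mu)$ surjects onto the cokernel of the linearised $\overline{\partial}_{J_{t_0}}$-operator associated to $u_0$ for all $\mu \geq \mu_{(t_0,\phi_0,u_0)}$; and by elliptic regularity (as in Remark~\ref{remarkelliptic}) this surjectivity persists on a neighbourhood of $(t_0,\phi_0,u_0)$ in $\scrT^\pre_{(J_t)}$ with respect to the combined Hausdorff-and-parameter topology, since surjectivity of a continuously varying family of Fredholm operators onto a fixed finite-dimensional complement is an open condition. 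The set $\{(t,\phi,u,0) : t \in [0,1],\ \phi \in K\}$ is compact by Gromov compactness together with compactness of $[0,1]$ and $K$, so it is covered by finitely many such neighbourhoods $U_{(t_i,\phi_i,u_i)}$, $i \in I$; taking $\mu > \max_{i \in I} \mu_{(t_i,\phi_i,u_i)}$ then gives the claim. The same argument applies with the weaker notion of regularity (surjectivity onto the cokernel of the parametrised operator), which only makes the condition easier to satisfy.

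For the $C^1_{loc}$-structure on $\scrT^{\pre,\reg}_{(J_t)} \to \scrF$, I would invoke the gluing result Corollary~\ref{cor algfam} exactly as in the proof of Lemma~\ref{propregular}: at a regular element the perturbed Cauchy--Riemann problem is transversely cut out, and the gluing theorem produces local $C^1_{loc}$-charts for the total space which are compatible with the projection to the smooth base $\scrF$; these patch to the desired structure. The presence of the extra smooth parameter $t$ is harmless because Corollary~\ref{cor algfam} applies to families over a compact parameter manifold with boundary (here $[0,1]$), so one simply includes $t$ among the gluing parameters. The one point requiring genuine care -- more a matter of bookkeeping than a real obstacle -- is the openness of the regularity condition when $J_t$ itself varies with $t$, which forces one to compare cokernels of $\overline{\partial}_{J_t}$-operators on nearby but distinct domains; this is dealt with by the same normalisation-and-elliptic-regularity argument already used in the proof of Theorem~\ref{thm:basicglobalchart}. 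Everything else is a formal transcription of the unparameterised arguments.
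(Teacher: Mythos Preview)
Your proposal is correct and matches the paper's approach exactly: the paper states this lemma with a bare \qed, prefacing it only with ``Exactly as for Lemma~\ref{propregular}, and again using Corollary~\ref{cor algfam}, we get,'' so your transcription of the unparameterised arguments with $t$ carried along as an extra compact parameter is precisely what is intended.
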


	\subsection{General Construction of Global Kuranishi Charts} \label{sectiongeneralconstruction}

	Let $(X,\omega)$ be a closed symplectic manifold, let
	$\beta \in H_2(X;\Z)$.
	
	\begin{defn} \label{defnliegroupexp}
		For any compact Lie group $G$,
		we let $\exp : i\mathfrak{g} \to G_\C/G$ be the exponential map in the imaginary direction.
	\end{defn}

	\begin{defn} \label{defnGbundle}
		Let $G$, $\check{G}$ be compact Lie groups.
		Let $B$ be a $\check{G}$-space. A \emph{$\check{G}$-equivariant principal $G$-bundle}
		is a principal $G$-bundle $P \to B$ together with a $\check{G}$-action on $P$ commuting with the $G$-action on $P$
		so that the map $P \to B$ is $\check{G}$-equivariant.

		Given a principal $G$-bundle $P$, we set $P_\C := P \times_G G_\C$ to be the corresponding principal $G_\C$ bundle.
		
		Let $U_G \subset i\frak{g}$ be a $G$-invariant neighborhood of $0$
		and $Q_G \subset G_\C$ a $G$-invariant neighborhood of $G$
		so that the map $\exp|_{U_G} : U_G \to Q_G / G$ is a diffeomorphism.
		We let
		\begin{equation}
			\log_P : P_\C \dashrightarrow i\mathfrak{g}
		\end{equation}
		be the partially defined map
		whose restriction to each fiber $P_\C|_x \cong G_\C$
		is the composition
		$Q_G \twoheadrightarrow Q_G / G \lra{\exp^{-1}} i \mathfrak{g}$.
		We call the bundle $P \times_G Q_G$ the \emph{domain} of $\log_P$.
	\end{defn}
	
	\begin{defn} \label{microbundles}
	Let $G$ be a compact Lie group.
		A \emph{smooth $G$-microbundle} over a smooth $G$-manifold $B$ is a triple
		$(E,i,p)$ where
		\begin{enumerate}
			\item $E$ is a smooth $G$-manifold,
			\item $i : B \to E$ is a smooth $G$-equivariant embedding and
			\item $p : E \to B$ is a smooth $G$-equivariant map satisfying $p \circ i = \id$.
		\end{enumerate}
		A \emph{microbundle isomorphism} $\rho : E \to E'$ between microbundles $(E,i,p)$ and $(E',i',p')$ over $B$
		is a smooth open embedding $\rho : U \to E'$ where $U \subset E$ is an open neighborhood of $i(B)$
		satisfying $\rho \circ i = i'$ and $p|_U = p' \circ \rho$.
	\end{defn}

	We will sometime write $E$ instead of $(E,i,p)$ if the context is clear.  
	An example of a $G$-microbundle is a smooth $G$-vector bundle. This is actually the universal example: since $p$ is a smooth submersion, any smooth microbundle has a vertical tangent bundle, to which it is isomorphic.   The microbundle language simplifies notation since it avoids picking an isomorphism from  a neighbourhood of $i(B)$ to such a normal bundle. We encountered a $G$-microbundle $\scrF^{(2)}$ in Section \ref{Subsec:moduli}, with its projection map $\Pi$ to, and diagonal map $\Delta$ from, the base $B = \scrF$. 		
	
	\begin{defn} \label{pre-Kuranishi data}	
		Let $G$ be a compact Lie group and $\scrC \to \scrF$ a $G_\C$-equivariant family of nodal curves (Definition \ref{defn equivariant family}).
		We define \emph{pre-Kuranishi data} over $\scrC \to \scrF$ to be a tuple $(G,E,\lambda_\mu)$
		where
		\begin{enumerate}
			\item $E = (E,i,p)$ is a smooth $G$-microbundle over $\scrF$,
			\item $\lambda_\mu$ arise from a finite dimensional approximation scheme $(V_\mu,\lambda_\mu)_{\mu \in \bN}$
			for $\scrF$ (Definition \ref{defnfdapproxgeneral}).
		\end{enumerate}		
	\end{defn}

	\begin{defn} \label{defnc1locsubmanifold}
		Let $p : R \to B$ be a topological submersion from a  topological manifold to a smooth manifold, which admits a $G$-equivariant $C^1_{loc}$-structure.
		A \emph{$C^1_{loc}$-submanifold} of $R$ is a topological submanifold $R' \subset R$
		so that for each $x \in R'$, letting $b=p(x)$, there is a product neighborhood $\iota : W \stackrel{\sim}{\longrightarrow} W|_b \times p(W)$ for the total space whose restriction 
		$\iota|_{R'} : W|_{R'} \to (W|_{R'})|_b \times p(W)$ is a product neighborhood of $R'$.
		(\cite[Definition 4.27]{AMS-Hamiltonian}).
	\end{defn}
	
	In Definition \ref{defnc1locsubmanifold} note that the image $p(R') \subset B$ is open; there is a natural generalisation to $C^1_{loc}$-submanifolds with boundary, but again we are only concerned with the case when their image in $B$ has codimension zero.

	\begin{defn} \label{defnnoncomplexthickening}
		Let $(G,E,\lambda_\mu)$ be pre-Kuranishi data over some $G_\C$-nodal family $\scrC \to \scrF$ with $E = (E,i,p)$.
		We define
		\emph{global chart data associated to $(G,E,\lambda_\mu)$}
		to be a tuple $(\scrT,s)$
		where
		\begin{enumerate}
			\item $\scrT = P_\C$ for some principal $G$-bundle $P$ over a closed $C^1_{loc}$ submanifold $B_\scrT$ of $\scrT^\pre$ possibly with boundary where $\scrT^\pre :=\scrT^\pre(\beta,\lambda_\mu)$,
			\item equipping $\scrT$ with the diagonal $G$-action (inside its natural $G\times G$-action), $s$ is a $G$-equivariant continuous map $\scrT \to E$ with $s^{-1}(0)$ compact and contained in $\scrT^{\pre,\reg}$ (Definition \ref{defnprethickenedgeneral})
			and so that $P_\scrF = p \circ s$ where $P_\scrF : \scrT \to \scrF$ and $p : E \to \scrF$ are the natural projection maps.
			Equivalently, $s$ is a $G$-equivariant section of the pullback of $E$ to $\scrT$.
		\end{enumerate}	

\[\begin{tikzcd}
	& {\scrT=P_{\bC}} && E \\
	{s^{-1}(0)} & {B_\scrT} & {\scrT^{\pre}} & \scrF \\
	& {\scrT^{\pre,\reg}}
	\arrow[from=1-2, to=2-2]
	\arrow[hook, from=2-2, to=2-3]
	\arrow[hook, from=3-2, to=2-3]
	\arrow["{(\phi,u) \to \phi}"', from=2-3, to=2-4]
	\arrow["p", from=1-4, to=2-4]
	\arrow["s"{description}, from=1-2, to=1-4]
	\arrow[hook, from=2-1, to=2-2]
	\arrow[hook, from=2-1, to=3-2]
	\arrow["{P_\scrF}", from=1-2, to=2-4]
\end{tikzcd}\]

		Given global chart data $(\scrT,s)$ as above, and an element $(\phi,u,e) \in\scrT^\pre$,
		a \emph{$\scrT$-framing} of $(\phi,u,e)$ is an element $F \in \scrT|_{(\phi,u,e)}$.
		
		Let $\widetilde{N}_\scrF$ be the pullback of the  normal bundle $N_\scrF$ of $i(\scrF)$ (Definition \ref{pre-Kuranishi data})
		to $\scrT$ via the projection map $\scrT \to \scrF$.
		Let $Q_\scrT \subset P_\C$ be the domain of $\log_P$ (Definition \ref{defnGbundle}).
		Choose an isomorphism of microbundles $\rho : N_\scrF \to E$
		where $N_\scrF$ is the normal bundle of $i(\scrF)$ inside $E$.		
		Let $U \subset E$ be the image of $\rho$.
		The \emph{global Kuranishi chart associated to $(\scrT,s)$}
		is the tuple 
		\begin{equation}
			(G,s^{-1}(U) \cap Q_\scrT, i\mathfrak{g} \oplus \widetilde{N}_\scrF \oplus V_\mu, \log_P \oplus (\rho^{-1} \circ s) \oplus P_\mu)
		\end{equation}
		where $P_\mu : \scrT \to V_\mu$ is the natural projection map.
	\end{defn}

	The following proposition is a consequence of Lemma \ref{propregular} combined with the fact that a 
	$G$-equivariant principal $G$-bundle whose base admits a $C^1_{loc}$ structure over a smooth $G$-manifold also admits a $C^1_{loc}$ structure
	over the same smooth $G$-manifold.
	\begin{prop} \label{propchart}
		The global Kuranishi chart associated to $(\scrT,s)$ is a topological global Kuranishi chart
		whose base admits a $C^1_{loc}$ structure over $B_\scrT \subset \scrF$.
		If $\underline{\scrC} \to \underline{\scrF}$ is another $G_\C$-equivariant family of nodal curves and $\rho : \scrF \to \underline{\scrF}$
		is a smooth $G_\C$-equivariant morphism which is a submersion so that $\scrC$ is the pullback of $\underline{\scrC}$,
		then $\scrT$ admits a $C^1_{loc}$ structure over $\underline{\scrF}$. \qed
	\end{prop}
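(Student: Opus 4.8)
The plan is to prove the two assertions in turn, with Lemma~\ref{propregular} as the only genuinely geometric ingredient and everything else reduced to formal manipulation of the principal-bundle and microbundle data. First I would check that the displayed tuple $(G,\,s^{-1}(U)\cap Q_\scrT,\,i\mathfrak{g}\oplus\widetilde N_\scrF\oplus V_\mu,\,\log_P\oplus(\rho^{-1}\circ s)\oplus P_\mu)$ satisfies Definition~\ref{defn topological GKC}. The group $G$ is compact by hypothesis. Since $B_\scrT$ is a $C^1_{loc}$-submanifold of, hence a topological manifold inside, the regular locus $\scrT^{\pre,\reg}$ — which is a topological manifold by Lemma~\ref{propregular} — and $\scrT=P_\C$ is the total space of a smooth principal $G_\C$-bundle over $B_\scrT$, the space $\scrT$ is a topological manifold, and $s^{-1}(U)\cap Q_\scrT$ is an open subset of it (here $U=\im(\rho)\subset E$ and $Q_\scrT=P\times_G Q_G$, the domain of $\log_P$ from Definition~\ref{defnGbundle}, are both open and $s$ is continuous), hence again a topological manifold. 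The target $i\mathfrak{g}\oplus\widetilde N_\scrF\oplus V_\mu$ is a Whitney sum of two $G$-representations with the pullback of a $G$-vector bundle, so is a $G$-vector bundle; the map $\log_P\oplus(\rho^{-1}\circ s)\oplus P_\mu$ is defined precisely on $s^{-1}(U)\cap Q_\scrT$, the locus where $\log_P$ and $\rho^{-1}\circ s$ are simultaneously defined, and is continuous and $G$-equivariant since $\log_P$ (built from $\exp$), $\rho$, $s$ and the linear projection $P_\mu$ all are. Finiteness of stabilisers is immediate: the projection $\scrT=P_\C\to B_\scrT$ intertwines the diagonal $G$-action with the $G$-action on $B_\scrT\subset\scrT^\pre$, and the latter has finite stabilisers by construction of $\scrT^\pre$ (Definition~\ref{defnprethickenedgeneral}), so the stabiliser of any point of $\scrT$ embeds into a finite group. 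I would also record that the zero locus of the new section is a closed subset of $s^{-1}(0)$, hence compact and contained in $\scrT^{\pre,\reg}$ — not needed for Definition~\ref{defn topological GKC}, but matching the hypotheses on global chart data in Definition~\ref{defnnoncomplexthickening}.

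Next I would produce the $C^1_{loc}$-structure by transporting the one supplied by Lemma~\ref{propregular} along the chain
\[
s^{-1}(U)\cap Q_\scrT\ \subset\ \scrT=P_\C\ \longrightarrow\ B_\scrT\ \subset\ \scrT^{\pre,\reg}\ \longrightarrow\ \scrF .
\]
Lemma~\ref{propregular} gives that $\scrT^{\pre,\reg}\to\scrF$ is a topological submersion with $G$-equivariant $C^1_{loc}$-structure; restricting the product charts of Definition~\ref{defnc1locsubmanifold} to the $C^1_{loc}$-submanifold $B_\scrT$, whose image in $\scrF$ is open, exhibits $B_\scrT\to\scrF$ as such a submersion. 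The principal $G$-bundle $P\to B_\scrT$ then has total space admitting a $C^1_{loc}$-structure over $\scrF$, by the fact recalled just before the statement of the proposition (equivariant local triviality reduces it to the stability of $C^1_{loc}$-submersions under product with the smooth manifold $G$); composing with the smooth principal $G_\C$-bundle $P_\C\to B_\scrT$, equivalently dividing the $C^1_{loc}$-submersion $P\times G_\C\to\scrF$ by the free $G$-action, puts a $C^1_{loc}$-structure on $\scrT\to\scrF$, which then restricts to the open $G$-invariant subset $s^{-1}(U)\cap Q_\scrT$; this is the asserted structure over (the open image in $\scrF$ of) $B_\scrT$. For the final assertion I would run the identical transport starting instead from the second conclusion of Lemma~\ref{propregular} — a $C^1_{loc}$-structure on $\scrT^{\pre,\reg}\to\underline{\scrF}$ under the stated pullback hypothesis $\scrC=\rho^*\underline{\scrC}$ — which yields the corresponding $C^1_{loc}$-structure on the base of the chart over $\underline{\scrF}$.

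The hard part here is not any single step but the bookkeeping in the second paragraph: keeping track of where the partially defined map $\log_P$ is honestly defined (which is exactly why one intersects with $Q_\scrT$) and combining the $C^1_{loc}$-submanifold structure on $B_\scrT\subset\scrT^{\pre,\reg}$ with the successive projections $P\to B_\scrT$ and $P_\C\to B_\scrT$. The single input that is not purely formal — that a $G$-equivariant principal $G$-bundle over a base admitting a $C^1_{loc}$-structure over a smooth $G$-manifold inherits one — I would either cite or dispose of in a short lemma, using equivariant local triviality together with stability of $C^1_{loc}$-submersions under products with smooth manifolds and under quotients by free smooth actions of compact groups.
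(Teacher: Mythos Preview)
Your proposal is correct and follows essentially the same approach as the paper: the paper's entire justification is the sentence immediately preceding the statement, namely that the proposition ``is a consequence of Lemma~\ref{propregular} combined with the fact that a $G$-equivariant principal $G$-bundle whose base admits a $C^1_{loc}$ structure over a smooth $G$-manifold also admits a $C^1_{loc}$ structure over the same smooth $G$-manifold.'' Your write-up unpacks exactly this, additionally verifying the axioms of Definition~\ref{defn topological GKC} (finite stabilisers, manifold structure, equivariance of the section) that the paper leaves implicit.
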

	
	The global Kuranishi chart does not depend on the choice of $\rho$ in the sense that
	if we choose a different $\rho$, we get an isomorphic obstruction bundle over the same thickening and this isomorphism
	sends the section of one global Kuranishi chart to the other.
	The resulting global Kuranishi chart in the proposition above is not necessarily for $\ccMbar_{g,h}(X,\beta,J)$. It could be for a subspace of $\ccMbar_{g,h}(X,\beta,J)$ or a fiber product of such space (See, for example, Sections \ref{sec:split_curves} and \ref{sec:genusreduction}).

	\begin{Example} \label{examplegl}
		Let $\scrF = \scrF_{g,h,d}$, $\scrC = \scrC_{g,h,d}$,
		$\scrF^{(2)} = \scrF^{(2)}_{g,h,d}$, and $\Delta$ be as in Section \ref{Subsec:moduli}
		with $G = U(d-g+1)$.
		Let $P_\scrF : \scrF^{(2)} \to \scrF$ be the natural projection map.
		Then $\scrF^{(2)} = (\scrF^{(2)},\Delta,P_\scrF)$ is a smooth $G$-microbundle over $\scrF$.
		Choose a finite dimensional approximation scheme  $(V_\mu,\lambda_\mu)_{\mu \in \bN}$ for $\scrF$.
		Then $(G,\scrF^{(2)},\lambda_\mu)$ is pre-Kuranishi data over $\scrC$.

		The principal bundle $P \to\scrT^\pre$ has fiber over $(\phi,u,e) \in\scrT^\pre$ equal to the space of unitary $\bL$-framings on $(\phi,u,e)$ (Definition \ref{defn Lframing}).
		Then $\scrT = P_\bC$. The space of framed curves from Definition \ref{defn thickened} is an open subset of $\scrT$ containing $P$.
		We let $s : \scrT \to \scrF^{(2)}$ be the map sending $(\phi,u,e,F)$ to $(\phi,\phi_F)$ as in Definition \ref{defn thickened}.
		Then $(\scrT,s)$ is global chart data associated to $(G,\scrF^{(2)},\lambda_\mu)$.
		The corresponding global Kuranishi chart is the one described in Definition \ref{defn Kurnishi chart}.
		The map $\widetilde{\Delta}$ from Equation \eqref{eqn diagonal neighborhood} identifies the microbundle $\scrF^{(2)}$ with the normal bundle of $\scrF$ inside $\scrF^{(2)}$.	
	\end{Example}
	
	\subsection{Parametrized families and cobordism invariance\label{Subsec:families}}
	
	We also have parameterized versions of Definition \ref{defnnoncomplexthickening}
	and Proposition \ref{propchart}.
	These are needed to show that our Gromov-Witten invariants do not depend on the choice of almost complex structure.
	Let $(J_t)_{t \in [0,1]}$ be a smooth family of $\omega$-tame almost complex structures.
	More generally, the interval $[0,1]$ can be replaced with any compact manifold with boundary.

	\begin{defn} \label{defnnoncomplexthickeningparameterized}
		Let $(G,E,\lambda_\mu)$ be pre-Kuranishi data over some $G_\C$-nodal family $\scrC \to \scrF$ with $E = (E,i,p)$.
		We define the
		\emph{parameterized global chart data associated to $(G,E,\lambda_\mu)$}
		to be a tuple $(\scrT',s')$
		where
		\begin{enumerate}
			\item $\scrT' = P'_\C$ for some principal $G$-bundle $P'$ over a closed $C^1_{loc}$ submanifold $B_\scrT$ of $\scrT^\pre_{(J_t)}$ possibly with boundary where $\scrT^\pre_{(J_t)} :=\scrT^\pre_{(J_t)}(\beta,\lambda_\mu)$ (Definition \ref{defnprethickenedgeneralparameterized}),
			\item $s'$ is a $G$-equivariant continuous map $\scrT' \to E$ with $s^{-1}(0)$ compact and contained in $\scrT^{\pre,\reg}_{(J_t)}$ (Definition \ref{defnprethickenedgeneralparameterized})
			and so that $P'_\scrF = p \circ s'$ where $P'_\scrF : \scrT \to \scrF$ and $p : E \to \scrF$ are the natural projection maps.
			Equivalently, $s'$ is a $G$-equivariant section of the pullback of $E$ to $\scrT'$.
		\end{enumerate}	
		Given parameterized global chart data $(\scrT',s')$ as above, and an element $(t,\phi,u,e) \in\scrT^\pre_{(J_t)}$,
		a \emph{$\scrT'$-framing} of $(t,\phi,u,e)$ is an element $F \in \scrT'|_{(t,\phi,u,e)}$.
		
		Let $\widetilde{N}_\scrF$ be the pullback of the  normal bundle $N_\scrF$ of $i(\scrF)$ (Definition \ref{pre-Kuranishi data})
		to $\scrT'$ via the projection map $\scrT \to \scrF$.
		Let $Q_{\scrT'} \subset P'_\C$ be the domain of $\log_{P'}$ (Definition \ref{defnGbundle}).
		Choose an isomorphism of microbundles $\rho : N_\scrF \to E$
		where $N_\scrF$ is the normal bundle of $i(\scrF)$ inside $E$.		
		Let $U \subset E$ be the image of $\rho$.
		The \emph{global Kuranishi chart associated to $(\scrT,s)$}
		is the tuple 
		\begin{equation}
			(G,{s'}^{-1}(U) \cap Q_{\scrT'}, i\mathfrak{g} \oplus \widetilde{N}_\scrF \oplus V_\mu, \log_{P'} \oplus (\rho^{-1} \circ s') \oplus P'_\mu)
		\end{equation}
		where $P'_\mu : \scrT' \to V_\mu$ is the natural projection map.
	\end{defn}

	The following proposition is a consequence of Proposition \ref{propregular} combined with the fact that
	$G$-equivariant principal $G$-bundles whose base admits a $C^1_{loc}$ structure over a smooth $G$-manifold also admit a $C^1_{loc}$ structure
	over the same smooth $G$-manifold.
	\begin{prop} \label{propchartparameterized}
		The global Kuranishi chart associated to $(\scrT',s')$ is a topological global Kuranishi chart
		whose base admits a $C^1_{loc}$ structure over $B_{\scrT'} \subset \scrF$.
	\end{prop}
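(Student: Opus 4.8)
The plan is to closely follow the proof of Proposition \ref{propchart}, with the parametrised regularity statement Lemma \ref{propcontainscompactproperparameterized} playing the role of Lemma \ref{propregular}. First I would unwind Definition \ref{defnnoncomplexthickeningparameterized}: the chart associated to $(\scrT',s')$ has underlying thickening the open subset ${s'}^{-1}(U)\cap Q_{\scrT'}$ of $\scrT'=P'_\C$, where $P'$ is a $G$-equivariant principal $G$-bundle over $B_{\scrT'}\subset\scrT^\pre_{(J_t)}$; its obstruction bundle $i\mathfrak{g}\oplus\widetilde{N}_\scrF\oplus V_\mu$ is visibly a $G$-vector bundle, and the displayed section is $G$-equivariant by construction. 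Thus three things remain to be checked in order to match Definition \ref{defn topological GKC} and the claimed $C^1_{loc}$-structure: (i) the thickening is a topological $G$-manifold with finite stabilisers; (ii) the section has compact zero-locus; and (iii) the thickening admits a $G$-equivariant $C^1_{loc}$-structure over $B_{\scrT'}\subset\scrF$.

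For (i), being a topological manifold is local on the base and follows because $P'_\C\to B_{\scrT'}$ is a fibre bundle with manifold fibre $G_\C$ over a topological manifold (itself a topological manifold as a $C^1_{loc}$-submanifold of $\scrT^\pre_{(J_t)}$), together with the openness of ${s'}^{-1}(U)\cap Q_{\scrT'}$; finiteness of stabilisers is built into Definition \ref{defnprethickenedgeneralparameterized}, which by construction only contains points of $\scrT^\pre_{(J_t)}$ with finite $G$-stabiliser, and this is inherited by the complexified bundle. For (ii), the zero-locus of $\log_{P'}\oplus(\rho^{-1}\circ s')\oplus P'_\mu$ is contained in $\{\log_{P'}=0\}\cap{s'}^{-1}(0)$, hence is a closed subset of ${s'}^{-1}(0)$, which is compact by hypothesis; therefore it is compact.

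Part (iii) is the substantive point. Here I would invoke Lemma \ref{propcontainscompactproperparameterized}: the forgetful map $\scrT^{\pre,\reg}_{(J_t)}\to\scrF$ carries a $G$-equivariant $C^1_{loc}$-structure, and ${s'}^{-1}(0)\subset\scrT^{\pre,\reg}_{(J_t)}$ by the definition of parametrised global chart data. After restricting $B_{\scrT'}$ to a $G$-invariant neighbourhood of ${s'}^{-1}(0)$ inside $\scrT^{\pre,\reg}_{(J_t)}$ (which changes neither the chart's zero-locus nor its equivalence class), the base inherits a $C^1_{loc}$-structure over $\scrF$ as a $C^1_{loc}$-submanifold in the sense of Definition \ref{defnc1locsubmanifold}. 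Then I would apply the general fact recalled just before the statement --- a $G$-equivariant principal $G$-bundle over a base admitting a $C^1_{loc}$-structure over a smooth $G$-manifold admits one itself, by $G$-equivariantly trivialising the bundle over the $C^1_{loc}$-product charts of the base and pulling them back --- first to $P'\to B_{\scrT'}$, observe that it passes to $P'_\C=P'\times_G G_\C=\scrT'$, and finally restrict to the open subset ${s'}^{-1}(U)\cap Q_{\scrT'}$.

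I expect the only genuine bookkeeping burden to be keeping the two $G$-actions on $\scrT'=P'_\C$ straight --- the ``vertical'' principal-bundle action used to form the quotient, and the ``horizontal'' action pulled back from $\scrF$ that organises the $C^1_{loc}$-structure --- and checking that complexifying the structure group and passing to the open domain $Q_{\scrT'}$ of $\log_{P'}$ are compatible with both. This is, however, exactly the manipulation already carried out in Proposition \ref{propchart} and in \cite[Section 4.1]{AMS-Hamiltonian}; the extra parameter $t\in[0,1]$ enters only through $J_t$ and has been absorbed into $\scrT^\pre_{(J_t)}$, so it causes no additional difficulty.
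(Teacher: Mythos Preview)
Your proposal is correct and follows essentially the same approach as the paper, which justifies the proposition in a single sentence by combining Lemma~\ref{propregular} with the principal-bundle fact you invoke; your version simply unpacks the bookkeeping in more detail, and your substitution of the parametrised Lemma~\ref{propcontainscompactproperparameterized} for Lemma~\ref{propregular} is, if anything, the more precise citation.
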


To deduce from this the invariance of smooth global charts, we include a brief discussion on uniqueness of smoothings.  Let $M \to B$ be a topological submersion of a topological manifold over a smooth manifold, with a $C^1_{loc}$-structure. The entire discussion below is meant to be $G$-equivariant, but we suppress further discussion of the group action. In \cite{AMS-Hamiltonian} the construction of a smooth chart from a $C^1_{loc}$ chart relies on lifting the tangent microbundle of $M$ to a vector bundle. Lashof's theory \cite{Lashof} implies that, perhaps after further stabilisation (by a $G$-representation), the smoothing is uniquely determined up to diffeomorphism by the stable isomorphism class of the vector bundle lift.  The bundle lift constructed in \cite{AMS-Hamiltonian} in turn depends on two pieces of data: the $C^1_{loc}$-structure on $M \to B$, and the choice of a fibrewise submersion in the sense of \cite[Definition 4.22]{AMS-Hamiltonian} (which is a map from an open neighbourhood of the diagonal in $M \times M$ to $M$ satisfying a raft of conditions).  There is a natural notion of stabilising a fibrewise submersion: one for $M \to B$ induces one on $M\times \bR^m \to B \times \bR^m$. It follows from the extension result of Bai and Xu \cite[Lemma 6.21]{BaiXu}  that any $S^k$-parametrized family of fibrewise submersions $M \to B$ extends over $\bR^{k+1}$ after stabilising $M$ and $B$ by $\bR^{k+1}$ in this way, so the space of fibrewise submersions up to stabilisation is contractible. 

\begin{remark}
In the case of topological manifolds with boundary, one requires a relative version of the smoothing theory to ensure that the smooth global chart obtained above is well-defined under the relation of cobordism.  Such a relative smoothing theory is carefully developed in \cite[Appendix B]{BaiXu2}, see also  \cite[Section 2]{Rezchikov} for a reduction to the case of $\bZ/2$-equivariant smoothings.
\end{remark}

It follows that the stable vector bundle lift of the tangent microbundle is completely determined by the $C^1_{loc}$-structure.  Since this is controlled by Proposition \ref{propchartparameterized}, the virtual class associated to a stable smoothing of a $C^1_{loc}$-global chart is indeed invariant under cobordism.

	The abstract smoothing theory gives rise to a global chart $(G,\scrT,E,s)$ with $\scrT$ and $E$ smooth but with the obstruction section $s$ still only continuous.  One can however replace it by a smooth section, as follows:
	
	\begin{lemma} \label{lem:mollify}
	Let $(G,\scrT,E,s)$ be a topological global Kuranishi chart over a base $\scrF$ with $\scrT \to \scrF$ admitting a $C^1_{loc}$-structure.  Then there is a stable smoothing $(G,\scrT', E',s')$ with the section $s'$ smooth and with $(s')^{-1}(0) = s^{-1}(0)$.
	\end{lemma}
	
	\begin{proof}
	The previous argument constructs a stable smoothing $\scrT''$ of $\scrT$ such that there is still a $C^1_{loc}$-topological submersion $\scrT'' \to \scrF$, and such that the obstruction section $s''$ in the smoothed chart is still $C^1_{loc}$ and has $(s'')^{-1}(0) = s^{-1}(0) = Z$, i.e. which has the same zero-set as $s$. Fix a smooth function $\rho: \scrT'' \to \bR$ which vanishes along $Z$ and for which all derivatives of $\rho$ vanish along $Z$. Mollify $s''$ away from $Z$ to obtain a section $s'$ which is smooth on the complement of $Z$ and globally continuous.  Since $s'$ can be taken arbitrarily $C^1_{loc}$-close to $s''$ along $Z$, the section $\rho s'$ is then globally smooth.
		\end{proof}
	
	The smoothing construction depends only on contractible choices, so yields a smooth global chart well-defined up to equivalence.

	\subsection{Equivalences Between  Kuranishi Data} \label{sectionequivalences}

	We will be constructing global Kuranishi charts using various global Kuranishi chart data
	as in Definition \ref{defnnoncomplexthickening}.
	We next describe some operations  on such charts which yield equivalences,  e.g. which then induce  the stabilisation, germ equivalence or group enlargement operations  from \cite{AMS-Hamiltonian} (which were shown to be equivalences in \emph{op. cit.}, and which are $C^1_{loc}$-analogues of the moves from Section \ref{sec:counting-theories}, which they induce on smoothings). As usual we fix a closed symplectic manifold $(X,\omega)$ and
	$\beta \in H_2(X;\Z)$.

	\begin{defn} \label{defnpulbackoffdscheme}
		Let $\scrC \to \scrF$ be a $G_\C$-equivariant family of nodal curves and let $f : \scrF^{(2)} \to \scrF$ be a smooth $G_\C$-equivariant morphism
		so that $f^*\scrC \to \scrF^{(2)}$ is the pullback of $\scrC$.
		Let $(V_\mu,\lambda_\mu)_{\mu \in \bN}$ be a finite dimensional approximation scheme for $\scrF$ (Definition \ref{defnfdapproxgeneral}).
		Then the \emph{pullback} $f^*\lambda_\mu : V_\mu \to \Omega^{0,1}(\scrC'/\scrF^{(2)};TX)$ is uniquely defined by the following property:
		$f^*\lambda_\mu(e)$
		restricted to $(f^*\scrC)|_{\phi'}$ is equal to $\lambda_\mu(e)$ restricted to the isomorphic curve $\scrC|_\phi$, where $\phi$ is the image of $\phi'$, for each $e \in V_\mu$ and $\phi' \in \scrF^{(2)}$.
		We call $(V_\mu,f^*\lambda_\mu)_{\mu \in \bN}$ the \emph{pullback} of $(V_\mu,\lambda_\mu)_{\mu \in \bN}$ along $\scrF^{(2)} \to \scrF$.		
	\end{defn}	
	
	Note that the pullback of a finite dimensional approximation scheme
	is a finite dimensional approximation scheme.

	\begin{defn} \label{defngroupenlargement}
		Let $(G,E,\lambda_\mu)$
		be pre-Kuranishi data associated to a $G_\C$-equivariant family of nodal curves $\scrC \to \scrF$.
		A \emph{principal bundle enlargment} of $(G,E,\lambda_\mu)$
		consists of the pre-Kuranishi data
		$(G \times \check{G},\check{E},\check{\lambda}_\mu)$
		over a $G_\C$-nodal family $\check{\scrC} \to \check{\scrF}$
		where
		\begin{enumerate}
			\item there exists a $G_\C$-equivariant principal $\check{G}$-bundle $P \to \scrF$
			so that $\check{\scrF} = P_\C$ and $\check{\scrC}$ is the pullback of $\scrC$,
			\item $\check{E}$ is the pullback of $E$ to $P_\C$ and
			\item $\check{\lambda}_\mu$ is the pullback of $\lambda_\mu$ along $\check{\scrF} \to \scrF$.
		\end{enumerate}
		
		Now let $(\scrT,s)$ be global chart data associated to $(G,E,\lambda_\mu)$.
		An \emph{enlargement} of $(\scrT,s)$ consists of global Kuranishi data
		$(\check{\scrT},\check{s})$ for a principal bundle enlargement $(G \times \check{G}, \check{E},\lambda_\mu)$
		as above
		where
		\begin{enumerate}
			\item $\check{\scrT}$ is a $G$-equivariant principal $\check{G}$-bundle over $\scrT$,
			\item $\check{s}$ composed with the projection map to $E$ is equal to the projection map to $\scrT$ composed with $s$.
		\end{enumerate}
	\end{defn}

	\begin{lemma}  \label{lemmagroupenlargement}
		Let $(\scrT,s)$ and $(\check{\scrT},\check{s})$ be as in Definition \ref{defngroupenlargement} above.
		The global Kuranishi chart associated to $(\check{\scrT},\check{s})$ is obtained from the one associated to $(\scrT,s)$
		via group enlargement (in the sense of \cite{AMS-Hamiltonian} or Section \ref{sec:counting-theories}). \qed
	\end{lemma}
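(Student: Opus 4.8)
The statement is a bookkeeping comparison, and the plan is to unwind Definition \ref{defnnoncomplexthickening} for both charts and identify the outcome with the group enlargement move. I would begin by writing the two charts out explicitly. Writing $\bT(\scrT,s)$ for the global Kuranishi chart associated to $(\scrT,s)$, Definition \ref{defnnoncomplexthickening} gives
\[
\bT(\scrT,s) \ = \ \big(G,\ s^{-1}(U)\cap Q_\scrT,\ i\fg\oplus\widetilde{N}_\scrF\oplus V_\mu,\ \log_P\oplus(\rho^{-1}\circ s)\oplus P_\mu\big),
\]
and the same definition applied with structure group $G\times\check{G}$ gives
\[
\bT(\check{\scrT},\check{s}) \ = \ \big(G\times\check{G},\ \check{s}^{-1}(\check{U})\cap Q_{\check{\scrT}},\ i\fg\oplus i\check{\fg}\oplus\widetilde{N}_{\check{\scrF}}\oplus V_\mu,\ \log_{\check{P}}\oplus(\check{\rho}^{-1}\circ\check{s})\oplus\check{P}_\mu\big),
\]
where $\check{\fg}$ is the Lie algebra of $\check{G}$ and $\check{\scrF}=\calP_\C$ for the $G_\C$-equivariant principal $\check{G}$-bundle $\calP\to\scrF$ of Definition \ref{defngroupenlargement} (renamed $\calP$ so as not to clash with the bundle $P$ in $\scrT=P_\C$).

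Next I would verify that, apart from the new group directions, every ingredient of $\bT(\check{\scrT},\check{s})$ is pulled back from $\bT(\scrT,s)$ along the bundle projection $\check{\scrT}\to\scrT$. Since $\check{\scrC}$ is the pullback of $\scrC$ and $\check{\lambda}_\mu$ the pullback of $\lambda_\mu$, an element $(\check{\phi},u,e)$ of $\check{\scrT}^\pre(\beta,\check{\lambda}_\mu)$ solves the perturbed equation on $\check{\scrC}|_{\check{\phi}}\cong\scrC|_\phi$ with $\check{\lambda}_\mu(e)|_{\check{\scrC}|_{\check{\phi}}}=\lambda_\mu(e)|_{\scrC|_\phi}$, so $\check{\scrT}^\pre\cong\scrT^\pre\times_\scrF\check{\scrF}$, compatibly with the regular loci and their $C^1_{loc}$-structures (Lemma \ref{propregular}, Proposition \ref{propchart}). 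Since $\check{E}$ is the pullback of the microbundle $E$, its normal bundle $N_{\check{\scrF}}$ is the pullback of $N_\scrF$, and $\check{\rho}$ may be taken to be the pullback of $\rho$, so $\widetilde{N}_{\check{\scrF}}$ is the pullback of $\widetilde{N}_\scrF$; likewise $\check{P}_\mu$ is the pullback of $P_\mu$. Finally, choosing the auxiliary data of Definition \ref{defnGbundle} compatibly — $U_{G\times\check{G}}=U_G\times U_{\check{G}}$ and $Q_{G\times\check{G}}=Q_G\times Q_{\check{G}}$ — the map $\log_{\check{P}}$ splits over its domain as the pullback of $\log_P$ in the $\fg$-directions together with its $\check{G}$-component $\log_{\check{G}}$ in the $\check{\fg}$-directions. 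Assembling these identifications, $\bT(\check{\scrT},\check{s})$ takes the form
\[
\big(G\times\check{G},\ \widehat{\scrT},\ \pi^*(i\fg\oplus\widetilde{N}_\scrF\oplus V_\mu)\oplus i\check{\fg},\ \pi^*(\log_P\oplus(\rho^{-1}\circ s)\oplus P_\mu)\oplus\log_{\check{G}}\big),
\]
with $\pi\colon\widehat{\scrT}\to s^{-1}(U)\cap Q_\scrT$ the relevant restriction of the $\check{G}$-bundle $\check{\scrT}\to\scrT$ and the $\log_{\check{G}}$-summand of the section cutting the residual $\check{G}_\C$-directions of $\widehat{\scrT}$ down to $\check{G}$-directions. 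This is precisely the group enlargement of $\bT(\scrT,s)$ determined by the embedding $G\hookrightarrow G\times\check{G}$ and the $G$-equivariant principal $\check{G}$-bundle $\check{\scrT}\to\scrT$, in the sense of \cite{AMS-Hamiltonian}; that this move is an equivalence is proved there (it decomposes into the free-quotient and group-enlargement moves of Lemma \ref{lem:factor_equivalence}).

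No analytic input is needed beyond what is already available: the $C^1_{loc}$-structures and their compatibility with pullback along $\check{\scrF}\to\scrF$ come from Proposition \ref{propchart} and Lemma \ref{propregular}. The one genuine point to watch will be the interplay of complexifications: $\check{\scrT}=\check{P}_\C$ a priori carries a $(G\times\check{G})_\C$-action, and one must check that over $\scrT=P_\C$ it restricts to exactly the $G$-equivariant $\check{G}$-bundle structure posited in Definition \ref{defngroupenlargement} — this, together with the splitting of $\log_{\check{P}}$, is where the product choices $U_{G\times\check{G}}=U_G\times U_{\check{G}}$ and $Q_{G\times\check{G}}=Q_G\times Q_{\check{G}}$ are essential.
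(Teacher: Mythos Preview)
Your proposal is correct; the paper itself gives no proof of this lemma (the statement ends with a bare $\qed$, signalling that the authors regard it as immediate from Definitions \ref{defngroupenlargement} and \ref{defnnoncomplexthickening}), so your careful unwinding of the data and identification of the enlarged chart with the group-enlargement move is in fact more detailed than the paper's own treatment.
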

	
	\begin{defn} \label{defnapproxenlargement}
		Let $(G,E,\lambda_\mu)$ be pre-Kuranishi data over a $G_\C$-nodal family $\scrC \to \scrF$ as in Definition \ref{defnprethickenedgeneral}.
		Let $(\check{V}_\mu,\check{\lambda}_\mu)_{\mu \in \bZ}$ be another finite dimensional approximation scheme for $\scrF$ (Definition \ref{defnfdapproxgeneral}).
		
		Let $(\scrT,s)$ be global Kuranishi data associated to $(G,E,\rho,\lambda_\mu)$ and $(\scrT',s')$ global Kuranishi data associated to $(G,E,\lambda_\mu \oplus \check{\lambda}_\mu)$. 
		We say that $(\scrT',s')$ is an \emph{approximation stabilization of $(\scrT,s)$} if the restriction of $(\scrT',s')$ to $\scrT^\pre(\beta,\lambda_\mu)$
		is $(\scrT,s)$.
	\end{defn}

	\begin{lemma}
		If $(\scrT',s')$ is an approximation stabilization of $(\scrT,s)$ as above then 
		the global Kuranishi chart associated to $(\scrT',s')$ is obtained from the one associated to $(\scrT,s)$ via stabilization. \qed
	\end{lemma}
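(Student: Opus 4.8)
The plan is to unwind Definition~\ref{defnnoncomplexthickening} for the two global chart data $(\scrT,s)$ and $(\scrT',s')$ and match the resulting tuples against the stabilization move of Lemma~\ref{lem:factor_equivalence}(3) (the $C^1_{loc}$-analogue of the move in Section~\ref{sec:counting-theories} used in \cite{AMS-Hamiltonian}), with new $G$-representation $W := \check V_\mu$. Write $P'_\mu = P_\mu \oplus \check P_\mu$, where $\check P_\mu : \scrT' \to \check V_\mu$ records the $\check V_\mu$-coordinate $\check e$. Then the global Kuranishi chart attached to $(\scrT',s')$ has obstruction bundle $i\mathfrak g \oplus \widetilde{N}_\scrF \oplus V_\mu \oplus \check V_\mu$ and section $\log_{P'} \oplus (\rho^{-1}\circ s') \oplus P_\mu \oplus \check P_\mu$, while the stabilization of the chart of $(\scrT,s)$ by $\check V_\mu$ has obstruction bundle $i\mathfrak g \oplus \widetilde{N}_\scrF \oplus V_\mu \oplus \check V_\mu$ over $(s^{-1}(U)\cap Q_\scrT)\times\check V_\mu$ and section $\log_P \oplus (\rho^{-1}\circ s)\oplus P_\mu \oplus \mathrm{pr}_{\check V_\mu}$. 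The microbundle $E$, the isomorphism $\rho$, the normal bundle $N_\scrF$, and the maps $\log$ and $P_\mu$ on the $V_\mu$-slot are literally the same data in both constructions, so the entire content of the lemma is to identify $\scrT'$, near the compact zero locus of its section, with $\scrT \times \check V_\mu$ in a $G$-equivariant $C^1_{loc}$ fashion under which $\check e$ becomes the $\check V_\mu$-coordinate and $\log_{P'}$, $\rho^{-1}\circ s'$, $P_\mu$ become the pullbacks of $\log_P$, $\rho^{-1}\circ s$, $P_\mu$.

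The heart of the argument is the following claim: in a $G$-invariant neighbourhood of $\scrT^\pre|_{e=0,\ \check e=0}$, the map $\scrT^{\pre,\reg}(\beta,\lambda_\mu\oplus\check\lambda_\mu)\to\check V_\mu$, $(\phi,u,e,\check e)\mapsto\check e$, is a $G$-equivariant $C^1_{loc}$ fibre bundle whose fibre over $0$ is $\scrT^{\pre,\reg}(\beta,\lambda_\mu)$. Indeed, for fixed $\check e$ the solutions of $\overline\partial_J u + \lambda_\mu(e)\circ\Gamma_u = -\check\lambda_\mu(\check e)\circ\Gamma_u$ near a regular solution of the $\lambda_\mu$-equation form a $C^1_{loc}$-manifold submersing over $\scrF$ — this is exactly Lemma~\ref{propregular}/Corollary~\ref{cor algfam} with the perturbation shifted by the fixed term $-\check\lambda_\mu(\check e)\circ\Gamma_u$, using the regularity hypothesis of Definition~\ref{defnprethickenedgeneral} — and these manifolds vary $C^1_{loc}$-smoothly and $G$-equivariantly with $\check e$. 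Since $\check V_\mu$ is $G$-equivariantly contractible via the scaling homotopy $\check e\mapsto t\check e$ (whose endpoint is the $G$-fixed origin), this bundle is $G$-equivariantly $C^1_{loc}$-trivial, i.e. $G$-equivariantly $C^1_{loc}$-isomorphic to $\scrT^{\pre,\reg}(\beta,\lambda_\mu)\times\check V_\mu$ over $\check V_\mu$. Restricting to the closed $C^1_{loc}$-submanifold $B_{\scrT'}$ and to the principal $G$-bundle $P'$, and invoking the defining property of an approximation stabilization — that the restriction of $(\scrT',s')$ over $\{\check e=0\}$ is $(\scrT,s)$, pinning the fibre over $0$ to $B_\scrT$ and $P$ — we obtain (after shrinking to a neighbourhood of the zero locus, i.e. a germ equivalence, if needed) $B_{\scrT'}\cong B_\scrT\times\check V_\mu$, $P'\cong P\times\check V_\mu$, hence $\scrT'\cong\scrT\times\check V_\mu$, with $\check e$ as the second coordinate.

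Under this identification $\log_{P'}$ and the domain $Q_{\scrT'}$ depend only on the $P$-factor, hence are the pullbacks of $\log_P$ and $Q_\scrT$; $P_\mu$ reads the $V_\mu$-coordinate and pulls back from $\scrT$; and $\check P_\mu$ is by definition $\mathrm{pr}_{\check V_\mu}$. The section component $\rho^{-1}\circ s'$ restricts on $\{\check e=0\}$ to $\rho^{-1}\circ s$, and in the constructions we use (cf. Example~\ref{examplegl}) it is in fact independent of $\check e$ altogether; in general, since the associated global chart is only relevant up to germ equivalence near its compact zero locus $\{s'=0\}\subset\{\check e=0\}$, one replaces $s'$ over a small neighbourhood by the pullback of $s$ via a straight-line homotopy of sections in a vector-bundle model of $E$. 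With all of the data identified, the global Kuranishi chart of $(\scrT',s')$ is the stabilization by $\check V_\mu$ of that of $(\scrT,s)$, as claimed. The main obstacle is the equivariant $C^1_{loc}$-triviality of the fibre bundle over $\check V_\mu$; everything else is bookkeeping against Definition~\ref{defnnoncomplexthickening}.
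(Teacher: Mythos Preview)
The paper records this lemma with a bare \qed, so you have supplied what the authors omit; your unwinding of Definition~\ref{defnnoncomplexthickening} and the key observation that $\check P_\mu:\scrT'\to\check V_\mu$ is a $G$-equivariant $C^1_{loc}$ submersion near the zero locus (via Lemma~\ref{propregular} with $\check e$ treated as a parameter) is exactly the right argument.

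One step can be tightened. In the general case where $s'$ may depend on $\check e$, invoking a straight-line homotopy of sections does not by itself produce an equivalence of global charts in the sense of Lemma~\ref{lem:factor_equivalence}. The cleaner route is to bypass the attempt to identify the $(\scrT',s')$-chart \emph{literally} with the strict stabilization, and instead observe that the inclusion of the $(\scrT,s)$-chart into the $(\scrT',s')$-chart along $\{\check e=0\}$ is a morphism of global charts for which the exact sequence \eqref{eqn:exact_on_Z} holds (the extra $\check V_\mu$ in $T\scrT'$ and in the obstruction cancel via $D\check P_\mu=\id$); it is therefore an equivalence, and Lemma~\ref{lem:factor_equivalence} then factors it through stabilization and germ equivalence without any need to adjust $s'$. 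As you note, in the actual applications (Example~\ref{examplegl} and its relatives) $s'$ is independent of $\check e$, so your direct product identification already suffices there.
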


	\begin{defn} \label{defnc1locmap}
		Let $p : M \to F$ admit a $C^1_{loc}$ structure. We define
		$T(M/F)$ to be the corresponding vertical tangent bundle.	
		
		Let $p_0 : M_0 \to F$, $p_1 : M_1 \to F$ admit $C^1_{loc}$ structures
		over a smooth manifold $F$.
		A \emph{fiberwise $C^1_{loc}$ map} $f : M_0 \to M_1$ is a continuous map where
		\begin{enumerate}
			\item $p_0 = p_1 \circ f$,
			\item the restriction $f|_\phi : M_0|_\phi \to M_1|_{\phi}$ for each $\phi \in F$ is smooth and
			\item there is a continuous map $D^{ver}f : T(M_0/F_0) \to T(M_1/F_1)$ whose restriction to $T(M_0|_\phi)$ is the derivative of $f|_{\phi}$ for each $\phi \in F$.
		\end{enumerate}
			\end{defn}

For the next definition, let $(G,E,\lambda_\mu)$ be pre-Kuranishi data over a $G_\C$-nodal family $\scrC \to \scrF$ as in Definition \ref{defnprethickenedgeneral}.
		Let $Q : \check{\scrF} \to \scrF$ be a smooth surjective $G_\C$-equivariant morphism
		and let $\check{\scrC} \to \check{\scrF}$ be the corresponding pullback by $Q$.
        
	\begin{defn} 
		We say that pre-Kuranishi data $(G,\check{E},\check{\lambda}_\mu)$ over $\check{\scrC} \to \check{\scrF}$ is a \emph{$Q$-lift}
		of $(G,E,\lambda_\mu)$
		if
		\begin{enumerate}
			\item There is a map $\widetilde{Q}$  fitting into the following commutative diagram:
						\[\begin{tikzcd}
				{\check{E}} & E \\
				{\check{\scrF}} & \scrF
				\arrow[from=1-1, to=2-1]
				\arrow[from=1-2, to=2-2]
				\arrow["Q", from=2-1, to=2-2]
				\arrow["{\widetilde{Q}}", from=1-1, to=1-2]
			\end{tikzcd}\]
			where the vertical maps are the natural projection maps and
			
			\item  $\check{\lambda}_\mu$ is the pullback of $\lambda_\mu$ via $Q$.
		\end{enumerate}
		
		\end{defn}
		We now explain how to relate global Kuranishi data in this setting: let $(\scrT,s)$ be global Kuranishi data for $(G,E,\lambda_\mu)$, and, to simplify the notation, let $\check{\scrT}^\pre :=\scrT^\pre(\beta,\check{\lambda}_{\mu})$ and $\scrT^\pre :=\scrT^\pre(\beta,\lambda_\mu)$.
                \begin{defn}        \label{defngenearlstablization}             
                A \emph{general stabilization} of $(\scrT,s)$ consists of global Kuranishi data $(\check{\scrT},\check{s})$ for a $Q$-lift $(G,\check{E},\check{\lambda}_\mu)$ as above
		so that
		\begin{enumerate}
			\item there exists a map $\widetilde{P}$ fitting into the following commutative diagram:
			
						\[\begin{tikzcd}
				{\check{E}} & E \\
				{\check{\scrT}} & \scrT \\
				{\check{\scrT}^\pre} & {\scrT^\pre} \\
				{\check{\scrF}} & \scrF
				\arrow[from=2-2, to=3-2]
				\arrow[from=2-1, to=3-1]
				\arrow[from=3-1, to=4-1]
				\arrow[from=3-2, to=4-2]
				\arrow["Q", from=4-1, to=4-2]
				\arrow["{\widetilde{Q}}", from=1-1, to=1-2]
				\arrow[bend right=40, from=1-1, to=3-1]
				\arrow[bend left=40, from=1-2, to=3-2]
				\arrow["{\check{s}}", from=2-1, to=1-1]
				\arrow["s", from=2-2, to=1-2]
				\arrow[from=3-1, to=3-2]
				\arrow["{\widetilde{P}}", from=2-1, to=2-2]
			\end{tikzcd}\]		
						where  unlabelled arrows are the natural projection maps, all spaces in the diagram admit $C^1_{loc}$ structures over $\scrF$ and where all the morphisms  are $C^1_{loc}$ maps over $\scrF$.
			\item all the maps in the diagram are $G$-equivariant (cf. the action from Definition \ref{defnnoncomplexthickening}) and $\widetilde{P}$ maps $\check{s}^{-1}(0)$ homeomorphically to $s^{-1}(0)$ (respecting stabiliser groups).
			\item For each $x \in \check{s}^{-1}(0)$, $D^{\ver}\check{s} : T(\check{\scrT}/\scrT)|_x \to T(\check{E}/E)|_{\check{s}(x)}$ is an isomorphism.
					\end{enumerate}				
			\end{defn}

	\begin{prop} \label{propstabilization}
		Let $(\check{\scrT},\check{s})$ be a general stabilisation of $(\scrT,s)$ as in Definition \ref{defngenearlstablization}.
		Then the global Kuranishi charts associated to $(\scrT,s)$ and to $(\check{\scrT},\check{s})$
		are related by stabilization and germ equivalence.
	\end{prop}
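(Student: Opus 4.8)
The plan is to unravel the global Kuranishi chart associated to a general stabilisation $(\check{\scrT},\check{s})$ and compare it directly with the one associated to $(\scrT,s)$, using the commutative diagram in Definition \ref{defngenearlstablization} to organise the comparison into two elementary moves: a germ equivalence (shrinking to a neighbourhood of the zero locus, where the $Q$-lift and the map $\widetilde P$ behave well) followed by a stabilisation in the sense of Lemma \ref{lem:factor_equivalence}(3). First I would record that, by Definition \ref{defnnoncomplexthickening}, the chart for $(\scrT,s)$ has thickening $s^{-1}(U)\cap Q_{\scrT}$, obstruction bundle $i\frak{g}\oplus \widetilde N_\scrF\oplus V_\mu$ and section $\log_P\oplus(\rho^{-1}\circ s)\oplus P_\mu$, and similarly for $(\check{\scrT},\check s)$ but with the possibly larger group, the pulled-back microbundle $\check E$, and the approximation scheme $\check\lambda_\mu$. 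Because $\check\lambda_\mu$ is the pullback of $\lambda_\mu$ along $Q$ (so $\check{\scrT}^\pre$ maps to $\scrT^\pre$ with the expected fibrewise structure) and because $\widetilde P$ maps $\check s^{-1}(0)$ homeomorphically onto $s^{-1}(0)$ respecting stabilisers, near the zero loci the two sets of chart data differ only by the extra directions recorded in the $Q$-lift — i.e. by the fibres of $\check{\scrF}\to\scrF$ together with whatever fibre directions were added to $\check E$ over $E$.

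The second step is to identify these extra directions as a \emph{stabilisation datum}. Condition (3) in Definition \ref{defngenearlstablization}, that $D^{\ver}\check s$ is an isomorphism along $\check s^{-1}(0)$, is precisely what is needed to apply (a $C^1_{loc}$ version of) the normal-form argument in the proof of Lemma \ref{lem:factor_equivalence}: after replacing $\check{\scrT}$ by a neighbourhood of $\check s^{-1}(0)$ (germ equivalence, the fourth move) one may use the vertical derivative to split off the new fibre directions, exhibiting the thickening of the $(\check{\scrT},\check s)$-chart as (a neighbourhood in) the total space of a $G$-representation bundle over the thickening of the $(\scrT,s)$-chart, with the section being the canonical diagonal section in those directions summed with the pullback of the old section. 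Concretely, I would argue that the microbundle isomorphism $\rho$ and the map $\widetilde Q$ identify $\check N_\scrF$ with $N_\scrF$ plus a complementary $G$-bundle, and that $\widetilde P$ together with $D^{\ver}\check s$ trivialises this complement along the zero locus; invoking Lashof-type uniqueness of smoothings as in Section \ref{Subsec:families}, these identifications on a neighbourhood of the zero locus suffice to produce the claimed stabilisation-plus-germ-equivalence up to the ambiguities already known to be equivalences. I would cite \cite[Section 4]{AMS-Hamiltonian} for the statement that the ``stabilisation'' and ``germ equivalence'' moves there are equivalences, so that the composite is an equivalence, and for the $C^1_{loc}$-bookkeeping that lets one pass from the topological charts to their smoothings.

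The main obstacle I expect is the bookkeeping of the microbundle directions: the chart construction of Definition \ref{defnnoncomplexthickening} folds together the Lie-algebra direction $i\frak g$ (from $\log_P$), the normal-bundle direction $\widetilde N_\scrF$ (from $\rho^{-1}\circ s$), and the approximation-scheme direction $V_\mu$, and one must check that the passage from $(\scrT,s)$ to a $Q$-lift only enlarges the middle summand (and possibly $G$, hence $i\frak g$) in a way matched by an enlargement of the thickening, with the section behaving as a diagonal section in the new coordinates. This is where condition (3) and the commutativity of the big diagram do the real work, and where one has to be careful that the isomorphism $D^{\ver}\check s$ at the zero locus can be promoted (after germ equivalence and, if necessary, a further stabilisation by a $G$-representation as in the remark following Lemma \ref{lem:factor_equivalence}) to a fibrewise $C^1_{loc}$ splitting on a whole neighbourhood, rather than merely pointwise along $\check s^{-1}(0)$. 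Once that local splitting is in hand, matching it to the definitions of ``stabilisation'' and ``germ equivalence'' in \cite{AMS-Hamiltonian} is routine, and the proof concludes by composing the two moves.
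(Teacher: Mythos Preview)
Your proposal is correct and follows essentially the same approach as the paper: replace the microbundles $E$ and $\check E$ by vector bundles, split $\check E$ as $E^\perp\oplus \widetilde P^*E$ using $\widetilde Q$, decompose $\check s$ accordingly, and observe via condition (3) that the $E^\perp$-component $s^\perp$ is transverse to zero along $\check s^{-1}(0)$, so after shrinking (germ equivalence) the chart for $(\check{\scrT},\check s)$ is a stabilisation of that for $(\scrT,s)$. The paper's proof is considerably terser and does not separately discuss the $i\frak g$ summand or smoothing ambiguities; note also that in a $Q$-lift the group $G$ is unchanged, so your parenthetical about enlarging $G$ is unnecessary.
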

	\begin{proof}
		First of all, we can replace $E$ and $\check{E}$ with any isomorphic smooth $G$-microbundles.
		As  a result, we can think of them as vector bundles over $\check{\scrT}^\pre$ and $\scrT^\pre$ respectively.
		The pullback of these vector bundles $\check{E}'$ and $E'$ to $\check{\scrT}$ and $\scrT$ respectively
		have natural sections $\check{s}'$ and $s'$ induced by $\check{s}$ and $s$ respectively.
		We can also assume that $\widetilde{Q}$ is a linear submersion and hence $\check{E}'$ is isomorphic to $E^\perp \oplus E''$
		where $E''$ is the pullback $\widetilde{P}^*E$.
		Then $\check{s}' = s^\perp \oplus s''$ with respect to this splitting where $s''$ is the pullback of $s'$.
		Also $s^{\perp}$ is transverse to zero along $\check{s}^{-1}(0)$.
			\end{proof}

	\subsection{Independence of Choices} \label{subsection independence}

	We need to show the global Kuranishi
	charts constructed in
	Section \ref{Subsec:easier} are independent of choices of finite dimensional approximation scheme,
	consistent domain metrics and Hermitian line bundles, etc.
	We will do this by using global Kuranishi chart data from Section \ref{sectiongeneralconstruction}.
	Dependence on $J$ was already dealt with in Proposition \ref{propchartparameterized} above.
	
	\begin{remark}
	The general template for building equivalences follows an idea from \cite{AMS-Hamiltonian}; given two charts associated to two choices of data, we find group enlargements of each which are simultaneously stabilisation-equivalent to a global chart of `doubly framed' curves (in which both sets of choices are carried through the construction at once).  Equivalences therefore appear as zig-zags in such a diagram. This template will recur several times in the coming sections. \end{remark}
	
	Let $g,h,d,\check{d} \in \bN$.
	Let $\scrF = \scrF_{g,h,d}$,
	 $\scrC = \scrC_{g,h,d}$, 
	$\scrF^{(2)} = \scrF^{(2)}_{g,h,d}$,
	$\check{\scrF} := \scrF_{g,h,\check{d}}$,
	 $\check{\scrC} := \scrC_{g,h,\check{d}}$ and
	$\check{\scrF}^{(2)} := \scrF^{(2)}_{g,h,\check{d}}$
	be as Section \ref{Subsec:moduli}.
	We let $G := U(d-g+1)$ and $\check{G} := U(\check{d}-g +1)$.
		Choose finite dimensional approximation schemes
	$(V_\mu,\lambda_\mu)_{\mu \in \bN}$
	and $(\check{V}_\mu,\check{\lambda}_\mu)_{\mu \in \bN}$
	for $\scrF$ and $\check{\scrF}$ respectively.
	
	We have associated pre-thickenings
	$\scrT^\pre :=\scrT^\pre(\beta,\lambda_\mu)$
	and $\check{\scrT}^\pre :=\scrT^\pre(\beta,\check{\lambda}_\mu)$
	for some large $\mu$.
	
	Now choose consistent domain metrics for $\scrF$ and $\check{\scrF}$
	as in Definition \ref{defn domain metric}
	and Hermitian line bundles $L \to X$ and $\check{L} \to X$
	whose curvatures tame $J$, 
	and fix an integer $2 \leq k $.
	
	We have a principal $G$-bundle $P \to\scrT^\pre$
	whose fiber over $(\phi,u,e) \in\scrT^\pre$
	is a unitary basis of
	$L_u := u^*(\omega_{\scrC/\scrF}(p_1,\cdots,p_h)|_\phi \otimes u^* L)^{\otimes k}$
	where $p_1,\cdots,p_h$ are the marked point sections.
	We let $d$ be the size of this basis.
	We have a similarly defined principal $G$-bundle $\check{P} \to \check{\scrT}^\pre$
	whose fiber over $(\phi,u,e) \in \check{\scrT}^\pre$
	is a unitary basis of
	$\check{L}_u := u^*(\omega_{\check{\scrC}/\check{\scrF}}(\check{p}_1,\cdots,\check{p}_h)|_\phi \otimes u^* \check{L})^{\otimes k}$
	where $\check{p}_1,\cdots,\check{p}_h$ are the marked point sections.
	We let $\check{d}$ be the size of this basis.
	Define $\scrT := P_\bC$ and $\check{\scrT} := \check{P}_\bC$.
	
	Each unitary basis $F$ gives us a map $\phi_F$ from the domain of $u$
	to $\bC \bP^{d-g}$ and hence an element $(\phi,\phi_F)$ of $\scrF^{(2)}$.
	Therefore we have a $G$-equivariant map
	$P \to \scrF^{(2)}$ which extends to a $G_\C$-equivariant map $s : \scrT \to \scrF^{(2)}$.
	We define $\check{s} : \check{\scrT} \to \check{\scrF}^{(2)}$ analogously. 
	Then $(G,\scrT,s)$ is global Kuranishi data associated to $(G,\scrF^{(2)},\lambda_\mu)$
	and $(\check{G},\check{\scrT},\check{s})$ is global Kuranishi data associated to $(\check{G},\check{\scrF}^{(2)},\check{\lambda}_\mu)$.

	\begin{prop} \label{proprelated}
		$(G,\scrT,s)$ and $(\check{G},\check{\scrT},\check{s})$
		are related by a sequence of
		principal bundle enlargements (Definition \ref{defngroupenlargement})
		and general stabilizations (Definition \ref{defngenearlstablization}) and their inverses.
		Hence their associated global Kuranishi charts are equivalent.
	\end{prop}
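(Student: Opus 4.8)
### Proof Proposal

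The plan is to follow the ``doubly framed curves'' template sketched in the preceding remark. The goal is to produce a global Kuranishi datum $(\widehat G, \widehat\scrT, \widehat s)$ that simultaneously carries both sets of auxiliary choices, together with morphisms of Kuranishi data exhibiting $(G,\scrT,s)$ and $(\check G,\check\scrT,\check s)$ as recovered from $(\widehat G, \widehat\scrT, \widehat s)$ by the moves already classified in Section \ref{sectionequivalences}. Concretely, I would first form the ``common refinement'' family of curves. Over the fiber product of the two pre-thickenings one obtains, from a point $(\phi,u,e)$ (where $e$ now ranges over $V_\mu \oplus \check V_\mu$, so that we have simultaneously solved the perturbed equation for both approximation schemes; this is exactly an approximation stabilization in the sense of Definition \ref{defnapproxenlargement}), two line bundles $L_{u,\bL}$ and $\check L_{u,\check{\bL}}$ on the domain. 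A unitary basis of $H^0$ of \emph{each} gives a pair of maps, hence a map to $\bC\bP^{d-g}\times\bC\bP^{\check d-g}$, and one works over the appropriate Segre- or product-type moduli space $\scrF_{g,h,d}^{(2)} \times \scrF_{g,h,\check d}^{(2)}$ with group $U(d-g+1)\times U(\check d - g + 1)$.

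The key steps, in order, are: (1) replace $(V_\mu,\lambda_\mu)$ and $(\check V_\mu,\check\lambda_\mu)$ by the common scheme $(V_\mu\oplus\check V_\mu, \lambda_\mu\oplus\check\lambda_\mu)$, applying the approximation-stabilization lemma at the end of Section \ref{sectionequivalences} to see that this changes each chart only by stabilization; (2) build the doubly-framed thickening $\widehat\scrT$ as $\widehat P_\bC$ where $\widehat P$ is the bundle of pairs of unitary frames, over the common pre-thickening, and define the section $\widehat s$ to record both diagonal-neighbourhood obstructions and both framing-matrix obstructions $\exp^{-1}(H_F), \exp^{-1}(\check H_{\check F})$ as in Definition \ref{defn Kurnishi chart}; (3) observe that forgetting the $\check L$-frame (resp. the $L$-frame) exhibits $\widehat P$ as a $\check G$-equivariant (resp. $G$-equivariant) principal bundle over (a pullback of) $\scrT$ (resp. $\check\scrT$) — this is a principal bundle enlargement in the sense of Definition \ref{defngroupenlargement}, whose effect on associated global Kuranishi charts is group enlargement by Lemma \ref{lemmagroupenlargement}; (4) check that the remaining discrepancy — namely that $\widehat\scrT$ also carries the extra framing data and solves the extra diagonal equation — is precisely a general stabilization in the sense of Definition \ref{defngenearlstablization}, so that Proposition \ref{propstabilization} applies; (5) conclude by assembling the zig-zag of principal bundle enlargements and general stabilizations and invoking that each such move induces an equivalence of smooth global charts (Lemmas \ref{lemmagroupenlargement}, \ref{propstabilization} and the smoothing discussion of Section \ref{Subsec:families}).

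I would emphasize that most of this is bookkeeping once the framework of Section \ref{sectiongeneralconstruction} is in place: the real content has already been packaged into Definitions \ref{defngroupenlargement} and \ref{defngenearlstablization} and Proposition \ref{propstabilization}. The point to check carefully is step (4): one must verify that the diagram of $C^1_{loc}$-manifolds over $\scrF$ (or over the common base) commutes, that $\widetilde P$ maps the zero locus homeomorphically respecting stabilizer groups, and that the vertical derivative $D^{\ver}\widehat s$ is an isomorphism along the zero locus — i.e. that the extra obstruction directions introduced by the doubly-framed construction are genuinely ``tautologically cut out'' (the analogue of the fact in the proof of Theorem \ref{thm:basicglobalchart} that $s^{-1}(0)$ forces $H_F$ unitary and $\phi = \phi_F$). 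The main obstacle, as in \cite{AMS-Hamiltonian}, is organizing the groups: the complexified group acting on $\widehat\scrT$ must be arranged so that forgetting either frame is literally a principal bundle projection and not merely finite-to-one, which is why one works with $P_\bC$ and the microbundle $\scrF^{(2)}$ rather than directly with $\scrF_{g,h,d}$; verifying compatibility of the two forgetful projections with the respective $\log_P$ and $\log_{\check P}$ maps (Definition \ref{defnGbundle}) is the fiddliest part, but presents no conceptual difficulty beyond what Section \ref{sectiongeneralconstruction} already handles.
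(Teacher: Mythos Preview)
Your proposal is correct and follows essentially the same ``doubly framed curves'' template as the paper's proof. The paper organizes the zig-zag through three explicit intermediate Kuranishi data $(\scrT_1,s_1)$, $(\scrT_2,s_2)$, $(\scrT_3,s_3)$ rather than a single central object $\widehat\scrT$: first one performs the principal bundle enlargement (adding the $\check F$-frame to $\scrT$ while still working over $\scrF$) to get $\scrT_1$, then a general stabilization together with an approximation stabilization to reach the doubly-framed $\scrT_2$ over the product-type base $\scrF_2$, and symmetrically on the $\check\scrT$ side via $\scrT_3$. One point worth tightening in your write-up is your step~(1): the schemes $(V_\mu,\lambda_\mu)$ and $(\check V_\mu,\check\lambda_\mu)$ live over different bases $\scrF$ and $\check\scrF$, so their direct sum is not literally an approximation stabilization in the sense of Definition~\ref{defnapproxenlargement} until both have been pulled back to the common base $\scrF_2$; this is why the paper performs the principal bundle enlargement \emph{before} the approximation stabilization rather than after.
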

	\begin{proof}
	This is done via three intermediate global Kuranishi data.
	The first is constructed as follows:
	We start with pre-Kuranishi data $(G \times \check{G},\scrF^{(2)},\lambda_\mu)$
	where $G \times \check{G}$ acts by projecting to $G$ first.
	We let $\scrT_1$ be the principal $\check{G}$-bundle
	over $\scrT$ whose fiber over a point in $\scrT|_{(\phi,u,e)}$, 
	$(\phi,u,e) \in\scrT^\pre$
	is a unitary basis $\check{F}$
	of $H^0(\check{L}_u)$ using the consistent
	domain metric for $\check{\scrF}$.
	We let $s_1$
	be the composition $\scrT_1 \to \scrT \lra{s} \scrF^{(2)}.$
	Then $(\scrT_1,s_1)$ is global Kuranishi data associated to
	 $(G \times \check{G},\scrF^{(2)},\lambda_\mu)$.
	 It is also a principal bundle enlargement of $(\scrT,s)$ (Definition \ref{defngroupenlargement}).

	 We will now construct the second global Kuranishi chart data.
	 We start with the space $\scrF_2$
	 of genus $g$ curves with $h$
	 marked points mapping to $\bC \bP^d \times \bC \bP^{\check{d}}$
	 whose projection to $\bC \bP^d$
	 is a member of $\scrF$
	 and whose projection to $\bC \bP^{\check{d}}$
	 is in $\check{\scrF}$.
	 We let $\scrC_2 \to \scrF_2$
	 be the corresponding universal curve
	 with marked point sections $p^2_1,\cdots,p^2_h$.
	 This is a $G_\bC \times \check{G}_\bC$-nodal family.
	 
	 We let $\scrF_2^{(2)}$ be the space
	 of genus $g$ curves with $h$
	 marked points mapping to $(\bC \bP^d)^2 \times (\bC \bP^{\check{d}})^2$
	 whose projection to $(\bC \bP^d)^2$
	 is a member of $\scrF^{(2)}$
	 and whose projection to $(\bC \bP^{\check{d}})^2$
	 is in $\check{\scrF}^{(2)}$.

	 We let $(V_{\mu,2},\lambda_{\mu,2})_{\mu \in \bN}$
	 be the finite dimensional approximation scheme
	 for $\scrF_2$
	 given by the sum of the pullback (\ref{defnpulbackoffdscheme}) of
	 $(V_\mu,\lambda_\mu)_{\mu \in \bN}$
	 and $(\check{V}_\mu,\check{\lambda}_\mu)_{\mu \in \bN}$
	 respectively.
	 
	 We have an associated pre-thickening
	$\scrT^\pre_2 :=\scrT^\pre(\beta,\lambda_{\mu,2})$
	for some large $\mu$.
	
	 We have a principal $G \times \check{G}$-bundle $P_2 \to\scrT^\pre_2$
	whose fiber over $(\phi,u,e) \in\scrT^\pre_2$ is 
	the space of pairs $F,\check{F}$
	of unitary bases of $H^0(L_u)$
	and $H^0(\check{L}_u)$
	respectively.
	So elements of $\scrT^\pre_2$ are tuples of the form $(\phi,u,e,F,\check{F})$.
	We also have a map
	$s_2 :\scrT^\pre_2 \to \scrF_2^{(2)}$
	sending $(\phi,u,e,F,\check{F})$ to $(\phi,\phi_F,\check{\phi},\phi_{\check{F}})$.

	 Then $(\scrT_2,s_2)$ is global Kuranishi data associated to
	 $(G \times \check{G},\scrF_2,\lambda_{\mu,2})$.
	 Also $(\scrT_2,s_2)$
	 is a general stabilization (Definition \ref{defngenearlstablization})
	 followed by an approximation stabilization (Definition \ref{defnapproxenlargement})
	 of $(\scrT_1,s_1).$

	 Finally we construct the third global Kuranishi data.
	 This is the same as the first one,
	 but with $\scrF$ and $\check{\scrF}$
	 swapped.
	 We start with pre-Kuranishi data $(G \times \check{G},\check{\scrF}^{(2)},\check{\lambda}_\mu)$
	where $G \times \check{G}$ acts by projecting to $\check{G}$ first.
	We let $\scrT_3$ be the principal $G$-bundle
	over $\check{\scrT}$ whose fiber over a point in $\check{\scrT}|_{(\phi,u,e)}$, 
	$(\phi,u,e) \in \check{\scrT}^\pre$
	is a unitary basis $F$
	of $H^0(L_u)$ using the consistent
	domain metric for $\scrF$.
	We let $s_3$
	be the composition $\scrT_3 \to \check{\scrT} \lra{\check{s}} \check{\scrF}.$
	Then $(\scrT_3,s_3)$ is global Kuranishi data associated to
	 $(G \times \check{G},\check{\scrF}^{(2)},\check{\lambda}_\mu)$.
	 It is also a principal bundle enlargement of $(\check{\scrT},\check{s})$ (Definition \ref{defngroupenlargement})
	 and $(\scrT_2,s_2)$ is a generalized stabilization  (Definition \ref{defngenearlstablization})
	 followed by an approximation stabilization (Definition \ref{defnapproxenlargement})
	 of $(\scrT_3,s_3)$.
		\end{proof}

	\subsection{Forgetful Maps} \label{sectionforgetful}
	
	We wish to show that if one forgets a marked point
	then one gets an appropriate morphism of global Kuranishi charts.
	For simplicity, we will forget the last marked point.
	The same argument holds when forgetting other marked points
	since one can relabel them and all of our constructions
	are equivariant under such relabelings.
	
	Our treatment of forgetful maps relies on the following trick.  In the usual global chart construction, we use that for a stable map $u: C \to X$ with marked points $p_1,\ldots,p_h$, and $L\to X$ having curvature $-2i\pi\Omega$, the bundle $\omega_C(p_1,\ldots,p_h) \otimes u^*L$ has strictly positive degree on all irreducible components. Now note that 
	\[
	\hat{L}_u :=\omega_C(p_1,\ldots,p_{h-1}) \otimes u^*L
	\]
	 has positive degree on an irreducible component $C'$ of $C$ except when the homology class represented by $u(C')$ vanishes, and one of the following properties hold:
	\begin{enumerate}
	\item $C'$ has genus zero with one marked point $p_h$ and two nodal points;
	\item $C'$ has genus zero with marked points $p_h, p_i$ for some $i<h$, and one nodal point;
	\item $C'$ has genus one with one marked point $p_h$ and no nodes.
	\end{enumerate}
In each of these cases, after forgetting $p_h$ the log canonical bundle of $C'$ has degree zero, so that $\hat{L}_u$ has degree zero on $C'$. The map $\phi_F$ associated to a framing $F$ of $H^0(\hat{L}_u^k)$ is regular and automorphism free in both the first two cases, whilst in the third case the curve obtained by forgetting $p_h$ would have genus zero and no marked points, in which case there is no moduli space of stable constant maps since there is no Deligne-Mumford stack $\ccMbar_{1,0}$ (so we don't expect a good theory of forgetful maps anyway). 

	With this in mind, we will now construct three different sets of global Kuranishi data.
	Let $g,h,d \in \bN$.  In light of the previous discussion, we will assume that $(g,h) \neq (1,1)$.

	\begin{enumerate}
		\item \label{item1Kuranishi} We let $G$, $\scrF$, $\scrC$, $\scrF^{(2)}$, $\scrT^\pre$, $P \to\scrT^\pre$, $\scrT$, $s$, $L_u$ be as in Section \ref{subsection independence}.
		Then we have that $(G,\scrF^{(2)}, \lambda_\mu)$ is pre-Kuranishi data associated to $\scrC \to \scrF$ for $\mu$ large
		and $(\scrT,s)$ is global Kuranishi data associated to this pre-Kuranishi data.
		The associated global Kuranishi chart $\bT$ (Definition \ref{defnnoncomplexthickening}) describes the moduli space $\ccMbar_{g,h}(X,\beta)$.
		\item \label{item2Kuranishi} Let $d' = d-k$   and choose a finite dimensional approximation scheme $(V'_\mu,\lambda'_\mu)$
		for $\scrC_{g,h-1,d'} \to \scrF_{g,h-1,d'}$. 
		We also have pre-Kuranishi data $(G,\scrF^{(2)}_{g,h-1,d'},\lambda'_\mu)$.
		Let $G' := U(d'-g+1)$.
		Choose consistent domain metric for $\scrF_{g,h-1,d'}$ (Definition \ref{defn domain metric}).
		We let $P' \to\scrT^\pre(\beta,\lambda'_\mu)$ be the principal $G'$-bundle
		whose fiber over $(\phi,u,e)$ is the space of unitary bases $F$ of the space of sections of the $k$-th power of the  line bundle
		$L'_u := u^*L \otimes (\omega_{\scrC_{g,h-1,d'}/\scrF_{g,h-1,d'}}(p'_1,\cdots,p'_{h-1}))$
		where $p'_1,\cdots,p'_{h-1}$ are the corresponding marked point sections.   (Thus, if $d$ is the degree of $L_u^k$ then $d'$ is the degree of $(L'_u)^k$ since we are twisting by one fewer marked point.)
		Then we have a thickening $\scrT' := P'_\bC$.
		For each such basis $F$ we have an element $(\phi,\phi_F) \in \scrF^{(2)}_{g,h-1,d'}$
		giving us a map $P' \to \scrF^{(2)}_{g,h,h-1,d'}$
		which extends to a $G'_{\bC}$-equivariant map $\scrT \to \scrF^{(2)}_{g,h-1,d'}$ for  $d' = d-k$.
		Then $(\scrT',s')$ is global Kuranishi chart data for $(G,\scrF^{(2)}_{g,h-1,d},\lambda'_\mu)$
		whose associated global Kuranishi chart $\bT'$ describes the moduli space $\ccMbar_{g,h-1}(X,\beta)$.
		\item We let $\widetilde{\scrF}$ be the moduli space of curves $\phi : \Sigma \to \bC \bP^{d'}$
		with $h$-marked points so that if we forget the last marked point, we get an element of $\scrF_{g,h-1,d}$.
		We let $\widetilde{\scrC} \to \widetilde{\scrF}$ be the corresponding universal curve
		and we let $(\widetilde{V}_\mu,\widetilde{\lambda}_\mu)_{\mu \in \bN}$
		be the pullback of $(V'_\mu,\lambda'_\mu)_{\mu \in \bN}$ via the forgetful map $\widetilde{\scrF} \to \scrF_{g,h-1,d}$.
		We let $\widetilde{\scrF}^{(2)}$ be the space of curves mapping to $(\bC \bP^{d'-g})^2$
		whose projection to each factor $\bC \bP^{d'-g}$ is an element of $\widetilde{\scrF}$.
		We then have pre-Kuranishi data $(G',\widetilde{\scrF}^{(2)},\widetilde{\lambda}_\mu)$.
		We have a principal $G'$-bundle $\widetilde{P} \to\scrT^\pre(\beta,\widetilde{\lambda}'_\mu)$
		given by the pullback of $P'$ via the map to $\scrT^\pre(\beta,\lambda'_\mu)$.
		We also have a natural map $\widetilde{s} : \widetilde{P} \to \widetilde{\scrF}$
		of the form $(\phi,\phi_F)$ where $F$ is the framing pulled back from $P'$.
		Then $(\widetilde{\scrT},\widetilde{s})$ is global Kuranishi data for $(G',\widetilde{\scrF}^{(2)},\lambda'_\mu)$
		and whose associated global Kuranishi chart $\widetilde{\bT}$ describes $\ccMbar_{g,h}(X,\beta)$ (See Lemma \ref{lemmaequivcharts} below).
	\end{enumerate}
	There is a natural morphism of global Kuranishi charts $\widetilde{\bT} \to \bT'$
	given by forgetting the last marked point.
	This is the forgetful map.

	\begin{lemma} \label{lemmaequivcharts}
		The global Kuranishi chart data associated to $(\scrT,s)$ and $(\widetilde{\scrT},\widetilde{s})$
		are related by a sequence of germ equivalences, approximation stabilizations and generalized stabilizations
		as well as their inverses. \qed
	\end{lemma}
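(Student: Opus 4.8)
The plan is to run the three-step ``doubly framed curves'' argument from the proof of Proposition~\ref{proprelated}. Both $(\scrT,s)$ and $(\widetilde{\scrT},\widetilde{s})$ are distinguished global Kuranishi data presenting the same space $\ccMbar_{g,h}(X,\beta)$, and they differ only in the line bundle used to build framings: $L_u=\omega_C(p_1,\dots,p_h)\otimes u^*L$, whose $k$-th power has degree $d$, for $(\scrT,s)$, versus $\widehat L_u=\omega_C(p_1,\dots,p_{h-1})\otimes u^*L$, whose $k$-th power has degree $d'=d-k$, for $(\widetilde{\scrT},\widetilde{s})$. Consequently they also differ in the auxiliary target $\bC\bP^{d-g}$ versus $\bC\bP^{d'-g}$, in the compact group $G=U(d-g+1)$ versus $G'=U(d'-g+1)$, and in the finite dimensional approximation scheme ($(\widetilde V_\mu,\widetilde\lambda_\mu)$ being a pullback of $(V'_\mu,\lambda'_\mu)$ rather than $(V_\mu,\lambda_\mu)$; this last discrepancy is removed by an approximation stabilization, Definition~\ref{defnapproxenlargement}). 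As in \cite{AMS-Hamiltonian} and in Proposition~\ref{proprelated}, the equivalence is produced by a zig-zag through intermediate data carrying \emph{both} framings at once.

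Concretely I would introduce three intermediate global Kuranishi data. First, the principal bundle enlargement $(\scrT_1,s_1)$ of $(\scrT,s)$ whose fibre over $(\phi,u,e)$ additionally records a unitary basis of $H^0(\widehat L_u^k)$ computed with a chosen consistent domain metric for the $(h-1)$-pointed family; it has group $G\times G'$. Symmetrically, the enlargement $(\scrT_3,s_3)$ of $(\widetilde{\scrT},\widetilde{s})$ additionally records a unitary basis of $H^0(L_u^k)$. Finally, the ``doubly framed'' data $(\scrT_2,s_2)$ over the space of genus $g$, $h$-pointed curves mapping to $\bC\bP^{d-g}\times\bC\bP^{d'-g}$ with one projection in $\scrF$ and the other in $\widetilde{\scrF}$, equipped with the sum of the pullbacks of $(V_\mu,\lambda_\mu)$ and $(\widetilde V_\mu,\widetilde\lambda_\mu)$, recording both framings. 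Exactly as in the proof of Proposition~\ref{proprelated}, $(\scrT_2,s_2)$ is a general stabilization (Definition~\ref{defngenearlstablization}) followed by an approximation stabilization of each of $(\scrT_1,s_1)$ and $(\scrT_3,s_3)$; since these moves induce stabilizations and germ equivalences on the associated charts (Proposition~\ref{propstabilization}), this gives the asserted zig-zag relating $\bT$ and $\widetilde{\bT}$.

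The input that is genuinely new relative to Proposition~\ref{proprelated}, and the main thing to check, is that $\widehat L_u$ defines a legitimate framing problem over an $h$-pointed family --- that $\widetilde{\scrF}$ is a smooth quasi-projective variety, that $s_2$ is well defined, and that the analogue of Theorem~\ref{thm:basicglobalchart} holds so $\widetilde{\bT}$ really presents $\ccMbar_{g,h}(X,\beta)$. This rests on the degenerate-component analysis recorded at the start of this section: $\widehat L_u$ has strictly positive degree on every irreducible component of the domain except the three listed types, the last of which (a genus one component carrying only $p_h$) is excluded by the standing hypothesis $(g,h)\neq(1,1)$, while on the first two types the associated maps $\phi_F$ remain regular and automorphism-free. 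Granting this, the remaining transversality inputs needed to verify the hypotheses of Definition~\ref{defngenearlstablization} --- that the Gram-matrix part of the section is transverse to zero along its zero locus, and that the regular locus contains a neighbourhood of $\{e=0\}$ once $\mu\gg0$ --- are verbatim the arguments of Theorem~\ref{thm:basicglobalchart} and Lemma~\ref{propcontainscompact}.
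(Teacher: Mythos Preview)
Your proposal is correct and follows exactly the route the paper indicates: the paper's proof consists of the single sentence that the argument is identical to that of Proposition~\ref{proprelated} with $\check{G}$, $\check{\scrF}$, $\check{\scrF}^{(2)}$ replaced by $G'$, $\widetilde{\scrF}$, $\widetilde{\scrF}^{(2)}$, and you have carefully spelled out precisely this zig-zag through the three intermediate doubly-framed data. Your additional remark about the positivity of $\widehat L_u$ on irreducible components (and the role of the hypothesis $(g,h)\neq(1,1)$) is the content the paper places in the preamble to Section~\ref{sectionforgetful} rather than in the proof itself, so you have in fact included slightly more detail than the paper does.
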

	The proof of this lemma is identical to the proof of Proposition \ref{proprelated}
	where $\check{G}$ is replaced with $G'$,
	$\check{\scrF}$ is replaced with $\widetilde{\scrF}$,
	$\check{\scrF}^{(2)}$ is replaced with $\widetilde{\scrF}^{(2)}$
	etc.

\subsection{Split Curves} \label{sec:split_curves}

This section gives two different global Kuranish chart presentations
for the space of split nodal curves, i.e. the space of connected nodal curves in $X$  obtained
by gluing genus $g_1$ and $g_2$ nodal curves along a marked point, and shows that they are equivalent. These two charts correspond geometrically to the following two descriptions of a split stable map:
\begin{enumerate}
\item One considers those stable maps of genus $g=g_1+g_2$ whose domains lie in the boundary divisor $D_{g_1,g_2} \subset \ccMbar_g$;
\item One considers separately curves of genus $g_1$ and of genus $g_2$ each with one marked point, and then takes the preimage of the diagonal in $X\times X$ under evaluation from the product moduli space.
\end{enumerate}

Fix natural numbers $g_1$, $g_2$, $h_1$, $h_2$ corresponding to the
genus and number of marked points of each component.
We let $(X,\omega)$ be a closed symplectic manifold with $\beta \in H_2(X;\bZ)$
and $J$ an $\omega$-compatible almost complex structure.

The two global Kuranishi data for split curves are given as follows:

\begin{enumerate}
	\item Let $G = U(d_1+d_2-g+1)$ where $d_1,d_2$ are to be determined soon and $g = g_1 + g_2$.
	Let $d = d_1 + d_2$.
	We let $\scrF$ be the moduli space of pairs of curves $\phi_1 : \Sigma_1 \to \bC \bP^{d-g}$,
	$\phi_2 : \Sigma_2 \to \bC \bP^{d-g}$ with $h_1+1$ and $h_2+1$
	marked points $p_1,\cdots,p_{h_1+1}$, $q_1,\cdots,q_{h_2+1}$ respectively
	so that
	\begin{enumerate}
		\item their degrees sum up to $d$,
		\item $\phi_1(p_{h_1+1}) = \phi_2(q_{h_2+1})$,
		\item the resulting curves $\phi_1 \# \phi_2 : \Sigma_1 \cup_{p_{h_1+1}=q_{h_2+1}} \Sigma_2 \to \bC \bP^{d-g}$, $\phi_1 \# \phi_2|_{\Sigma_j} = \phi_j$, $j=1,2$
		satisfy $H^1((\phi_1 \# \phi_2)^*(O(1)) = 0$ and are automorphism free.
	\end{enumerate}
	Let $\scrC \to \scrF$ be the corresponding universal curve whose fiber over $\phi_1, \phi_2$ as above
	is the union $\Sigma_1 \cup_{p_{h_1+1}=q_{h_2+1}} \Sigma_2$.
	We let $\scrF^{(2)}$ be the moduli space of tuples $(\phi_1,\phi_2),(\phi'_1,\phi'_2)$ mapping to $(\bC \bP^{d-g})^4$
	with the property that $(\phi_1,\phi_2)$ and $(\phi'_1,\phi'_2)$ are in $\scrF$ respectively.
	Choose a finite dimensional approximation scheme $(V_\mu, \lambda_\mu)$ for $\scrC \to \scrF$.
	Choose a Hermitian line bundle $L \to X$ with curvature $-2i\pi\Omega$ with $\Omega$ taming $J$, an integer $3 \leq k$,
	and a consistent domain metric for $\scrF$ as in Definition \ref{defn domain metric}.
	For each $((\phi_1,\phi_2),u)$ in $\scrT^\pre(\beta,\lambda_\mu)$,
	we let $L_u := (\omega_{\scrC|_\phi/\scrF|_\phi}(p_1,\cdots,p_{h_1+1},q_1,\cdots,q_{h_2+1}) \otimes u^*L$.
	Let $P \to\scrT^\pre(\beta,\lambda_\mu)$
	be the principal $G$-bundle whose fiber over $(\phi_1,\phi_2,u)$ is a unitary basis $F$ for $H^0(L_u^{\otimes k})$.
	We now let $d_j$ be the degree of $L_u$ restricted to $\Sigma_j$ where $\Sigma_j$ is the domain of $\phi_j$ for each $j=1,2$.
	Then we have a thickening $\scrT := P_\C$ which consists of tuples $(\phi_1,\phi_2,u,F)$ where $(\phi_1,\phi_2,u) \in\scrT^\pre(\beta,\lambda^0_\mu)$
	and $F$ is a basis of $H^0(L_u^{\otimes k})$.
	For each $(\phi_1,\phi_2,u,F) \in \scrT$,
	let $\phi_F : \Sigma_1 \cup_{p_{h_1+1}=q_{h_2+1}} \Sigma_2 \to \bC \bP^{d-g}$ be as in Equation \eqref{equation}.
	Let $\phi_{F,j}$ be the restriction of $\phi_F$ to $\Sigma_j$ for $j=1,2$.
	We let $s : \scrT \to \scrF^{(2)}$ send $(\phi_1,\phi_2,u,F)$ to $((\phi_1,\phi_2),(\phi_{F_1},\phi_{F_2}))$.
	Then for $\mu$ large enough and $3 \leq k$, $(\scrT,s)$ is global Kuranishi chart data associated to $(G,\scrF^{(2)},\lambda_\mu)$
	(Definition \ref{defnnoncomplexthickening}).

	\item Let $\check{G} = U(d_1-g_1+1) \times U(d_2-g_2+1)$ with $d_1,d_2$ to be determined later.
	We let $\check{\scrF}$ be the moduli space of pairs of curves
	$\phi_1 : \Sigma_1 \to \bC \bP^{d_1-g_1}$, $\phi_2 : \Sigma_2 \to \bC \bP^{d_2-g_2}$ with $h_1+1$ and $h_2+1$
	marked points $p_1,\cdots,p_{h_1+1}$, $q_1,\cdots,q_{h_2+1}$ respectively
	so that
	\begin{enumerate}
		\item their degrees are $d_1$ and $d_2$ respectively,
		\item $H^1((\phi_j)^*(O(1)) = 0$ for $j=1,2$ 
		\item and $\phi_1$, $\phi_2$ are automorphism free.
	\end{enumerate}
	We let $\check{\scrC} \to \check{\scrF}$ be the curve whose fiber over
	$(\phi_1,\phi_2)$ as above is $\Sigma_1 \cup_{p_{h_1+1}=q_{h_2+1}} \Sigma_2$.
	Let $\check{\scrC}_j \to \check{\scrF}$ be the curve over $\check{\scrF}$ whose fiber over $(\phi_1,\phi_2)$
	is $\Sigma_j$ for each $j=1,2$.

	We let $\check{\scrF}^{(2)}$ be the moduli space of tuples $(\phi_1,\phi_2),(\phi'_1,\phi'_2)$ mapping to $(\bC \bP^{d-g})^4$
	with the property that $(\phi_1,\phi_2)$ and $(\phi'_1,\phi'_2)$ are in $\check{\scrF}$ respectively.
	Choose a finite dimensional approximation scheme $(\check{V}_\mu,\check{\lambda}_\mu)$ for
	$\check{\scrC} \to \check{\scrF}$.
	Let $L,k$ be as above and choose a consistent domain metric for $\check{\scrC}_j \to \check{\scrF}$ as in Definition \ref{defn domain metric} for each $j=1,2$.
	For each $((\phi_1,\phi_2),u)$ in $\scrT^\pre(\beta,\check{\lambda}_\mu)$,
	we let $L_{u,1} := (\omega_{\check{\scrC}_1|_{\phi_1}/\check{\scrF}|_{\phi_1}}(p_1,\cdots,p_{h_1+1}) \otimes u^*L|_{\check{\scrC}_1|_{\phi_1}})$
	and
	$L_{u,2} := (\omega_{\check{\scrC}_2|_{\phi_2}/\check{\scrF}|_{\phi_2}}(q_1,\cdots,q_{h_2+1}) \otimes u^*L|_{\check{\scrC}_2|_{\phi_2}})$.
	Let $\check{P} \to\scrT^\pre(\beta,\check{\lambda}_\mu)$
	be the principal $\check{G}$-bundle whose fiber over $(\phi_1,\phi_2,u)$ is a pair of unitary bases $(F_1,F_2)$
	of $H^0(L_{u,1}^{\otimes k})$ and $H^0(L_{u,2}^{\otimes k})$ respectively.
	Then we have a thickening $\check{\scrT} := \check{P}_\bC$ which consists of tuples $(\phi_1,\phi_2,u,F_1,F_2)$
	where $(\phi_1,\phi_2, u) \in\scrT^\pre(\beta,\check{\lambda}_\mu)$
	and $(F_1,F_2)$ are bases of $H^0(L_{u,1}^{\otimes k})$ and $H^0(L_{u,2}^{\otimes k})$ respectively.
	Let $\phi_{F_1} : \Sigma_1 \to \bC \bP^{d_1-g_1}$, $\phi_{F_2} : \Sigma_2 \to \bC \bP^{d_2-g_2}$
	be as in Equation \eqref{equation}.
	We let $\check{s} : \check{\scrT} \to \check{\scrF}^{(2)}$ send $(\phi_1,\phi_2,u,F_1,F_2)$ to $(\phi_1,\phi_2,\phi_{F_1},\phi_{F_2})$.
	Then for $k$, $\mu$ large enough, $(\check{\scrT},\check{s})$ is global Kuranishi chart data associated to $(\check{G},\check{\scrF}^{(2)},\check{\lambda}_\mu)$.
\end{enumerate}

\begin{prop} \label{propsplitrelation}
The Kuranishi  data $(G,\scrT,s)$ and $(\check{G},\check{\scrT},\check{s})$
	are related by a sequence of
	 enlargements (Definition \ref{defngroupenlargement})
	and general stabilizations (Definition \ref{defngenearlstablization}) and their inverses.
	Hence their associated Kuranishi charts are equivalent.
\end{prop}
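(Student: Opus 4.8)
The argument follows the ``doubly framed'' template used in the proof of Proposition \ref{proprelated}: we connect $(G,\scrT,s)$ to $(\check G,\check\scrT,\check s)$ by a zig-zag passing through three intermediate pieces of global Kuranishi data, each leg being an enlargement (Definition \ref{defngroupenlargement}), a general stabilisation (Definition \ref{defngenearlstablization}), an approximation stabilisation (Definition \ref{defnapproxenlargement}), or the inverse of one of these. By Lemma \ref{lemmagroupenlargement}, Proposition \ref{propstabilization} and the lemma following Definition \ref{defnapproxenlargement}, each leg induces an equivalence on the associated global Kuranishi charts (as do the elementary moves of \cite{AMS-Hamiltonian} and Section \ref{sec:counting-theories}), so the proposition follows once the zig-zag is produced. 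The geometric fact that makes this possible is that, for $k\gg0$ (and, if necessary, after replacing $L$ by a high tensor power so that non-constant components carry energy bounded below by more than one), the line bundle $L_u$ on the glued domain $\Sigma_1\cup_{p_{h_1+1}=q_{h_2+1}}\Sigma_2$ and the component bundles $L_{u,1}, L_{u,2}$ are all very ample with vanishing $H^1$ on the relevant (sub)curves, so that a framing of any one of them determines an automorphism-free embedding into projective space; this rests on positivity of these bundles on every irreducible component, which for unstable components uses that a split stable map is non-constant there, exactly as in the basic construction of Section \ref{Subsec:easier}.

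\emph{The first intermediate datum $(\scrT_1,s_1)$} is the enlargement of $(\scrT,s)$ by $\check G$: one sets $\scrT_1\to\scrT$ to be the $G$-equivariant principal $\check G$-bundle whose fibre over a point with underlying data $(\phi_1,\phi_2,u,F)$ is the space of pairs $(F_1,F_2)$ of unitary bases of $H^0(L_{u,1}^{\otimes k})$ and $H^0(L_{u,2}^{\otimes k})$, and takes $s_1$ to be $s$ composed with the projection to $\scrT$; thus $(\scrT_1,s_1)$ is global Kuranishi data for the pre-Kuranishi data $(G\times\check G,\scrF^{(2)},\lambda_\mu)$ with $\check G$ acting trivially on $\scrF^{(2)}$, and is an enlargement of $(\scrT,s)$ in the sense of Definition \ref{defngroupenlargement}. \emph{The second intermediate datum $(\scrT_2,s_2)$} lives over the moduli space $\scrF_2$ of genus $g$ split curves $\Sigma_1\cup_{p_{h_1+1}=q_{h_2+1}}\Sigma_2$ carrying a map of the whole curve to a $\bC \bP^{d-g}$ lying in $\scrF$, together with maps of $\Sigma_1$ and of $\Sigma_2$ to projective spaces realising each, for $k\gg0$, as a curve of the type used in the second construction above. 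One forms $\scrF^{(2)}_2$ by doubling each projective factor, lets $\scrC_2\to\scrF_2$ be the universal curve, takes $(V_{\mu,2},\lambda_{\mu,2})$ to be the sum of the pullbacks to $\scrF_2$ of $(V_\mu,\lambda_\mu)$ and of $(\check V_\mu,\check\lambda_\mu)$, sets $\scrT_2=(P_2)_\C$ for $P_2$ the $(G\times\check G)$-bundle over $\scrT^\pre(\beta,\lambda_{\mu,2})$ of triples $(F,F_1,F_2)$ of unitary bases, and lets $s_2$ record the tautological projective maps together with $\phi_F$ restricted to each component and $\phi_{F_1},\phi_{F_2}$. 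Forgetting the last two projective factors gives a $(G\times\check G)_\C$-equivariant submersion $\scrF_2\to\scrF$ pulling back $\scrC$ to $\scrC_2$, which exhibits $(\scrT_2,s_2)$ as a general stabilisation followed by an approximation stabilisation of $(\scrT_1,s_1)$.

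\emph{The third intermediate datum $(\scrT_3,s_3)$} is the mirror of the first: the enlargement of $(\check G,\check\scrT,\check s)$ by $G$, adjoining to the data $(\phi_1,\phi_2,u,F_1,F_2)$ a unitary basis $F$ of $H^0(L_u^{\otimes k})$ on the glued curve, with $s_3$ equal to $\check s$ composed with the projection to $\check\scrT$. Forgetting the $\bC \bP^{d-g}$-factor of $\scrF_2$ gives a submersion $\scrF_2\to\check\scrF$ pulling back $\check\scrC$ to $\scrC_2$, and one checks exactly as in the previous step that $(\scrT_2,s_2)$ is a general stabilisation followed by an approximation stabilisation of $(\scrT_3,s_3)$. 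Assembling
\[
(\scrT,s)\ \longleftarrow\ (\scrT_1,s_1)\ \longrightarrow\ (\scrT_2,s_2)\ \longleftarrow\ (\scrT_3,s_3)\ \longrightarrow\ (\check\scrT,\check s),
\]
where the leftward arrows are enlargements and the rightward arrows are general followed by approximation stabilisations, then proves that the charts associated to $(G,\scrT,s)$ and $(\check G,\check\scrT,\check s)$ are equivalent.

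The step requiring the most care is the construction of $\scrF_2$: one must check that split curves carrying the three simultaneous projective embeddings form a smooth quasi-projective $(G\times\check G)_\C$-variety on which the stabiliser of each point acts faithfully on the domain, and that both forgetful maps to $\scrF$ and to $\check\scrF$ are surjective submersions compatible with the universal curves and the approximation schemes. Granting the uniform very-ampleness and $H^1$-vanishing above, together with finiteness of automorphism groups of split stable maps, this is routine but lengthy, just as in Proposition \ref{proprelated}. Verifying the remaining hypotheses of Definition \ref{defngenearlstablization} for the two stabilisation legs --- in particular that $D^{\ver}\check s$ restricts to an isomorphism along the zero locus, which amounts to the tautological identity equating each added projective coordinate with the corresponding $\phi_{F_\bullet}$ there --- is likewise formal.
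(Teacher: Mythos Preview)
Your proof is correct and follows essentially the same three-step zig-zag as the paper's own argument: enlarge $(\scrT,s)$ by the $\check G$-bundle of componentwise framings to get $(\scrT_1,s_1)$; build the ``doubly framed'' chart $(\scrT_2,s_2)$ over a space $\scrF_2$ of split curves carrying simultaneous projective embeddings, with approximation scheme the sum of the two pullbacks; and symmetrically enlarge $(\check\scrT,\check s)$ by the $G$-bundle of glued framings to get $(\scrT_3,s_3)$. One cosmetic slip: in your displayed zig-zag the convention ``leftward $=$ enlargement, rightward $=$ stabilisation'' is violated on the right half, since by your own text $(\scrT_2,s_2)$ is a stabilisation of $(\scrT_3,s_3)$ and $(\scrT_3,s_3)$ is an enlargement of $(\check\scrT,\check s)$; the arrows on that side should be reversed to match.
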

\begin{proof}
	This is done via three intermediate global Kuranishi data.
	The first is constructed as follows:
	We start with pre-Kuranishi data $(G \times \check{G},\scrF^{(2)},\lambda_\mu)$ where $G \times \check{G}$ acts by projecting to $G$ first.
	We let $\scrT_1$ be the principal $\check{G}$-bundle
	over $\scrT$ whose fiber over a $(\phi_1,\phi_2,u,F) \in \scrT$ is a pair of unitary bases
	$(F_1,F_2)$
	of $H^0(L_{u,1}^{\otimes k})$ and $H^0(L_{u,2}^{\otimes k})$ respectively.
	We let $s_1$ be the composition
	$\scrT_1 \to \scrT \lra{s} \scrF^{(2)}$.
	Then $(\scrT_1,s_1)$ is global Kuranishi data associated to
	$(G \times \check{G},\scrF^{(2)},\lambda_\mu)$.
	It is also an enlargement of $(\scrT,s)$ (Definition \ref{defngroupenlargement}).

	We will now construct the second global Kuranishi chart data.
	We start with the space $\scrF_2$
	of pairs of curves $(\phi_1,\phi_2)$ curves mapping to $\bC \bP^{d-g} \times \bC \bP^{d_1-g_1}$ and $\bC \bP^{d-g} \times \bC \bP^{d_2-g_2}$ respectively
	so that $(\Pi_1 \circ \phi_1, \Pi_1 \circ \phi_2) \in \scrF$ and $(\Pi_2 \circ \phi_2, \Pi_2 \circ \phi_2) \in \check{\scrF}$
	where $\Pi_1$ and $\Pi_2$ are the projection maps to the first and second factors of $\bC \bP^{d-g} \times \bC \bP^{d_1-g_1}$ and $\bC \bP^{d-g} \times \bC \bP^{d_2-g_2}$ respectively.
	We let $\scrC_2 \to \scrF_2$
	be the curve
	over $\scrF_2$ whose fiber over
	$\phi_1 : \Sigma_1 \to \bC \bP^{d-g} \times \bC \bP^{d_1 -g_1}$,
	$\phi_2 : \Sigma_1 \to \bC \bP^{d-g} \times \bC \bP^{d_2 -g_2}$
	is $\Sigma_1 \cup_{p_{h_1+1} = q_{h_2+1}} \Sigma_2$
	where $p_1,\cdots,p_{h_1+1}$, $q_1,\cdots,q_{h_2+1}$
	are the marked points on the domains of $\phi_1$ and $\phi_2$ respectively.

	We let $\scrF^{(2)}_2$ be the space of pairs of curves $(\phi_1,\phi_2)$ mapping to $(\bC \bP^{d-g} \times \bC \bP^{d_1-g_1})^2$ and $(\bC \bP^{d-g} \times \bC \bP^{d_2-g_2})^2$ respectively
	so that $(P_1 \circ \phi_1, P_1 \circ \phi_2)$
	and $(P_2 \circ \phi_1, P_2 \circ \phi_2)$
	are in $\scrF_2$ where $P_1 : (\bC \bP^{d-g} \times \bC \bP^{d_1-g_1})^2 \to (\bC \bP^{d-g} \times \bC \bP^{d_1-g_1})^2$
	and
	$P_2 : (\bC \bP^{d-g} \times \bC \bP^{d_2-g_2})^2 \to (\bC \bP^{d-g} \times \bC \bP^{d_2-g_2})^2$
	are the projection maps to the first and second factors respectively.	

	Let $(V_{\mu,2},\lambda_{\mu,2})_{\mu \in \bN}$
	be a finite dimensional approximation scheme for $\scrF_2$
	given by the sum of the pullback (\ref{defnpulbackoffdscheme}) of
	$(V_\mu,\lambda_\mu)_{\mu \in \bN}$
	and $(\check{V}_\mu,\check{\lambda}_\mu)_{\mu \in \bN}$
	respectively.

	We have an associated pre-thickening
	$\scrT^\pre_2 :=\scrT^\pre(\beta,\lambda_{\mu,2})$
	for some large $\mu$.
	
	 We have a principal $G \times \check{G}$-bundle $P_2 \to\scrT^\pre_2$
	whose fiber over $(\phi,u,e) \in\scrT^\pre_2$
	the space of triples $(F,F_1,F_2)$
	of unitary bases of $H^0(L_u)$, $H^0(L_{u,1})$ and $H^0(L_{u,2})$
	respectively.
	So elements of $\scrT^\pre_2$ are tuples of the form $(\phi_1,\phi_2,u,F,F_1,F_2)$.
	We also have a map
	$s_2 :\scrT^\pre_2 \to \scrF_2^{(2)}$
	sending $(\phi_1,\phi_2,u,F,F_1,F_2))$ to $(\phi_1,\phi_2,(\phi_F|_{\Sigma_1} \times \phi_{F_1},\phi_F|_{\Sigma_2} \times \phi_{F_2}))$
	where $\Sigma_1 \cup_{p_{h_1+1}=q_{h_2+1}} \Sigma_2$ is the domain of $u$.

	 Then $(\scrT_2,s_2)$ is global Kuranishi data associated to
	 $(G \times \check{G},\scrF_2,\lambda_{\mu,2})$.
	 Also $(\scrT_2,s_2)$
	 is a general stabilization (Definition \ref{defngenearlstablization})
	 followed by an approximation stabilization (Definition \ref{defnapproxenlargement})
	 of $(\scrT_1,s_1).$

	 Finally we construct the third global Kuranishi data.
	 This is the same as the first one,
	 but with $\scrF$ and $\check{\scrF}$
	 swapped.
	 We start with pre-Kuranishi data $(G \times \check{G},\check{\scrF}^{(2)},\check{\lambda}_\mu)$
	where $G \times \check{G}$ acts by projecting to $\check{G}$ first.
	We let $\scrT_3$ be the principal $G$-bundle
	over $\check{\scrT}$ whose fiber over a point 
	$(\phi_1,\phi_2,u,e,F_1,F_2) \in \check{\scrT}$
	is a unitary basis $\check{F}$
	of $H^0(L_u^{\otimes k})$ using the consistent
	domain metric for $\check{\scrF}$.
	We let $s_3$
	be the composition $\scrT_3 \to \check{\scrT} \lra{\check{s}} \check{\scrF}.$
	Then $(\scrT_3,s_3)$ is global Kuranishi data associated to
	 $(G \times \check{G},\check{\scrF}^{(2)},\check{\lambda}_\mu)$.
	 It is also a group enlargement of $(\check{\scrT},\check{s})$ (Definition \ref{defngroupenlargement})
	 and $(\scrT_2,s_2)$ is a generalized stabilization  (Definition \ref{defngenearlstablization})
	 followed by an approximation stabilization (Definition \ref{defnapproxenlargement})
	 of $(\scrT_3,s_3)$.
\end{proof}

\subsection{Genus Reduction} \label{sec:genusreduction}

This section gives two different global Kuranish chart presentations
for the space of curves together with a special non-separating node and shows that they are equivalent. These two charts correspond geometrically to the following two descriptions of a self-glued stable map:
\begin{enumerate}
\item One considers those stable maps of genus $g$ whose domains map to the boundary divisor of $\ccMbar_g$ of curves with a non-separating node;
\item One considers curves of genus $g-1$ with two additional marked points, and then takes the preimage of the diagonal in $X\times X$ under  the evaluation map corresponding to these two marked points.
\end{enumerate}

Fix natural numbers $g$, $h$ corresponding to the
genus and number of marked points.
We let $(X,\omega)$ be a closed symplectic manifold with $\beta \in H_2(X;\bZ)$
and $J$ an $\omega$-tame almost complex structure.

We will now construct two different global Kuranishi data for these curves.

\begin{enumerate}
	\item Let $G = U(d-g+1)$ where $d$ is to be determined soon.
	We let $\scrF$ be the moduli space of genus $g-1$ curves $\phi : \Sigma \to \bC \bP^{d-g}$,
	with $h+2$
	marked points $p_1,\cdots,p_{h+2}$ respectively
	so that
	\begin{enumerate}
		\item their degree is $d$,
		\item $\phi(p_{h+1}) = \phi(p_{h+2})$,
		\item the resulting curve $\phi^\# : \Sigma^\# := \Sigma / \{p_{h+1}=p_{h+2}\} \to \bC \bP^{d-g}$, where $\phi$ is the composition $\Sigma \twoheadrightarrow \Sigma^\# \lra{\phi^\#} \bC \bP^{d-g}$,
		satisfies $H^1((\phi^\#)^*(O(1)) = 0$ and is automorphism free.
	\end{enumerate}
	Let $\scrC \to \scrF$ be the corresponding universal curve whose fiber over $\phi$ as above
	is the glued curve $\Sigma^\#$.
	We let $\scrF^{(2)}$ be the moduli space of tuples $(\phi,\phi')$ mapping to $(\bC \bP^{d-g})^2$
	with the property that $\phi$, $\phi'$ are in $\scrF$ respectively.
	Choose a finite dimensional approximation scheme $(V_\mu, \lambda_\mu)$ for $\scrC \to \scrF$.
	Choose a Hermitian line bundle $L \to X$ with curvature $-2i\pi\Omega$ with $\Omega$ taming $J$, an integer  $3 \leq k $,  
	and a consistent domain metric for $\scrF$ as in Definition \ref{defn domain metric}.
	For each $(\phi,u)$ in $\scrT^\pre(\beta,\lambda_\mu)$,
	we let $L_u := (\omega_{\scrC|_\phi/\scrF|_\phi}(p_1,\cdots,p_{h+1},p_{h+2}) \otimes u^*L$.
	Let $P \to\scrT^\pre(\beta,\lambda_\mu)$
	be the principal $G$-bundle whose fiber over $(\phi,u)$ is a unitary basis $F$ for $H^0(L_u^{\otimes k})$.
	We now let $d$ be the degree of $L_u$ restricted to the domain $\Sigma$ of $\phi$.
	Then we have a thickening $\scrT := P_\C$ which consists of tuples $(\phi,u,F)$ where $(\phi,u) \in\scrT^\pre(\beta,\lambda^0_\mu)$
	and $F$ is a basis of $H^0(L_u^{\otimes k})$.
	For each $(\phi,u,F) \in \scrT_0$,
	let $\phi'_F : \Sigma^\# \to \bC \bP^{d-g}$ be as in Equation \eqref{equation}
	and let $\phi_F$ be the composition $\Sigma \twoheadrightarrow \Sigma^\# \lra{\phi'_F} \bC \bP^{d-g}$.
	We let $s : \scrT \to \scrF^{(2)}$ send $(\phi,u,F)$ to $(\phi,\phi_F)$.
	Then for $\mu$ large enough, $(\scrT,s)$ is global Kuranishi chart data associated to $(G,\scrF^{(2)},\lambda_\mu)$
	(Definition \ref{defnnoncomplexthickening}).

	\item Let $\check{G} = U(\check{d})$ with $\check{d}$ to be determined later.
	We let $\check{\scrF}$ be the moduli space of genus $g-1$ curves
	$\phi : \Sigma\to \bC \bP^{\check{d}-g+1}$ with $h+2$
	marked points $p_1,\cdots,p_{h+1},p_{h+2}$ respectively
	so that
	\begin{enumerate}
		\item the degree of $\phi$ is $\check{d}$,
		\item $H^1(\phi^*(O(1)) = 0$
		\item and $\phi$ is automorphism free.
	\end{enumerate}
	We let $\check{\scrC} \to \check{\scrF}$ be the curve whose fiber over
	$\phi$ as is $\Sigma^\# := \Sigma / \{p_{h+1}=p_{h+2}\}$.
	We let $\widetilde{\scrC} \to \check{\scrF}$ be the curve whose fiber over $\phi$ is its domain $\Sigma$.

	We let $\check{\scrF}^{(2)}$ be the moduli space of tuples $(\phi,\phi')$ mapping to $(\bC \bP^{d'-g+1})^2$
	with the property that $\phi$ and $\phi'$ are in $\check{\scrF}$.
	Choose a finite dimensional approximation scheme $(\check{V}_\mu,\check{\lambda}_\mu)$ for
	$\check{\scrC} \to \check{\scrF}$.
	Let $L,k$ be as above and choose a consistent domain metric for $\check{\scrC} \to \check{\scrF}^{(2)}$ as in Definition \ref{defn domain metric}.
	For each $((\phi,u)$ in $\scrT^\pre(\beta,\check{\lambda}_\mu)$,
	we let $\check{L}_u := \omega_{\widetilde{\scrC}|_\phi/\check{\scrF}|_\phi}(p_1,\cdots,p_{h+2}) \otimes u^*L$.
	We let $\check{d}$ be the degree of $L=\check{L}_u$.
	Let $\check{P} \to\scrT^\pre(\beta,\check{\lambda}_\mu)$
	be the principal $\check{G}$-bundle whose fiber over $(\phi,u)$ is a unitary basis $F$
	of $H^0(\check{L}_u^{\otimes k})$.
	Then we have a thickening $\check{\scrT} := \check{P}_\bC$ which consists of tuples $(\phi,u,F)$
	where $(\phi, u) \in\scrT^\pre(\beta,\check{\lambda}_\mu)$
	and $F$ is a basis of $H^0(\check{L}_u^{\otimes k})$.
	Let $\phi_F : \Sigma \to \bC \bP^{\check{d}-g}$
	be as in Equation \eqref{equation}.
	We let $\check{s} : \check{\scrT} \to \check{\scrF}^{(2)}$ send $(\phi,u,F_1,F_2)$ to $(\phi,\phi_F)$.
	Then for $k$, $\mu$ large enough, $(\check{\scrT},\check{s})$ is global Kuranishi chart data associated to $(\check{G},\check{\scrF}^{(2)},\check{\lambda}_\mu)$.
\end{enumerate}

The proof of the following proposition is similar to the proof of Proposition \ref{propsplitrelation}
above and so we omit it.

\begin{prop}
The Kuranishi  data $(G,\scrT,s)$ and $(\check{G},\check{\scrT},\check{s})$
	are related by a sequence of
	 enlargements (Definition \ref{defngroupenlargement})
	and general stabilizations (Definition \ref{defngenearlstablization}) and their inverses.
	Hence their associated Kuranishi charts are equivalent. \qed
\end{prop}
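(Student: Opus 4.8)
The plan is to imitate the proof of Proposition~\ref{propsplitrelation} essentially verbatim, interpolating between $(G,\scrT,s)$ and $(\check{G},\check{\scrT},\check{s})$ through three intermediate pieces of global Kuranishi data connected by principal bundle enlargements (Definition~\ref{defngroupenlargement}), general stabilisations (Definition~\ref{defngenearlstablization}) and approximation stabilisations (Definition~\ref{defnapproxenlargement}); equivalence of the associated charts then follows from Lemma~\ref{lemmagroupenlargement} and Proposition~\ref{propstabilization}.

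First I would produce $(\scrT_1,s_1)$, a principal bundle enlargement of $(\scrT,s)$: starting from the pre-Kuranishi data $(G\times\check{G},\scrF^{(2)},\lambda_\mu)$ with $G\times\check{G}$ acting through its projection to $G$, let $\scrT_1\to\scrT$ be the principal $\check{G}$-bundle whose fibre over $(\phi,u,F)$ is a unitary basis $\check{F}$ of $H^0(\check{L}_u^{\otimes k})$ --- where $\check{L}_u=\omega_{\widetilde{\scrC}|_\phi/\check{\scrF}|_\phi}(p_1,\dots,p_{h+2})\otimes u^*L$ lives on the \emph{normalisation} $\Sigma$ rather than on the self-glued curve $\Sigma^\#$, with Hermitian structure coming from a consistent domain metric for $\check{\scrF}$ --- and let $s_1$ be $\scrT_1\to\scrT\lra{s}\scrF^{(2)}$. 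Next I would build $(\scrT_2,s_2)$ over the moduli space $\scrF_2$ of genus $g-1$ curves mapping to $\bC\bP^{d-g}\times\bC\bP^{\check{d}-g+1}$, with $h+2$ marked points satisfying $\phi(p_{h+1})=\phi(p_{h+2})$ on the first factor and with projections lying in $\scrF$ and $\check{\scrF}$ respectively; equipping it with the finite dimensional approximation scheme obtained by summing the pullbacks of $(V_\mu,\lambda_\mu)$ and $(\check{V}_\mu,\check{\lambda}_\mu)$ (Definition~\ref{defnpulbackoffdscheme}) and with the principal $G\times\check{G}$-bundle of pairs $(F,\check{F})$ of unitary bases of $H^0(L_u^{\otimes k})$ and $H^0(\check{L}_u^{\otimes k})$, this exhibits $(\scrT_2,s_2)$ as a general stabilisation followed by an approximation stabilisation of $(\scrT_1,s_1)$. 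Finally, running the same construction with the roles of $\scrF$ and $\check{\scrF}$ exchanged gives $(\scrT_3,s_3)$, a principal bundle enlargement of $(\check{\scrT},\check{s})$ of which $(\scrT_2,s_2)$ is again a general-plus-approximation stabilisation; chaining these moves and their inverses links $(G,\scrT,s)$ to $(\check{G},\check{\scrT},\check{s})$.

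The only genuine difference from the split-curve case --- and the step I expect to be the main obstacle --- is that in the first presentation the line bundle $L_u$ and its sections live on the nodal self-glued curve $\Sigma^\#$, whereas in the second they live on its normalisation $\Sigma$. I would need to confirm: that the two degrees match, which holds since $\Sigma^\#$ has arithmetic genus $g$ and $\omega_{\Sigma^\#}$ pulls back to $\omega_\Sigma(p_{h+1}+p_{h+2})$ under the normalisation, so $d$ and $\check{d}$ are related as in the split case; that for $k\gg0$ the framing maps $\phi_F$ of Equation~\eqref{equation} stay very ample, injective and automorphism-free on every component, so that each of the thickenings above is honest global chart data in the sense of Definition~\ref{defnnoncomplexthickening}; and that the doubly-framed pre-thickening $\scrT^\pre(\beta,\lambda_{\mu,2})$ inherits compatible $C^1_{loc}$-structures over $\scrF$ and $\scrF_2$ from the gluing result Corollary~\ref{cor algfam}, since this is what licenses the stabilisation moves. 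Granting these, every remaining verification of the enlargement and stabilisation axioms is word-for-word that of Proposition~\ref{propsplitrelation}, which is why the paper omits it.
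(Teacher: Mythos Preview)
Your proposal is correct and follows exactly the approach the paper intends: the paper omits the proof entirely, stating only that it is similar to that of Proposition~\ref{propsplitrelation}, and your three-step interpolation via $(\scrT_1,s_1)$, $(\scrT_2,s_2)$, $(\scrT_3,s_3)$ is precisely that template. Your identification of the one genuinely new wrinkle --- that $L_u$ lives on the self-glued curve $\Sigma^\#$ while $\check{L}_u$ lives on its normalisation $\Sigma$, reconciled via $\omega_{\Sigma^\#}|_\Sigma \cong \omega_\Sigma(p_{h+1}+p_{h+2})$ --- is exactly the point that distinguishes this case from the split-curve argument.
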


\subsection{Notational conventions}
\label{Subsec:constant_maps}

Suppose $X$ is a point, and fix a single pair $(g,h)$. All stable maps are constant.  In this case,  the 3rd power of the relative log canonical bundle used in our general construction is a relatively very ample Hermitian line bundle $\mathcal{L}$ on the universal curve over $\ccMbar_{g,h}$, and outputs a global chart for $\ccMbar_{g,h}$. Although the map $\lambda$ in the approximation scheme vanishes so the thickening is entirely composed of constant maps together with $\mathcal{L}$-framings, there is a non-trivial obstruction bundle, coming from $H^0(u^* T\bC \bP^q)$, where $q+1 = \rk\,H^0(\calL|_\Sigma)$ for any curve $\Sigma \in \ccMbar_{g,h}$, together with the space $\scrH$ of $(q+1) \times (q+1)$-Hermitian matrices.
\medskip

One can relate the output of the general construction to a simpler global chart.  The space of \emph{unitary} $\mathcal{L}$-framed curves itself gives a global quotient presentation 
\[
\ccMbar_{g,h} = \widetilde{\calM} / H
\]
for the unitary group $H = U(q+1)$. The space $\widetilde{\calM}$ carries a natural $H$-transverse complex structure, and is naturally an element of the category $\Glo$.  This `unitary'  global chart, with trivial obstruction bundle,  is related to the one obtained from our construction by the stabilisation procedure for the $H$-bundle $H_{g,h,q+g} \oplus \scrH$.
\medskip

Consider now the case of a general target $X$, there is a larger global chart for  $\ccMbar_{g,h}(X,\beta,J)$ in which we frame both $L_u$ and also the bundle $(st)^*\scrL$ where $\scrL \to \scrC_{g,h}$ is relatively ample over the universal curve over Deligne-Mumford space and $\st$ is the classical stabilisation map.  In the notation above, this leads to a chart
 which is 
 $G$-complex and 
 $H$-transversely almost complex, and for which there is an $H$-equivariant  forgetful map $\scrT \to \widetilde{\calM}$ which plays the role of the stabilisation map $\scrT \to \ccMbar_{g,h}$. Combined with the trick from Lemma \ref{lem:subcategory}, this gives us stabilisation maps inside the category $\Glo$ (where $\widetilde{\calM}$ carries a trivial $G$-action for the original group $G$ arising from the framing choice $F$ in the construction of $\scrT$).

For simplicity, we will use the notation $\scrF_{g,h}(\bP^{d-g},d)$ and $\scrT_{g,h}(\beta)$ for the (smooth) spaces  of regular stable curves in projective space and a thickening of $\ccMbar_{g,h}(\beta)$ when the precise choices in the construction are not central to the immediate context. We will also usually suppress the additional framings of line bundles over domain curves, and the choice of global $H$-covering $\widetilde{\calM}$ for Deligne-Mumford space, and simply write stabilisation as a map $\scrT \to \scrF \to \ccMbar_{g,h}$. We thus have maps
\[
\scrT_{g,h}(\beta) \to \scrF_{g,h}(\bP^{d-g}, d) \to \ccMbar_{g,h}
\]
(where $d$ depends on $\omega(\beta)$ but also various choices in the construction) whose composite we denote by $\st$.

As discussed after Corollary \ref{cor globalKuranishichart}, a global chart constructed from framed stable maps is not naturally smooth but the thickening $\scrT$ admits a fibrewise $C^1_{loc}$-map $\scrT \to \scrF$ to a quasi-projective variety.  As explained in \cite{AMS-Hamiltonian}, this is enough to show that the tangent microbundle $T_{\mu}\scrT$ admits a vector bundle lift; by further stabilising, one can then obtain smooth global charts, well-defined up to equivalence in the sense of Definition \ref{defn:global_charts_category}. 
\emph{We will assume we have done this henceforth.} After smoothing and stabilisation to be transversely stably complex, our charts naturally belong to the category $\Glo$ considered previously. 
By \cite[Lemma 4.5]{AMS-Hamiltonian} we can also replace a map between the thickenings of such charts by a smooth
submersion.
We summarise the results of this section:

\begin{corollary}\label{cor:charts_summary}
	The moduli spaces $\ccMbar_{g,n}(X,J,\beta)$  of stable maps admit a distinguished system of global charts $\bT = (G,\scrT,E,s)$ with thickenings $\scrT:= \scrT_{g,n}(X,J,\beta)$ for which  \begin{enumerate}
	\item the virtual bundles $T\scrT - \frak{g}$ and  $E$ admit $G$-equivariant stable almost complex structures;
		\item evaluation $ev: \ccMbar_{g,n}(X,J,\beta) \to X^n$ and stabilisation $\st: \ccMbar_{g,n}(X,J,\beta) \to \ccMbar_{g,n}$ both extend to $G$-equivariant maps on $\scrT_{g,n}(X,J,\beta)$;
		\item evaluation and stabilisation maps, and the stable complex structures, are entwined by equivalences or cobordisms of distinguished global charts coming from varying auxiliary data or changing $J$, so the charts are well-defined up to isomorphism in the sense of Section \ref{sec:counting-theories}. \qed
			\end{enumerate} 
\end{corollary}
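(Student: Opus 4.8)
## Proof proposal for Corollary \ref{cor:charts_summary}

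The plan is to assemble the statement as a straightforward corollary of the constructions carried out throughout Section \ref{sec:glob-kuran-charts}, treating each of the three numbered claims in turn and pointing to the precise result that delivers it. The overall structure is: start from a distinguished global chart as produced in Definition \ref{defn Kurnishi chart}, upgrade it to a smooth, transversely stably complex chart via Corollary \ref{cor globalKuranishichart} and the smoothing discussion of Section \ref{Subsec:families}, and then read off the extra structure (stable almost complex structures, extended evaluation/stabilisation maps, entwining equivalences) from the relevant subsections.

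First I would address (1). By construction the obstruction bundle $E$ is a sum of the pullback of $H_{g,h,d}$ (a complex vector bundle, being a bundle of spaces of holomorphic sections), the complex vector space $V_\mu$ (a $G$-representation which we may arrange to be complex, cf. Definition \ref{defn fd} and the remark following Definition \ref{defn:global_charts_category}), and the real vector space $\scrH$ of Hermitian matrices; as explained immediately after Corollary \ref{cor globalKuranishichart}, adding a copy of the Lie algebra $\frak g$ of skew-Hermitian matrices to both $\scrT$ and $E$ (the trick of Lemma \ref{lem:subcategory}) makes $E$ stably almost complex, and endows $T\scrT - \frak g$ with a transverse, hence stable, almost complex structure; the smoothing procedure of Section \ref{Subsec:families} preserves this since the additional stabilisations are by complex $G$-representations. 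For (2), the evaluation map at the marked points is visibly defined on all of $\scrT$ (an element of $\scrT$ carries an honest map $u : \Sigma \to X$ together with marked points $p_1,\dots,p_n$, so $(u(p_1),\dots,u(p_n))$ makes sense), $G$-equivariantly; the stabilisation map $\scrT \to \scrF \to \ccMbar_{g,n}$ is the composite $\st$ discussed in Section \ref{Subsec:constant_maps}, again $G$-equivariant, and by \cite[Lemma 4.5]{AMS-Hamiltonian} we may take these to be smooth submersions after a further equivalence. Restricting to $s^{-1}(0)/G$ recovers $ev$ and $\st$ on $\ccMbar_{g,n}(X,J,\beta)$ via the homeomorphism of Theorem \ref{thm:basicglobalchart}.

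For (3), the independence statement, I would invoke Proposition \ref{proprelated} (independence of the finite-dimensional approximation scheme, the consistent domain metric, the Hermitian line bundle, and $k$) together with Proposition \ref{propchartparameterized} and the smoothing-uniqueness discussion of Section \ref{Subsec:families} (independence of $J$ and of the smoothing): in each case the two charts are connected by a zig-zag of principal bundle enlargements and general stabilisations, which by Lemma \ref{lemmagroupenlargement}, Proposition \ref{propstabilization} and the lemmas of Section \ref{sectionequivalences} induce equivalences in the sense of Definition \ref{defn:global_charts_category}. The only point requiring a word is that these equivalences are compatible with the extra data in (1) and (2): the evaluation and stabilisation maps, being defined directly in terms of the underlying stable map and its domain, are manifestly carried to one another under each elementary move (the moves only enlarge the group, add trivial or complex representation directions, or pass to germ neighbourhoods), and the stable complex structures are tracked through by Lemma \ref{lem:subcategory} and the explicit descriptions in Section \ref{sec:counting-theories}.

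The main obstacle is the bookkeeping in (3): one must check that the zig-zags produced in Propositions \ref{proprelated} and \ref{propchartparameterized} can be taken \emph{simultaneously} compatible with evaluation, with stabilisation to $\ccMbar_{g,n}$, and with the transverse stable complex structures on $T\scrT - \frak g$ and $E$ — i.e. that the three structures are ``entwined'' rather than merely each individually invariant. This is not deep but is where the real content lies; it amounts to observing that every intermediate global Kuranishi datum in those proofs can be equipped with compatible evaluation and stabilisation maps (the auxiliary framings of domain line bundles and the covering $\widetilde{\calM}$ of Deligne--Mumford space are inserted precisely so that $\st$ survives, cf. Section \ref{Subsec:constant_maps}), and that the elementary moves of Section \ref{sectionequivalences} act on the complex structures by the standard manipulations (direct sum with complex representations, restriction, pullback) which by \cite[Lemma 5.11]{AMS-Hamiltonian} do not change the homotopy class of the stable complex structure. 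Granting this, the corollary follows. \qed
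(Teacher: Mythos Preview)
Your proposal is correct and matches the paper's approach: the corollary is stated in the paper as a summary with no separate proof (the \qed\ is inside the statement), following the sentence ``We summarise the results of this section,'' and you have accurately identified the ingredients being summarised (Theorem \ref{thm:basicglobalchart} and Corollary \ref{cor globalKuranishichart} for the basic chart and its smoothing, the discussion after Corollary \ref{cor globalKuranishichart} and Lemma \ref{lem:subcategory} for the stable complex structures, Section \ref{Subsec:constant_maps} for the extended evaluation/stabilisation maps, and Proposition \ref{proprelated} together with Proposition \ref{propchartparameterized} and the equivalences of Section \ref{sectionequivalences} for independence of choices). Your additional remarks on the bookkeeping in (3) are a fair elaboration of what the paper leaves implicit.
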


\section{Gromov-Witten invariants and tautological modules}
\label{sec:grom-witt-invar}
\subsection{Invariants as generalised homology classes}

Take a counting theory $\bE$  on the category $\Glo$ of global charts as in Section \ref{sec:counting-theories}. We write $L_{\bE}$ for the formal group associated to  $\bE$. 

Fix a compact symplectic $(X,\omega)$, a class $\beta \in H_2(X;\bZ)$, a pair $(g,n)$ and an almost complex structure $J$ taming $\omega$.  We have a global chart $\bT_{g,n}(\beta)$ with zero-set $Z$ such that $Z/G$ is the compactified moduli space $\ccMbar_{g,n}(\beta,J)$ of $J$-holomorphic stable maps. Evaluation and stabilisation define a map $\bT_{g,n}(\beta) \to X^n \times \ccMbar_{g,n}$ so one can push-forward the virtual class to a class
\[
I_{g,n}(\beta) \in E_*(X^n \times \ccMbar_{g,n}) \cong E^*(X^n \times \ccMbar_{g,n}),
\]
where we supress the dimension shift in the second isomorphism.

\begin{proposition} The above class is a well-defined symplectic invariant of $(X,\omega, \beta,g,n)$, and does not depend on the choice of global chart or taming almost complex structure $J$.
\end{proposition}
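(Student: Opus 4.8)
The plan is to show that the class $I_{g,n}(\beta)$ is unchanged under the two sources of non-canonicity: (i) the choices internal to the global chart construction (the line bundle data $\bL = (L,k,\mathfrak{D})$, the finite dimensional approximation scheme $(V_\mu,\lambda_\mu)$, the integers $k,\mu$, and the abstract smoothing/stabilisation), and (ii) the taming almost complex structure $J$. The common mechanism is that any two such choices yield charts that are related by a zig-zag of equivalences in $\Glo$ (for a fixed $J$) or by a proper cobordism realised as fibres over regular values of a projection to a finite-dimensional manifold (when $J$ varies). Since the counting theory axioms guarantee that equivalences preserve virtual fundamental classes (Functoriality axiom) and that such cobordisms preserve their pushforwards (Lemma \ref{lem:cobordism}), the pushforward to $E_*(X^n\times\ccMbar_{g,n})$ is well-defined.

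First I would treat independence of the auxiliary choices with $J$ fixed. By Corollary \ref{cor:charts_summary} — whose content is exactly Proposition \ref{proprelated} together with the smoothing discussion of Section \ref{Subsec:constant_maps} — any two distinguished global charts $\bT_{g,n}(\beta)$ and $\bT'_{g,n}(\beta)$ built from different data are related by a sequence of principal bundle enlargements and general stabilisations (Definitions \ref{defngroupenlargement} and \ref{defngenearlstablization}) and their inverses; by Lemma \ref{lemmagroupenlargement} and Proposition \ref{propstabilization} these induce on smoothings precisely the elementary moves of Lemma \ref{lem:factor_equivalence}, i.e. equivalences in the sense of Definition \ref{defn:global_charts_category}. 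Crucially, by part (3) of Corollary \ref{cor:charts_summary}, these equivalences intertwine the evaluation and stabilisation maps to $X^n\times\ccMbar_{g,n}$, so they fit into commuting triangles over $X^n\times\ccMbar_{g,n}$. The Functoriality axiom of a counting theory then gives that the induced isomorphism $E^{lf}_*(\bT)\cong E^{lf}_*(\bT')$ carries $[\bT]$ to $[\bT']$, and naturality of the pushforward along the (compatible) maps to $X^n\times\ccMbar_{g,n}$ yields equality of the two images in $E_*(X^n\times\ccMbar_{g,n})$. (One should also invoke the smoothing-uniqueness discussion after Proposition \ref{propchartparameterized}: the stable vector bundle lift of the tangent microbundle is determined by the $C^1_{loc}$-structure, so the passage from a $C^1_{loc}$-chart to a smooth one in $\Glo$ does not introduce further ambiguity up to equivalence.)

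Next I would treat independence of $J$. Given two taming almost complex structures $J_0, J_1$, choose a smooth path $(J_t)_{t\in[0,1]}$. The parametrised construction of Sections \ref{sectiongeneralprethicken}--\ref{Subsec:families} produces parametrised global chart data $(\scrT',s')$ over $\scrT^\pre_{(J_t)}$, whose associated global Kuranishi chart $\bW$ is a $G$-equivariant $C^1_{loc}$-chart fibred over $\scrF$ (Proposition \ref{propchartparameterized}), with $s'^{-1}(0)$ compact and restricting over $t=0,1$ to charts for $\ccMbar_{g,n}(X,J_0,\beta)$ and $\ccMbar_{g,n}(X,J_1,\beta)$; the evaluation and stabilisation maps extend over $\bW$. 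After smoothing and stabilising (again using the uniqueness discussion following Proposition \ref{propchartparameterized}), this is a proper cobordism of charts in $\Glo$, over $X^n\times\ccMbar_{g,n}$, obtained from regular values of the projection $\bW\to[0,1]$. Lemma \ref{lem:cobordism} then shows the pushforwards of $[\bT_{g,n}(\beta;J_0)]$ and $[\bT_{g,n}(\beta;J_1)]$ to $E^{lf}_*(\bW)$ agree, hence their further pushforwards to $E_*(X^n\times\ccMbar_{g,n})$ agree. Since the space of $\omega$-taming almost complex structures is connected (indeed contractible), this settles $J$-independence.

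The main obstacle is bookkeeping rather than substance: one must verify that at \emph{every} stage of the zig-zag in the first part, and over the whole parameter interval in the second, the evaluation and stabilisation maps are genuinely compatible (up to the relevant equivalence/cobordism), so that one is comparing classes in the \emph{same} target $E_*(X^n\times\ccMbar_{g,n})$ — this is the force of item (3) of Corollary \ref{cor:charts_summary} and of the fact that the stabilisation map to the Deligne--Mumford space factors through $\scrF$ as a map of $G$-spaces with trivial obstruction bundle, which is what makes it manifestly natural. A secondary point to be careful about is that the abstract smoothing step is only well-defined up to stabilisation; one invokes Lashof's theory and the extension result of Bai--Xu (as recalled after Proposition \ref{propchartparameterized}) to conclude that the stable smoothing, and hence the virtual class, is insensitive to this. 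Given these, the statement follows formally from the axioms of a counting theory.
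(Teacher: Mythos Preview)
Your proposal is correct and follows essentially the same approach as the paper's (very terse) proof: independence for fixed $J$ via the equivalence of distinguished charts (Proposition \ref{proprelated}/Corollary \ref{cor:charts_summary}) combined with the Functoriality axiom, and $J$-independence via the parametrised construction of Section \ref{Subsec:families} combined with Lemma \ref{lem:cobordism}. You have spelled out the bookkeeping (compatibility of $\ev$ and $\st$ through the zig-zag, and the smoothing-uniqueness appeal to Lashof/Bai--Xu) that the paper's two-sentence proof leaves implicit.
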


\begin{proof}  The global chart for $\ccMbar_{g,n}(\beta,J)$ is well-defined up to equivalence for fixed $J$ by Proposition \ref{propchartparameterized}, and up to cobordism for varying $J$ by the discussion of Section \ref{Subsec:families}.  The associated virtual class $[\bT_{g,n}(\beta)] \in E_*^{lf}(\scrT_{g,n}(\beta,J))$ is preserved by equivalence of global charts from the formalism of counting theories, and its push-forward is invariant under cobordism by Lemma \ref{lem:cobordism}.  \end{proof}

Again by cobordism invariance, $I_{g,n}(\beta)$ only depends on the deformation equivalence class of $\omega$, i.e. its connected component in the space of symplectic forms.

\subsection{First properties}

We discuss analogues of the effectivity, symmetry, and point-mapping axioms for Gromov-Witten invariants.

\begin{prop}
	If $\langle [\omega],\beta\rangle < 0$ then $I_{g,n}(\beta) = 0$.
\end{prop}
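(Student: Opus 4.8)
The plan is to observe that when $\langle[\omega],\beta\rangle<0$ there simply are no $J$-holomorphic stable maps in class $\beta$, so the moduli space is empty and the invariant vanishes for essentially trivial reasons; the only subtlety is to phrase this within the global-chart formalism, where the thickening $\scrT$ is typically non-empty even when $Z=s^{-1}(0)$ is empty.

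First I would recall that for any $\omega$-tame almost complex structure $J$ and any $J$-holomorphic map $u:\Sigma\to X$, the energy identity gives $\int_\Sigma u^*\omega = \langle[\omega],\beta\rangle$ on each irreducible component summing to the total, and more to the point $\int_{\Sigma'}u^*\omega\geq 0$ on every component, with equality iff $u|_{\Sigma'}$ is constant; hence $\langle[\omega],\beta\rangle\geq 0$ for any class $\beta$ represented by a $J$-holomorphic stable map, with strict positivity unless $\beta=0$. Consequently, if $\langle[\omega],\beta\rangle<0$ the moduli space $\ccMbar_{g,n}(X,J,\beta)$ is empty. By Theorem \ref{thm:basicglobalchart} (or Corollary \ref{cor:charts_summary}), the global chart $\bT_{g,n}(\beta)=(G,\scrT,E,s)$ has $s^{-1}(0)/G \cong \ccMbar_{g,n}(X,J,\beta)=\varnothing$, so $Z=s^{-1}(0)=\varnothing$.

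Next I would feed this into the definition of the virtual class and of $I_{g,n}(\beta)$. For a counting theory $\bE$, the class $[\bT]$ lives in $E^{lf}_{\dim\bT}(\bT)$, and the group $E^{lf}_*(\bT)$ is built from the pair $(\scrT,\scrT\setminus Z)$ (in the Morava case literally $H^{\dim\cT/G-*}_G(\cT|Z;\bK)$, and in the $K$-theory case $K^{\dim\cT/G-*}_{G,c}(\cT|Z)$); when $Z=\varnothing$ the pair $\cT|Z$ is $(\cT,\cT)$, so all these relative (or compactly-supported-relative) groups vanish, and hence $[\bT]=0$ in $E^{lf}_*(\bT)$. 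Pushing forward the zero class along the evaluation–stabilisation map $\scrT\to X^n\times\ccMbar_{g,n}$ gives $I_{g,n}(\beta)=0$ in $E_*(X^n\times\ccMbar_{g,n})$.

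The only point requiring a word of care—and the closest thing to an "obstacle," though it is mild—is that one should check this is consistent with the cobordism-invariance statement: a priori, for a different choice of $J'$ still satisfying $\langle[\omega],\beta\rangle<0$, the thickening $\scrT'$ again has empty zero locus, so the same argument applies chart-by-chart, and no cobordism argument is even needed. Alternatively, and perhaps more cleanly, one can argue purely homologically: by the axioms the class $[\bT]$ is supported on $Z$ in the sense that it is the image of a class under $E^{lf}_*$ of the pair relative to $\cT\setminus Z$, so $Z=\varnothing$ forces $[\bT]=0$; this avoids invoking any model-specific description of $E^{lf}_*$ and works uniformly for every counting theory. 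I would present the homological version as the main line of argument and mention the model-specific computation as a remark.
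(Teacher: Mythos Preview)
Your proof is correct and follows essentially the same approach as the paper: the moduli space is empty by energy positivity, so $Z=\varnothing$, and hence $[\bT]=0$. The paper's version is terser (one sentence), but your added explanation of why $E^{lf}_*(\bT)$ vanishes when $Z=\varnothing$ via the relative pair $(\scrT,\scrT)$ is a welcome elaboration of exactly what the paper leaves implicit.
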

\begin{proof} The moduli space $\ccMbar_{g,n}(\beta,J) = \emptyset$ for any taming $J$, hence the zero-set $Z$ of the section $s$ is empty and so the equivariant Euler class $[\bT]$ vanishes, for any global chart.
\end{proof}

The symmetric group $\Sym_n$ acts on $X^n \times \ccMbar_{g,n}$ by permuting factors in $X^n$  respectively marked points in $\ccMbar_{g,n}$. This yields a representation 
\[
\Sym_n \to \Aut_{E^*(pt)}E_*(X^n \times \ccMbar_{g,n})
\]
to the $E^*(pt)$-linear automorphisms. 

\begin{prop} \label{prop:symmetry} The class $I_{g,n}(\beta)$ is $\Sym_n$-invariant. \end{prop}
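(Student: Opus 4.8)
The plan is to deduce $\Sym_n$-invariance of $I_{g,n}(\beta)$ from the fact that the symmetric group acts on the entire construction of the global chart $\bT_{g,n}(\beta)$ by permuting marked points, in a way compatible with the evaluation and stabilisation maps, together with the functoriality and equivalence-invariance axioms of a counting theory. First I would observe that a transposition $\tau \in \Sym_n$ acts on the moduli problem: relabelling marked points sends a stable map $(\Sigma; p_1,\dots,p_n; u)$ to $(\Sigma; p_{\tau(1)},\dots,p_{\tau(n)}; u)$, and this lifts to an action on all the auxiliary spaces appearing in Section \ref{sec:glob-kuran-charts} --- the spaces $\scrF_{g,n,d}$ of curves in projective space, the universal curves, the line bundle data $\bL$ (whose domain metric $\frak{D}$ can be chosen $\Sym_n$-invariantly, since in Lemma \ref{lemma metrics on domains} the metrics on $\ccCbar_{g,h}$ were taken invariant under permuting marked points), the finite-dimensional approximation scheme, and hence the thickening $\scrT$, the obstruction bundle $E$, and the section $s$. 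Concretely, $\tau$ induces an isomorphism of global charts $\tau_*: \bT_{g,n}(\beta) \to \bT_{g,n}(\beta)$ covering the relabelling self-map of $X^n \times \ccMbar_{g,n}$.

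Next I would record that this isomorphism of charts is in fact an \emph{equivalence} in the sense of Definition \ref{defn:global_charts_category} (it is even an isomorphism of the underlying data, up to the choices of auxiliary metric/line bundle which one arranges to be permutation-invariant, or otherwise up to the equivalence of charts established in Section \ref{subsection independence}). By the Functoriality axiom of a counting theory, $\tau_*$ carries the virtual fundamental class $[\bT_{g,n}(\beta)]$ to $[\bT_{g,n}(\beta)]$. Since the evaluation-times-stabilisation map $\bT_{g,n}(\beta) \to X^n \times \ccMbar_{g,n}$ intertwines $\tau_*$ with the permutation action on the target, pushing forward gives $\tau \cdot I_{g,n}(\beta) = I_{g,n}(\beta)$, which is the claim.

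I would expect the main obstacle to be the bookkeeping needed to make the $\Sym_n$-action genuinely act on a \emph{single} global chart, rather than merely relating two a priori different charts by an equivalence: the construction of $\scrF_{g,n,d}$, of the domain metric $\frak{D}$, and of the approximation scheme all involve choices, and one must either make these choices $\Sym_n$-equivariantly from the outset or invoke Proposition \ref{proprelated}/Corollary \ref{cor:charts_summary} to identify $\tau_*\bT_{g,n}(\beta)$ with $\bT_{g,n}(\beta)$ up to equivalence coherently in $\tau$. The cleanest route is to note that $\Sym_n$ is finite, so after fixing auxiliary data one can average or, more simply, observe that all the metrics and bundles used in Section \ref{sec:glob-kuran-charts} were already chosen symmetrically under relabelling (as in Lemma \ref{lemma metrics on domains}); with that in hand the argument reduces to the Functoriality axiom as above. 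A secondary, purely formal point is to check that the $\Sym_n$-action on $E_*(X^n\times\ccMbar_{g,n})$ obtained this way is the same as the one in the statement (permutation of $X^n$-factors and of marked points), which is immediate from the definitions of $\ev$ and $\st$.
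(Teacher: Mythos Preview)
Your proposal is correct and contains the paper's argument. The paper's own proof is extremely brief and takes exactly the second route you describe: given auxiliary data $\mu$ defining a global chart, one pulls back by $\sigma\in\Sym_n$ to obtain new data $\sigma^*\mu$, and then invokes the independence-of-choices results (the paper cites Proposition~\ref{propchartparameterized}) to conclude that the virtual classes for the two charts have the same pushforward. Your first route---arranging all auxiliary choices to be $\Sym_n$-invariant so that $\tau$ acts as an automorphism of a single chart---is not needed for the proposition but is precisely what the paper mentions in the subsequent remark as the way to obtain a refined class in $E^*_{\Sym_n}(X^n\times\ccMbar_{g,n})$.
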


\begin{proof} Given data $\mu$ defining a global chart for $\ccMbar_{g,n}(\beta)$, we can pullback by $\sigma \in \Sym_n$ to obtain new data $\sigma^*\mu$.  The virtual classes for the two sets of data are equivalent by Proposition \ref{propchartparameterized}.
\end{proof}

\begin{remark} Going beyond Proposition \ref{prop:symmetry}, one can incorporate finite-group equivariance into the construction of global charts and build $\Sym_n$-equivariant global charts for moduli spaces of stable maps, obtaining a refined invariant $I_{g,n}(\beta)^{\Sym_n} \in E^*_{\Sym_n}(X^n \times \ccMbar_{g,n})$.  This is relevant to a program of Givental \cite{Givental-permutation}. \end{remark} 

We now consider the case of maps in the trivial homology class. 
As usual, let $\widetilde{\calM}/H$ be a global chart presentation for $\ccMbar_{g,n}$. The orbibundle $\lambda_{\mathrm{Hodge}} := R^1p_*\mathcal{O}_{\ccC} \boxtimes TX \to \ccMbar_{g,n} \times X$ has a natural lift, which we still denote by $\lambda_{\mathrm{Hodge}}$, to an $H$-equivariant bundle over $\widetilde{\calM}$. 

\begin{prop} Let $\beta = 0 \in H_2(X;\bZ)$. The space $\ccMbar_{g,n}(X,0,J)$ admits a global chart 
	\[
	(G,\scrT, V,s) = (H, X \times \widetilde{\calM},  \lambda_{\mathrm{Hodge}}, 0).
	\]
\end{prop}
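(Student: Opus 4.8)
The plan is to produce the stated global chart directly and then check, against the output of the general construction in Section~\ref{Subsec:easier}, that it presents $\ccMbar_{g,n}(X,0,J)$ with the correct virtual class. First I would recall that when $\beta=0$ a stable map to $X$ consists of a stable domain curve $\Sigma$ (of genus $g$ with $n$ marked points) together with a constant map $\Sigma \to X$ with image a single point $x\in X$; so set-theoretically $\ccMbar_{g,n}(X,0,J) \cong \ccMbar_{g,n}\times X$, and this identification is an isomorphism of orbispaces since the automorphism group of a constant stable map is just the automorphism group of the marked domain. Taking the global quotient presentation $\ccMbar_{g,n}=\widetilde{\calM}/H$, this gives the homeomorphism $s^{-1}(0)/G = (X\times\widetilde{\calM})/H = X\times\ccMbar_{g,n}$ required for $(H, X\times\widetilde{\calM}, \lambda_{\mathrm{Hodge}}, 0)$ to present the moduli space, with $s=0$ so that $s^{-1}(0)$ is all of $X\times\widetilde{\calM}$.

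Next I would identify the deformation-obstruction data. For a constant map $u:\Sigma\to X$ with image $x$, the linearised $\overline\partial$-operator on $u^*TX = \underline{T_xX}$ is the ordinary $\overline\partial$-operator on the trivial bundle; its kernel is $H^0(\Sigma,\mathcal O)\otimes T_xX = T_xX$ (the infinitesimal motions of the point) and its cokernel is $H^1(\Sigma,\mathcal O)\otimes T_xX$, which fits together into the Hodge bundle $\lambda_{\mathrm{Hodge}} = R^1p_*\mathcal O_{\ccC}\boxtimes TX$ over $\ccMbar_{g,n}\times X$, with its $H$-equivariant lift over $\widetilde{\calM}$. The tangent directions of $X\times\widetilde{\calM}$ already account for the deformations of $x$ and of the domain curve (after passing to the orbifold), so the remaining obstruction is exactly $\lambda_{\mathrm{Hodge}}$, and the virtual dimension $\dim(X\times\widetilde\calM) - \dim H - \operatorname{rk}\lambda_{\mathrm{Hodge}}$ matches the expected dimension $2n + (\dim_{\bC}X)(1-g)\cdot 2 + 2(3g-3+n)$ of $\ccMbar_{g,n}(X,0,J)$ — I would verify this numerology from $\operatorname{rk}_{\bC}\lambda_{\mathrm{Hodge}} = g\cdot\dim_{\bC}X$.

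The main work — and the expected obstacle — is showing this small chart is \emph{equivalent}, in the sense of Definition~\ref{defn:global_charts_category}, to the chart produced by the general machine of Section~\ref{Subsec:easier} applied to $\beta=0$; only this guarantees that $[\bT] = e_H(\lambda_{\mathrm{Hodge}})$ is the correct virtual class computing $I_{g,n}(0)$. Here one uses that with $\beta=0$ the perturbation data $\lambda_\mu$ acts trivially on the constant-map locus, so the thickening $\scrT$ of the distinguished chart consists of constant maps together with $\bL$-framings of $L_{u,\bL} = \omega_\Sigma(p_1,\ldots,p_n)^{\otimes k}$ (the $u^*L$ factor being trivial), i.e. framings of a relatively very ample bundle on the universal curve over $\ccMbar_{g,n}$; and the obstruction bundle there is $H^0(u^*T\bP^{q})\oplus V_\mu \oplus \scrH$. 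The strategy is then exactly the one indicated in Section~\ref{Subsec:constant_maps}: the framing datum plus $\scrH$ can be stripped off by the inverse of a stabilisation/germ-equivalence (the $H$-bundle $H_{g,h,q+g}\oplus\scrH$ move), reducing the distinguished chart to the `unitary' chart $(H,\widetilde{\calM},0,0)$ for $\ccMbar_{g,n}$ twisted over $X$, while the surviving piece of the obstruction bundle reassembles as $\lambda_{\mathrm{Hodge}}$ via the long exact cohomology sequence relating $H^\bullet(u^*T\bP^q)$, $H^\bullet(\mathcal O)$ and $H^\bullet(u^*TX)$ for the constant map. Making the exact sequence \eqref{eqn:exact_on_Z} hold on the nose through these moves, equivariantly for the change of groups $G\to H$, is the delicate point; I would handle it by invoking Lemma~\ref{lem:factor_equivalence} to factor the comparison into the elementary moves and checking each one preserves the stated data, exactly as in the proof of Corollary~\ref{cor globalKuranishichart}.
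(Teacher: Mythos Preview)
Your outline is sound and arrives at the right answer, but the argument for equivalence to the distinguished chart differs from the paper's, and one step of yours is imprecise.

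You write that ``the thickening $\scrT$ of the distinguished chart consists of constant maps together with $\bL$-framings''. That is true when $X$ is a point (the situation of the first paragraph of Section~\ref{Subsec:constant_maps}), but for general $X$ with $\beta=0$ the pre-thickening $\scrT^{\pre}(0,\lambda_\mu)$ contains triples $(\phi,u,e)$ with $e\neq 0$, and for those $u$ need not be constant. So stripping off the framing and the $V_\mu\oplus\scrH$ summand by the moves of Section~\ref{Subsec:constant_maps} does not by itself reduce you to $X\times\widetilde{\calM}$ with obstruction $\lambda_{\mathrm{Hodge}}$; you still need to explain what happens to the non-constant part of the thickening.

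The paper's argument bypasses this by invoking a general principle rather than tracking the explicit moves: one observes that along $\{e=0\}$ the maps are constant, and that the section $s$ cuts out this locus \emph{cleanly} (in the fibrewise $C^1_{loc}$ sense), meaning the linearisation of $s$ is surjective onto the normal bundle with cokernel the bundle $V$ whose fibre is $\mathrm{coker}\,D\overline\partial_u$. Whenever a global chart has its zero-locus cut out cleanly, it is stabilisation-equivalent to the chart given by that zero-locus carrying the zero section of $V$. Your computation $\mathrm{coker}\,D\overline\partial_u \cong H^1(\Sigma,\mathcal O)\otimes T_xX$ then identifies $V$ with $\lambda_{\mathrm{Hodge}}$, and you are done. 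This is both shorter and avoids the delicate bookkeeping of the exact sequence~\eqref{eqn:exact_on_Z} through a chain of elementary moves that you anticipated as ``the delicate point''.
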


\begin{proof}
	For $\beta = 0 \in H_2(X;\bZ)$, the moduli space $\ccMbar_{g,n}(\beta,J)$ agrees with the product $ X \times \ccMbar_{g,n}$, since all stable maps are constant (however this space typically has the wrong virtual dimension). The stabilisation map $\scrT \to \ccMbar_{g,n}$ is the natural projection, whilst the evaluation map $\scrT \to X^n$ 
	is the composition of second projection and inclusion of the small diagonal $
	\scrT = \widetilde{\ccM}\times X  \stackrel{pr_2}{\longrightarrow} X \stackrel{\Delta}{\longrightarrow} X^n.$ 
	
	Construct the global chart by the usual procedure from Section \ref{Subsec:easier}. Over the thickening $\scrT^{\pre}(0,\lambda_\mu)$ there is a $G$-bundle $V$ with fibre the cokernel of the linearised $\overline{\partial}$-operator. The image of $\lambda_\mu$ surjects onto $V$ and the moduli space of constants is cut out `cleanly' (in the fibrewise $C^1_{loc}$-sense) by the section $s$. Whenever the zero-section of a global chart is cut out cleanly in this sense, the chart is stabilisation equivalent to the zero-locus equipped with the identically vanishing section of the obstruction bundle. Identifying $V$ with the Hodge bundle gives the result.
\end{proof}

\begin{Example} In the counting theories associated to Morava $K$-theories or complex $K$-theory, $I_{g,n}(X,0) \in E_*(X^n \times \ccMbar_{g,n})$ is given by the push-forward of the  Euler class of $ \lambda_{\mathrm{Hodge}}$ under the map $X \times \ccMbar_{g,n} \to X^n \times \ccMbar_{g,n}$ associated to the inclusion of the small diagonal $X \hookrightarrow X^n$. \end{Example}

The last of the `elementary' axioms for rational cohomological Gromov-Witten invariants is the `forgetful axiom' which allows one to forget a marked point. The usual setting involves the diagram of spaces 
\begin{equation}
	X^{n+1} \times \ccMbar_{g,n+1} \to X^n \times \ccMbar_{g,n+1}  \leftarrow  X^n \times \ccMbar_{g,n}  
\end{equation}
which respectively forget the last factor and are given by a choice of section of the forgetful map $\ccMbar_{g,n+1}  \to \ccMbar_{g,n}  $ given by a choice of marked point. One hopes to compare the pushforward of $ I_{g,n}(\beta)$ to $ E_*(X^n \times \ccMbar_{g,n+1})$ with the cap product of the fundamental class of $ \ccMbar_{g,n}$ with  $ I_{g,n+1}(\beta)$.  

However, this axiom seems to have no elementary analogue in our setting.  We briefly discuss the issue.  Consider the diagram
\[
\xymatrix{
\bT_{g,n+1}(X,\beta) \ar[r] \ar[d] & \bT_{g,n}(X,\beta) \ar[d] \\
\scrF_{g,n+1}(\bP^{d-g+1},d) \times X^{n+1} \ar[r] \ar[d] & \scrF_{g,n}(\bP^{d-g+1},d) \times X^n \ar[d] \\
\ccMbar_{g,n+1}\times X^{n+1}  \ar[r] & \ccMbar_{g,n} \times X^n
}
\]
The top square is essentially a pullback square, to which the axioms of a counting theory apply, but we have formulated the invariants $I_{g,n}(\beta)$ after pushing forwards all the way to the bottom line.  Although the bottom square is entirely algebro-geometric, the vertical maps are not proper, and it seems hard to analyse this using our axiomatic framework. On the other hand, the square formed by the top and bottom lines is not a pullback square, because of configurations in which the point being forgotten lies on an unstable bubble component, which is then contracted by stabilisation. 

\begin{remark}\label{rmk:forgetful in K-theory}
When $g=0$ one can compactify $\scrF_{0,n}(\bP^d,d)$ to  the compact complex orbifold $\ccMbar_{0,n}(\bP^d,d)$.  Starting from here, and the fact that the $K$-theory fundamental class  is preserved by pushforward under a map with rationally connected fibres,  one can obtain the forgetful axiom for genus zero invariants in complex $K$-theory; this imitates Lee's proof from \cite{Lee-QKFoundations}, in the special case $g=0$, but for more general symplectic manifolds. However,  we do not know a Morava analogue.
\end{remark}

\subsection{Tautological modules\label{sec:tautological}}

Suppose $(X,J)$ is a smooth quasi-projective algebraic variety. There is a collection of `tautological rings' associated to the moduli spaces $\ccMbar_{g,n}(X,\beta,J)$ of stable maps, which generalise the tautological rings $R^*(\ccMbar_{g,n}) \subset H^*(\ccMbar_{g,n})$  of the moduli spaces of curves, see  \cite{Mustata-Mustata} and \cite{Bae}.  The definition of the product in the ring is somewhat non-trivial because of the singularities of the spaces $\ccMbar_{g,n}(X,\beta,J)$, and proceeds via Fulton's `operational Chern-classes'.  Since the operational Chern class exists in algebraic cobordism, one can define the tautological ring in any complex-oriented theory $\bE$.  

The tautological rings have natural analogues for any counting theory on $\Glo$.  Suppose now $(X,\omega)$ is a compact symplectic manifold with compatible almost complex structure $J$, and fix a curve class $\beta$. Fix a global chart $(G,\scrT,\beta,E,s)$ for a moduli space $\ccMbar_{g,n}(\beta,J)$, with thickening $\scrT = \scrT_{g,n}(\beta)$ and virtual class $[\bT_{g,n}(\beta)]$.

We may consider the thickening $ \scrT_{g,n}(\beta) $ as a non-proper stably transversely almost complex global chart, equipped with the trivial vector bundle. As such, we have a forgetful map $\bT \to  \scrT_{g,n}(\beta) $, which induces a pullback
\begin{equation}
	E^*(\scrT_{g,n}(\beta) )  \to E^*(\bT_{g,n}(\beta)).  
\end{equation}
We also have a forgetful map $\scrT_{g,n}(\beta)$ to $ \scrF_{g,n}(\bP^{d-g},d)$, and from there to $\ccMbar_{g,n}(\bP^{d-g},d) $ and to $\ccMbar_{g,n}$.
\begin{defn}
	The \emph{tautological ring} $R^*(\bT_{g,n}(\beta))  $ of $ \bT_{g,n}(\beta)$ is the subring generated by
	\begin{enumerate}
		\item  the pullback of the first Chern class of the $i$-th cotangent line bundle $T_i^* \in E^*(\ccMbar_{g,n})$,
		\item the classes of the divisors in $ \scrT_{g,n}(\beta)$  associated to splitting $(\beta,g,n)$ as $(\beta_1 + \beta_2, g_1 + g_2, n_1 + n_2 + 2)$, and choosing an integer $1 \leq i \leq n$.
	\end{enumerate}
\end{defn}
\begin{rem}
	To clarify the construction of the second type of classes, observe that $\scrF_{g,n}(\bP^{d-g},d) $ inherits (from $\ccMbar_{g,n}(\bP^{d-g},d) $) divisors associated to splitting $(\beta,g,n)$ as $(\beta_1 + \beta_2, g_1 + g_2, n_1 + n_2 + 2)$, and choosing an integer $1 \leq i \leq n$. We  formulate the divisors in $\scrT_{g,n}(\beta) $ instead because the splitting by homology class is finer, in general, than the splitting by degree (area).
\end{rem}

In practice, we shall need to consider `tautological modules' which lie in homology rather than cohomology. Because the moduli spaces of stable maps are compact, we have a natural isomorphism
\begin{equation}
	E^*_G(\scrT | Z) = E^{lf}_*(\bT) \cong  E_*(\bT).
\end{equation}
\begin{defn}
	The \emph{tautological module} $R_*(\bT) \subset  E_*(\bT)$ is the submodule
	\[
	R^{g,n}_*(\beta;\bE) :=  R^*(\bT_{g,n}(\beta))   \cap [\bT_{g,n}(\beta)]  \, \subset E^{lf}_*(\bT).
	\]
\end{defn}
We will say that classes in the tautological ring, respectively module, which are not in the span of the unit, respectively virtual class, have `positive (co)dimension'.

\begin{lemma} An equivalence of distinguished global charts $\bT \to \bT'$ for $\ccMbar_{g,n}(\beta)$ yields an isomorphism of tautological modules. \end{lemma}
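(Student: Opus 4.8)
The statement is that an equivalence $f : \bT \to \bT'$ of distinguished global charts for $\ccMbar_{g,n}(\beta)$ induces an isomorphism of tautological modules $R_*(\bT) \xrightarrow{\sim} R_*(\bT')$. The plan is to show that the isomorphism $f_* : E^{lf}_*(\bT) \to E^{lf}_*(\bT')$ guaranteed by the functoriality axiom of a counting theory (Definition \ref{def:counting_theory}) carries the tautological submodule isomorphically onto the tautological submodule, i.e.\ it restricts to an isomorphism $R^*(\bT)\cap[\bT] \to R^*(\bT')\cap[\bT']$. The two ingredients are: (i) $f_*$ sends $[\bT]$ to $[\bT']$, which is precisely the content of the functoriality axiom for equivalences; and (ii) the generators of the tautological ring $R^*(\bT)$ pull back, under $f^*$, to the corresponding generators of $R^*(\bT')$ (equivalently, are identified under the isomorphism $E^*(\bT)\cong E^*(\bT')$). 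Granting (i) and (ii), the module-map identity \eqref{eqn:module_map_property}, namely $f_*(f^*\alpha \cap \beta) = \alpha \cap f_*\beta$, immediately gives $f_*(R^*(\bT)\cap[\bT]) = R^*(\bT')\cap[\bT']$, and since $f_*$ is an isomorphism on all of $E^{lf}_*$ it restricts to an isomorphism of the submodules; this is the whole argument.

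The real work is (ii): matching the tautological generators. Here one uses Corollary \ref{cor:charts_summary}(3) and the construction of equivalences between distinguished charts from Sections \ref{subsection independence}--\ref{sec:split_curves}. The first type of generator, the pullback of $\tilde c_1(T_i^*)$ for $T_i^* \in E^*(\ccMbar_{g,n})$, is manifestly compatible with any equivalence: by Corollary \ref{cor:charts_summary}(2)--(3) the stabilisation maps $\scrT \to \ccMbar_{g,n}$ and $\scrT' \to \ccMbar_{g,n}$ are intertwined by $f$ up to the relevant equivalences, so the pulled-back Chern classes correspond under $f^*$. For the second type of generator --- the classes of the divisors in $\scrT_{g,n}(\beta)$ associated to a splitting $(\beta_1+\beta_2, g_1+g_2, n_1+n_2+2)$ and a marking $1\le i\le n$ --- one must check that these divisors are pulled back compatibly. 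By the Remark following the definition, these divisors are inherited from $\scrF_{g,n}(\bP^{d-g},d)$, hence ultimately from the corresponding boundary divisors of $\ccMbar_{g,n}(\bP^{d-g},d)$; since the zig-zag of principal-bundle enlargements and general stabilisations relating two distinguished charts (Proposition \ref{proprelated} and its variants) is built from maps of $G_\C$-nodal families $\scrC\to\scrF$ over which the relevant boundary divisors are preserved, the divisor classes on $\scrT$ and $\scrT'$ are identified under $f^*$. One should verify this elementary move by elementary move (free quotient, group enlargement, stabilisation, open embedding, in the sense of Lemma \ref{lem:factor_equivalence}), for each of which the boundary divisors of the ambient space of stable maps to projective space pull back tautologically.

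\textbf{Main obstacle.} I expect step (ii), and specifically the compatibility of the splitting divisors, to be the only non-formal point: one must be careful that the degree $d$ (hence the ambient projective space $\bP^{d-g}$) changes along a zig-zag of equivalences, so that a ``splitting divisor'' on $\scrF_{g,n}(\bP^{d-g},d)$ and one on $\scrF_{g,n}(\bP^{\check d - g},\check d)$ are \emph{a priori} different objects, and one needs the finer indexing by homology class $\beta_1+\beta_2$ (rather than by degree) --- flagged in the Remark after the definition of $R^*$ --- precisely so that pullback under the submersions of nodal families matches them up. Once one observes that the divisor in $\scrT_{g,n}(\beta)$ is defined pointwise on the (moduli of) domain curves together with the homology decomposition of the stable map, and that every morphism in the equivalence zig-zag is fibrewise an isomorphism of domain curves and preserves the map $u$ (hence its homology decomposition), the compatibility follows. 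The rest --- pushforward of the fundamental class, the module identity \eqref{eqn:module_map_property}, and the conclusion that $f_*$ restricts to an isomorphism of submodules --- is formal from the axioms of a counting theory.
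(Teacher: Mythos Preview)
Your proposal is correct and follows essentially the same approach as the paper: show that equivalences preserve the virtual class (functoriality axiom), that the tautological generators --- both the cotangent-line classes and the splitting divisors --- are pulled back from the spaces $\scrF_{g,h,d}$ and hence are matched under the equivalence, and then conclude via the module identity \eqref{eqn:module_map_property}. The paper's proof is terser, simply asserting that the equivalences ``commute with the natural submersions from the thickenings to $\scrF_{g,h,d}$'' and that both kinds of generators are pullbacks from there; your discussion of the obstacle (that $d$ may change along the zig-zag, so one must use the homology-class indexing of the splitting divisors and trace through the elementary moves of Lemma~\ref{lem:factor_equivalence}) is actually more careful than what the paper records, but it is exactly the content implicit in the paper's one-line appeal to compatibility with the maps to $\scrF_{g,h,d}$.
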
 

\begin{proof} By definition the equivalence respects the virtual classes $[\bT]$ and $[\bT']$, and from Corollary \ref{cor:charts_summary} is compatible with evaluation and stabilisation maps.  
	In fact, these equivalences commute with the natural submersions from the thickenings to
	$\scrF_{g,h,d}$. Since the cotangent line bundles $T^*_i$ as well as the divisors in $\scrT_{g,n}(\beta)$
	are pullbacks of the corresponding divisors in $\scrF_{g,h,d}$,
	we get a corresponding isomorphism of tautological modules.
\end{proof}

\begin{rem} Working with pullbacks from the spaces $\scrF_{g,h,d}$ is analogous, in our setting, to working with pullbacks of gluing diagrams of the Artin stacks of `stable modular curves with degree' discussed in \cite{Lee-QKFoundations}. \end{rem}

\begin{defn} \label{defn:corrected}
	A \emph{corrected virtual class} for a moduli space $\ccMbar_{g,n}(\beta)$ presented by a global chart $(G,\scrT,E,s)$ is a class 
	\begin{equation} \label{eqn:corrected}
		[\bT_{g,n}(\beta)]^{corr} = [\bT_{g,n}(\beta)] + \alpha \in E_*^{lf}(\bT)
	\end{equation}
	where $\alpha$ belongs to the tautological module and has positive codimension. 
\end{defn}

\section{Quantum generalised cohomology}
\label{sec:quant-gener-cohom}

Let $\bE$ denote a counting theory on the category $\Glo$ of global charts.  In this section, we define an associative algebra $Q\bE^*(X; \Lambda_{\bE_*})$ which we call quantum $\bE$-cohomology.  The proof of associativity involves correcting the `naive' virtual classes for moduli spaces by corrections arising from arbitrary codimension strata of singular stable maps, which appear in normal crossing degenerations of moduli spaces when the domain curve degenerates (splitting). The need for such corrections in principle is a fundamental insight due to Givental \cite{Givental}. In the case of ordinary $K$-theory, he formulated these as corrections to the Poincar\'e pairing, but to work with more general theories $\bE$ governed by more complicated formal groups it seems necessary to correct the fundamental classes themselves.

\subsection{Morphisms associated to correspondences and their compositions}

The construction of operations in Gromov-Witten theory starts with a global chart equipped with evaluation maps
\[
\scrT \stackrel{e_{\pm}}{\longrightarrow} X_1 \times X_2.
\]
This defines a map 
\[
\Phi_{[\bT]}: E^*(X_1) \to E^*(X_2), \quad \alpha \mapsto D_{X_2} \circ (e_+)_* (e_-^*\alpha \cap [\bT])
\]
where $D_{X_2}: E_*(X_2) \to E^{\dim(X)-*}(X_2)$ is the duality isomorphism. In order to formulate properties of the resulting operations, we have to be able to geometrically understand the composition of such maps using the product global chart.

Given two global charts $e_{\pm}: \scrT_e \to X_1\times X_2$ and $f_{\pm}: \scrT_f \to X_2\times X_3$ as above, consider the product chart $\bT_e \times \bT_f$ equipped with the evaluation maps
\begin{equation}
  \begin{tikzcd}
    &\scrT_e \times \scrT_f \arrow[dl,"e_-"] \arrow[d,"e_+ \times f_-"] \ar[dr,"f_-"] &  \\
   X_1   & X_2 \times X_2 &  X_3.
  \end{tikzcd}
\end{equation}
Using the middle map to pullback the dual of the fundamental class of the diagonal on the $X_2$, we obtain a class
\begin{equation} \label{eq:pullback_fundamental_class}
   ((e_+ \times f_-)^*  \Delta_{e_+,f_-}) \cap ([\bT_e]\times[\bT_f])
\end{equation}
which agrees with the pushforward of the fundamental class of the fibre product. 

\begin{lemma} \label{lem:composition_operations} 
	The composition $\Phi_{[\bT_f]} \circ \Phi_{[\bT_e]}$ is equal to the map
	\[
	\alpha \mapsto  D_{X_3} \circ (f_+)_* (e_-^*(\alpha)  \cap (  (e_+ \times f_-)^*  \Delta_{e_+,f_-}) \cap  [\bT_e]\times[\bT_f])).
	\]
\end{lemma}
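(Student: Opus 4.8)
The plan is to unwind both sides of the claimed equality using the axioms of a counting theory (especially the module property \eqref{eqn:module_map_property} and the monoidal structure), reducing the statement to a sequence of elementary manipulations with cap products and pushforwards.

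First I would write out $\Phi_{[\bT_e]}(\alpha) = D_{X_2}\circ (e_+)_*\bigl(e_-^*\alpha \cap [\bT_e]\bigr) \in E^*(X_2)$, and then feed this into $\Phi_{[\bT_f]}$, obtaining
\[
\Phi_{[\bT_f]}\bigl(\Phi_{[\bT_e]}(\alpha)\bigr) = D_{X_3}\circ (f_+)_*\Bigl(f_-^*\bigl(D_{X_2}(e_+)_*(e_-^*\alpha \cap [\bT_e])\bigr) \cap [\bT_f]\Bigr).
\]
The key point is to rewrite $f_-^* D_{X_2}(e_+)_*(\cdots)$. Using the identification $D_{X_2}\colon E_*(X_2) \cong E^{\dim X_2 - *}(X_2)$, the class $(e_+)_*(e_-^*\alpha \cap [\bT_e])$ is, up to duality, a homology class on $X_2$, and its image under $D_{X_2}$ is the corresponding cohomology class. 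Pulling this back along $f_-\colon \scrT_f \to X_2$ and capping with $[\bT_f]$ is, by the projection formula (i.e. the module axiom \eqref{eqn:module_map_property} applied to $f_-$ and the duality isomorphism on $X_2\times X_2$), the same as forming the external product $[\bT_e]\times[\bT_f]$ on $\scrT_e\times\scrT_f$, capping with the pullback of the diagonal class $\Delta_{e_+,f_-} \in E^*(X_2\times X_2)$ under $e_+\times f_-$, and capping with $e_-^*\alpha$ (pulled back to the product). Concretely, one inserts the defining property of the diagonal class: for any homology class $\xi$ on $X_2$, $D_{X_2}(\xi)$ is characterized by $\Delta^* = (\mathrm{id}\times \text{duality})$, so that $f_-^* D_{X_2}(e_+)_* \beta \cap [\bT_f]$ becomes $(\mathrm{pr}_f)_*\bigl( (e_+\times f_-)^*\Delta_{e_+,f_-} \cap (\beta \times [\bT_f])\bigr)$, where $\mathrm{pr}_f$ is projection off the $\scrT_e$-factor; this is exactly the statement recorded just before the lemma that \eqref{eq:pullback_fundamental_class} is the pushforward of the fibre-product fundamental class. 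Substituting $\beta = e_-^*\alpha \cap [\bT_e]$ and moving $e_-^*\alpha$ through the pushforward by the module axiom gives the right-hand side of the lemma.

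The main obstacle is bookkeeping the various duality isomorphisms and pushforward/pullback squares correctly: one must check that $D_{X_2}$ is compatible with the external product on $X_2\times X_2$ and with the pullback $f_-^*$ in the precise way needed, which is really the assertion that capping with the diagonal class implements the duality pairing. This is a formal consequence of the symmetric monoidal structure together with axiom \eqref{pullback axiom} (applied to the proper immersion $\Delta\colon X_2 \hookrightarrow X_2\times X_2$), but writing it cleanly requires care with the dimension shifts suppressed throughout. Once that compatibility is in hand, the rest is a direct rearrangement, and the equality of the two expressions follows. I would present the argument as: (i) state the projection-formula identity $f_*(f^*\gamma \cap \eta) = \gamma \cap f_*\eta$; (ii) state the diagonal identity $f_-^* D_{X_2}\xi \cap \eta = (\mathrm{pr}_f)_*((e_+\times f_-)^*\Delta \cap (\xi\times\eta))$; (iii) combine, tracking $D_{X_3}$ and $(f_+)_*$ through.
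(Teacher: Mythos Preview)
Your proposal is correct and follows essentially the same approach as the paper. The paper's proof simply invokes ``standard Fourier-Mukai theory'' and then notes that the extension to virtual classes uses the symmetric monoidal hypothesis, the product axiom $[\bT_e \times \bT_f] = [\bT_e] \otimes [\bT_f]$, and the pullback axiom for the diagonal---exactly the ingredients you unpack explicitly (the module/projection formula, the monoidal structure, and the diagonal compatibility).
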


\begin{proof} This is standard from Fourier-Mukai theory if the fundamental classes are ordinary and not virtual. The extension to global charts follows from  the symmetric monoidal hypothesis on $\bE$, the equality $[\bT_e \times \bT_f] = [\bT_1] \otimes [\bT_2]$, and the pullback axiom for the restriction to the diagonal.  \end{proof}

\subsection{A correlator}

For a class $\beta \in H_2(X;\bZ)$ we have a thickening $\scrT_{0,3}(\beta)$ with evaluation maps 
\[
\xymatrix{
	&  \scrT_{0,3}(\beta) \ar[rd]_{ev_{\infty}} \ar[ld]^{ev_{0,1}} & \\ X\times X & & X
}
\]
Let 
\[
\kappa: E^*(X) \otimes_{E^*(pt)} E^*(X) \to E^*(X\times X)
\] be the natural cross-product map, and write $D_X: E_*(X) \stackrel{\sim}{\longrightarrow} E^{2n-*}(X)$ for the duality isomorphism coming from the canonical deformation class of almost complex structures on $X$. 
We obtain a map 
\[
\mu_\beta: E^*(X) \otimes_{E^*(pt)} E^*(X) \to E^*(X)
\]
by sending 
\[
a \otimes b \ \mapsto \ D_X\,\left((ev_{\infty})_* [\bT_{0,3}(\beta)] \cap ev_{0,1}^*(\kappa(a\otimes b)) \right).
\]
If $\beta=0$ then all stable maps are constant, the virtual class $[\bT_{0,3}(\beta)] \in E_*(X^3)$ is the class of the small diagonal, and $\mu_{\beta=0}(a\otimes b) = a \smile b$ recovers the usual cup product on $E^*(X)$, compare to \cite[Ch 2, Theorem 2.9]{Rudyak}.

Let $\Lambda_{\bE_*}$ denote the one-variable Novikov ring with formal variable $q$ over the coefficient ring $\bE_* = E^*(pt)$, i.e. the $\bE_*$-module whose elements are  series
\[
\Lambda_{\bE_*} = \left\{ \sum_i a_i q^{t_i} \, | \, a_i \in \bE_*, t_i \in \bR, \lim_i t_i = + \infty \right\}.
\]
The quantum product on $E^*(X;\Lambda_{\bE_*}) = E^*(X) \otimes_{\bE_*}\Lambda_{\bE_*}$  is an $\bE_*$-linear product which `corrects' the (non-associative) naive product
\begin{equation} \label{eq:naive_product}
  (a,b) \mapsto \sum_{\beta} \mu_{\beta}(a\otimes b) \, q^{\omega(\beta)}.  
\end{equation}
We will explain how this correction arises in this section.

\subsection{From the mesosphere\label{Sec:mesosphere}}

The construction of an associative quantum product on $H^*(X;\bE)$ follows the following general template, which in particular applies to the classical cases of ordinary cohomology and $K$-theory. Consider the projection
\[
\st: \scrT_{0,4}(\beta) \to \ccMbar_{0,4} 
\]
which forgets the map and stabilises the domain. The fact that $ \ccMbar_{0,4} $ is connected implies that the expression
\begin{equation} 
	[\st^{-1}(\lambda)] \cap [\bT_{0,4}(\beta)] 
\end{equation}
is independent of $\lambda  \in \bP^1 \backslash \{0,1,\infty\}$.  Following the standard proof of associativity, one specialises  to the (non-generic) case where $\lambda \in \{0,1,\infty\} \subset \bP^1 =  \ccMbar_{0,4}$ represents one of the three reducible points. In this case, the fibre $\st^{-1}(\lambda)$  is a strict normal crossing divisor $D_{\lambda} \subset \scrT_{0,4}(\beta)$ whose top irreducible components are indexed by splittings $\beta = \beta_1+\beta_2$. These divisors intersect so that the codimension $m$ strata are closures of the loci where the corresponding (thickened) stable map has domain which contains a chain of $m\geq 0$ rational curves between the stable components which are contracted to the node in $\bP^1\vee\bP^1$ under the stabilisation map. The axioms of a counting theory express this as
\begin{equation}  \label{eqn:class_of_fibre_of_stab}
    L(\{ \Delta \cap ([\bT_{0,3}(\beta_1)]\times [\bT_{0,3}(\beta_2)]) \}_{\beta_1 + \beta_2 = \beta}),
\end{equation}
where $\Delta$ is the pullback of the appropriate diagonal under evaluation.

If this expression agrees with the sum of the expression for the fundamental class of the fibre product (given by Equation \eqref{eq:pullback_fundamental_class}), then associativity holds. But comparing the two expressions shows that this can only be true whenever $L$ is the additive formal group (or if the higher order terms of the formal group law vanish by some coincidence). The higher order terms are supported on higher order strata, as illustrated in Figure \ref{fig:higher_depth_strata}.

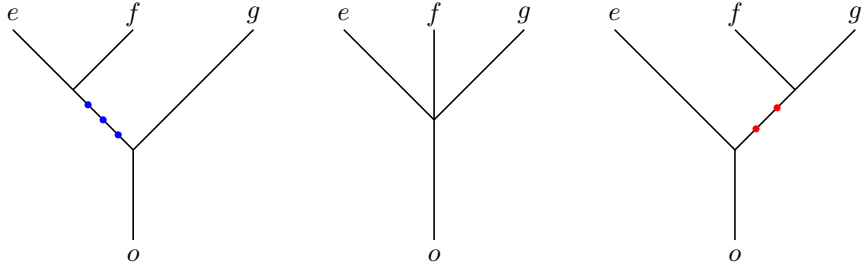
\begin{figure}[ht]
	\begin{center}
		\begin{tikzpicture}[scale=0.4]
			
			\draw[semithick] (0,0) to (-2,2);
			\draw[semithick] (0,0) to (2,2);
			\draw[semithick] (0,0) to (2,-2);
			\draw[semithick] (2,-2) to (6,2);
			\draw[semithick] (2,-2) to (2,-5);
			\draw (-2,2.5) node {$e$};
			\draw (2,2.5) node {$f$};
			\draw (6,2.5) node {$g$};
			\draw (2,-5.5) node {$o$};
			\draw[fill,blue] (0.5,-0.5) circle (0.1);
			\draw[fill,blue] (1,-1) circle (0.1);
			\draw[fill,blue] (1.5,-1.5) circle (0.1);

			\draw[semithick] (12,-1) to (9,2);
			\draw[semithick] (12,-1) to (12,2);
			\draw[semithick] (12,-1) to (15,2);
			\draw[semithick] (12,-1) to (12,-5);
			\draw (9,2.5) node {$e$};
			\draw (12,2.5) node {$f$};
			\draw (15,2.5) node {$g$};
			\draw (12,-5.5) node {$o$};
			
			\draw[semithick] (24,0) to (26,2);
			\draw[semithick] (24,0) to (22,2);
			\draw[semithick] (24,0) to (22,-2);
			\draw[semithick] (22,-2) to (18,2);
			\draw[semithick] (22,-2) to (22,-5);
			
			\draw (18,2.5) node {$e$};
			\draw (22,2.5) node {$f$};
			\draw (26,2.5) node {$g$};
			\draw (22,-5.5) node {$o$};
			\draw[fill,red] (22.7,-1.3) circle (0.1);
			\draw[fill,red] (23.4,-0.6) circle (0.1);

		\end{tikzpicture}
	\end{center}
	
	\caption{Configurations of higher depth in $\overline{\mathcal{M}}_{0,4}(\beta)$; the (red/blue) dots indicate chains of rational curves separating the irreducible components of the stabilisation, whilst the central configuration is schematic for the fibre over general $\lambda \in \overline{\calM}_{0,4}$.}
        \label{fig:higher_depth_strata}
\end{figure}

To obtain an associative product, we then seek `corrected' virtual classes 
\[
[\bT_{0,4}(\beta)]^{corr} \in E^*(\ccMbar_{0,4}(\beta)) \qquad \mathrm{and} \qquad [\bT_{0,3}(\beta_i)]^{corr} \in E^*(\ccMbar_{0,3}(\beta_i))
\]
with the following fundamental property:
\begin{equation} \label{eq:factorisation_corrected_class}
    \parbox{36em}{the pullback of $[\bT_{0,4}(\beta)]^{corr}$ to the $(\beta_1,\beta_2)$-component of the domain (normalization) of a normal crossing boundary fibre \emph{factorises} in the sense that it has the form $([\bT_{0,3}(\beta_1)]^{corr} \times [\bT_{0,3}(\beta_2)]^{corr}) \cap  [\Delta_X]$. }
\end{equation}
In order to also achieve commutativity as well, we shall also require that $[\bT_{0,4}(\beta)]^{corr}$ is invariant under permutations of the first 3 marked points.

We will write $D_{\beta_1,\beta_2}$ for the divisor in $\ccMbar_{0,4}(\beta)$ associated to a splitting $\beta = \beta_1 + \beta_2$.

\begin{Example} 
	If $\bE = H\bQ$ is rational singular cohomology then the uncorrected class $[\bT_{0,4}(\beta)]$ satisfies both requirements. This is because although  the map from the disjoint union of components of the normal crossing fibre is only componentwise-injective, the homology class defined by its image is given by the sum of the (virtual analogues of the) classes $D_{\beta_1,\beta_2} = [\ccMbar_{0,3}(\beta_1) \times_{ev} \ccMbar_{0,3}(\beta_2)]$. 
\end{Example}

\begin{Example} 
	If $\bE = KU$ is complex topological $K$-theory, then the class $\sum [D_{\beta_1,\beta_2}]$ satisfies the second requirement -- its pullback to a component by definition factorises as the product of the virtual classes of the two factors of the fibre product -- but it has no reason to be invariant under the involution exchanging marked points.
\end{Example}

\begin{lemma}
	The class $L_{\bE}(\{[D_{\beta_1,\beta_2}]\}_{\beta_1+\beta_2=\beta}) \in E^*(\ccMbar_{0,4}(\beta))$ is $\sigma_{13}$-invariant.
\end{lemma}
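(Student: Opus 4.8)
The plan is to realise the class $L_{\bE}(\{[D_{\beta_1,\beta_2}]\}_{\beta_1+\beta_2=\beta})$ as the pullback of a geometrically defined class under the stabilisation map $\st: \scrT_{0,4}(\beta) \to \ccMbar_{0,4} \cong \bP^1$, and then exploit the fact that the involution $\sigma_{13}$ permuting the first and third marked points acts on $\ccMbar_{0,4}$ by a permutation of the three special points $\{0,1,\infty\}$, in fact fixing one of the reducible points and swapping the other two. The key input is the normal crossing axiom (Axiom \ref{normal crossing axiom}) together with the discussion around Equation \eqref{eqn:class_of_fibre_of_stab}: for $\lambda \in \{0,1,\infty\}$ a reducible point, the fibre $\st^{-1}(\lambda) \subset \scrT_{0,4}(\beta)$ is a strict normal crossing divisor whose top components are indexed by splittings $\beta = \beta_1 + \beta_2$, and the axioms of a counting theory give
\[
\st^*(g_*[\lambda]) = L_{\bE}(\{ \Delta \cap ([\bT_{0,3}(\beta_1)]\times [\bT_{0,3}(\beta_2)]) \}_{\beta_1 + \beta_2 = \beta}).
\]
Under the natural evaluation/stabilisation maps this pushes forward to $L_{\bE}(\{[D_{\beta_1,\beta_2}]\}) \in E^*(\ccMbar_{0,4}(\beta))$, so the class in question equals $\st^*(g_*[\lambda])$.

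First I would fix the point $\lambda = 1 \in \bP^1 = \ccMbar_{0,4}$ which corresponds to the partition $\{\{1,3\},\{2,4\}\}$ of the four marked points; this reducible point is fixed by $\sigma_{13}$ (which swaps $1$ and $3$ but preserves the partition), whereas the other two reducible points $0,\infty$ get exchanged. Since $\ccMbar_{0,4}$ is connected, the cohomology class $g_*[\lambda] \in E^*(\ccMbar_{0,4})$ is independent of $\lambda$; concretely, it is the first Chern class of $\mathcal{O}(1)$ on $\bP^1$, and hence is manifestly invariant under \emph{any} automorphism of $\ccMbar_{0,4}$, in particular under $\sigma_{13}$. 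Therefore $\st^*(g_*[\lambda])$ is $\sigma_{13}$-invariant, because $\sigma_{13}$ acts compatibly on $\scrT_{0,4}(\beta)$ and on $\ccMbar_{0,4}$ (the stabilisation map is $\Sym_4$-equivariant, and a fortiori $\sigma_{13}$-equivariant), so $\sigma_{13}^* \st^*(g_*[\lambda]) = \st^*(\sigma_{13}^* g_*[\lambda]) = \st^*(g_*[\lambda])$.

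The remaining step is to identify $\st^*(g_*[\lambda])$ with $L_{\bE}(\{[D_{\beta_1,\beta_2}]\})$ as a class in $E^*(\ccMbar_{0,4}(\beta))$ (equivalently in $E^*(\bT_{0,4}(\beta))$ after the tautological identifications of Corollary \ref{cor:charts_summary}). This is exactly the content of the normal crossing axiom applied to the diagram obtained from $\st^{-1}(1) \hookrightarrow \scrT_{0,4}(\beta) \to \scrF_{0,4}(\bP^{d},d)$ mapping to $\ccMbar_{0,4}$, combined with the split-curve global chart comparison of Section \ref{sec:split_curves} (Proposition \ref{propsplitrelation}), which identifies the top strata of $\st^{-1}(1)$ and their fundamental classes with the fibre-product charts $\bT_{0,3}(\beta_1) \times_X \bT_{0,3}(\beta_2)$, i.e.\ with the divisors $D_{\beta_1,\beta_2}$. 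One must check that the bookkeeping of which partition of $\{1,2,3,4\}$ one uses is consistent: using $\lambda = 1$ the splitting is into a genus-zero curve carrying marked points $\{1,3\}$ and one carrying $\{2\}$, reattached at the node together with the fourth (forgotten) point, which matches the definition of $D_{\beta_1,\beta_2}$ up to the symmetry already built in. The main obstacle I anticipate is precisely this combinatorial matching — verifying that the normal crossing axiom produces $L_{\bE}$ applied to the $D_{\beta_1,\beta_2}$ with the \emph{same} labelling conventions, and that the chain-of-rational-curves higher strata (Figure \ref{fig:higher_depth_strata}) contribute the higher formal-group terms with the correct signs of the $a_{ij}$ — rather than any genuine difficulty with $\sigma_{13}$-invariance, which, once the class is exhibited as a pullback from $\ccMbar_{0,4}$, is immediate.
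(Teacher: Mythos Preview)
Your proposal is correct and takes essentially the same approach as the paper: the paper's one-line proof simply cites Equation \eqref{eqn:class_of_fibre_of_stab}, which is precisely your observation that the formal group expression equals $\st^*[\lambda]$, a class pulled back from $\ccMbar_{0,4}$ and hence automatically $\sigma_{13}$-invariant. Your extra care about choosing the fixed point $\lambda=1$ and the combinatorial matching of strata is unnecessary (as you yourself note, the point class on $\bP^1$ is invariant under any automorphism), but harmless.
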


\begin{proof}
	This is immediate from Equation \eqref{eqn:class_of_fibre_of_stab}.
\end{proof}

\subsection{Some universal algebra}

The axioms of a counting theory imply that a (transverse, stably) almost complex codimension one submanifold $D \subset M$ of a (transverse, stably) almost complex $G$-manifold has an associated class $[D] = \tilde{c} \in E^*(M)$ which depends only on the associated complex line bundle. Write $\tilde{c}$ for the first Chern class in the theory $\bE$.

The corrected fundamental classes $[\bT_{0,3}(\beta_1)]^{corr}$  alluded to in the previous section will be obtained by multiplying the naive fundamental class by a power series in the Chern classes associated to boundary divisors corresponding to sphere bubbling at the output marked point. In order to understand the properties which such a power series needs to satisfy, recall that the geometry underlying Equation \eqref{eqn:class_of_fibre_of_stab} is that the general fibre of the map $\st: \ccMbar_{0,4}(\beta) \to \ccMbar_{0,4} = \bP^1$ represents $\st^*\tilde{c}(\mathcal{O}_{\bP^1}(1))$, and 
\[
\mathcal{O}(\st^{-1}(\lambda)) = \bigotimes_{\beta_1+\beta_2=\beta} \mathcal{O}(D_{\beta_1,\beta_2}).
\]
In particular, if one restricts the LHS to any given component $D_{\beta_1,\beta_2}$ one gets the trivial bundle, and hence
\[
\bigotimes_{\beta_1+\beta_2=\beta} \mathcal{O}(D_{\beta_1,\beta_2}) |_{D_{\beta_1,\beta_2}} \cong \mathcal{O}.
\]
Ordering the irreducible components / divisors as $D_0, D_1,\ldots$ one sees
\[
\mathcal{O}(D_i)|_{D_i} = \left ( \bigotimes_{j\neq i} \mathcal{O}(D_j)^{-1} \right)\big|_{D_i}.
\]
Applying $\tilde{c}$ to this identity and recalling that $\tilde{c}(\otimes_j \mathcal{L}_j) = L_{\bE}(\tilde{c}(\mathcal{L}_j))$ for any collection of line bundles, we are led to introduce the following quotient ring. Take \emph{finitely many ordered} variables $D_0, D_1, D_2, \ldots, D_N$, and define the ring $R$ to be the quotient 
\begin{equation} \label{eqn:crazy_ring}
	\bE_*\llbracket D_0,\ldots , D_N \rrbracket / \langle D_j^2 = D_j \cdot L_{\bE}(-_{\bE}D_0, -_{\bE}D_1,\ldots, -_{\bE}D_{j-1},-_{\bE}D_{j+1},\ldots,-_{\bE} D_N),  1 \leq j \leq N \rangle
\end{equation}
Here $L_{\bE}(x_1,\ldots,x_k)$ denotes the power series defining $x_1 +_{\bE} \cdots +_{\bE} x_k$ and the $-_{\bE}$ indicates the inverse with respect to the formal group law (i.e. $L_{\bE}(x, -_{\bE}x) = 0$). For notation, we will write
\[
r_j := D_j^2 - D_j \cdot L_{\bE}(-_{\bE}D_0, -_{\bE}D_1,\ldots, -_{\bE}D_{j-1}, -_{\bE}D_{j+1},\ldots, -_{\bE}D_N)
\]
so the ring $R = \bE_*\llbracket D_0,\ldots , D_N\rrbracket / \langle r_j, \, 1\leq j \leq N\rangle$. The power series that we need to correct the virtual fundamental class arises as follows:

\begin{lemma} \label{lem:crazy_algebra}
	There is a power series $f(D_0,D_1,\ldots, D_N) = 1+\cdots$ for which $L_{\bE}(D_0, D_1,\ldots,D_N)$ is given by
	\[
	D_0 \cdot f(0,\ldots) + D_1 \cdot f(D_0, 0,\ldots) + D_2 \cdot f(D_0, D_1,0,\ldots) + \cdots + D_N \cdot f(D_0,\ldots, D_{N-1}, 0)
	\]
	in the quotient $R$.
\end{lemma}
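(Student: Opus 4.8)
The plan is to construct $f$ by an explicit inductive ``peeling'' argument that mirrors the recursive structure of the iterated formal sum, working modulo the relations $r_1,\ldots,r_N$ in $R$. First I would record the basic identity for the formal group: for any variables $x,y_1,\ldots,y_k$ one has $L_\bE(y_1,\ldots,y_k,x) = L_\bE(y_1,\ldots,y_k) +_\bE x$, and by \eqref{eqn:formal_group} applied to the two-variable sum $u +_\bE x$ with $u = L_\bE(y_1,\ldots,y_k)$, we may write
\[
L_\bE(y_1,\ldots,y_k,x) = L_\bE(y_1,\ldots,y_k) + x + x\cdot L_\bE(y_1,\ldots,y_k)\cdot \Big(\sum_{i,j\geq 1} a_{ij}\,(L_\bE(y_1,\ldots,y_k))^{i-1} x^{j-1}\Big),
\]
so the ``new'' variable $x$ enters with a factor that is either $1$ or carries an explicit $x$ or an explicit $L_\bE(y_1,\ldots,y_k)$. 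The point of the ordering on $D_0,\ldots,D_N$ is that the relation $r_j$ lets us replace $D_j^2$ by $D_j$ times a power series in $D_0,\ldots,D_{j-1}$ only; so any monomial divisible by $D_j^2$ can be rewritten, modulo $\langle r_j\rangle$, as $D_j$ times something involving only strictly earlier variables.

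Next I would set up the induction on $N$. For $N=0$ the statement is $L_\bE(D_0) = D_0 = D_0\cdot f(0)$ with $f \equiv 1$, which is trivial. For the inductive step, write $L_\bE(D_0,\ldots,D_N) = L_\bE(D_0,\ldots,D_{N-1}) +_\bE D_N$ and expand using the displayed identity with $x = D_N$ and $u = L_\bE(D_0,\ldots,D_{N-1})$:
\[
L_\bE(D_0,\ldots,D_N) = L_\bE(D_0,\ldots,D_{N-1}) + D_N\cdot\Big(1 + u\cdot\textstyle\sum_{i,j\geq 1} a_{ij} u^{i-1} D_N^{\,j-1}\Big).
\]
By the inductive hypothesis the first term equals $\sum_{\ell=0}^{N-1} D_\ell \cdot f(D_0,\ldots,D_{\ell-1},0,\ldots,0)$ in the quotient $R_{N-1} := \bE_*\llbracket D_0,\ldots,D_{N-1}\rrbracket/\langle r_1,\ldots,r_{N-1}\rangle$, and this passes to $R$ since $R_{N-1}\to R$ (extend scalars by $D_N$, then impose $r_N$) makes sense — the relations $r_1,\ldots,r_{N-1}$ do not involve $D_N$. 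So it remains to understand the $D_N$-term: I need the coefficient $1 + u\sum a_{ij}u^{i-1}D_N^{j-1}$ to be expressible, modulo $r_N$, as a power series in $D_0,\ldots,D_{N-1}$ alone evaluated appropriately, i.e. as $f(D_0,\ldots,D_{N-1},0)$ for the $f$ we are building. Here the relation $r_N$ is exactly what is needed: every occurrence of $D_N^{\,j-1}$ with $j\geq 2$ (i.e. a positive power of $D_N$ multiplied into the $D_N$-term, giving $D_N^{\,j}$) can be reduced using $D_N^2 = D_N\cdot L_\bE(-D_0,\ldots,-D_{N-1})$ iteratively down to a linear expression in $D_N$ with coefficients in $\bE_*\llbracket D_0,\ldots,D_{N-1}\rrbracket$; collecting everything, the $D_N$-term becomes $D_N$ times a power series in $D_0,\ldots,D_{N-1}$, which I define to be $f(D_0,\ldots,D_{N-1},0)$, and I check it starts with $1$.

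Finally I would verify that the $f$ produced this way is consistent across different values of $N$ — that is, the power series obtained at stage $N$ restricts, on setting $D_N=0$, to the one at stage $N-1$ — so that a single power series $f(D_0,D_1,\ldots)$ (a well-defined element of $\bE_*\llbracket D_0,D_1,\ldots\rrbracket$ as an inverse limit, or simply $f$ in the finitely many variables at hand) does the job, and that the resulting expression $\sum_{\ell=0}^N D_\ell\cdot f(D_0,\ldots,D_{\ell-1},0,\ldots,0)$ genuinely equals $L_\bE(D_0,\ldots,D_N)$ in $R$. \textbf{The main obstacle} I anticipate is bookkeeping: making the reduction-by-$r_N$ step precise requires showing the iterated substitution $D_N^2 \mapsto D_N L_\bE(-D_0,\ldots,-D_{N-1})$ converges $D_0,\ldots,D_{N-1}$-adically (so that an infinite collection of substitutions makes sense in the completed ring) and that the order in which one performs substitutions does not matter, i.e. that the normal form ``$D_N$ times a series in the earlier variables, plus series in the earlier variables'' is well-defined in $R$. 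This is really a statement that $\{1, D_N\}$ is a topological basis for $R$ as a module over $\bE_*\llbracket D_0,\ldots,D_{N-1}\rrbracket/\langle r_1,\ldots,r_{N-1}\rangle$, which follows from $r_N$ being monic of degree $2$ in $D_N$ together with the completeness of the rings involved; once that structural fact is in hand, the extraction of $f$ and the verification that it begins with $1$ are routine manipulations of the formal group law.
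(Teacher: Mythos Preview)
There is a genuine gap in your argument, stemming from a misreading of the relations defining $R$. You assert in the plan that ``the relation $r_j$ lets us replace $D_j^2$ by $D_j$ times a power series in $D_0,\ldots,D_{j-1}$ only'', and later that ``the relations $r_1,\ldots,r_{N-1}$ do not involve $D_N$''. Both statements are false: the relation is
\[
r_j = D_j^2 - D_j\cdot L_\bE(-D_0,\ldots,-D_{j-1},-\widehat{D}_j,-D_{j+1},\ldots,-D_N),
\]
so reducing $D_j^2$ brings in \emph{all} of the other variables, including $D_{j+1},\ldots,D_N$. Consequently the stage-$(N-1)$ relations (those cutting out $R_{N-1}$) are not the restrictions of the stage-$N$ relations, and there is no natural ring map $R_{N-1}\to R$ of the kind you invoke to transport the inductive hypothesis. (What does exist is the quotient map $R\to R_{N-1}$ obtained by setting $D_N=0$, which goes the wrong way for your induction.) The only index for which your reduction step is valid as stated is $j=N$, since $r_N$ genuinely involves only $D_0,\ldots,D_{N-1}$.

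The paper's proof sidesteps this by working entirely inside $R$ for a fixed $N$ and running an algorithm that processes each monomial by the \emph{largest} index $j$ of any variable appearing in it. Applying $r_j$ to $D_j^{k}$ produces terms that may now involve $D_\ell$ with $\ell>j$; these are not put into $f_j$ but are ``pushed down'' to later iterations of the algorithm. Only at the terminal stage $j=N$ does the reduction stay within the earlier variables, which is why the process terminates. Compatibility across different $N$ (so that the $f_j$ are specialisations of a single $f$) is then argued separately using the quotient $R_N\to R_{N-1}$, not an inclusion. Your approach could perhaps be repaired by recasting it in this iterative rather than inductive form, but as written the key structural claim about $R$ is incorrect.
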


\begin{proof}
	We will first construct power series $f_0=1, f_1,\ldots, f_N$ for which
	\label{eqn:partial_result}
	\begin{equation}
		L_{\bE}(D_0, D_1,\ldots, D_N) = 1 \cdot D_0 + f_1(D_0)  \cdot  D_1 + f_2(D_0,D_1)  \cdot D_2 + \cdots + f_N(D_0,\ldots,D_{N-1})  \cdot  D_N.
	\end{equation}
	Consider the following algorithm: given any monomial appearing in $L_{\bE}(D_0, D_1,\ldots, D_N)$,  if $j$ is the largest index such that $D_j$ occurs in that monomial, we have two options:
	\begin{itemize}
		\item $D_j$ occurs with multiplicity one, so the monomial is $D_j \cdot \phi$ where $\phi$ is a monomial in $D_0,\ldots, D_{j-1}$;
		\item $D_j$ occurs with multiplicity $>1$ so the monomial is $D_j^{k\geq 2} \cdot \phi$ with $\phi$ a monomial in $D_0,\ldots, D_{j-1}$.
	\end{itemize}
	In the first case, we do nothing, and the term $\phi$ is put in $f_j$.  In the second case, we use the relation for $D_j^2$ to decrease the power with which $D_j$ occurs, more precisely we replace $D_j^k \phi$ by $D_j \cdot (L_{\bE}(-_{\bE}D_0, -_{\bE}D_1,\ldots, -_{\bE}D_{j-1}, -_{\bE}D_{j+1},\ldots, -_{\bE}D_N))^{k-1} \cdot \phi$.  Note that in this expression, every term is divisible by exactly one power of $D_j$. Some of the terms only involve $D_i$ with $i<j$, and they are put into $f_j$, whilst all others involve some monomial $D_\ell$ with $\ell > j$.   These are `pushed down' in the induction and treated later.  We then iterate, inductively with increasing $j$.  Thus, at the first iteration, we consider only terms $D_0^i$, and the coefficient of $D_0$ defines $f_0$ and all terms $D_0^i$ with $i>1$ are pushed down into the next stage; then turn to monomials whose highest index is some power of $D_1$ (some of which come from the original expression, and some from the re-expression of terms $D_0^i$ with $i>1$ in the previous stage). At the final stage, when re-expressing terms of the form $D_N^j \phi$ with $j>1$ we get terms $D_N \cdot   L_{\bE}(-_{\bE}D_0, -_{\bE}D_1,\ldots, -_{\bE}D_{N-1})^{j-1} \cdot \phi$, and the whole expression $L_{\bE}(-_{\bE}D_0, -_{\bE}D_1,\ldots, -_{\bE}D_{N-1})^{j-1} \cdot \phi$ is then incorporated into $f_N$, so the algorithm indeed terminates.
	
	This constructs a sequence of power series $f_j$ satisfying \eqref{eqn:partial_result}. Note the algorithm involves making no choice. Moreover, if 
	\[
	R_N := \bE_*\llbracket D_0,D_1,\ldots, D_N \rrbracket / \langle D_j^2 = D_j \cdot L_{\bE}(-_{\bE}D_0,\ldots, -_{\bE}D_{j-1}, -_{\bE}D_{j+1},  \ldots, -_{\bE}D_N), \ 0 \leq j \leq N\rangle  
	\]
	is the ring introduced above on $N+1$ variables, 
	then there is a natural quotient map $q: R_N \to R_{N-1}$ on setting $D_N = 0$, for which
	\[
	q(L_{\bE}(D_0,\ldots,D_N)) = L_{\bE}(D_0,\ldots,D_{N-1}).
	\]   In the algorithm re-expressing $L_{\bE}(D_0,\ldots,D_N)$ in $R_N$, if a monomial contains $D_N^k$ for some $k>1$ then all the terms occuring when it is replaced are divisible by $D_N$ (since the basic relation in \eqref{eqn:crazy_ring} has that property), so the algorithm is `compatible' with the homomorphism $q$.  Put differently, if one views the algorithm as producing functions $f_j = f_j^{(N)}$ that depend on the number of variables, then $f_j^{(N-1)} = f_j^{(N)}|_{D_N = 0}$. It follows that the $f_j$ are specialisations of a single power series $f$.  
\end{proof}

\begin{remark} Lemma \ref{lem:crazy_algebra} constructs a \emph{distinguished} power series in the sense that the algorithm makes no choice beyond the total ordering of the variables $D_i$.  For the additive formal group, 
	\[
	R_j = D_j^2 - D_j((-D_0) + \cdots + (\widehat{-D_j}) + \cdots + (-D_N)) \ = \ D_j(D_0+\cdots+D_N).
	\]
	Therefore all elements in the ideal generated by the $R_j$ belong to the principal ideal $\langle D_0+\cdots+D_N\rangle$; but clearly an element $f(D_0,\ldots,D_{N-1})\cdot D_N$ does not belong to this ideal.  This shows that the power series $f$ is unique in that case. We conjecture that the solution is unique over the universal formal group.
\end{remark}

\begin{lemma} \label{lem:resolve_partial}
	Suppose that one changes the total order on the variables $\{D_j\}$ by swapping $D_i =: D_{i+1}'$ and $D_{i+1} =: D_i'$. Let $f$ and $f'$ be the power series produced by the algorithm constructed in Lemma \ref{lem:crazy_algebra}.  Then $f = f'$ in the quotient ring $R / (D_i \cdot D_{i+1} = 0)$.
\end{lemma}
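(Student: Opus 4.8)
The plan is to exploit the recursive structure of the algorithm in Lemma \ref{lem:crazy_algebra}, together with the observation that the algorithm only `sees' the relative order of a pair of variables through which one is assigned the lower index and thus has its square-relation invoked first. Working in the ambient ring $\bE_*\llbracket D_0,\ldots,D_N\rrbracket$ and then passing to the quotient $R/(D_i D_{i+1} = 0)$, the key point is that swapping $D_i$ and $D_{i+1}$ changes the bookkeeping in the algorithm only in the handling of monomials whose highest-index variable is one of the two being swapped. Monomials involving neither $D_i$ nor $D_{i+1}$, or involving only variables of strictly higher index, are treated identically by both orderings (the relations $r_j$ for $j \notin \{i,i+1\}$ are literally unchanged, since the set $\{D_0,\ldots,\widehat{D}_j,\ldots,D_N\}$ doesn't depend on the order in which we list it). So the whole discrepancy $f - f'$ is concentrated in the contributions coming from re-expressing monomials of the form $D_i^a D_{i+1}^b \cdot \phi$ with $\phi$ a monomial in the variables of index $< i$ (in either ordering, these are precisely the variables of index less than both).

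First I would set up notation: write $L := L_{\bE}(D_0,\ldots,D_N)$, let $\psi := L_{\bE}(-D_0,\ldots,-D_{i-1},\widehat{(-D_i)},\widehat{(-D_{i+1})},-D_{i+2},\ldots,-D_N)$ be the `common tail' not involving $D_i$ or $D_{i+1}$, and note that modulo $D_i D_{i+1}$ the relation $r_i$ used by the first ordering reads $D_i^2 = D_i(\psi -_{\bE} (-D_{i+1})) \equiv D_i \psi$ (the $-D_{i+1}$ term drops after multiplying by $D_i$ since any further $D_{i+1}$-monomial in that formal-group expansion is killed), and similarly $D_{i+1}^2 \equiv D_{i+1}\psi$. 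Crucially these two reduced relations are \emph{symmetric} in $D_i \leftrightarrow D_{i+1}$. Hence in $R/(D_i D_{i+1})$ the two orderings differ only in \emph{which} of $D_i, D_{i+1}$ is eligible to be `pushed down' into the other's power series — but since products $D_i D_{i+1}$ vanish, once a monomial has had its $D_i$-power (say) reduced to $1$ via $D_i^2 \equiv D_i\psi$, every resulting term is either already a clean multiple of $D_i$ with coefficient in the low-index variables (and $\psi$ contains only variables of index $\ne i, i+1$), or it is a term involving some $D_\ell$ with $\ell > i+1$, which is then handled by the common part of the algorithm. One then checks by induction on the degree (in the grading where each $D_j$ has degree $1$, so that the algorithm strictly decreases a suitable complexity measure) that the term-by-term output agrees.

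The cleanest way to organise this is probably to exhibit an explicit identity in $R/(D_i D_{i+1})$: namely that both $f$ and $f'$ satisfy the same recursion, because the only stages of the algorithm at which the orderings could diverge are stages $i$ and $i+1$, and I would show that the combined output of stages $i$ and $i+1$ — i.e. the coefficient of $D_i$ plus the coefficient of $D_{i+1}$ in $L$ after reduction, regarded as elements of $\bE_*\llbracket D_0,\ldots,D_{i-1}\rrbracket$ after setting $D_i = D_{i+1} = 0$ at the end, together with whatever gets pushed down to higher stages — is invariant under the swap once $D_i D_{i+1} = 0$. Concretely: modulo $D_i D_{i+1}$ we have $L \equiv L_{\bE}(D_0,\ldots,D_{i-1}) +_{\bE} D_i +_{\bE} D_{i+1} +_{\bE} (\text{tail})$ with the cross terms $D_i D_{i+1}(\cdots)$ all gone, so $L$ is manifestly symmetric in $D_i, D_{i+1}$ modulo the vanishing ideal, and since the reduction relations $r_i, r_{i+1}$ are also symmetric modulo that ideal, the distinguished (choice-free) output of the algorithm must be symmetric too — giving $f = f'$.

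\textbf{Main obstacle.} The delicate part is that the algorithm's output $f$ is only guaranteed to be well-defined as a power series \emph{before} quotienting, and a priori the `pushing down' of high-powered terms could propagate a $D_i D_{i+1}$ cross term through several later stages before it is visibly killed; so I cannot just say "the relations are symmetric mod $D_i D_{i+1}$, done." I expect the real work is a careful induction showing that at \emph{every} stage of the algorithm, working already inside $R/(D_i D_{i+1})$, the partial results of the two orderings coincide — i.e. that the quotient is compatible with running the algorithm, analogous to (but more subtle than) the compatibility with $q: R_N \to R_{N-1}$ established at the end of the proof of Lemma \ref{lem:crazy_algebra}. Once that compatibility is in hand, the symmetry argument closes the proof.
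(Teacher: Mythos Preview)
Your approach and the paper's differ in a meaningful way. You attempt a direct comparison of the two runs of the algorithm, arguing that modulo $D_i D_{i+1}$ both the input $L$ and the relations $r_i, r_{i+1}$ become symmetric in $D_i \leftrightarrow D_{i+1}$, so the algorithm's output should be too. This is plausible, but as you yourself flag in your ``main obstacle,'' it is not immediate that a choice-free algorithm run on symmetric data with symmetric relations produces symmetric output when the \emph{processing order} is what changes; closing this gap really does require the induction you describe, tracking how terms get pushed down stage by stage.

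The paper bypasses this entirely with a short trick: introduce a single merged variable $D'' := D_i + D_{i+1}$ and run the algorithm on the ordered set $D_0 < \cdots < D_{i-1} < D'' < D_{i+2} < \cdots < D_N$. The point is that in $R/(D_i D_{i+1})$ one has $L_{\bE}(D_i, D_{i+1}) = D_i + D_{i+1} = D''$, so the formal-group sum of all the variables, and the relevant relations, collapse to those for the merged set regardless of which of $D_i, D_{i+1}$ was listed first. Hence both $f$ and $f'$ agree with the output for the merged variables, and therefore with each other. This replaces your two-way symmetry comparison with a comparison of each to a common third object, and the ``careful induction'' you anticipated never needs to be carried out. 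Your symmetry observations (that $r_i, r_{i+1}$ reduce to $D_i^2 \equiv D_i\psi$, $D_{i+1}^2 \equiv D_{i+1}\psi$) are exactly what makes the merged variable $D''$ satisfy the right relation $(D'')^2 \equiv D''\psi$, so your computations are not wasted --- they are the substance behind the paper's one-line justification.
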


\begin{proof}
	Let $D'' := D_i + D_{i+1}$ and consider the ordered variables 
	\[
	D_0 < D_1 < \cdots < D_{i-1} < D'' < D_{i+1} < \cdots < D_N.
	\]  Then both $f$ and $f'$ agree with the output of the algorithm applied to this set of variables, because in the quotient ring $R / (D_i \cdot D_{i+1} = 0)$ the relation $D_i \cdot D_{i+1} = 0$ ensures that $L_{\bE}(D_i, D_{i+1}) = D_i + D_{i+1}$.
\end{proof}

\subsection{Associativity}

Consider $\ccMbar_{0,3}(\beta)$.  For each $\delta \in H_2(X;\bZ)$, there is an associated divisor 
\[
\hat{D}_\delta \subset \ccMbar_{0,3}(\beta)
\]
of curves which (in the top stratum) have a $\delta$-bubble at the output marked point and main component in class $\beta-\delta$.  We enumerate the $\hat{D}_{\delta}$ so that $\hat{D}_i < \hat{D}_j$ if $\omega(\delta_i) > \omega(\delta_j)$ again extended arbitrarily to a total order. We then define the corrected virtual class
\begin{equation} \label{eqn:correct_3}
	[\bT_{0,3}(\beta)]^{corr} :=  [\bT_{0,3}(\beta)]\cdot f(\hat{D}_0,\hat{D}_1,\ldots)
\end{equation}
where $f$ is the power series produced in Lemma \ref{lem:crazy_algebra}. Similarly, we define a correction to the virtual class of $[\bT_{0,4}(\beta)]$ by multiplying by $f(E_0,E_1,\ldots)$ where the $E_i$ enumerate divisors corresponding to bubbling at the output marked point. To prove that these expressions are well-defined, we need:
\begin{lemma} \label{lem:power-series-well-defined} The power series $f$ appearing in Lemma \ref{lem:crazy_algebra} does not depend on the choices made in resolving the partial order to a total order, so depends only on $(X,\omega,J)$ and $\beta$.  
\end{lemma}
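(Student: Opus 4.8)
The plan is to reduce the claim to the invariance established in Lemma \ref{lem:resolve_partial}, combined with a geometric input about intersections of boundary divisors. Recall that the power series $f$ is produced by the algorithm in Lemma \ref{lem:crazy_algebra}, which depends only on a chosen total order refining the partial order $\hat{D}_i < \hat{D}_j$ whenever $\omega(\delta_i) > \omega(\delta_j)$. Two total orders refining the same partial order differ by a finite sequence of transpositions of \emph{adjacent} elements $\hat{D}_i, \hat{D}_{i+1}$ which are incomparable in the partial order, i.e. which satisfy $\omega(\delta_i) = \omega(\delta_{i+1})$. So it suffices to show that swapping two such adjacent incomparable divisors leaves the resulting corrected class unchanged.

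First I would observe that, by Lemma \ref{lem:resolve_partial}, the two power series $f$ and $f'$ obtained from the two orders agree in the quotient ring $R/(\hat{D}_i\cdot\hat{D}_{i+1} = 0)$. Therefore $f - f'$ lies in the ideal generated by $r_1,\ldots,r_N$ together with the single extra relation $\hat{D}_i\cdot\hat{D}_{i+1}$. The point is then geometric: the divisors $\hat{D}_{\delta_i}$ and $\hat{D}_{\delta_{i+1}}$ in $\ccMbar_{0,3}(\beta)$ correspond to $\delta$-bubbling at the output marked point for \emph{distinct} classes $\delta_i \neq \delta_{i+1}$ of the \emph{same} energy. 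A stable map cannot simultaneously have a $\delta_i$-bubble and a $\delta_{i+1}$-bubble at the single output marked point with a main component in both class $\beta - \delta_i$ and class $\beta-\delta_{i+1}$; equivalently, the corresponding divisors in the thickening $\scrT_{0,3}(\beta)$ (pulled back from $\scrF_{g,h,d}$) are disjoint, so their product vanishes after capping with $[\bT_{0,3}(\beta)]$. Hence the relation $\hat{D}_i\cdot\hat{D}_{i+1} = 0$ already holds when we apply $\tilde{c}$ of the relevant line bundles and cap with the virtual class. The relations $r_j$ encode the triviality of $\mathcal{O}(\hat{D}_{\delta_j})|_{\hat{D}_{\delta_j}}$ arising from the fact that the sum of all the $\hat{D}_\delta$ is the pullback of a fixed line bundle (the class of a fibre of stabilisation to the relevant $\ccMbar$), exactly as in the derivation preceding Lemma \ref{lem:crazy_algebra}; these hold after capping with $[\bT_{0,3}(\beta)]$ by the Chern class and normal crossing axioms of a counting theory. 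Therefore the entire ideal modulo which $f$ and $f'$ agree maps to zero under $x\mapsto \tilde{c}(\mathcal{O}(\hat{D}_x))$ applied to $[\bT_{0,3}(\beta)]$, so $[\bT_{0,3}(\beta)]\cdot f = [\bT_{0,3}(\beta)]\cdot f'$.

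The main obstacle I anticipate is justifying the disjointness statement $\hat{D}_{\delta_i} \cap \hat{D}_{\delta_{i+1}} = \emptyset$ at the level of the thickenings $\scrT_{0,3}(\beta)$ rather than just the moduli space $\ccMbar_{0,3}(\beta)$. On the moduli space this is clear from the combinatorics of stable maps (two distinct effective decompositions $\beta = (\beta-\delta_i)+\delta_i = (\beta-\delta_{i+1})+\delta_{i+1}$ cannot be simultaneously realised by a single bubble at one point), but one must check that the divisors in $\scrT_{0,3}(\beta)$, defined by pullback from the analogous degree-labelled divisors in $\scrF_{g,h,d}$ as in the Remark after Definition of the tautological ring, continue to be disjoint — or at least that their intersection is supported away from the zero locus $Z$, which is all that is needed since we only cap with $[\bT_{0,3}(\beta)] = e_G(E) \in E^{lf}_*$, a class supported on $Z$. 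This is where the finer splitting by homology class (as opposed to by area), emphasised in that same Remark, is used: distinct classes of equal energy give distinct divisors whose common locus, if nonempty in $\scrF_{g,h,d}$, nonetheless meets $Z$ only in configurations that do not occur. A careful statement and proof of this, parallel to the construction of split-curve charts in Section \ref{sec:split_curves}, is the technical heart of the argument; everything else is the universal algebra already set up in Lemmas \ref{lem:crazy_algebra} and \ref{lem:resolve_partial} together with the Chern class and normal crossing axioms.
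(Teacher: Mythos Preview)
Your proposal is correct and follows the same route as the paper: reduce to adjacent transpositions of incomparable divisors, invoke Lemma~\ref{lem:resolve_partial}, and verify that $\hat{D}_{\delta_i}\cdot\hat{D}_{\delta_{i+1}}=0$ when $\omega(\delta_i)=\omega(\delta_{i+1})$. The paper's proof is two sentences, citing only ``Gromov compactness'' for the disjointness and then Lemma~\ref{lem:resolve_partial}.

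Your flagged obstacle is in fact no obstacle, and your combinatorial justification (``main component in both class $\beta-\delta_i$ and $\beta-\delta_{i+1}$'') is not the right one, since it only addresses top strata. The actual reason disjointness holds already on the thickening $\scrT_{0,3}(\beta)$, not merely on $Z$, is the energy bound built into Definition~\ref{defn space of curves}: every unstable irreducible component of a map in $\fF_\scrF$ is required to have $\omega$-area $\geq e>0$. A point in $\hat{D}_{\delta_i}\cap\hat{D}_{\delta_{i+1}}$ would have two nested nodes at the output, and the genus-zero component between them (carrying two special points, hence unstable) would represent the class $\pm(\delta_i-\delta_{i+1})\neq 0$ with $\omega$-area zero, contradicting that bound. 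This is precisely the content of the paper's appeal to ``Gromov compactness,'' and it makes your support-on-$Z$ workaround unnecessary.
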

\begin{proof}
	Note that if $\omega(\delta) = \omega(\delta')$ then Gromov compactness shows the corresponding divisors $\hat{D}_{\delta}$ and $\hat{D}_{\delta'}$ of broken curves in class $\beta$ are \emph{disjoint}. The result then follows from Lemma \ref{lem:resolve_partial}.
\end{proof}

\begin{remark} \label{remark:dummy_variable} If one adds another variable $D_{\delta''}$  (anywhere in the partial order) and the moduli spaces of $J$-curves in class $\delta''$ is empty, then the resulting corrected virtual class does not change. This follows from the fact that the $f_j$ produced by the algorithm of Lemma \ref{lem:crazy_algebra} specialise correctly under setting variables to zero, i.e. are induced by a single power series $f$. \end{remark}

\begin{lemma} \label{lem:corrected_works} The restriction of the `naive' virtual class $[\bT_{0,4}(\beta)]$ to the boundary stratum associated to a splitting $\beta = \beta_1 + \beta_2$ agrees with 
  \begin{equation} \label{eqn:easier_factorization}
	([\bT_{0,3}(\beta_1)]^{corr} \times [\bT_{0,3}(\beta_2)]) \cap  [\Delta_X].
\end{equation}
\end{lemma}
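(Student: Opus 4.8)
The plan is to analyze the restriction of the naive class $[\bT_{0,4}(\beta)]$ to the boundary stratum $D_{\beta_1,\beta_2}$ by working on the thickening $\scrT_{0,4}(\beta)$ and using the geometry of the stabilisation map $\st: \scrT_{0,4}(\beta) \to \ccMbar_{0,4} = \bP^1$. First I would recall that by Proposition \ref{propsplitrelation} (split curves), the global chart $\bT_{0,4}(\beta)$ near the fibre $\st^{-1}(0)$ can be presented, stratum by stratum, in terms of the split-curve global charts $\bT_{0,3}(\beta_1) \times_{ev} \bT_{0,3}(\beta_2)$; equivalently, restricting the chart to the $(\beta_1,\beta_2)$-component of the normalization of the normal crossing fibre yields, up to the equivalences of Corollary \ref{cor:charts_summary}, the fibre product chart. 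The pullback axiom (Axiom \ref{pullback axiom}) together with the Chern class axiom then identifies the restriction of $[\bT_{0,4}(\beta)]$ to this component with $([\bT_{0,3}(\beta_1)] \times [\bT_{0,3}(\beta_2)]) \cap [\Delta_X]$ \emph{times} a correction factor coming from the difference between $\O(D_{\beta_1,\beta_2})$ restricted to itself and the trivial bundle.

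The key computation is to pin down that correction factor. The relation $\bigotimes_{\beta_1'+\beta_2'=\beta}\O(D_{\beta_1',\beta_2'})|_{D_{\beta_1,\beta_2}} \cong \O$, established in Section \ref{Sec:mesosphere}, shows that $\O(D_{\beta_1,\beta_2})|_{D_{\beta_1,\beta_2}}$ is the inverse of the tensor product of all the \emph{other} boundary divisor line bundles restricted to $D_{\beta_1,\beta_2}$. Under the restriction map, the divisors $\hat D_\delta \subset \ccMbar_{0,3}(\beta_1)$ (bubbling at the output of the genus-zero $\beta_1$-component) are precisely the restrictions of those $D_{\beta_1',\beta_2'}$ with $\beta_2' \supseteq \beta_2$, i.e. the ones whose index precedes $D_{\beta_1,\beta_2}$ in our ordering, so that applying $\tilde c$ and the formal group law axiom reproduces exactly the factor $f_j(\hat D_0,\ldots,\hat D_{j-1})$ from Lemma \ref{lem:crazy_algebra}, where $j$ is the position of $D_{\beta_1,\beta_2}$ in the total order. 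Thus the restriction of $[\bT_{0,4}(\beta)]$ acquires the factor $f(\hat D_0, \hat D_1,\ldots)$ on the $\beta_1$-side and nothing extra on the $\beta_2$-side (since the bubbling at the output of $\bT_{0,4}(\beta)$ is inherited by the $\beta_1$-component containing that marked point). Combining, the restriction equals $([\bT_{0,3}(\beta_1)]\cdot f(\hat D_0,\hat D_1,\ldots) \times [\bT_{0,3}(\beta_2)]) \cap [\Delta_X] = ([\bT_{0,3}(\beta_1)]^{corr}\times [\bT_{0,3}(\beta_2)])\cap[\Delta_X]$, which is \eqref{eqn:easier_factorization}.

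I would organise the write-up as: (i) reduce to the chart-level statement using the split-curve presentation of Proposition \ref{propsplitrelation} and the identification of tautological divisors as pullbacks from $\scrF_{g,h,d}$; (ii) apply the pullback and Chern-class axioms to express the restriction as the fibre-product class times $\tilde c$ of a line bundle; (iii) identify that line bundle via the triviality relation for $\bigotimes \O(D_{\beta_1',\beta_2'})$ and match the resulting formal-group expression with the output of the algorithm in Lemma \ref{lem:crazy_algebra}; (iv) invoke Lemma \ref{lem:power-series-well-defined} to know $f$ is well-defined and Remark \ref{remark:dummy_variable} to handle the bookkeeping of which $\beta'$ contribute. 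The main obstacle I anticipate is step (iii): carefully matching the \emph{ordered} bookkeeping — that the divisors preceding $D_{\beta_1,\beta_2}$ in the area-ordering are exactly those that restrict to the $\hat D_\delta$ with $\omega(\delta)$ small, and that the higher-depth strata (chains of rational bubbles, Figure \ref{fig:higher_depth_strata}) are accounted for by the quadratic relations $D_i^2 = D_i \cdot L_{\bE}(\ldots)$ rather than being lost — so that the correction factor is literally the specialisation $f_j$ of the distinguished power series $f$ and not merely something congruent to it modulo lower-order terms. A secondary subtlety is checking that the normalization of the normal crossing fibre really does give the fibre-product chart $G$-equivariantly and compatibly with the stable almost complex structures, but this is exactly what Proposition \ref{propsplitrelation} and Corollary \ref{cor:charts_summary}(3) are set up to provide.
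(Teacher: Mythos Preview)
Your proposal is correct and follows essentially the same route as the paper. The paper's proof likewise fixes a component $D_i = D_{\beta_1,\beta_2}$, identifies the restriction of each other boundary divisor $D_j$ (for $j<i$) to $D_i$ with the pullback of a bubble divisor $\hat D_j$ on the $\beta_1$-factor of the fibre product, and then observes that the area ordering makes the coefficient $f(D_0,\ldots,D_{i-1},0,\ldots)$ from Lemma~\ref{lem:crazy_algebra} specialise exactly to $f(\hat D_0,\hat D_1,\ldots)$; your step~(iii) is precisely this matching, and your anticipated obstacle (the ordered bookkeeping) is indeed the heart of the argument. One small imprecision: in your step~(ii) the correction is not ``$\tilde c$ of a line bundle'' but the power series $f$ in the first Chern classes of several line bundles---but your step~(iii) makes clear you understand this.
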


\begin{proof}
	The argument is summarised in the schematic of Figure \ref{fig:schematic}, where black crosses are the output of an operation with grey dots as inputs, and  blue dots denote locations of contracted rational tails.  The left picture indicates that the divisors of interest in $\ccMbar_{0,4}(\beta)$, when understanding the restriction of its naive virtual class to a normal crossing fibre of the stabilisation map, are those related to bubbling at the central node; the central picture uses these to define a corrected product, with the correction `concentrated at the output'; the right image is the composition of two such operations (this counts configurations which are more involved than those on the left, and amounts to correcting the virtual class of $\ccMbar_{0,4}(\beta)$ by the divisors living at the output). On both left and right, there is no correction at the inputs exchanged by $\sigma_{13}$, which ensures the first of the two required properties.
	
	Elaborating, fix attention on a particular component $D_{\beta_1,\beta_2} \subset \ccMbar_{0,4}(\beta)$ corresponding to the given splitting; say this corresponds to some $D_i$ in our enumeration. Each of the other divisors $D_j$ with $j<i$  meets $D_i$ (if at all) in a divisor whose top open stratum corresponds to broken curves with a single contracted rational curve at the glued marked point, the three components having classes $\beta_1-\delta_j, \delta_j, \beta_2$ for some $\delta_j$ of area $> \omega(\delta_i)$.  Thus, $D_j$ pulls back to the fibre product 
	\[
	\ccMbar_{0,3}(\beta_1) \times_{\Delta} \ccMbar_{0,3}(\beta_2)
	\]
	exactly as $(\hat{D}_j \times [\ccMbar_{0,3}(\beta_2)]) \cap \Delta$. Because the ordering is by area, the restriction of $\hat{D}_j$ to $\hat{D}_i$ with $i<j$ is simply trivial (the intersection is empty), which is the geometric analogue of the fact that the series $f_j$ constructed in Lemma \ref{lem:crazy_algebra} are specialisations of a single function $f$. This yields the required property for \eqref{eqn:easier_factorization}.
\end{proof}

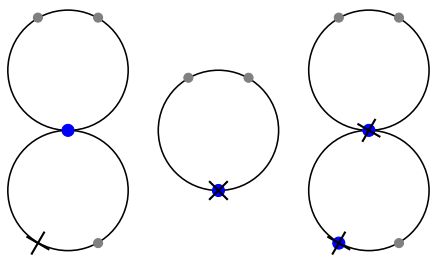
\begin{figure}[ht]
	\begin{center}
		\begin{tikzpicture}[scale=0.4]
			
			\draw[semithick] (-5,2) circle (2);
			\draw[semithick] (-5,-2) circle (2);
			\draw[blue,fill] (-5,0) circle (0.2);
			
			\draw[gray,fill] (-6,3.75) circle (0.15);
			\draw[gray,fill] (-4,3.75) circle (0.15);
			\draw[gray,fill] (-4,-3.75) circle (0.15);
			\path (-6,-3.75) pic[black,thick,rotate=15] {cross=5pt};
			
			\draw[semithick] (+5,2) circle (2);
			\draw[semithick] (+5,-2) circle (2);
			\draw[blue,fill] (+5,0) circle (0.2);
			\path (5,0) pic[black,thick,rotate=15] {cross=5pt};
			
			\draw[blue,fill] (4,-3.75) circle (0.2);
			\draw[gray,fill] (6,3.75) circle (0.15);
			\draw[gray,fill] (4,3.75) circle (0.15);
			\draw[gray,fill] (6,-3.75) circle (0.15);
			\path (4,-3.75) pic[black,thick,rotate=15] {cross=5pt};

			\draw[semithick] (0,0) circle (2);
			\draw[blue,fill] (0,-2) circle (0.2);
			
			\path (0,-2) pic[black,thick] {cross=5pt};
			\draw[gray,fill] (-1,1.75) circle (0.15);
			\draw[gray,fill] (+1,1.75) circle (0.15);

		\end{tikzpicture}
	\end{center}
	\caption{Schematic for corrections to virtual classes\label{fig:schematic}}
\end{figure}

\begin{theorem} \label{thm:associative}
       The terms 
	\[
	(a\ast b)_{\beta} := D((ev_\infty)_* (ev_{0,1}^* (a\otimes b) \cdot [\bT_{0,3}(\beta)]^{corr}))
	\]
	give rise to an associative product on $E^*(X;\Lambda_{\bE_*})$, which is the $\Lambda_{\bE_*}$-linear extension of $(a,b) \mapsto \sum_{\beta} (a\ast b)_{\beta} q^{\omega(\beta)}$.
\end{theorem}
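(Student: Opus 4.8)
The plan is to reduce associativity of the $q$-deformed product to the $\ccMbar_{0,4}$ argument. First I would set up the two boundary fibres of the stabilisation map $\st:\scrT_{0,4}(\beta)\to\ccMbar_{0,4}=\bP^1$ over the points $0$ and $\infty$, which correspond to the two ways of splitting four marked points into $\{1,2\}+\{3,4\}$ versus $\{1,3\}+\{2,4\}$ (with the output marked point $\infty$ always on one side and distributed according to the chosen grouping). Since $\ccMbar_{0,4}$ is connected, $[\st^{-1}(\lambda)]\cap[\bT_{0,4}(\beta)]$ is independent of $\lambda$, so the classes obtained from $\lambda=0$ and $\lambda=\infty$ agree; this is the geometric heart of the argument. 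The corrected class $[\bT_{0,4}(\beta)]^{corr}=[\bT_{0,4}(\beta)]\cdot f(E_0,E_1,\ldots)$ is designed so that both the triple products $(a\ast b)\ast c$ and $a\ast(b\ast c)$, when expressed via Lemma \ref{lem:composition_operations} as pushforwards over product charts with diagonal classes inserted, match the pullback of $[\bT_{0,4}(\beta)]^{corr}$ to the respective boundary strata.

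Next I would carry out the key comparison. Using Lemma \ref{lem:corrected_works}, the restriction of the naive class $[\bT_{0,4}(\beta)]$ to the boundary stratum of a splitting $\beta=\beta_1+\beta_2$ factorises as $([\bT_{0,3}(\beta_1)]^{corr}\times[\bT_{0,3}(\beta_2)])\cap[\Delta_X]$; multiplying by the output-correction $f(E_i)$ and restricting to the same stratum turns the second factor into $[\bT_{0,3}(\beta_2)]^{corr}$ as well, since under the stabilisation the divisors $E_i$ living at the output of $\ccMbar_{0,4}(\beta)$ restrict to the divisors $\hat D_\delta$ at the output of $\ccMbar_{0,3}(\beta_2)$ (the one carrying the original output marked point). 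Combined with the normal crossing axiom \eqref{eqn:class_of_fibre_of_stab}, which expresses $\st^*\tilde c(\OO(1))$ as $L_{\bE}$ applied to the classes $[D_{\beta_1,\beta_2}]$, and with Lemma \ref{lem:crazy_algebra} expressing $L_{\bE}(D_0,\ldots,D_N)$ in the quotient ring $R$ as $\sum_i D_i\cdot f(D_0,\ldots,D_{i-1},0,\ldots)$, one sees that $[\st^{-1}(\lambda)]\cap[\bT_{0,4}(\beta)]^{corr}$ equals $\sum_{\beta_1+\beta_2=\beta}([\bT_{0,3}(\beta_1)]^{corr}\times[\bT_{0,3}(\beta_2)]^{corr})\cap[\Delta_X]$, as a class on $\ccMbar_{0,4}(\beta)$ supported on the appropriate boundary fibre. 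Summing over $\beta$ with weights $q^{\omega(\beta)}$, applying $(ev_\infty)_*$, the evaluation-pullback, and the duality $D_X$, and finally using Lemma \ref{lem:composition_operations} to recognise the result as $\Phi\circ\Phi$ of the corresponding correspondences, gives that the composite over $\{1,2\}+\{3,4\}$ computes $(a\ast b)\ast c$ and the composite over $\{1,3\}+\{2,4\}$ computes $(a\ast c)\ast b$; invariance of the $\ccMbar_{0,4}$ fibre class yields $(a\ast b)\ast c=(a\ast c)\ast b$, and combined with commutativity (which follows from the $\sigma_{13}$-invariance of $[\bT_{0,4}(\beta)]^{corr}$ built into \eqref{eqn:correct_3} and Lemma \ref{lem:power-series-well-defined}) this is equivalent to associativity. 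The unit is $1\in E^0(X)$ because $\mu_{\beta=0}$ recovers the cup product and the $\beta=0$ correction is trivial.

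I would also need to record that $(a\ast b)_\beta$ is well defined: this uses Lemma \ref{lem:power-series-well-defined} (the power series $f$ does not depend on the resolution of the area-ordering to a total order, because divisors $\hat D_\delta,\hat D_{\delta'}$ with $\omega(\delta)=\omega(\delta')$ are disjoint by Gromov compactness) together with Remark \ref{remark:dummy_variable} (adding dummy variables for empty moduli spaces does not change the corrected class), so that the same function $f$ can be used uniformly across all $\beta$ and all strata. Finiteness of all sums in each fixed $q$-degree, needed for the product to land in $E^*(X;\Lambda_{\bE_*})$, follows from Gromov compactness: for bounded $\omega(\beta)$ there are finitely many classes $\beta$ with nonempty moduli space, and for each the local nilpotence of $\tilde c_1$ (via the dimension axiom) makes $f(\hat D_0,\hat D_1,\ldots)$ a finite expression.

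The main obstacle I expect is the bookkeeping in the key comparison step: matching, on the nose, the output-correction divisors $E_i\subset\ccMbar_{0,4}(\beta)$ with the bubbling divisors $\hat D_\delta\subset\ccMbar_{0,3}(\beta_2)$ under restriction to each boundary stratum, and checking that the combinatorial identity of Lemma \ref{lem:crazy_algebra} in the ring $R$ really does reproduce the geometric relation $\OO(\st^{-1}(\lambda))\cong\bigotimes_{\beta_1+\beta_2=\beta}\OO(D_{\beta_1,\beta_2})$ with its normal-bundle constraints $\OO(D_i)|_{D_i}\cong(\bigotimes_{j\neq i}\OO(D_j)^{-1})|_{D_i}$ — i.e. that the quotient $R$ is exactly the universal algebra governing the geometry. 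Everything else is a formal consequence of the counting-theory axioms (pullback, normal crossing, products, duality) and Lemma \ref{lem:composition_operations}.
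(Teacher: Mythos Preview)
Your proposal is correct and follows essentially the same strategy as the paper: identify both triple products with the output-corrected count on $\ccMbar_{0,4}(\beta)$ over a fixed cross-ratio via Lemma~\ref{lem:corrected_works}, and use connectedness of $\ccMbar_{0,4}$. One minor correction: commutativity of $a\ast b$ comes directly from the $\Sym_2$-invariance of $[\bT_{0,3}(\beta)]^{corr}$ under swapping its two \emph{inputs} (the correction $f(\hat D_i)$ lives only at the output marked point, so input symmetry is inherited from the naive class), not from $\sigma_{13}$-invariance of the $4$-point class; with commutativity in hand you can also compare the boundary points $\{1,2\}\cup\{3,4\}$ and $\{2,3\}\cup\{1,4\}$ to obtain $(a\ast b)\ast c=a\ast(b\ast c)$ directly, avoiding the detour through $(a\ast c)\ast b$.
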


\begin{proof}
	The product $a\ast b$ is graded commutative, because the corrected virtual class is by hypothesis invariant under permuting the first and second marked points. Lemma \ref{lem:corrected_works} implies that the compositions $(a \ast b)\ast c$ and $a \ast (b \ast c)$ can both be identified with a count on $\ccMbar_{0,4}(\beta)$ of curves with a fixed cross-ratio,  after its virtual class has been corrected by the divisors corresponding to bubbling at the output. 
\end{proof}

\begin{lemma} \label{lem:invariance_change_J}
	The quantum product is independent of the choice of $J$.
\end{lemma}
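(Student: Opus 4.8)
The plan is to reduce the claim to the cobordism invariance already established for the Gromov--Witten classes $I_{g,n}(\beta)$, applied this time to the \emph{corrected} virtual class that enters the definition of $\ast$ in Theorem \ref{thm:associative}. Fix two $\omega$-taming almost complex structures $J_0,J_1$ and a smooth path $(J_t)_{t\in[0,1]}$ between them. It is enough to show that each operation $(a\ast b)_\beta = D_X\bigl((ev_\infty)_*(ev_{0,1}^*(a\otimes b)\cdot[\bT_{0,3}(\beta)]^{corr})\bigr)$ is represented by the same class on $E^*(X)$ for $J_0$ and for $J_1$, after which $\Lambda_{\bE_*}$-linearity and summation over $\beta$ give the statement.

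First I would use the parametrised constructions of Sections \ref{sectiongeneralconstruction}--\ref{Subsec:families} (Proposition \ref{propchartparameterized}) to produce global chart data over $[0,1]$ whose fibre over $t$ presents $\ccMbar_{0,3}(X,J_t,\beta)$; Gromov compactness makes the total zero-locus compact, so this yields a proper cobordism $\bW$ between the distinguished charts $\bT_{0,3}(\beta,J_0)$ and $\bT_{0,3}(\beta,J_1)$, induced by taking regular values of the projection $\bW\to[0,1]$. By Corollary \ref{cor:charts_summary} the evaluation maps and the transverse complex structures extend over $\bW$. Next I would check that the correction itself extends: for each $\delta$ with $\omega(\delta)\le\omega(\beta)$, the locus of thickened curves carrying a $\delta$-bubble at the output marked point is a $G$-invariant (transverse complex) divisor $\hat D_\delta$ cut out compatibly with the $C^1_{loc}$-structure, and these conditions are locally constant in $t$, so the $\hat D_\delta$ glue to divisors $\hat D_\delta^{\bW}$ on $\bW$ restricting to the divisors of Section \ref{sec:split_curves} at each end. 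Since $\omega$ is fixed, the area ordering of the variables does not change along the path; by Remark \ref{remark:dummy_variable} one may include all such $\delta$ uniformly (inserting dummy variables for empty $J_t$-moduli spaces) and by Lemma \ref{lem:power-series-well-defined} the power series $f$ of Lemma \ref{lem:crazy_algebra} is the same throughout, so $f(\hat D_0^{\bW},\hat D_1^{\bW},\dots)\in E^*(\bW)$ is well defined and restricts correctly to each end.

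With these in place, cobordism invariance (Lemma \ref{lem:cobordism}) gives that the push-forwards of $[\bT_{0,3}(\beta,J_0)]$ and $[\bT_{0,3}(\beta,J_1)]$ to $E^{lf}_*(\bW)$ both equal $[\bW]$. Capping with $f(\hat D_\bullet^{\bW})$ and using that the push-forwards of $E^{lf}_*$ are $E^*$-module maps (Equation \eqref{eqn:module_map_property}) then shows that the push-forwards of $[\bT_{0,3}(\beta,J_0)]^{corr}$ and $[\bT_{0,3}(\beta,J_1)]^{corr}$ to $E^{lf}_*(\bW)\cong E_*(\bW)$ coincide; pushing forward further along the extended map $ev_\infty$, capping with $ev_{0,1}^*(a\otimes b)$ en route, and applying $D_X$ then yields $(a\ast b)_\beta$ independent of the choice within the path, hence of $J$.

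The main obstacle I expect is the middle step: verifying that the output-bubbling divisors $\hat D_\delta$ really do extend over the $C^1_{loc}$-cobordism as transverse complex divisors with well-defined Chern classes restricting correctly to the two ends, rather than merely as topological subsets. This should follow from the explicit split-curve constructions in Section \ref{sec:split_curves} together with Corollary \ref{cor:charts_summary} (stabilisation maps and the stable complex structures on the thickenings are entwined by the equivalences produced when varying $J$), but it is the point where the geometry of the thickenings, and not just the formalism of counting theories, is used.
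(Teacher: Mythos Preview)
Your proposal is correct and follows essentially the same line as the paper: build a parametrised global chart over $[0,1]$, observe that since $\omega$ is fixed the area ordering of the bubbling divisors is constant along the path, use Remark \ref{remark:dummy_variable} to include uniformly all $\delta$ that contribute for some $J_t$, and then let the power series $f$ of Lemma \ref{lem:crazy_algebra} act on the extended divisors to obtain a corrected class on the cobordism which specialises correctly at both ends. The paper's proof is much terser---it simply notes that cobordism invariance of the naive virtual class is already in hand and that the algorithm of Lemma \ref{lem:crazy_algebra} applied to the union over $t$ of all relevant classes specialises to the endpoint corrections by Remark \ref{remark:dummy_variable}---whereas you spell out the module-map mechanics and flag the geometric point (that the $\hat D_\delta$ extend as divisors with Chern classes) that the paper leaves implicit.
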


\begin{proof}

  We already have cobordism invariance of the virtual class for the moduli space of curves in any fixed class $\beta$, so it suffices to prove well-definition of the corrected virtual class.
  Take a path $J_t$ all taming $\omega$. We can consider all classes $\beta$ which contain a $J_t$-holomorphic representative for any $t$, and then partially / totally order all possible classes $D_{\beta_1,\beta_2}$ of broken curve configurations in class $\beta$ by the value $\omega(\beta_1)$ which is independent of $t$. The algorithm of Lemma \ref{lem:crazy_algebra} and Definition \ref{defn:corrected} produces a corrected virtual class which will agree, on specialisation to the end-points $t_0, t_1$ of the interval of complex structures, to those associated to $J_{t_0}$ and $J_{t_1}$, by Remark \ref{remark:dummy_variable}. \end{proof}

One can also formulate an invariance statement with respect to changing the symplectic form as follows: associated to a strictly convex cone $\Gamma$ in $H^2(M,\bZ)$ is a completion $R_{\Gamma} \equiv \bE[[H_2(M,\bZ)]]_{\Gamma}$ of the group ring on second-cohomology classes, consisting of series $\sum_{\beta \in H_2(M,\bZ)} a_{\beta} q^{\beta}$ satisfying the property that, for each element $\gamma \in \Gamma$ and each integer $N$, the set of exponents $\beta$ with for which $a_\beta \neq 0$ and $\langle \beta, \gamma \rangle \leq N$ is bounded. Given a symplectic form representing a cohomology class in $\Gamma $, the associated quantum operations are naturally defined with respect to the ring $R_{\Gamma} $:
\begin{prop} 
	The quantum product with coefficient in $R_\Gamma$ depends only on the deformation class of the symmpletic class within the cone $\Gamma$.  
\end{prop}

\begin{proof}   By deforming a path $(\omega_t, J_t)$ of compatible pairs into a concatenation of short segments which are constant in one or the other variable, it is sufficient to show invariance under small deformation of $\omega$ for fixed $J$, since the case of varying $J$ is already covered in Lemma \ref{lem:invariance_change_J}. As we vary $\omega$ from $\omega_0$ to $\omega_1$ with $J$ fixed,   if there are classes $\beta_0$ and $\beta_1$ for which the ordering by area changes, $\omega_0(\beta_0) < \omega_0(\beta_1)$ but $\omega_1(\beta_0) > \omega_1(\beta_1)$, then since we work with fixed $J$ we immediately see that the divisors of broken $J$-curves $D_{\beta_0}$ and $D_{\beta_1}$ in the space of curves in class $\beta$ are disjoint. This reduces us to the case covered by Lemma \ref{lem:resolve_partial}. 
\end{proof}
The above result implies, for example, the invariance of the quantum product under deformations of symplectic forms which tame a given almost complex structure, yielding an invariant of complex manifolds which admit a K\"ahler structure that is independent of the K\"ahler class.

\begin{Example} Take $\bE = KU$. The symplectic small quantum product $\star$ on $K(X)$ is defined by 
	\[
	e \star f = \sum_d (e\star f)_d \, q^{\omega(d)} 
	\]
	where
	\[
	(e\star f)_d = \ev_{3*} (\ev_1^*(e) \otimes \ev_2^*(f) \otimes \sum_{m\geq 0} (-1)^m [D\scrT^m])
	\]
	where $D\scrT^m$ is the codimension $m$ locus in $\ccMbar_{0,3}(d)$ of curves with a length $m$ rational tail at the output marked point. The group $\pi_0\Symp(X)$ acts on $QK^*(X)$ by ring automorphisms; this action was not previously known even in the case of $X$ being smooth algebraic.
\end{Example}

\begin{remark} \label{rmk:another_option} One could define a second (associative) quantum product on $E^*(X;\Lambda_{\bE_*})$, which corrects the classes $[\bT_{0,3}(\beta)]^{naive}$ by contributions from contracted rational tails at the two inputs of the product operation rather than from those at the single output.  Breaking symmetry to choose between input and output corrections seems to be a choice one is free to make universally. \end{remark}

\section{Splitting and operations}\label{Sec:operadic}

The axioms of a counting theory in Section \ref{sec:counting-theories} are well-adapted to deal with strict normal crossing divisors, but less well adapted to general normal crossing divisors (because of the difference in explicitness between Proposition \ref{prop:formal group} and Proposition \ref{prop:general_divisor}). For this reason, we shall give two formulations of the splitting axiom: an implicit general one in the form of Proposition \ref{prop:splitting} below, and an explicit one in the form of Theorem \ref{thm:operad}, asserting the existence of an algebra structure over the truncation of the Deligne-Mumford operad given by moduli spaces of curves with at least one input and one output. In the special cases of ordinary cohomology and $K$-theory, no restriction is required, and one can give an explicit general formula. 

\subsection{Divisors of reducible stable curves}

We fix a genus $g$ and number of marked points $n$, and a non-trivial decomposition 
\[
(g,n) = (g_1,n_1) + (g_2,n_2).
\]  
For each partition $\sigma$ of $\{1,\ldots,n\}$ into complementary subsets of sizes $n_1$ and $n_2$
there is a `gluing' map of compact complex orbifolds
\begin{equation} \label{eqn gluing map}
	\iota_{\sigma} := \iota: \ccMbar_{g_1,n_1+1} \times \ccMbar_{g_2,1+n_2} \to \ccMbar_{g,n}
\end{equation}
which is a finite quotient onto a normal crossing divisor $D^{\sigma}_{g_1,g_2} \subset \ccMbar_{g,n}$.  For notational simplicity we will often suppress the partition $\sigma$ and/or implicitly assume that $\sigma = \{1,\ldots,n_1\} \sqcup \{n_1+1,\ldots,n_1+n_2=n\}$.

\begin{Example}\label{ex:inj}
	The map $\iota$ need not be generically injective: when $n_1=0=n_2$ and $g_1=g_2$ the map factors through the second symmetric product $\Sym^2(\ccMbar_{g_1,1})$. 
\end{Example}

\begin{Example}\label{ex:not_snc}
	The map $\iota: \ccMbar_{g,1} \times \ccMbar_{g,2} \to \ccMbar_{2g,1}$ is generically injective.  However, there is a divisor $\ccMbar_{g,1} \cong D_{red} \subset \ccMbar_{g,2}$ of curves with a $\bP^1$-bubble carrying both marked points, and the map $\iota$ restricted to $\ccMbar_{g,1} \times D_{red} \cong \ccMbar_{g,1} \times \ccMbar_{g,1}$ factors through the second symmetric product of $\ccMbar_{g,1}$. The image of $\iota$ is normal crossing but not strict normal crossing.
\end{Example}

\subsection{Reducible framed and thickened curves}

\begin{lemma} \label{lem:submerse_to_domains} Let $\scrF:=\scrF_{g,n}(\bP^r,d)$ denote any Zariski open subset of the moduli stack of degree $d$ stable maps to projective space $\bP^r$  of curves contained in the locus of curves $u$ where $H^1(u^*O(1)) = 0$. Then $\scrF$ is smooth and the divisor of singular stable curves meets $\scrF$ in a normal crossing divisor.
\end{lemma}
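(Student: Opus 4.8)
The plan is to reduce everything to a statement about the moduli stack $\ccMbar_{g,n}(\bP^r,d)$ of stable maps near points where $H^1(u^*\OO(1))=0$, and then to a statement about the Artin stack $\mathfrak{M}_{g,n}$ of prestable curves. First I would recall that on the locus where $H^1(u^*\OO(1))=0$, the Euler exact sequence
\[
0 \to \OO \to \OO(1)^{\oplus(r+1)} \to u^*T\bP^r \to 0
\]
gives $H^1(u^*T\bP^r)=0$, so the obstruction space of the deformation problem for $(\Sigma,p_i,u)$ with \emph{fixed} domain vanishes; hence the forgetful morphism $\scrF \to \mathfrak{M}_{g,n}$ (remembering only the domain curve with its marked points) is smooth. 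This is the key structural input, and it is essentially the content of the parenthetical remark already made just before the statement in Section \ref{Subsec:moduli}. Smoothness of $\scrF$ itself then follows since $\mathfrak{M}_{g,n}$ is a smooth Artin stack.

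Next I would identify the divisor of singular stable curves in $\scrF$ as the preimage under $\scrF \to \mathfrak{M}_{g,n}$ of the boundary divisor $\partial\mathfrak{M}_{g,n} \subset \mathfrak{M}_{g,n}$ parametrising nodal curves. The boundary of $\mathfrak{M}_{g,n}$ is a normal crossing divisor: this is classical, the local model near a curve with $k$ nodes being $\{z_1\cdots z_k = 0\}$ inside the (smooth) deformation space, with the $z_i$ the smoothing parameters of the individual nodes. Since $\scrF \to \mathfrak{M}_{g,n}$ is a smooth morphism, the preimage of a normal crossing divisor is again normal crossing (smoothness preserves the local analytic product structure: étale-locally $\scrF$ looks like $\mathfrak{M}_{g,n}\times (\text{smooth})$, and the boundary pulls back to $\partial\mathfrak{M}_{g,n}\times(\text{smooth})$). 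This gives the claim. Note one should \emph{not} claim strict normal crossings here — Examples \ref{ex:not_snc} and \ref{ex:inj} show the boundary can genuinely fail to be strict once self-gluing or symmetric gluing occurs — so the statement is correctly phrased with `normal crossing' only.

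The main obstacle is a bookkeeping subtlety rather than a conceptual one: one is working with a Zariski open substack $\scrF$ of the \emph{stack} of stable maps (which is Deligne–Mumford, since the automorphism condition need not be imposed here — or, in the variant used in the body of the paper, it is, making $\scrF$ a scheme), and one must make sure that `the divisor of singular stable curves' is being taken inside $\scrF$ and that its normal-crossing nature is an étale-local assertion compatible with the passage to a global chart presentation $\scrF = \widetilde{\scrF}/GL_{r+1}$. Since normal crossings is an étale-local property and the $GL_{r+1}$-action is by automorphisms of the target $\bP^r$ (hence preserves the locus of singular domains stratum-wise, as the action does not touch the domain curve at all), the property descends/ascends without difficulty. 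I would also remark, for use elsewhere in the paper, that the individual boundary divisors $D^{\sigma}_{g_1,g_2}\cap\scrF$ and the self-gluing divisors are smooth with the expected codimension, and that their union being merely normal crossing (not strict) is exactly why Proposition \ref{prop:general_divisor} rather than Proposition \ref{prop:formal group} is needed when computing $[\OO_{\scrF}(D)]$ in the general case.
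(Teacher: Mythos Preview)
Your proof is correct and in fact more direct than the paper's. You use that $\scrF \to \mathfrak{M}_{g,n}$ is smooth and that $\partial\mathfrak{M}_{g,n}$ is already a normal crossing divisor in the smooth Artin stack of prestable curves, so the pullback is normal crossing. The paper instead factors further through the stabilisation map $\st: \mathfrak{M}_{g,n} \to \ccMbar_{g,n}$, arguing that $\st$ is log smooth and flat (the flatness being deduced from the combinatorics of the induced map on cone complexes via \cite{Abramovich-Karu}), and then pulls back $\partial\ccMbar_{g,n}$.

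What each approach buys: your argument is shorter and avoids any log-geometric input. The paper's route, however, proves the slightly stronger fact that the composite $\scrF \to \ccMbar_{g,n}$ pulls back normal crossing divisors to normal crossing divisors, which is precisely what is used in the next lemma when pulling back the specific boundary divisor $D^{\sigma}_{g_1,g_2} \subset \ccMbar_{g,n}$ to obtain $\scrF^{\,\square}_{g_1,g_2}$. Your argument gives normal crossings for the \emph{full} nodal locus (the preimage of $\partial\mathfrak{M}_{g,n}$), whereas the preimage of $\partial\ccMbar_{g,n}$ is a strictly smaller divisor (curves whose stabilisation is nodal); the paper's extra step with $\st$ is what makes the later pullback diagrams from $\ccMbar_{g,n}$ go through cleanly.
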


\begin{proof}
	The Euler sequence shows that $H^1(u^*T\bP^r) = 0$, which ensures that $\scrF$ is smooth. The stronger cohomological condition $H^1(u^*O(1)) = 0$ implies that locally near the curve $u: C \to \bP^r$, the moduli space $\scrF$ is smooth over the versal deformation space of the domain $C$, because cohomology vanishing and global generation are both Zariski open conditions.  This means the forgetful morphism from $\scrF$  to the stack $\frak{M}_{g,n}$ of all prestable curves is a smooth morphism in the sense of  \cite[\href{https://stacks.math.columbia.edu/tag/075U}{Definition 075U}]{stacks-project}  (so locally analytically the map to the versal deformation space, which is given by the product of the deformation spaces over the nodes of $C$, is a submersion). The stabilisation map $\st: \frak{M}_{g,n} \to \ccMbar_{g,n}$ is dominant (essentially by definition)  and flat \cite[Proposition 3]{Behrend}. It is also locally toroidal (`log smooth', i.e. locally equivalent to a toric morphism in the sense of \cite[Definition 3.3.3]{CLS}) because it is an iterated prestable curve fibration. 
	A map of smooth schemes which is flat and (log) smooth pulls back a normal crossing divisor  to something with normal crossings, see e.g.  \cite[\href{https://stacks.math.columbia.edu/tag/0CBQ}{Lemma 0CBQ}]{stacks-project} and \cite{Abramovich-Karu}. The result now follows from the fact that the boundary of Deligne-Mumford space is normal crossing, and that $\scrF$ is a smooth scheme.
\end{proof}

\begin{lemma}
	Define $\scrF_{g_1,g_2}^{\,\square}$ by the pullback diagram
	\[
	\xymatrix{
		\scrF_{g_1,g_2}^{\,\square}  \ar[r]_-{\iota_\scrF} \ar[d] & \scrF_{g,n}(\bP^{d-g},d)\ar[d] \\
		\ccMbar_{g_1,n_1+1} \times \ccMbar_{g_2,1+n_2}  \ar[r]_-{\iota_{\sigma}}  &  \ccMbar_{g,n} 
	}
	\]
	Then $\iota_\scrF$ is a closed immersion; $\iota_\scrF (\scrF_{g_1,g_2}^{\,\square}) \subset \scrF_{g,n}(\bP^{d-g},d)$ is a divisor with normal crossings; and $\scrF_{g_1,g_2}^{\,\square}$ is its normalisation and is smooth.
\end{lemma}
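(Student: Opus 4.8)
The statement is essentially a compatibility between the pullback square and the normal crossing structure established in Lemma~\ref{lem:submerse_to_domains}. First I would recall that the gluing map $\iota_\sigma$ factors as the composition of a normalisation map with the inclusion of the boundary divisor $D^\sigma_{g_1,g_2} \subset \ccMbar_{g,n}$: concretely, $\ccMbar_{g_1,n_1+1}\times\ccMbar_{g_2,1+n_2}$ is exactly the normalisation of $D^\sigma_{g_1,g_2}$ away from the locus where the two factors can be interchanged (compare Examples~\ref{ex:inj} and~\ref{ex:not_snc}), and this is a clean closed immersion into a smooth stack stratum-by-stratum. Since $\scrF_{g,n}(\bP^{d-g},d) \to \ccMbar_{g,n}$ is a submersion (the key point in the proof of Lemma~\ref{lem:submerse_to_domains}, using $H^1(u^*\mathcal O(1))=0$ and the Euler sequence), base change along a submersion preserves both smoothness and the property of being a normalisation. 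Hence $\scrF^{\,\square}_{g_1,g_2} \to \scrF_{g,n}(\bP^{d-g},d)$ is again a closed immersion followed by normalisation, and $\scrF^{\,\square}_{g_1,g_2}$ is smooth because it is the pullback of the smooth stack $\ccMbar_{g_1,n_1+1}\times\ccMbar_{g_2,1+n_2}$ along the submersion to $\ccMbar_{g,n}$.

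Second, for the normal crossing claim: the preimage $\iota_\scrF(\scrF^{\,\square}_{g_1,g_2})$ is set-theoretically the intersection of the boundary divisor of $\scrF_{g,n}(\bP^{d-g},d)$ (which is normal crossing, by Lemma~\ref{lem:submerse_to_domains}) with the component pulled back from $D^\sigma_{g_1,g_2}$. Because the forgetful map to the stack $\mathfrak M_{g,n}$ of prestable curves is a submersion, and $D^\sigma_{g_1,g_2}$ is an irreducible component of the (normal crossing) boundary of $\mathfrak M_{g,n}$, the scheme-theoretic preimage of that component is a component of the normal crossing boundary of $\scrF_{g,n}(\bP^{d-g},d)$; an individual irreducible component of a normal crossing divisor, together with the induced stratification from the ambient boundary, is itself a normal crossing divisor in the total space. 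I would verify this via the local analytic model $\{z_1\cdots z_r = 0\}$: the stratum $D^\sigma$ is cut out (locally) by a single one of the coordinates $z_i$, and the remaining $z_j$ restrict to normal crossings, so the component and its further degenerations are modelled on the standard configuration. The normalisation statement then reduces to the fact that the normalisation of $\{z_i = 0\}$ inside $\{z_1\cdots z_r=0\}$ is obtained by separating the branches, which is exactly what the product $\ccMbar_{g_1,n_1+1}\times\ccMbar_{g_2,1+n_2}$ does (with the $\Sym^2$-subtlety of Example~\ref{ex:inj} handled by the universal property of normalisation, since normalisation does not see the finite-to-one locus coming from exchanging the two sides).

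Third, to make the base change argument rigorous I would use that the square is cartesian and the right vertical map is flat (indeed smooth on the relevant loci, after the log-smoothness/flatness discussion of Lemma~\ref{lem:submerse_to_domains}), so that: closed immersions are stable under base change, giving that $\iota_\scrF$ is a closed immersion; smoothness descends from $\ccMbar_{g_1,n_1+1}\times\ccMbar_{g_2,1+n_2}$, giving smoothness of $\scrF^{\,\square}_{g_1,g_2}$; and normality is preserved under smooth base change (Serre's criterion $R_1 + S_2$ is smooth-local), so $\scrF^{\,\square}_{g_1,g_2}$ is normal. Combined with the fact that $\iota_\scrF(\scrF^{\,\square}_{g_1,g_2})\to \ccMbar_{g,n}$ factors through $D^\sigma_{g_1,g_2}$ with the map being generically an isomorphism onto its image in the appropriate (non-symmetric) chart, one concludes that $\scrF^{\,\square}_{g_1,g_2}$ is the normalisation of the divisor $\iota_\scrF(\scrF^{\,\square}_{g_1,g_2})$.

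\textbf{Main obstacle.} The routine parts (stability of closed immersions and smoothness under base change) are immediate; the genuine content is showing that an \emph{irreducible component} of a normal crossing divisor, pulled back along the submersion, is itself normal crossing in the ambient space and that $\scrF^{\,\square}$ computes its normalisation rather than merely mapping finitely to it. The subtlety is precisely the non-injectivity phenomenon of Examples~\ref{ex:inj} and~\ref{ex:not_snc}: when $(g_1,n_1)=(g_2,n_2)$ the gluing map factors through a symmetric product, so the candidate normalisation $\ccMbar_{g_1,n_1+1}\times\ccMbar_{g_2,1+n_2}$ is a two-to-one cover of (an open part of) the image. One must check that after pulling back to $\scrF_{g,n}(\bP^{d-g},d)$ — where the framing data rigidifies the curves and breaks this symmetry — the resulting map $\scrF^{\,\square}_{g_1,g_2}\to\iota_\scrF(\scrF^{\,\square}_{g_1,g_2})$ is birational and hence, by normality of the source, \emph{the} normalisation. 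I would handle this by noting that the two marked points being glued carry distinct labels ($n_1+1$ on one side, $1$ on the other in the notation of \eqref{eqn gluing map}), so the relevant open stratum of framed reducible curves already distinguishes the two components, making $\iota_\scrF$ generically injective even when $\iota_\sigma$ is not.
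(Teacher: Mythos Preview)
Your overall approach matches the paper's: both reduce the core content to the fact that the pullback of a normal crossing divisor along a submersion is again normal crossing (the paper's proof is literally that one sentence). Your expansion via stability of closed immersions, smoothness, and normality under flat/smooth base change is a correct unpacking of what the one-liner leaves implicit.

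There is, however, a genuine gap in your resolution of the ``main obstacle''. Your argument that the labels $n_1+1$ and $1$ on the two glued marked points distinguish the components does not work: those marked points become the \emph{node} after gluing and are not retained as marked points of the resulting curve in $\ccMbar_{g,n}$, so a point of $\scrF_{g,n}(\bP^{d-g},d)$ cannot use them to recover an ordering of the two sides. Nor does ``the framing data rigidifies the curves and breaks this symmetry'' hold: the framing is a basis of $H^0$ of a line bundle on the glued curve, hence global, and does not single out one component. Concretely, in the situation of Example~\ref{ex:inj} with $g_1=g_2$ and $n_1=n_2=0$, the map $\iota_\scrF$ remains generically $2$-to-$1$: a framed curve $\phi$ whose stabilisation is $C_1 \cup_p C_2$ has two preimages in $\scrF^{\,\square}_{g_1,g_2}$, namely the two orderings $(C_1,C_2)$ and $(C_2,C_1)$, and nothing in $\scrF_{g,n}(\bP^{d-g},d)$ distinguishes them. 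In that case $\scrF^{\,\square}_{g_1,g_2}$ is a double cover of its image rather than its normalisation, and $\iota_\scrF$ is not a closed immersion.

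So the lemma as stated, and the paper's terse justification, implicitly require that $\iota_\sigma$ be generically injective --- equivalently that $(g_1,n_1)\neq(g_2,n_2)$, or that some of the $n$ ordered marked points of the glued curve land on a specified side and thereby pin down the ordering. The paper effectively restricts to this regime later: Section~\ref{Sec:operad} works with $n\geq 1$ and notes that ordered input/output marked points force the relevant boundary divisors to be strict normal crossing. Your identification of \emph{where} the subtlety lies is exactly right; only the proposed fix is mistaken.
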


\begin{proof} Note that the pullback of a normal crossings divisor is normal crossings. \end{proof}

The previous diagram induces another pullback diagram
\[
\xymatrix{
	\scrT_{g_1,g_2}^{\,\square} \ar[r] \ar[d] & \scrT_{g,n}(\beta)  \ar[d] \\
	\scrF_{g_1,g_2}^{\,\square} \ar[r]_-{\iota_\scrF}   &   \scrF_{g,n}(\bP^{d-g},d)
}
\]
(now in the category of global charts, so we also pull back the obstruction space and section in the top row). Since the map $\scrT_{g,n} \to \scrF_{g,n}$ is a submersion, the image of the  top arrow again has `normal crossing singularities' in an obvious sense, though this now takes place in the category of $G$-transverse almost complex manifolds, and $\scrT_{g_1,g_2}^{\,\square} $ plays the role of the normalisation of that image, and is a union of smooth manifolds.

For each $r \geq 0$,
let $\scrF_{g_1,g_2}^{\,\square,r} \subset \scrF_{g_1,g_2}^{\,\square}$ be the subspace consisting of those curves in which the stabilization map
to $\ccMbar_{g,n}$ contracts a chain of $r$
irreducible components to the distinguished node.
Let $\scrT_{g_1,g_2}^{\,\square,r} \subset \scrT_{g_1,g_2}^{\,\square}$ be the preimage of $\scrF_{g_1,g_2}^{\,\square,r}$ under the natural map.

\begin{lemma} \label{Lem:splitting_divisor_in_thickening}	
	For $r\geq 0$, the maps $\iota_\scrF|_{\scrF_{g_1,g_2}^{\,\square,r}}$ and $\iota_\scrT|_{\scrT_{g_1,g_2}^{\,\square,r}}$ are $(r+1):1$
	covering maps over their image in $\scrF_{g,n}(\C P^d,d)$ and $\scrT_{g,n}(\C P^d,d)$ respectively.
\end{lemma}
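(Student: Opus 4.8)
The plan is to analyze the combinatorics of how a chain of $r$ contracted rational components distributed over a single node gives rise to exactly $r+1$ distinct normalisations. First I would recall the local geometry: near a point of $\scrF_{g_1,g_2}^{\,\square,r}$, the stabilization map $\scrF_{g,n}(\bP^{d-g},d) \to \ccMbar_{g,n}$ contracts a chain $E_1 \cup E_2 \cup \cdots \cup E_r$ of rational bridges between the genus $g_1$ piece and the genus $g_2$ piece; each $E_j$ is a $\bP^1$ meeting its neighbours in two nodes, and (since all the $E_j$ are contracted to the single distinguished node of the stable model) each $E_j$ carries no marked points and maps with degree zero. The key point is that the datum being parametrised by $\scrF_{g_1,g_2}^{\,\square}$ is a curve \emph{together with} a choice of node at which to separate the two sides, so over such a chain configuration there are $r+1$ choices of separating node: one can cut at the node joining the genus $g_1$ component to $E_1$, or at the node joining $E_i$ to $E_{i+1}$ for $1 \le i \le r-1$, or at the node joining $E_r$ to the genus $g_2$ component. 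Each choice produces a point of $\scrF_{g_1,g_2}^{\,\square,r}$ lying over the same point of $\scrF_{g,n}(\bP^{d-g},d)$, and these $r+1$ points are distinct and exhaust the fibre; hence $\iota_\scrF|_{\scrF_{g_1,g_2}^{\,\square,r}}$ is an $(r+1):1$ covering onto its image.

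Next I would upgrade this from a set-theoretic count to the statement that $\iota_\scrF|_{\scrF_{g_1,g_2}^{\,\square,r}}$ is a covering map of the same degree. Since $\scrF_{g_1,g_2}^{\,\square}$ is the normalisation of a normal crossing divisor and $\scrF_{g_1,g_2}^{\,\square,r}$ is a locally closed smooth stratum (the locus where the contracted chain has length exactly $r$), the map $\iota_\scrF$ restricted to this stratum is étale onto its image: locally the normal crossing divisor near an $r$-fold intersection has $r+1$ analytic branches through a generic point of the depth-$r$ stratum --- wait, more carefully, the branches of $\iota_\scrF(\scrF_{g_1,g_2}^{\,\square})$ through a point with an $r$-chain correspond to the $r+1$ ways of labelling the separating node, so the normalisation has $r+1$ preimages, each of which maps isomorphically to its branch, and hence the restriction to the open stratum where the chain length is locally constant equal to $r$ is a degree $r+1$ covering. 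I would cite the preceding lemma that $\iota_\scrF$ is a closed immersion on $\scrF_{g_1,g_2}^{\,\square}$ --- no, that asserts $\iota_\scrF$ is a closed immersion, which is consistent: the normalisation of a normal crossing divisor maps as a finite map that is generically one-to-one but $(r+1)$-to-one over depth-$r$ strata, and $\scrF_{g_1,g_2}^{\,\square}$ being smooth is exactly the normalisation.

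For the thickening statement, I would observe that $\scrT_{g_1,g_2}^{\,\square,r}$ is by definition the preimage of $\scrF_{g_1,g_2}^{\,\square,r}$ under $\scrT_{g_1,g_2}^{\,\square} \to \scrF_{g_1,g_2}^{\,\square}$, and the square defining $\scrT_{g_1,g_2}^{\,\square}$ is a pullback of global charts along $\iota_\scrF$; since $\scrT_{g,n}(\beta) \to \scrF_{g,n}(\bP^{d-g},d)$ is a $G$-equivariant topological submersion (Theorem \ref{thm:basicglobalchart}, Corollary \ref{cor:charts_summary}) with a $C^1_{loc}$-structure, pulling back a degree $r+1$ covering along this submersion yields again a degree $r+1$ covering: covering multiplicities are preserved under base change along submersions because the fibres of the submersion are identified locally, and the thickening datum (the perturbation $e$, the framing $F$) depends only on the underlying map $u : \scrC|_\phi \to X$ and the choice of separating node, not on further data. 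So $\iota_\scrT|_{\scrT_{g_1,g_2}^{\,\square,r}}$ is also $(r+1):1$ onto its image. The main obstacle I anticipate is making the depth-$r$ stratification argument fully rigorous at the level of the \emph{normalisation} --- one must be careful that the $r+1$ preimages genuinely lie in $\scrF_{g_1,g_2}^{\,\square,r}$ (i.e. all have contracted-chain length exactly $r$, which they do since the chain is the same curve viewed with a different separating node) and that no further identifications occur, e.g. from automorphisms of the chain; here the hypothesis that curves in $\scrF_{g,n}(\bP^{d-g},d)$ are automorphism-free (so in particular no symmetry of the $\bP^1$-chain is induced by an automorphism of the stable map) is what rules out extra coincidences, and I would emphasise that point.
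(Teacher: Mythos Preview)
Your proposal is correct and takes essentially the same approach as the paper: the core observation is that a chain of $r$ contracted rational bridges contains exactly $r+1$ nodes, each of which separates the domain into a genus $g_1$ piece and a genus $g_2$ piece, so the fibre of the normalisation has cardinality $r+1$. The paper's proof is a one-paragraph version of your first paragraph; your additional remarks about why this gives a genuine covering (\'etaleness of the normalisation over depth strata, preservation under pullback along the submersion $\scrT \to \scrF$, and automorphism-freeness ruling out coincidences) are correct elaborations that the paper leaves implicit.

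One small correction: the bridges $E_j$ do \emph{not} map with degree zero to $\bP^{d-g}$ --- as unstable components with only two special points they must carry positive degree there; what is true is that they are contracted by the stabilisation map to $\ccMbar_{g,n}$. This slip does not affect your node-count.
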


\begin{proof}
	Consider an element of $\scrT_{g,n}(\beta)$ which lies over the divisor $D^{\sigma}_{g_1,g_2}$. The domain $\Sigma$ contains a chain of $r \geq 0$ rational curves which are contracted to the distinguished node by the stabilisation map (these rational components, being unstable, contain no other marked point). If a stable curve in $D^{\sigma}_{g_1,g_2} \subset \ccMbar_{g,n}$ is described as the image of another curve under the gluing map $\iota$ \eqref{eqn gluing map},
	then it has a distinguished node corresponding to the gluing region.
	The key point is that this chain of rational curves contains $r+1$ nodes that separate $\Sigma$ into a genus $g_1$ and $g_2$ nodal curve respectively.
\end{proof}

Combining the previous discussions, we have a diagram
\[
\xymatrix{
	\bT_{g_1,g_2}^{\,\square} \ar[r]_-{\psi_\scrT} \ar[d]_{\pi_0} & \bT_{g,n}(\beta)  \ar[d]_{\pi_1} \\
	\scrF_{g_1,g_2}^{\,\square} \times X^n \ar[r]_-{\psi_\scrF} \ar[d]_{p_0}  &   \scrF_{g,n}(\bP^{d-g},d) \times X^n \ar[d]_{p_1} \\
	\ccMbar_{g_1,n_1+1} \times \ccMbar_{g_2,1+n_2} \times X^n \ar[r]_-{\psi_{\sigma}}  &  \ccMbar_{g,n}  \times X^n
}
\]
where the first line indicates that we have a pullback in the category of global charts, defined by a $G$-map on thickenings which is compatible with obstruction bundles.  We can write $\scrT_{g_1,g_2}^{\,\square}$ as a finite disjoint union
\begin{equation} \label{eqn:component_strata}
	\scrT_{g_1,g_2}^{\,\square} = \bigcup_{\beta_1+\beta_2 = \beta} \, \scrT_{g_1,g_2}^{\,\square} (\beta_1,\beta_2)
\end{equation}
according to the decomposition of the homology classes, where finiteness follows from Gromov compactness. 

\subsection{Calculus of split curves}

We next identify the virtual class for a smooth connected component $\bT_{g_1,g_2}^{\,\square}(\beta_1,\beta_2)$  of $\bT_{g_1,g_2}^{\,\square}$ in terms of products of classes of simpler moduli spaces.

Let $\psi_{\scrT,\sigma}$ denote the natural map
\[
\psi_{\scrT,\sigma}: \scrT_{g_1,n_1+1}(\beta_1) \times \scrT_{g_2,1+n_2}(\beta_2) \to \scrT_{g,n}(\beta),
\]
whose domain is the thickening for a global chart presenting the product moduli space
\[
\ccMbar_{g_1,n_1+1}(\beta_1) \times \ccMbar_{g_2,1+n_2}(\beta_2).
\]

Passing to $E_*$-homology, we note that the monoidal structure induces a map
\[
E_*(\scrT_{g_1,n_1+1}(\beta_1)) \otimes_{E_*(pt)} E_*(\scrT_{g_2,1+n_2}(\beta_2)) \longrightarrow E_*(\scrT_{g_1,n_1+1}(\beta_1) \times \scrT_{g_2,1+n_2}(\beta_2)),
\]
and that the diagonal $\Delta \subset X\times X$ defines a class in $E^*(X\times X)$.  Let $\Delta_{\sigma} \subset E^*(\scrT_{g_1,n_1+1}(\beta_1) \times \scrT_{g_2,1+n_2}(\beta_2))$ denote the pullback of  $\Delta_X \subset X\times X$ under the evaluation to $X^{n_1+2+n_2} \to X^2$ which picks out the two factors indexed by the glued marked points.

\begin{prop}\label{prop:square_as_product}
	For the relevant component of \eqref{eqn:component_strata} we have 
	\[
	[\bT_{g_1,g_2}^{\,\square}] = (\psi_{\scrT,\sigma})_*([\bT_{g_1,n_1+1}(\beta_1)]  \otimes [\bT_{g_2,1+n_2}(\beta_2)]) \cap \Delta_{\sigma} \quad \in E_*(\bT_{g,n}(\beta)).
	\]
\end{prop}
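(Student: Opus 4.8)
The plan is to reduce the statement to the pullback and normal-crossing axioms for the counting theory, applied to the diagram of global charts constructed in the preceding paragraphs. First I would fix the relevant component $\scrT_{g_1,g_2}^{\,\square}(\beta_1,\beta_2)$ and observe that, by construction, it is the fibre product of $\scrT_{g,n}(\beta)$ and $\ccMbar_{g_1,n_1+1}\times\ccMbar_{g_2,1+n_2}$ over $\ccMbar_{g,n}$ in the category of global charts; in particular the obstruction bundle over $\scrT_{g_1,g_2}^{\,\square}$ is pulled back from $\scrT_{g,n}(\beta)$, so that $[\bT_{g_1,g_2}^{\,\square}]$ is the cap product of the virtual class of the ambient chart with the pulled-back fundamental class of the splitting divisor. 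The content of the proposition is then to identify that divisor class.

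The second step is the decisive geometric input: I would use Lemma \ref{Lem:splitting_divisor_in_thickening}, which shows that $\iota_\scrT|_{\scrT_{g_1,g_2}^{\,\square,r}}$ is an $(r+1):1$ covering over its image, together with the identification of $\scrT_{g_1,g_2}^{\,\square}$ as the normalisation of a normal-crossings image inside $\scrT_{g,n}(\beta)$ (in the category of $G$-transverse complex manifolds). Concretely, on the component indexed by $(\beta_1,\beta_2)$ the chain of $r$ contracted rational curves has $r+1$ separating nodes, so the map to $\scrT_{g,n}(\beta)$ factors through $\scrT_{g_1,n_1+1}(\beta_1)\times\scrT_{g_2,1+n_2}(\beta_2)$ cut out by the diagonal condition $\Delta_\sigma$ at the two glued marked points. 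Applying Proposition \ref{prop:square_as_product}'s counterpart (the pullback axiom \eqref{pullback axiom} together with the product axiom and the symmetric monoidal structure of $\bE$) to the lower pullback square $\psi_\scrF$, the pushforward of the fundamental class of $\scrF_{g_1,g_2}^{\,\square}$ to $\scrF_{g,n}(\bP^{d-g},d)$ is exactly $\psi_{\scrF,*}$ of the cap product of the product fundamental class with $\Delta_\sigma$; pulling this back along the submersion $\scrT_{g,n}\to\scrF_{g,n}$ and using that the obstruction data is pulled back identifies $[\bT_{g_1,g_2}^{\,\square}]$ with $(\psi_{\scrT,\sigma})_*([\bT_{g_1,n_1+1}(\beta_1)]\otimes[\bT_{g_2,1+n_2}(\beta_2)])\cap\Delta_\sigma$.

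The main obstacle I anticipate is keeping track of the $(r+1):1$ multiplicities from Lemma \ref{Lem:splitting_divisor_in_thickening} and checking that they do \emph{not} contaminate the formula: the point of the statement, as opposed to the splitting axiom for $[\mathcal{O}(D)]$ itself, is that on a \emph{single smooth component} $\bT_{g_1,g_2}^{\,\square}(\beta_1,\beta_2)$ the virtual class is a clean product capped with the diagonal, with no formal-group corrections — those appear only when one reassembles the strata of different depths into the full divisor class via Proposition \ref{prop:formal group}. So I would argue that the higher-depth loci $\scrT_{g_1,g_2}^{\,\square,r}$ with $r\geq 1$ sit in codimension $\geq r$ inside $\scrT_{g_1,g_2}^{\,\square}$ and thus do not affect the fundamental class of the (top-dimensional, smooth) component, exactly as in the $H\bQ$ case discussed after Example \ref{ex:divisor_Ktheory}; the covering-degree bookkeeping is then irrelevant at the level of the individual component. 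The remaining routine points are the compatibility of the monoidal map on $E_*$ with the evaluation maps defining $\Delta_\sigma$, and the verification that the $C^1_{loc}$-submersion hypotheses of axiom \eqref{pullback axiom} are met by $\scrT_{g,n}(\beta)\to\scrF_{g,n}(\bP^{d-g},d)$, which is Corollary \ref{cor:charts_summary} combined with \cite[Lemma 4.5]{AMS-Hamiltonian}.
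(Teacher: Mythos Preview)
Your proposal misses the central input and instead spends effort on material that is not needed for this proposition. The object $\scrT_{g_1,g_2}^{\,\square}(\beta_1,\beta_2)$ is the pullback of the \emph{genus $g$} chart along the splitting divisor, so an element carries a single framing $F$ of $H^0(L_u^{\otimes k})$ on the whole nodal curve, and the structure group is $G = U(d-g+1)$. The product chart $\bT_{g_1,n_1+1}(\beta_1)\times\bT_{g_2,1+n_2}(\beta_2)$ instead carries a pair of framings $(F_1,F_2)$ on the two components separately, with structure group $U(d_1-g_1+1)\times U(d_2-g_2+1)$. These are genuinely different global charts, and there is no map from one thickening to the other. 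Your assertion that ``the map to $\scrT_{g,n}(\beta)$ factors through $\scrT_{g_1,n_1+1}(\beta_1)\times\scrT_{g_2,1+n_2}(\beta_2)$'' is therefore not a routine observation but the entire content of the proposition.

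The paper's proof is a one-line appeal to Section~\ref{sec:split_curves}: Proposition~\ref{propsplitrelation} constructs a zig-zag of principal bundle enlargements and general stabilisations relating the two chart presentations of the space of split curves, and equivalences preserve virtual fundamental classes (Functoriality axiom). The diagonal constraint then enters via the pullback axiom applied to the evaluation map. By contrast, your invocation of Lemma~\ref{Lem:splitting_divisor_in_thickening} and the $(r{+}1)$-to-$1$ multiplicities, and of the normal-crossing axiom, is irrelevant here: those ingredients concern how the various components of the divisor fit together inside $\scrT_{g,n}(\beta)$ (which is the subject of Equations~\eqref{eqn:upstairs_1} and \eqref{eqn:upstairs_2}), whereas the present proposition is a statement about a single smooth component and requires no formal-group bookkeeping at all.
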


\begin{proof}
	By the equivalence of the two charts constructed in Section \ref{sec:split_curves} 
	we have that $\bT^{\square}_{g_1,g_2}$
	is isomorphic to a global Kuranishi chart $\bT_{\bL}$
	which is the pullback of the product
	chart $\bT_{g_1,n_1+1} \times \bT_{g_2,n_2+2}$
	of the diagonal $\Delta_X \subset X \times X$
	under the evaluation map corresponding to a choice of marked points on each curve.
\end{proof}

\subsection{Implicit splitting axiom}

The Gromov-Witten correlator for the class $\beta$ is by definition
\[
I_{g,n}(\beta) := (p_1\circ \pi_1)_* [\bT_{g,n}(\beta)] \in E_*(\ccMbar_{g,n} \times X^n).
\] 
Write $D_{\sigma} \in E^*(\ccMbar_{g,n} \times X^n)$ for the class dual to the divisor which is the image of $\psi_{\sigma}$.  If the map $\psi_\scrF$ defines a strict normal crossing divisor, or the counting theory $\bE$ is either $H\bQ$ or $KU$, 
we can apply Axioms \ref{pullback axiom} and \ref{normal crossing axiom} of a counting theory respectively to the upper and lower squares in this diagram. In the upper part, we obtain
\begin{equation} \label{eqn:upstairs_1}
	(\psi_{\scrT})_*[\bT_{g_1,g_2}^{\square}(\beta_1,\beta_2)] = [\bT_{g,n}(\beta)] \cap \pi_1^* (\psi_{\scrF})_*\left({ \sum_{\beta_1 + \beta_2 = \beta }} [\scrF^{\square}_{g_1,g_2}(\beta_1,\beta_2)]\right),
\end{equation}
while the lower part yields
\begin{equation} \label{eqn:upstairs_2}
	p_1^*(D_{\sigma}) = L\,(\{(\psi_\scrF)_*[\scrF^{\square}_{g_1,g_2}(\beta_1,\beta_2)]\}_{\beta_1 + \beta_2 = \beta}).
\end{equation}
Here the formal group law $L(\bullet)$ is applied to the collection of (first Chern classes of) irreducible divisors indicated, indexed by the splittings $\beta = \beta_1 + \beta_2$ compatible with the splitting of genus and marked points. In the case where $\psi_{\scrF}$ defines a non-strict normal crossing divisor, the analogue of \eqref{eqn:upstairs_2} is (even more) implicit, since it relies on Proposition \ref{prop:general_divisor} as indicated in Remark \ref{rem:axiom_3_variation}. We obtain a preliminary version of the splitting axiom: as above fix a decomposition $g=g_1+g_2$ and associated partition $\sigma$ of the $n=n_1+n_2$ marked points defining a divisor $D_{\sigma} \subset \ccMbar_{g,n}$. 

\begin{proposition}\label{prop:splitting}
	The cap-product $[\bT_{g,n}(\beta)] \cap (\st\times ev)^* (D_{\sigma} \times X^n)$ is determined, via the formal group $L$ and the cone complex to the boundary divisor,  by the tautological modules associated to the spaces with virtual classes $[\bT_{g_1,n_1+1}(\beta_1)]$ and $ [\bT_{g_2,1+n_2}(\beta_2)]$ indexed by decompositions $\beta = \beta_1+\beta_2 \in H_2(X;\bZ)$. 
\end{proposition}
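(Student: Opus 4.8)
The plan is to assemble the statement from the pullback and normal-crossing axioms applied to the two-square diagram constructed just above, combined with Proposition \ref{prop:square_as_product} which rewrites the virtual class of the normalisation $\bT^{\square}_{g_1,g_2}(\beta_1,\beta_2)$ in terms of the product classes $[\bT_{g_1,n_1+1}(\beta_1)] \otimes [\bT_{g_2,1+n_2}(\beta_2)]$ capped with the diagonal class $\Delta_\sigma$. First I would fix the decomposition $g=g_1+g_2$, the partition $\sigma$ of the $n$ marked points, and note that by Lemma \ref{lem:submerse_to_domains} and the lemma following it the map $\psi_\scrF$ defines a normal crossing divisor in $\scrF_{g,n}(\bP^{d-g},d) \times X^n$, whose normalisation is the smooth $\scrF^{\,\square}_{g_1,g_2}$, with the finer decomposition \eqref{eqn:component_strata} into smooth pieces indexed by $\beta = \beta_1+\beta_2$ (finitely many by Gromov compactness).

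Next I would run the two axioms through the diagram. For the lower square, whose base consists of $G$-transverse complex manifolds with $p_1^{-1}(D_\sigma)$ a normal crossing divisor, Axiom \ref{normal crossing axiom} (in the strict case) or Remark \ref{rem:axiom_3_variation} together with Proposition \ref{prop:general_divisor} (in general) gives that $p_1^*(D_\sigma)$ is obtained from the collection $\{(\psi_\scrF)_*[\scrF^{\,\square}_{g_1,g_2}(\beta_1,\beta_2)]\}_{\beta_1+\beta_2=\beta}$ by applying the formal group law $L = L_{\bE}$ (organised, in the non-strict case, by the cone complex of the boundary divisor). For the upper square, which is a pullback in $\Glo$ with $\psi_\scrF$ a proper immersion and $\pi_1$ submersive on thickenings, Axiom \ref{pullback axiom} gives $(\psi_\scrT)_*[\bT^{\,\square}_{g_1,g_2}] = [\bT_{g,n}(\beta)] \cap \pi_1^*(\psi_\scrF)_*\bigl(\sum_{\beta_1+\beta_2=\beta}[\scrF^{\,\square}_{g_1,g_2}(\beta_1,\beta_2)]\bigr)$, i.e. \eqref{eqn:upstairs_1}. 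I would then push forward along $p_1\circ\pi_1$ and combine: the left-hand side becomes the image of $[\bT^{\,\square}_{g_1,g_2}]$, which by Proposition \ref{prop:square_as_product} is (a sum over $\beta_1+\beta_2=\beta$ of) $(\psi_{\scrT,\sigma})_*([\bT_{g_1,n_1+1}(\beta_1)] \otimes [\bT_{g_2,1+n_2}(\beta_2)]) \cap \Delta_\sigma$, pushed to $\ccMbar_{g,n}\times X^n$; the right-hand side becomes $[\bT_{g,n}(\beta)] \cap (\st\times ev)^*(D_\sigma\times X^n)$ once one identifies, via \eqref{eqn:upstairs_2}, the pullback of $D_\sigma$ with the $L$-combination of the divisor classes $(\psi_\scrF)_*[\scrF^{\,\square}_{g_1,g_2}(\beta_1,\beta_2)]$. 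Since each term $(\psi_{\scrT,\sigma})_*([\bT_{g_1,n_1+1}(\beta_1)] \otimes [\bT_{g_2,1+n_2}(\beta_2)]) \cap \Delta_\sigma$ lies in the image of the tautological modules of the two factor moduli spaces (the divisor classes and $\psi$-classes being pulled back from the $\scrF$'s, cf. the isomorphism of tautological modules lemma), and since the way these terms combine is dictated entirely by $L$ and the cone complex, the cap product in question is determined by those tautological modules as asserted.

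The main obstacle, and the step requiring the most care, is the compatibility between the $\bE$-module operations used in the two squares: one must check that capping $[\bT_{g,n}(\beta)]$ with $\pi_1^*$ of the divisor-class combination coming from \eqref{eqn:upstairs_2} really does agree, after push-forward, with the push-forward of $[\bT^{\,\square}_{g_1,g_2}]$ expanded via \eqref{eqn:upstairs_1} and Proposition \ref{prop:square_as_product} — i.e. that the projection formula \eqref{eqn:module_map_property} and the symmetric-monoidal compatibility of $E_*$ and $E^{lf}_*$ with products let one commute $\cap$, $\pi_1^*$, and the $L$-expansion past the push-forwards. In the strict normal crossing case this is bookkeeping with Axioms \ref{pullback axiom}--\ref{normal crossing axiom}; in the general normal crossing case the subtlety is that the coefficients $c_J$ of Proposition \ref{prop:general_divisor} are only implicit, so the identification of $p_1^*D_\sigma$ with an expression in the $(\psi_\scrF)_*[\scrF^{\,\square}]$ must be phrased at the level of the cone complex and algebraic cobordism before base-changing to $\bE$; one also has to keep track of the $(r+1):1$ covering behaviour recorded in Lemma \ref{Lem:splitting_divisor_in_thickening} to see that the higher-depth strata contribute exactly the higher-order terms of $L$. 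The $\Sym$-equivariance subtleties and the finite quotient in $\iota_\sigma$ (Examples \ref{ex:inj}, \ref{ex:not_snc}) are harmless here because we work with the normalisation $\scrT^{\,\square}_{g_1,g_2}$ throughout and only push forward at the very end.
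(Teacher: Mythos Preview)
Your approach is essentially the paper's own: combine Equations \eqref{eqn:upstairs_1} and \eqref{eqn:upstairs_2} with Proposition \ref{prop:square_as_product}, then observe that every term in the formal-group expansion is a cap product of a stratum virtual class (factoring as a product times $\Delta_\sigma$) with line-bundle classes pulled back from the $\scrF$-level, hence lies in the tautological modules. The ingredients and overall architecture are correct.

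Two points deserve tightening. First, the step ``push forward along $p_1\circ\pi_1$ and combine'' should be dropped. The identity you want lives in $E_*(\bT_{g,n}(\beta))$, not in $E_*(\ccMbar_{g,n}\times X^n)$: since $(\st\times ev) = p_1\circ\pi_1$, you already have $[\bT_{g,n}(\beta)]\cap(\st\times ev)^*D_\sigma = [\bT_{g,n}(\beta)]\cap\pi_1^*p_1^*D_\sigma$ before any pushforward, and \eqref{eqn:upstairs_2} lets you expand this directly. Pushing forward to $\ccMbar_{g,n}\times X^n$ is in fact counterproductive for general $L$, because the higher-order coefficients in the formal-group expansion are tautological integrals that do not descend to cap products with classes pulled back from the base (this is exactly the content of Remark \ref{rmk:dont_push}). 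Your sentence ``the right-hand side becomes $[\bT_{g,n}(\beta)]\cap(\st\times ev)^*(D_\sigma\times X^n)$'' after pushforward therefore has a type mismatch.

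Second, be clearer about what \eqref{eqn:upstairs_1} buys you versus \eqref{eqn:upstairs_2}. Equation \eqref{eqn:upstairs_1} identifies only the \emph{linear} part of the $L$-expansion (the pushforward of the normalisation) with the product classes via Proposition \ref{prop:square_as_product}. The higher-order terms of $L$ are supported on the deeper strata --- codimension-$r$ loci where a chain of $r-1$ rational curves is contracted at the distinguished node --- and these must be handled by the same mechanism applied stratum-by-stratum: each such stratum is (the virtual class of) a fibre product of moduli spaces in classes $(\beta_1,\gamma_1,\ldots,\gamma_{r-1},\beta_2)$, and the line-bundle classes $\mathcal{O}(D_i)|_{D_J}$ appearing as coefficients are tautological by construction. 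You gesture at this in your obstacles paragraph via Lemma \ref{Lem:splitting_divisor_in_thickening}, but the proof should make explicit that it is this stratum-wise factorisation, not the single identity \eqref{eqn:upstairs_1}, that places every term in the tautological modules.
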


\begin{proof} This follows on combining Proposition \ref{prop:square_as_product} with Equations \eqref{eqn:upstairs_1} and \eqref{eqn:upstairs_2}.
	Recall that, for any fixed $J$, only finitely many decompositions are realised by stable maps, so the formal group law is only applied to finite collections of irreducible divisors.  The codimension $r$ strata in  the nc-divisor $\iota_{\scrF}(\scrF_{g_1,g_2}^{\,\square})$ exactly correspond to stable maps in which a length $r$ chain of rational curves was contracted to the distinguished node. 
	Expanding out the formal group,  \eqref{eqn:upstairs_2} gives a formula for $[\bT_{g,n}(\beta)] \cap \pi^*D_{\sigma}$ as a weighted sum over virtual classes indexed by
	\begin{itemize}
		\item  tuples  $(\beta_1,\underline{\gamma}, \beta_2)$ with $\beta_1 + \sum \gamma_j + \beta_2 = \beta$, where the $\gamma_i$ index rational curves contracted at the distinguished node, and in particular involves all invariants of the form $[\bT_{0,2}(X,\gamma_i)]$ (capped against suitable multidiagonals); and
		\item elements of the Picard group of the moduli spaces of stable maps in these classes, coming from the restriction to some stratum $D_J$ of the various line bundles $\mathcal{O}(D_i)$ and their powers (which appear from the formal group expansion).
	\end{itemize}
	These are by definition cap-products against appropriate multi-diagonals of elements of the tautological modules for  $\beta_1$ and $\beta_2$.
\end{proof}

\begin{example} \label{ex:cotangent _line} The normal (orbifold line) bundle to the image of 
	\[
	\iota_{\sigma}: \ccMbar_{g_1,n_1+1} \times \ccMbar_{g_2,1+n_2} \to \ccMbar_{g,n_1+n_2}
	\]
	is given by the tensor product of the line bundles $T_{\pm}^*$ defined by the cotangent line classes at the two marked points $p_{\pm}$ glued to give the distinguished node at a curve in the target, so
	\[
	\bigotimes_{\beta_1+\beta_2=\beta} \mathcal{O}(\scrT_{g_1,g_2}^{\square}(\beta_1,\beta_2)) \, \cong T_{+}^* \otimes T_{-}^*.
	\]
	Therefore the tautological integrals arising from expanding the formal group will include terms which replace the virtual class of a stratum with its cap-product with powers of cotangent line classes. 
	\end{example}

\begin{remark} \label{rmk:dont_push}
	For the additive or multiplicative formal group, the expansion of $L(\bullet)$ contains only one term on any given stratum of $(\st \times ev)^{-1}(D_{\sigma})$, and one can push forward \eqref{eqn:upstairs_2}  to $E_*(\ccMbar_{g,n} \times X^n)$ to obtain the `usual' splitting theorems.  In general there is no natural way to describe the terms in the sum after pushforward, since they involve tautological integrals which are not cap products of  fundamental classes of strata with classes pulled back from the base. 
\end{remark}

Remark \ref{rmk:dont_push} implies that Proposition \ref{prop:splitting}  does not yield an identity in $E_*(X^n \times \ccMbar_{g,n})$, and does not give a recursive way of computing  the genus $g$ Gromov-Witten class for $\beta$ in terms of lower degree and genus invariants except when $\bE = H^*(-,\bQ)$ or $\bE = KU^*(-)$. Rather, it is recursive only if one knows the entire tautological modules, in the sense of Section \ref{sec:tautological}, for the moduli spaces at lower genus and degree.

\subsection{Operadic corrections\label{Sec:operad}}

Consider moduli spaces of curves with $1+1$ marked points, one being distinguished as an input and one as an output. The virtual class $[\bT_{g,1+1}(\beta)]$ defines an endomorphism 
\[
\Phi_{g,\beta}: E^*(X) \to E^*(X), \qquad \lambda \mapsto D \left( (ev_{out})_*(ev_{in}^* \lambda \cap [\bT_{g,1+1}(\beta)]) \right)
\] 
by pull-push ($D$ denotes duality $E_*(X) \to E^*(X)$ as usual). Just as the naive virtual classes for spaces $[\bT_{0,3}(\beta)]$ do not lead to an associative quantum product, the naive classes at higher genus do not give $E^*(X)$ the structure of an algebra over the Deligne-Mumford operad, the simplest failure of which is that gluing of moduli spaces does not match compositions of the maps $\Phi_{g,\beta}$.  Since we are now considering operations, divisors of split curves have strict normal crossings (since components carry ordered input and output marked points), which makes the geometry underlying Proposition \ref{prop:splitting} sufficiently explicit to correct the higher genus virtual classes to restore the operadic structure. 

Recall that if $D_{g_1,g_2} \subset \ccMbar_{g,n}$ is the divisor associated to gluing
\[
\ccMbar_{g_1,n_1+1} \times \ccMbar_{g_2,1+n_2} \to \ccMbar_{g,n}
\]
then the normal bundle $\mathcal{O}(D_{g_1,g_2}) = T_{in}^* \otimes T_{out}^*$ is the tensor product of the cotangent line bundles at the distinguished marked points.  Follow the same recipe as in Lemma \ref{lem:crazy_algebra}, we have:

\begin{lemma} \label{lem:crazier_algebra}
	Let $R'$ denote the quotient ring 
	\begin{equation} 
		R'\,  := \, \bE_*\llbracket S,T, D_0,D_1,\ldots \rrbracket / \langle D_i^2 = D_i \cdot L_{\bE}(S,T, -_{\bE}D_0, -_{\bE}D_1,\ldots, -_{\bE}D_{i-1}, -_{\bE}\widehat{D}_i, -_{\bE}D_{i+1},\ldots) \rangle
	\end{equation}
	Then there is a power series $F(D_0,\ldots)$, with coefficients in $\bE_*[[S,T]]$, so that
	\[
	L_{\bE}(D_0,\ldots,D_N) = D_0 \cdot F(0) + D_1 \cdot F(D_0, 0,\ldots) + \cdots \ \quad \in R'
	\]
	and $F(0) = 1$. \qed
\end{lemma}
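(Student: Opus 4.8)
The plan is to reduce Lemma \ref{lem:crazier_algebra} directly to Lemma \ref{lem:crazy_algebra} by treating $S$ and $T$ as formal parameters that are carried inertly through the algorithm. Concretely, I would first observe that the ring $R'$ is obtained from the ring $R$ of \eqref{eqn:crazy_ring} (on the variables $D_0,\ldots,D_N$) by adjoining the two extra variables $S,T$ and modifying each relation $r_j$ to
\[
r_j' := D_j^2 - D_j\cdot L_{\bE}(S,T,-D_0,\ldots,-\widehat{D}_j,\ldots,-D_N).
\]
Since $S$ and $T$ only ever enter with a single factor of $D_j$ on the right-hand side of the quadratic relation (exactly as each $-D_i$ does), the bookkeeping algorithm of Lemma \ref{lem:crazy_algebra}, applied with the augmented formal sum $L_{\bE}(S,T,D_0,\ldots,D_N)$, runs verbatim: at each stage, when one uses the relation for $D_j^2$ to lower the power of $D_j$, one replaces $D_j^k\phi$ by $D_j\cdot\big(L_{\bE}(S,T,-D_0,\ldots,-\widehat{D}_j,\ldots,-D_N)\big)^{k-1}\cdot\phi$, and every resulting monomial still contains exactly one power of $D_j$, with the $S,T$-dependence absorbed into the coefficient ring $\bE_*[[S,T]]$ in place of $\bE_*$.

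Next I would run the algorithm applied to the power series $L_{\bE}(D_0,\ldots,D_N)$ — note the key point that we expand $L_{\bE}(D_0,\ldots,D_N)$, \emph{not} $L_{\bE}(S,T,D_0,\ldots,D_N)$, so $S$ and $T$ appear only through the relations, hence only in the coefficients of the output series — and extract functions $F_0 = 1, F_1,\ldots,F_N$ with
\[
L_{\bE}(D_0,\ldots,D_N) = 1\cdot D_0 + F_1(D_0)\cdot D_1 + \cdots + F_N(D_0,\ldots,D_{N-1})\cdot D_N \quad\in R',
\]
each $F_j$ a power series in $D_0,\ldots,D_{j-1}$ with coefficients in $\bE_*[[S,T]]$. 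The normalisation $F_0 = F(0) = 1$ is immediate since the linear term of $L_{\bE}$ is $D_0 + \cdots + D_N$ and the coefficient of $D_0$ extracted at the first stage is $1$. Finally, to see that the $F_j$ are specialisations of a single power series $F$, I would repeat the argument from the end of the proof of Lemma \ref{lem:crazy_algebra}: the natural surjection $R'_N \to R'_{N-1}$ sending $D_N \mapsto 0$ carries $L_{\bE}(D_0,\ldots,D_N)$ to $L_{\bE}(D_0,\ldots,D_{N-1})$ and is compatible with the algorithm (because the quadratic relation $r_j'$ has every term on the right divisible by $D_j$, and setting $D_N=0$ commutes with each replacement step), so $F_j^{(N-1)} = F_j^{(N)}|_{D_N=0}$; these compatible truncations define $F$.

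I do not expect a genuine obstacle here: the content of the lemma is entirely contained in Lemma \ref{lem:crazy_algebra}, and the only thing to check is that the presence of the two extra ``source'' variables $S,T$ does not disrupt the induction. The mildly delicate point to state carefully is exactly this — that $S$ and $T$ behave formally like additional $-D_i$ terms on the right-hand side of the relations but are never the ``highest-index'' variable being eliminated, so they are simply spectators that end up in the coefficient ring $\bE_*[[S,T]]$. Once that is articulated, the proof is a one-line appeal to the previous lemma, which is presumably why the authors wrote \qed after the statement.
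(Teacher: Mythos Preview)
Your proposal is correct and is precisely the argument the paper has in mind: the \qed\ at the end of the statement signals that the proof of Lemma \ref{lem:crazy_algebra} runs verbatim with $S,T$ carried along as coefficients, and you have articulated this accurately, including the reason $F(0)=1$ (the only pure $D_0$-monomial in $L_{\bE}(D_0,\ldots,D_N)$ is $D_0$ itself, and nothing is ever pushed backward to stage $0$). Your identification of the one point needing care---that $S,T$ enter only through the relations, never as ``highest-index'' variables, so they end up in $\bE_*\llbracket S,T\rrbracket$ rather than being eliminated---is exactly right.
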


\begin{defn} A \emph{graded} virtual class for $\ccMbar_{g,n+1}(\beta)$ with respect to a global chart $(G,\scrT,E,s)$  is a virtual class in $E^*(\scrT_{g,n+1}(\beta))\llbracket w_{1}, \ldots, w_n,  z \rrbracket$ with the property that the coefficient of each monomial belongs to the tautological module of the thickening.
\end{defn}

We think of the formal variables as attached to the input and output marked points, in an obvious way. Given an element
\begin{equation}
	\lambda =  \sum_{\alpha \in \bN^n} \lambda_\alpha w^\alpha \in E^*(\ccMbar_{g,n+1} \times  X \times \ldots \times X)\llbracket w_1, \ldots, w_n \rrbracket 
\end{equation}
and a graded virtual class $\sum_{\alpha} [M_{\alpha i}] w^\alpha z^i$ for $\bT_{g,n+1}(\beta)$, we obtain an element of $E^*(X)\llbracket z \rrbracket$ as a sum
\begin{equation}
	\sum_{i \geq 0} Dev_* (ev^*\lambda \cap [M_{\alpha i}]) z^i.
\end{equation}
This uses  a residue-type pairing of the input cohomology class formal variable $w_i$ against the input variable $z_i$ (i.e. view $z_{i} = w_i^{-1}$, and extract constant terms of the product).  Assuming convergence, extending linearly yields a Gromov-Witten operation
\begin{equation}
	E^*(\ccMbar_{g,n+1}) \otimes \left( E^*(X)\llbracket z \rrbracket \right)^{\otimes n} \to E^*(X)\llbracket z \rrbracket.  
\end{equation}
In the examples below, the linear extension will be well-defined (all coefficients finite) because of finiteness of contributions coming from Gromov compactness.

Returning to Lemma \ref{lem:crazier_algebra}, view $T = T^*_{out}$ be the cotangent line at the input to the second component of genus $g_2$ whilst $S = T^*_{in}$ is that at the output of the first component of genus $g_1$, and the $D_j$ are indexing divisors of broken curves according to decompositions $\beta = \beta_0 + \beta_1$ ordered intrinsically by area of $\beta_0$. Now expand 
\[
F = \sum_{i\geq 0} F_i(T^*_{out},D_0,\ldots) (T^*_{in})^i
\]
in powers of $S = T^*_{in}$. (Breaking symmetry between $S$ and $T$ is analogous to the symmetry breaking by choosing outputs rather than inputs in Remark \ref{rmk:another_option}.)  

\begin{defn}
	The corrected virtual class of a moduli space of higher genus maps is the series
	\begin{equation} \label{eqn:correct_in_higher_genus}
		[\bT_{g,n+1}(\beta)]^{corr} := \sum_{i \geq 0} [\bT_{g,1+1}(\beta)] \cdot \, F_i(T_{out}^*, \hat{D}_0, \hat{D}_1,\ldots) \cdot z^i \cdot \prod_{j = 1}^{n} \frac{1}{1-T^*_{j,in} \cdot w_i}  ,
	\end{equation}
	considered as an element of $E_*(\bT) \llbracket w_1, \ldots, w_n, z \rrbracket$
\end{defn}

\begin{theorem}\label{thm:operad} The corrected graded virtual classes given by Equation \eqref{eqn:correct_in_higher_genus} equip $E^*(X)\llbracket z \rrbracket$ with the structure of an algebra over the $E$-homology of the Deligne-Mumford operad $\ccMbar_{g,n+1} $ (with $0 < n$).
\end{theorem}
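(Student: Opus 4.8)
\textbf{Proof proposal for Theorem \ref{thm:operad}.}

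The plan is to reduce the operadic coherence to the single `associativity-type' identity already established at the level of strict normal crossing divisors, namely Proposition \ref{prop:splitting} together with Equations \eqref{eqn:upstairs_1} and \eqref{eqn:upstairs_2}, and then to show that the correction in \eqref{eqn:correct_in_higher_genus} is exactly the series needed to make that identity into the statement that gluing of Deligne-Mumford moduli matches composition of the operations $\Phi_{g,\beta}$. First I would set up the bookkeeping: an algebra over the $E$-homology of the (truncated, one-input-one-output) Deligne-Mumford operad amounts to giving, for each $(g,n+1)$ with $0<n$, a map $E^*(\ccMbar_{g,n+1}) \otimes (E^*(X)\llbracket z\rrbracket)^{\otimes n} \to E^*(X)\llbracket z\rrbracket$, compatible with (i) the $\Sym_n$-action permuting inputs, (ii) the unit coming from $\ccMbar_{0,1+1}$, and (iii) the operadic composition maps, which geometrically are the gluing morphisms $\iota_\sigma: \ccMbar_{g_1,n_1+1}\times\ccMbar_{g_2,1+n_2}\to\ccMbar_{g,n}$ of \eqref{eqn gluing map}. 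The first two are immediate: permutation-invariance is Proposition \ref{prop:symmetry} applied to the corrected classes (the $w_i$ enter symmetrically through $\prod_j (1-T^*_{j,in}w_i)^{-1}$ and do not involve the output variables), and the unit axiom is the statement that $[\bT_{0,1+1}(0)]^{corr}$ is the diagonal class, which holds because for $\beta=0$ all the correction divisors $\hat D_i$ are empty so $F_0=1$ and the higher $F_i$ vanish, reducing the class to the usual point-class computation of Section \ref{sec:grom-witt-invar}.

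The substance is (iii). Fix a decomposition $(g,n)=(g_1,n_1)+(g_2,n_2)$ and a compatible partition $\sigma$, giving the divisor $D_\sigma\subset\ccMbar_{g,n}$ with $\mathcal{O}(D_\sigma)=T^*_{in}\otimes T^*_{out}$ at the distinguished node. I would pull back the corrected class $[\bT_{g,n+1}(\beta)]^{corr}$ along $(\st\times ev)$ over $D_\sigma$, using the diagram of global charts preceding \eqref{eqn:component_strata} and the fact (Lemma \ref{lem:submerse_to_domains} and the lemma on $\scrF^{\square}_{g_1,g_2}$) that here the relevant divisor in $\scrF_{g,n}(\bP^{d-g},d)$ has \emph{strict} normal crossings, so Axioms \ref{pullback axiom} and \ref{normal crossing axiom} apply in the explicit form \eqref{eqn:upstairs_1}, \eqref{eqn:upstairs_2}. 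Combining with Proposition \ref{prop:square_as_product}, the pullback of the naive class decomposes as a sum over $\beta=\beta_1+\gamma_1+\cdots+\gamma_r+\beta_2$ of pushforwards of $[\bT_{g_1,n_1+1}(\beta_1)]\otimes(\text{chain of }[\bT_{0,2}(\gamma_j)])\otimes[\bT_{g_2,1+n_2}(\beta_2)]$ capped with multidiagonals, weighted by monomials in the classes $\mathcal{O}(\hat D_i)$ coming from expanding the formal group $L_{\bE}$. The key algebraic input is Lemma \ref{lem:crazier_algebra}: the series $F=\sum_i F_i(T^*_{out},\hat D_\bullet)(T^*_{in})^i$ is precisely designed so that, in the quotient ring $R'$ encoding the relations $\mathcal{O}(\hat D_i)|_{\hat D_i}\cong(\bigotimes_{j\neq i}\mathcal{O}(\hat D_j)^{-1})|_{\hat D_i}$ together with $\mathcal{O}(D_\sigma)=S\otimes T$, multiplying the naive class by $F$ and reorganising turns the `sum over chains with formal-group weights' into a \emph{product} $[\bT_{g_1,n_1+1}(\beta_1)]^{corr}\boxtimes[\bT_{g_2,1+n_2}(\beta_2)]^{corr}$ capped with $\Delta_\sigma$ — with the correction on each factor being exactly the genus-$g_i$ corrected class \eqref{eqn:correct_in_higher_genus}, the residue pairing $z_i=w_i^{-1}$ at the glued node accounting for the factor $(1-T^*_{in}T^*_{out})$-type contributions. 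Translating this equality of pulled-back virtual classes into the operadic composition identity for the maps $\Phi_{g,\beta}$ is then formal, using Lemma \ref{lem:composition_operations} (composition of correspondences via the product chart and the diagonal) exactly as in the proof of Theorem \ref{thm:associative}.

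The main obstacle I expect is the reorganisation step in the previous paragraph: one must check that the \emph{same} universal power series $F$ (from Lemma \ref{lem:crazier_algebra}, whose well-definedness independent of the partial-to-total order resolution follows from the disjointness of equal-area divisors as in Lemmas \ref{lem:resolve_partial} and \ref{lem:power-series-well-defined}) simultaneously produces the correction on the glued source $\bT_{g,n+1}$ \emph{and} factorises correctly onto both genus-$g_i$ pieces, including the rational chains $[\bT_{0,2}(\gamma_j)]$ that get contracted to the node. Concretely, the chain contributions are governed by the genus-zero, two-pointed part of the theory, and one needs the compatibility of $F$ under `inserting a dummy variable' (Remark \ref{remark:dummy_variable}) to see that a contracted chain of length $r$ contributes the same combinatorial weight whether computed upstairs or as an iterated composition downstairs. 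A secondary point requiring care is convergence: the linear extension defining the operation on $E^*(X)\llbracket z\rrbracket$ must have all coefficients finite, which follows from Gromov compactness (only finitely many $(\beta_1,\underline\gamma,\beta_2)$ are realised for fixed $J$), but this must be tracked through the $w_i,z$-expansions, i.e. one must confirm that the residue pairing does not introduce infinitely many contributions to a fixed power of $z$; the `positive codimension' constraint in the definition of the tautological module and the cotangent-line geometry of $\ccMbar_{g,n+1}$ ensure the relevant classes are nilpotent, making each coefficient a finite sum.
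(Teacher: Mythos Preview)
Your approach is essentially the paper's: use strict normal crossings (guaranteed by the presence of input/output labels) so that the explicit formal group expansion \eqref{eqn:upstairs_2} applies, then invoke Lemma~\ref{lem:crazier_algebra} to reorganise into a factorised form, and conclude via Lemma~\ref{lem:composition_operations}. Your treatment of $\Sym_n$-equivariance and convergence is more explicit than the paper's sketch, which is fine.

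There is one organisational difference worth noting. You frame the composition check as restricting the \emph{full} corrected class $[\bT_{g,n+1}(\beta)]^{corr}$ and showing it factorises as $[\ldots]^{corr}\boxtimes[\ldots]^{corr}$. The paper instead first establishes the simpler identity \eqref{eqn:factorise_at_higher_genus}: the \emph{naive} class, capped with a single boundary divisor $D_{(g_1,n_1)\circ_k(g_2,n_2)}(\beta_1,\beta_2)$, factorises as $[\bT_{g_1,n_1+1}(\beta_1)]^{out\text{-}corr}\bullet[\bT_{g_2,n_2+1}(\beta_2)]^{in\text{-}corr}$, where the out-correction carries only the $F_i(T^*_{out},\hat D_\bullet)$ series at the output and the in-correction is just the geometric series $(1-T^*_{in}w_k)^{-1}$. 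The full corrected class \eqref{eqn:correct_in_higher_genus} is then the product of the output correction with input corrections at \emph{all} inputs; since the input corrections at non-glued marked points are supported away from the node and pass through the restriction unchanged, the full factorisation follows immediately from the naive one. This separation makes your ``main obstacle'' paragraph largely evaporate: one does not need to track chains of $[\bT_{0,2}(\gamma_j)]$ explicitly, because Lemma~\ref{lem:crazier_algebra} has already absorbed them into $F$, and the compatibility under inserting dummy variables (your appeal to Remark~\ref{remark:dummy_variable}) is exactly what makes the out-correction on the factor intrinsic.

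A small slip: your unit discussion invokes $[\bT_{0,1+1}(0)]^{corr}$, but $\ccMbar_{0,2}$ does not exist as a Deligne--Mumford stack (a genus-zero curve with two marked points is unstable), so there is no such class. The paper's operad is the truncated one with $0<n$ and does not carry an internal unit; this axiom should simply be dropped.
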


\begin{proof}[Sketch]
	Using the module structure of cohomology over homology, it suffices to prove the result for the fundamental class of the moduli spaces. The \emph{naive} virtual class $[\bT_{g,n+1}(\beta)]$ 
	has the property that if $\iota: D_{(g_1,n_1)\circ_k (g_2,n_2)}(\beta_1,\beta_2) \to \bT_{g,n+1}(\beta)$ is the inclusion of one irreducible stratum in the boundary divisor over the divisor $D_{(g_1,n_1) \circ_k (g_2,n_2)} \subset \ccMbar_{g,n+1}$ of split curves for which gluing takes place at the $k$th marked point, then
	\begin{equation} \label{eqn:factorise_at_higher_genus}
		[\bT_{g,n +1}(\beta)] \cap D_{(g_1,n_1)\circ_k (g_2,n_2)}(\beta_1,\beta_2) = [\bT_{g_1,n_1+1}(\beta_0)]^{out-corr} \bullet [\bT_{g_2,n_2+1}(\beta_1)]^{in-corr}
	\end{equation}
	factorises into corrections of the corresponding virtual classes which involve data intrinsically defined on the smaller moduli spaces and which are `concentrated' at the output respectively input, and where $\bullet$ is the previously introduced residue pairing.  More precisely, 
	\[
	[\bT_{g,n+1}] \cap \st^*D_{(g_1,n_1)\circ_k (g_2,n_2)} = [\bT_{g,n+1}] \cap L_{\bE}(\{D_{(g_1,n_1)\circ_k (g_2,n_2)}(\beta_0,\beta_1)\})
	\]
	by Proposition \ref{prop:formal group} and Example \ref{ex:cotangent _line},
	and so Lemma \ref{lem:crazier_algebra} writes this formal group expression as 
	\[
	\sum F_i(T^*_{out},D_0, D_1,\ldots) (T^*_{in})^i
	\]
	where the $D_i$ label the divisors $D_{(g_1,n_1)\circ_k (g_2,n_2)}(\beta_0,\beta_1)$ ordered by $\omega(\beta_0)$. We can therefore set
	\[
	[\bT_{g_1,n_1+1}(\beta_0)]^{out-corr} = \sum_{i\geq 0} [\bT_{g_1,n+1}(\beta_0)] F_i(T^*_{out}, \hat{D}_0, \hat{D}_1,\ldots) \cdot z^i
	\]
	and 
	\[
	[\bT_{g_2,1+1}(\beta_1)]^{in_k-corr} = \sum_{j \geq 0} [\bT_{g_2,1+1}(\beta_1)] (T^*_{in})^j w^j_k = [\bT_{g_2,1+1}(\beta_1)]\cdot \frac{1}{1-T^*_{in} w_k}
	\]
	to ensure that Equation \eqref{eqn:factorise_at_higher_genus} is satisfied, where the $\hat{D}_i$ are the divisors of broken curves with a bubble at the output marked point, ordered intrinsically by $\omega$-area of the bubble.   Universality of the construction shows that  \eqref{eqn:factorise_at_higher_genus} is sufficient to imply compatibility with composition when defining the corrected virtual class as in \eqref{eqn:correct_in_higher_genus} by incorporating all input and output corrections at once. 
\end{proof}

\subsection{Genus reduction}

The image of the gluing map $\ccMbar_{g-1,n+2} \to \ccMbar_{g,n}$ defines a divisor which self-intersects rather than being strict normal crossings. There is again a diagram of pullback squares
\[
\xymatrix{
	\scrT_{g-1,n+2}^{\,\square} \ar[r]_-{\iota_\scrT} \ar[d] & \scrT_{g,n}(\beta)  \ar[d] \\
	\scrF_{g-1,n+2}^{\,\square} \times X^n \ar[r]_-{\iota_\scrF} \ar[d] & \scrF_{g,n}(\bP^{d-g},d) \times X^n\ar[d] \\
	\ccMbar_{g-1,n+2} \times X^n \ar[r]_-{\iota_{\sigma}}  &  \ccMbar_{g,n} \times X^n
}
\]
and now the maps $\iota_{\scrF}$ and $\iota_{\scrT}$ have smooth irreducible domains but images in which $r$ sheets self-intersect along the locus of maps where a chain of $r-1$ rational curves are contracted under stabilisation at the distinguished node. The `genus reduction axiom' for the virtual fundamental class by necessity appeals to the Equation \eqref{eqn:nonstrict_nc_divisor}, which is not explicit cf. Remark \ref{rem:axiom_3_variation}. This again expresses
\[
[\bT_{g,n}(\beta)] \cap \st^*D_{\sigma}
\]
for $D_{\sigma} \in E^*(X^n \times \ccMbar_{g,n})$ dual to the image of $\iota_{\sigma}$ above  in terms of tautological classes capped against $[\bT_{g+1,n-2}(\beta')]\cap\Delta_{\sigma}$, where $\Delta_{\sigma}$ is the diagonal at the two marked points, and the $\beta'$ range over classes with $\beta' + \underline{\gamma} = \beta$ for a rational chain $\underline{\gamma}$ at the marked point. The coefficients of the simpler invariants will now depend on the formal group law $L(\bullet)$ of the theory and on the cone complex of  $D_{\sigma}$. The cases of $H\bQ$ and $KU$ are simpler, stemming from Examples \ref{ex:HQ_easier} and \ref{ex:divisor_Ktheory}, and the `usual' genus reduction axiom (as in the algebraic case) holds for these theories.

\appendix

\section{Gluing and comparisons of charts}
\label{sec:gluing-comp-charts}

	\subsection{Parameterized Gluing Theorem}\label{Subsec:gluing}
	
	Let $(X,J)$ be a smooth almost complex manifold and let $\beta \in H_2(X)$.
	Let $g, h \in \bN$ and let $\ccMbar \lra{} \ccMbar_{g,h}$ be a smooth \'{e}tale map (in the orbifold sense)
	from a smooth manifold and define
	a family of curves $\ccCbar \lra{} \ccMbar$ via the pullback square:
	\[\begin{tikzcd}
		\ccCbar & {\ccCbar_{g,h}} \\
		\ccMbar & {\ccMbar_{g,h}}.
		\arrow[from=1-2, to=2-2]
		\arrow[from=1-1, to=1-2]
		\arrow[from=1-1, to=2-1]
		\arrow[from=2-1, to=2-2]
	\end{tikzcd}\]
	We let $\ccCbar^o \subset \ccCbar$ be the region where the fiber is smooth (i.e. the complement of the nodes)
	and let $Y_{\ccCbar} := \Omega^{0,1}_{\ccCbar^o/\ccMbar} \otimes_\bC TX$ 
	be the bundle over $\ccCbar^o \times X$ whose fiber over $(c,x)$
	is the space of linear anti-holomorphic maps from the vertical tangent space of $\ccCbar^o$ to $(TX,J)$.
	Let $U$ be a smooth manifold, $W$ a real vector space and let
	\begin{equation}
		\lambda : W \lra{} C^\infty_c(U \times \ccCbar^o \times X, P^*Y_{\ccCbar})
	\end{equation}
	be a linear map where $P : U \times \ccCbar^o \times X \lra{} \ccCbar^o \times X$ is the natural projection map.
	For each $\nu \in \ccMbar$, let $\ccCbar|_\nu$ be the fiber of $\ccCbar$ over $\nu$.
	Let
	\begin{equation}
		\ccMbar^{\reg}(X) := \left\{
		\begin{array}{l|l}
			s = (a,\nu) \in U \times \ccMbar & u \ \textnormal{is smooth and} \ u_*(\ccCbar|_\nu) = \beta \\ 
			u : \ccCbar|_\nu \lra{} X & \overline{\partial}(u(\cdot)) + \lambda(w)(a,\cdot,u(\cdot)) = 0 \\
			w \in W & u \ \textnormal{is regular} \\
		\end{array} 
		\right\}.
	\end{equation}
	Let $\Pi : \ccMbar^{\reg}(X) \lra{} U \times \ccMbar$
	be the natural projection map sending $(s,u,w)$ to $s$. The following result can be proved using the comparison with slice charts discussed in Section \ref{sec:comp-glob-kuran} below, but it can also be directly extracted from Pardon's account of gluing \cite[Appendix C]{Pardon2016algebraic}.
	
	\begin{theorem} \label{theorem parameterized gluing}
		For each $(s,u,w) \in \ccMbar^{\reg}(X)$, $s = (a,\nu)$, and any small neighborhoods
		$K \subset \Pi^{-1}(\Pi(s,u,w))$ of $(s,u,w)$ and
		$Q \subset U \times \ccMbar$ of its image under $\Pi$, there is a continuous map $g$ fitting into the diagram:
				\[\begin{tikzcd}
			{Q \times K} & {\ccMbar^{\reg}(X)} \\
			Q & {U \times \ccMbar}
			\arrow["\Pi", from=1-2, to=2-2]
			\arrow["g", from=1-1, to=1-2]
			\arrow["j"', hook, from=2-1, to=2-2]
			\arrow["proj"', from=1-1, to=2-1]
		\end{tikzcd}\]
		where $j$ is the natural inclusion map and $g$ is
		an open embedding.
		The restriction of $g$ to each fiber of the projection map $Q \times K \lra{} Q$ is smooth and it is equivariant under the natural action on $K$ given by the stabilizer group of the image of $\nu$ in $\ccMbar_{g,h}$ if $K$ is equivariant under this group.
		The choices involved in constructing such a product neighborhood $g$ ensure that if $g'$ was another such neighborhood then $g$ and $g'$ are $C^1_{loc}$-compatible as in \cite[Definition 4.27]{AMS-Hamiltonian}. \qed
	\end{theorem}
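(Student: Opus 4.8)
\textbf{Proof plan for Theorem \ref{theorem parameterized gluing}.}

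The plan is to reduce the parameterized gluing statement to a fibrewise application of a standard gluing theorem, treating the parameter space $U \times \ccMbar$ as an enlarged space of domains. First I would observe that the perturbed Cauchy--Riemann equation $\overline{\partial}(u(\cdot)) + \lambda(w)(a,\cdot,u(\cdot)) = 0$ can be absorbed into a universal setup: enlarging the target by the vector space $W$ and regarding $\lambda$ as a family of inhomogeneous terms, the equation becomes an honest $J$-holomorphic curve equation with domain-dependent (and parameter-dependent) perturbation data. Because we have restricted to the regular locus $\ccMbar^{\reg}(X)$, the linearized operator (augmented by the tangent space to $W$ and to the parameter direction $U$) is surjective at every point, so the moduli space is cut out transversally and the implicit function theorem applies fibrewise over $Q$.

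The key steps, in order, are: (i) set up the universal Banach bundle whose fibre over $(s, u, w)$ is the relevant Sobolev space of sections, and identify its zero locus (after dividing by automorphisms) with $\ccMbar^{\reg}(X)$; (ii) invoke the pregluing construction --- for a point whose domain $\ccCbar|_\nu$ has nodes, build approximately-holomorphic pre-glued maps over the gluing region $K$, parameterized by the nearby smooth domains in $Q$; (iii) run the Newton--Picard iteration fibrewise over $Q$ to correct the pregluing to a genuine solution, using surjectivity of the linearization and uniform bounds on the right inverse (which hold on a neighborhood by openness of regularity and compactness); (iv) verify that the resulting map $g: Q \times K \to \ccMbar^{\reg}(X)$ is an open embedding commuting with the projections to $Q$, that it is smooth on each fibre (this is automatic from the implicit function theorem, since the fibrewise equation is smooth even though the total space is only $C^1_{loc}$), and that it is equivariant for the stabilizer group action on equivariant $K$ (which follows since all choices --- pregluing profile, right inverse, cutoff functions --- can be taken equivariantly, the stabilizer being finite or compact). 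For the last clause, I would appeal directly to \cite[Appendix C]{Pardon2016algebraic}: Pardon's gluing theorem already produces product charts with precisely the $C^1_{loc}$-compatibility needed, and \cite[Definition 4.27]{AMS-Hamiltonian} is the relevant notion; so step (v) is to match the output of the iteration with Pardon's charts and quote the compatibility.

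The main obstacle I expect is the $C^1_{loc}$-compatibility of two different product neighborhoods $g$ and $g'$: the gluing map is only $C^1$ (not smooth) in the gluing parameter as one approaches the nodal stratum, so one must show that the transition between two pregluing constructions, two choices of right inverse, and two families of cutoff functions is fibrewise-smooth with a continuous fibrewise derivative --- precisely the content of \cite[Definition 4.27]{AMS-Hamiltonian}. The cleanest route is not to reprove this but to note that both $g$ and $g'$ arise as restrictions of the single gluing chart constructed in \cite[Appendix C]{Pardon2016algebraic} (or differ from it by a fibrewise-smooth reparameterization), so the compatibility is inherited; making this identification precise, including checking that Pardon's hypotheses are met by our inhomogeneous equation and that the parameter directions $U$ are harmless, is where the real work lies. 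Everything else is a routine, if lengthy, adaptation of the transversal implicit function theorem in the presence of parameters.
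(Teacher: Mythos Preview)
Your proposal is sound and aligns with the paper's own treatment: the paper does not give an independent proof of this theorem but simply states that it ``can be directly extracted from Pardon's account of gluing \cite[Appendix C]{Pardon2016algebraic}'' (and alternatively via the slice-chart comparison in Section~\ref{sec:comp-glob-kuran}), placing a \qed\ after the statement. Your steps (i)--(v) are a faithful unpacking of what that extraction entails, and you correctly identify that the $C^1_{loc}$-compatibility clause is the nontrivial point and that the cleanest route is to inherit it from Pardon's construction rather than reprove it.
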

	
	The fibre $ \Pi^{-1}(\Pi(s,u,w))$ is a moduli space of curves with a fixed domain, so is a smooth manifold by the regularity assumption on the maps $u$ in $\ccMbar^{reg}(X)$ and classical elliptic regularity; taking $K$ to be a ball, one sees in particular that $\ccMbar^{\reg}(X)$ is a topological manifold.
	
	We wish to generalize the above theorem to allow for families of curves
	that do not necessarily submerse onto $\ccMbar_{g,h}$.
	Let $P : \ccMbar' \to \ccMbar_{g,h}$ be a smooth map from a smooth manifold and define
	a family of curves $\ccCbar' \lra{} \ccMbar'$ via the pullback square:
	\[\begin{tikzcd}
		{\ccCbar'} & {\ccCbar_{g,h}} \\
		{\ccMbar'} & {\ccMbar_{g,h}}
		\arrow[from=1-2, to=2-2]
		\arrow[from=1-1, to=1-2]
		\arrow[from=1-1, to=2-1]
		\arrow[from=2-1, to=2-2]
	\end{tikzcd}\]
	Let ${\ccCbar'}^o \subset \ccCbar$ the complement of all nodes
	and let $Y_{\ccCbar'} := \Omega^{0,1}_{{\ccCbar'}^o/\ccMbar'} \otimes_\bC TX$.
	Let 
	\begin{equation}
		\lambda' : W' \lra{} C^\infty_c({\ccCbar'}^o \times X, Y_{\ccCbar'})
	\end{equation}
	For each $\nu \in \ccMbar'$, let $\ccCbar'|_\nu$ be the fiber of $\ccCbar'$ over $\nu$.
	Let
	\begin{equation}
		{\ccMbar'}^{\reg}(X) := \left\{
		\begin{array}{l|l}
			\nu \in \ccMbar' & u \ \textnormal{is smooth and} \ u_*(\ccCbar'|_\nu) = \beta \\ 
			u : \ccCbar'|_\nu \lra{} X & \overline{\partial}(u(\cdot)) + \lambda'(w)(\cdot,u(\cdot)) = 0 \\
			w \in W & u \ \textnormal{is regular} \\
		\end{array} 
		\right\}.
	\end{equation}
	Let $\Pi' : \ccMbar^{\reg}(X) \lra{} \ccMbar'$
	be the natural projection map sending $(\nu,u,w)$ to $\nu$. 
	
	\begin{corollary} \label{cor curvemap}
		For each $(\nu,u,w) \in {\ccMbar'}^{\reg}(X)$, and any small neighborhoods
		$K \subset \Pi^{-1}(\Pi(\nu,u,w))$ of $(\nu,u,w)$ and
		$Q \subset \ccMbar'$ there is a continuous map $g$ fitting into the diagram:
				\[\begin{tikzcd}
			{Q \times K} & {{\ccMbar'}^{\reg}(X)} \\
			Q & {\ccMbar'}
			\arrow["\Pi", from=1-2, to=2-2]
			\arrow["g", from=1-1, to=1-2]
			\arrow["j"', hook, from=2-1, to=2-2]
			\arrow["proj"', from=1-1, to=2-1]
		\end{tikzcd}\]
		where $j$ is the natural inclusion map and $g$ is
		an open embedding.
		The restriction of $g$ to each fiber of the projection map $Q \times K \lra{} Q$ is smooth and it is equivariant under the natural action on $K$ given by the stabilizer group of the image of $\nu$ in $\ccMbar_{g,h}$ if $K$ is equivariant under this group.
		The choices involved in constructing such a product neighborhood $g$ ensure that if $g'$ was another such neighborhood then $g$ and $g'$ are $C^1_{loc}$-compatible as in \cite[Definition 4.27]{AMS-Hamiltonian}.
	\end{corollary}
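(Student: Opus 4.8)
The strategy is to reduce Corollary \ref{cor curvemap} to Theorem \ref{theorem parameterized gluing} by choosing, \emph{locally} near the point $(\nu,u,w)$, a submersion from a neighbourhood of $\nu$ in $\ccMbar'$ onto an \'etale chart for $\ccMbar_{g,h}$, together with a choice of extra marked points that stabilises the fibre curve $\ccCbar'|_\nu$. Concretely, the domain of $u$ is a prestable curve of genus $g$; after possibly adding $r$ auxiliary marked points lying on the unstable components of $\ccCbar'|_\nu$ at which $u$ is injective and transverse to some chosen local divisors in $X$ (this is possible because $u$ is regular, hence in particular non-constant on unstable components), the stabilisation becomes a point of $\ccMbar_{g,h+r}$ with an \'etale neighbourhood $\ccMbar$. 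First I would record that, after shrinking $Q$, there is a smooth map $Q \to \ccMbar$ compatible with the given maps to $\ccMbar_{g,h}$ (this is the content of Remark \ref{lemma local submersion}), so that $\ccCbar'|_Q$ is pulled back from the universal curve over $\ccMbar$. One then enlarges the perturbation datum: the map $\lambda'$ on $W'$ is pulled back, as in Definition \ref{defnpulbackoffdscheme}, from a perturbation $\lambda$ over $U \times \ccCbar^o \times X$ for a suitable parameter space $U$ (one may simply take $U$ a point, or the same parameter manifold if one is doing the parametrised version), and the regularity of $u$ is preserved under pullback.

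Next I would invoke Theorem \ref{theorem parameterized gluing} applied to the data $(\ccMbar \to \ccMbar_{g,h}, \lambda, U, W)$ at the point corresponding to $(\nu,u,w)$ under the identification above. This yields a continuous open embedding $g_0: Q_0 \times K_0 \to \ccMbar^{\reg}(X)$ over $Q_0 \subset U\times \ccMbar$, smooth on fibres, equivariant under the relevant stabiliser group, and unique up to $C^1_{loc}$-compatibility. The fibre product of $g_0$ with the map $Q \to Q_0$ induced by $Q \to \ccMbar$ then produces the desired map $g$ for Corollary \ref{cor curvemap}: one needs only check that $\ccMbar^{\reg}(X)$ for the family $\ccCbar' \to \ccMbar'$ is, locally near $(\nu,u,w)$, the pullback of $\ccMbar^{\reg}(X)$ for the family $\ccCbar \to \ccMbar$ along $\ccMbar' \to \ccMbar$. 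This is immediate from the definitions since the curve $\ccCbar'|_\mu$ is by construction isomorphic to $\ccCbar|_{P(\mu)}$ and the perturbation $\lambda'$ is the pullback of $\lambda$, so the perturbed $\overline{\partial}$-equations and the regularity conditions match term by term; the stabiliser group of the image of $\nu$ in $\ccMbar_{g,h}$ acts compatibly on both sides.

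The equivariance, smoothness on fibres, and the $C^1_{loc}$-compatibility of two choices of $g$ are then inherited directly from the corresponding properties of $g_0$ in Theorem \ref{theorem parameterized gluing}: any two product neighbourhoods $g, g'$ in the corollary pull back from product neighbourhoods $g_0, g_0'$ upstairs (after passing to a common refinement of the auxiliary marked point data, which does not affect $C^1_{loc}$-compatibility since adding marked points corresponds to an \'etale change of the base orbifold), and $g_0, g_0'$ are $C^1_{loc}$-compatible by the theorem; since pullback along the smooth map $Q \to \ccMbar$ is a $C^1_{loc}$-operation, so are $g$ and $g'$.

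\textbf{Main obstacle.} The delicate point is the choice of auxiliary marked points making the fibre curve stable in a way that varies continuously over $Q$ and is compatible with the stabiliser-group action. On unstable rational or genus-one components one must add points in the image $u^{-1}(D)$ for suitable codimension-two $D \subset X$ transverse to $u$; transversality is an open condition (cf. the discussion after Definition \ref{defnDcompatible} and Remark \ref{remarkelliptic}), so it persists over a neighbourhood $Q$, but one should make the choice so that the extra marked points are permuted correctly by the stabiliser of $\nu$, which may require first passing to a finite cover of $Q$ or enlarging the set of added points to a stabiliser-invariant one. Once this bookkeeping is handled, everything else is formal pullback along Theorem \ref{theorem parameterized gluing}. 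An alternative, which sidesteps the marked-point construction entirely, is to prove the corollary directly from Pardon's gluing analysis \cite[Appendix C]{Pardon2016algebraic} exactly as Theorem \ref{theorem parameterized gluing} was, since nothing in that argument used that the base submerses onto $\ccMbar_{g,h}$ — only that the family of domains is flat with the fixed-domain fibres being smooth moduli spaces by elliptic regularity. I would present the reduction argument as the main line and remark that the direct route is also available.
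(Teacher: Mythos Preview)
Your proposal heads in the right direction (reduce to Theorem \ref{theorem parameterized gluing}) but contains two genuine gaps, and it differs substantially from the paper's argument.

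First, the detour through auxiliary marked points is misplaced. In the hypotheses of this Corollary the family $\ccCbar' \to \ccMbar'$ is \emph{already} the pullback of the universal curve over $\ccMbar_{g,h}$, so every fibre is a stable marked curve; there are no unstable components to stabilise. You are conflating this statement with the next one, Corollary \ref{cor algfam}, whose proof does add marked points precisely in order to reduce an arbitrary flat algebraic family of prestable curves to the present situation. Remark \ref{lemma local submersion} is likewise about $\scrF_{g,h,d}$, not about a general $\ccMbar'$, so citing it does not help here.

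Second, and more seriously: producing a smooth map $Q \to \ccMbar$ with $\ccMbar$ \'etale over $\ccMbar_{g,h}$ is not enough to invoke Theorem \ref{theorem parameterized gluing}. The perturbation $\lambda'$ is a section over ${\ccCbar'}^o \times X$ and genuinely depends on the point of $\ccMbar'$; it is \emph{not} the pullback (in the sense of Definition \ref{defnpulbackoffdscheme}) of any $\lambda$ defined over $\ccCbar^o \times X$ alone, so your suggestion ``take $U$ a point'' fails outright. To apply the Theorem one must embed $Q$ into a product $U \times \ccMbar$ (for instance as the graph of $P$, with $U = Q$), \emph{extend} $\lambda'$ off the graph to a perturbation on the product, apply the Theorem there, and then restrict the resulting $C^1_{loc}$ structure to the graph. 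Your ``fibre product'' step tacitly presupposes this extension without ever constructing it.

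The paper's proof carries out exactly this thicken-then-restrict manoeuvre, but much more directly and with no marked points: it replaces $\ccMbar'$ by the total space $\ccMbar'' = T\ccMbar'$, maps to $\ccMbar_{g,h}$ by $\exp \circ DP$, uses that a submersion is locally a projection to cast the thickened family in the form $U \times \ccMbar$ required by Theorem \ref{theorem parameterized gluing}, and then observes that the restriction of a $C^1_{loc}$ structure to the smooth submanifold $\ccMbar' \subset \ccMbar''$ (the zero section) is again $C^1_{loc}$. Your ``alternative route'' of rerunning Pardon's gluing analysis is valid, but it bypasses the Theorem rather than deducing the Corollary from it.
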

	\begin{proof}
		Choose a complete metric on $\ccMbar_{g,h}$ and let $\exp : T\ccMbar_{g,h} \to \ccMbar_{g,h}$
		be the exponential map.
		Define
		\begin{equation}
			T := \exp \circ DP : T\ccMbar' \to \ccMbar_{g,h}.
		\end{equation}
		Define $\ccMbar'' := T\ccMbar'$ and
		let $\ccCbar'' \to \ccMbar''$ be the pullback of $\ccCbar_{g,h}$ via $T$.
		Since a submersion is locally a projection map,
		 Theorem \ref{theorem parameterized gluing}
		applies to $\ccCbar'' \to \ccMbar''$.
		Since $\ccCbar' \to \ccMbar'$ is the restriction of $\ccCbar'' \to \ccMbar''$
		to the zero section of $\ccMbar'' = T\ccMbar'$,
		the corollary follows since 
		a $C^1_{loc}$ structure restricted to a smooth submanifold of the base also admits a $C^1_{loc}$ structure.	
	\end{proof}

	\begin{cor} \label{cor algfam}
		Let $\ccCbar' \to \ccMbar'$ be a flat algebraic family of curves over a smooth base.
		Then the conclusion of Corollary \ref{cor curvemap} holds for this family of curves.
	\end{cor}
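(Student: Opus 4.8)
\textbf{Proof plan for Corollary \ref{cor algfam}.}

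The plan is to reduce the statement about a general flat algebraic family $\ccCbar' \to \ccMbar'$ to the situation already handled in Corollary \ref{cor curvemap}, where the family was assumed to be pulled back from the universal curve over $\ccMbar_{g,h}$ along a smooth (but not necessarily submersive) map $P : \ccMbar' \to \ccMbar_{g,h}$. The only thing that needs to be supplied is such a map $P$. First I would observe that a flat family of (prestable, genus $g$, $h$-marked) curves over a scheme $\ccMbar'$ is classified by a morphism $P : \ccMbar' \to \frak{M}_{g,h}$ to the Artin stack of prestable curves; composing with the stabilisation morphism $\st : \frak{M}_{g,h} \to \ccMbar_{g,h}$ to Deligne--Mumford space would give a candidate, but this is the wrong family (it is the stabilised domain, not $\ccCbar'$). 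Instead, the point to exploit is that the hypotheses of Corollary \ref{cor curvemap} do not actually require $\ccCbar'$ to be literally the pullback of $\ccCbar_{g,h}$: the gluing analysis there is local on $\ccMbar'$, and near any point $\nu\in\ccMbar'$ we are free to replace $\ccMbar'$ by an arbitrarily small open (analytic, or étale) neighbourhood.

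So the key steps, in order, would be: (1) Fix a point $(\nu,u,w) \in {\ccMbar'}^{\reg}(X)$ and work in a small neighbourhood $\ccMbar'_0 \ni \nu$. (2) On $\ccMbar'_0$, after possibly shrinking, the family $\ccCbar'|_{\ccMbar'_0} \to \ccMbar'_0$ becomes, after adding a bounded number $r$ of auxiliary sections through smooth points of the fibres (equivariantly under the relevant finite automorphism group, using that automorphism groups act faithfully on fibres for stable enough curves, cf.\ the discussion around Remark \ref{lemma local submersion} and Definition \ref{defn equivariant family}), a family whose fibres are stable with $h+r$ marked points; this produces a classifying map $\ccMbar'_0 \to \ccMbar_{g,h+r}$ which is a smooth map of smooth manifolds and recovers $\ccCbar'|_{\ccMbar'_0}$ (with the auxiliary sections forgotten) as the pullback of the universal curve. (3) Apply Corollary \ref{cor curvemap} verbatim to this presentation, with $\ccMbar_{g,h}$ there replaced by $\ccMbar_{g,h+r}$ and $\ccCbar_{g,h}$ by $\ccCbar_{g,h+r}$, noting that the perturbation data $\lambda'$ and the $\overline{\partial}$-equation only see the complex structure on the fibres and the map to $X$, which are unaffected by remembering or forgetting the auxiliary markings. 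This yields the product-neighbourhood map $g$, its fibrewise smoothness, its finite-group equivariance, and the $C^1_{loc}$-compatibility of two such choices --- all inherited from Corollary \ref{cor curvemap}. (4) Since all assertions are local on $\ccMbar'$ and the $C^1_{loc}$-compatibility statement lets the local structures be patched, this completes the proof.

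The main obstacle I anticipate is step (2): making the choice of auxiliary sections genuinely work in families and compatibly with the automorphism groups. In the stack-theoretic language this is the statement that $\frak{M}_{g,h}$ is (locally) covered by smooth charts of the form $\ccMbar_{g,h+r}$ after rigidification, i.e.\ that adding enough marked points at smooth points kills automorphisms and stabilises the fibres; this is standard (it is exactly the mechanism behind Remark \ref{lemma local submersion}), but one must be slightly careful that the sections can be chosen through the smooth locus of \emph{every} nearby fibre simultaneously and that they are disjoint from the nodes and from each other --- which is automatic after shrinking $\ccMbar'_0$ since these are open conditions. A secondary, purely bookkeeping, point is that the auxiliary markings change the numerics ($h \mapsto h+r$), so one should record that Corollary \ref{cor curvemap} was stated for arbitrary $(g,h)$ and hence applies with $h+r$ in place of $h$ with no change; the perturbation space $W'$ and the map $\lambda'$ are carried along unchanged. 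Everything else --- the gluing estimates themselves --- is black-boxed via Corollary \ref{cor curvemap} (ultimately \cite[Appendix C]{Pardon2016algebraic}), so no new analysis is required.
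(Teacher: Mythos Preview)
Your proposal is correct and follows essentially the same route as the paper: both arguments observe that the statement is local on $\ccMbar'$, then choose enough auxiliary sections through smooth points of the fibres near $\nu$ to make the marked domain automorphism-free, and use the resulting classifying map to (a suitable) Deligne--Mumford space to place oneself in the setting of Corollary~\ref{cor curvemap}. Your bookkeeping (writing $h+r$ rather than silently overloading $h$) is if anything more careful than the paper's, but the idea is identical.
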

	\begin{proof}
		Since the statement of Corollary \ref{cor curvemap} is local in nature,
		it is sufficient to find for each $\nu \in \ccMbar'$, a neighborhood $U$ of $\nu$
		so that $\ccCbar'|_U \to U$ is the pullback of $\ccCbar_{g,h} \to \ccMbar_{g,h}$
		via a smooth map $f : U \to \ccMbar_{g,h}$.
		For the same reason, we can assume that the fibers of $\ccCbar'|_U \to U$  are connected.
		
		Such a map is constructed as follows:
		Choose local sections $s_1,\cdots,s_h$ disjoint from the nodes on a neighborhood $U$ of $\nu$
		so that for each $\nu' \in U$, we have that $\Sigma_{\nu'} := (\ccCbar'|_{\nu'},s_1(\nu'),\cdots,s_h(\nu'))$
		is an automorphism free marked curve.
		Then $f$ is the map sending $\nu'$ to $\Sigma_{\nu'} \in \ccMbar_{g,h}$.
	\end{proof}

	\subsection{Local Slice Charts} \label{subsec:FOOOlocalslices}
	
	We wish to compare our construction of a global Kuranishi chart
	with that of local `slice' Kuranishi charts constructed by Fukaya, Oh, Ohta, and Ono \cite{FO3}. 
	In this subsection we will recall such a construction.  This section and the next are not required for the results of this paper, but may be helpful to readers already familiar with \cite{FO3} and Kuranishi atlases.

	Let $(X,\omega)$ be a closed symplectic manifold, let
	$\beta \in H_2(X;\Z)$ and let $g,h \in \bN$.
	Let $J$ be an $\omega$-tame almost complex structure.
	Let $D \subset X$ be a smooth codimension $2$ submanifold with boundary whose closure is a compact submanifold with boundary and let $r \in \bN$.
	
	\begin{defn} \label{defn sliceforcurve}
		A \emph{$(D,r)$-slice curve}
		is a smooth map $u : \Sigma \lra{} X$ from a nodal curve $\Sigma$
		with $h + r$ marked points $p_1,\cdots,p_{h+r}$
		satisfying the following properties:
		\begin{enumerate}
			\item $u$ is transverse to $D$ meaning that all the nodes of $\Sigma$ map to $X - D$ and $u$ restricted to the smooth locus of $\Sigma$ is transverse to $D$,
			\item $u^{-1}(D) = \{p_{h+1},\cdots,p_r\}$ and
			\item the marked nodal curve $(\Sigma,p_1,\cdots,p_{h+r})$ has trivial automorphism group.
		\end{enumerate}
	\end{defn}
	
			We let $S_r$ denote the permutation group on $r$ elements, and let 
 $S_r$ act by permuting the last $r$ marked points $p_{h+1},\cdots,p_{h+r}$.
	We let $\ccMbar'_{g,h+r} \subset \ccMbar_{g,h+r}$ be the subspace of automorphism free curves
	and $\ccC'_{g,h+r}$ the corresponding universal curve and $\ccC^{'o}_{g,h+r}$ the complement of its nodes.
	\begin{defn} \label{bundleY2}
		Let $Y_{g,h+r} := \Omega^{0,1}_{\ccC^{'o}_{g,h+r}/\ccMbar'_{g,h+r}} \otimes_\C TX$
		be the $S_r$ vector bundle over $\ccC^{'o}_{g,h+r} \times X$ whose fiber over a point $(c,x) \in \ccC^{'o}_{g,h+r} \times X$ is the space of anti-holomorphic maps from $T_c(\ccC^{'o}_{g,h+r}  |_{ \Pi(c)})$ to $T_x X$, where $\Pi$ is the natural projection  map from $ \ccC^{'o}_{g,h+r}$ to $ 			\ccMbar'_{g,h+r}$.
	\end{defn}
	Choose a finite dimensional approximation scheme $(W_{\mu},\lambda_{\mu})_{\mu \in \bN}$
	for $C^\infty_c(Y_{g,h+r})$. 
	
	For the next definition, we use the fact that the domain of a $(D,r)$-slice curve has no automorphisms and so we can canonically identify the smooth locus of its domain with a fibre of $\Pi$.
	
	\begin{defn} \label{defn Fukayathickening}
		We define
		$\scrT_{g,h} = \scrT_{g,h}(\beta,D,r,\mu)$
		to be the space of triples $(\phi,u,e)$ where $\phi \in \ccMbar'_{g,h+r}$,  $u : \ccC'_{g,h+r}|_\phi \lra{} X$ is a $(D,r)$-slice curve
		and $e \in W_\mu$,  satisfying the following equation:
		\begin{equation} 
			\overline{\partial}_J u|_{\ccC^{'o}_{g,h+r}|_\phi} + (\lambda_\mu(e)) \circ \Gamma_u =0
		\end{equation}
		where
		\begin{equation}
			\Gamma_u : \ccC^{'o}_{g,h+r}|_\phi \to \ccC^{'o}_{g,h+r} \times X, \quad \Gamma_u(\sigma) := (\sigma,u(\sigma))
		\end{equation}				
		is the graph map.
		The symmetric group $S_r$ on $r$ elements acts in a natural way on $\scrT_{g,h}$
		by permuting the last $r$ marked points.
		We call $(S_r,\scrT_{g,h},\scrT_{g,h} \times W_\mu,0)$ a \emph{$(D,r)$ slice chart}.
	\end{defn}

	\begin{prop} \label{prop Kslice}
		For $\mu$ large enough, we have that $(S_r,\scrT_{g,h},\scrT_{g,h} \times W_\mu,0)$
		is a topological Kuranishi chart for an open subset of the moduli space $\ccMbar_{g,h}(X,J,\beta) $.
	\end{prop}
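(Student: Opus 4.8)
The plan is to recognise $(S_r,\scrT_{g,h},\scrT_{g,h}\times W_\mu,0)$ as an instance of the general construction of Section~\ref{sectiongeneralprethicken}, specialised to the family $\scrC=\ccC'_{g,h+r}\to\scrF=\ccMbar'_{g,h+r}$ with structure group $G=S_r$ (finite, so $G_\bC=G$), with \emph{no} framing step, and with the divisor $D\subset X$ playing the rigidifying role that a projective embedding plays in Section~\ref{Subsec:easier}. First I would check that $(W_\mu,\lambda_\mu)_{\mu\in\bN}$ is a finite dimensional approximation scheme for $\scrF$ in the sense of Definition~\ref{defnfdapproxgeneral}: restricting $\lambda_\mu$ to a fixed fibre $\ccC^{'o}_{g,h+r}|_\phi$ yields an FDAS for that fibre by Definition~\ref{defn fd}, and $S_r$-equivariance is built in. Consequently $\scrT_{g,h}(\beta,D,r,\mu)$ is exactly the pre-thickened moduli space $\scrT^\pre(\beta,\lambda_\mu)$ of Definition~\ref{defnprethickenedgeneral} for this data, and the obstruction bundle $\scrT_{g,h}\times W_\mu$ carries the tautological section $(\phi,u,e)\mapsto e$ (so that its zero set is the locus $\{e=0\}$ of honest $J$-holomorphic slice curves). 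The automorphism-freeness clause in Definition~\ref{defn sliceforcurve}(3) guarantees that every point of $\scrT_{g,h}$ has trivial, in particular finite, $S_r$-stabiliser, so the finiteness hypothesis of Definition~\ref{defn topological GKC} is automatic, and since $S_r$ is a finite (hence compact Lie) group acting on a trivial $S_r$-vector bundle the only substantive points to verify are that $\scrT_{g,h}$ is a topological manifold near $\{e=0\}$ and that the zero set mod $S_r$ is an open subset of $\ccMbar_{g,h}(X,J,\beta)$.

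For the second of these, I would pin down the open subset: let $\ccMbar^{\circ}_{g,h}(X,J,\beta)$ denote the locus of stable maps $u$ whose nodes avoid $D$, whose restriction to the smooth locus is transverse to $D$, for which $\#\,u^{-1}(D)=r$, and such that marking the points of $u^{-1}(D)$ makes an automorphism-free curve. This is open: transversality is open by Remark~\ref{remarkelliptic}, the cardinality $\#\,u^{-1}(D)$ is locally constant among transverse curves because $D$ has real codimension two, and automorphism-freeness is open. Forgetting the last $r$ marked points and stabilising gives a map $\{(\phi,u,0)\in\scrT_{g,h}:\overline{\partial}_J u=0\}\to\ccMbar^{\circ}_{g,h}(X,J,\beta)$; the group $S_r$ permutes the labels of $u^{-1}(D)$, and since $(\Sigma,p_1,\dots,p_{h+r})$ is automorphism-free the fibres of this map are exactly the $S_r$-orbits. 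This gives the identification of the zero set mod $S_r$ with $\ccMbar^{\circ}_{g,h}(X,J,\beta)$, respecting stabiliser groups (they are trivial) and the Hausdorff-distance-on-graphs topology as in Definition~\ref{defn space of curves}. This step is a considerably easier version of the corresponding argument in the proof of Theorem~\ref{thm:basicglobalchart}, because there is no framing matrix $H_F$ to disentangle.

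For the topological manifold statement, the key quantitative input is the role of $\mu\gg0$. By Gromov compactness the set of $J$-holomorphic $(D,r)$-slice curves lying over any compact piece of $\ccMbar^{\circ}_{g,h}(X,J,\beta)$ is compact, so, exactly as in Lemma~\ref{propcontainscompact}, for $\mu$ large enough $\lambda_\mu(W_\mu)$ surjects onto the cokernel of the linearised $\overline{\partial}$-operator at every such curve; that is, $\{e=0\}\subset\scrT^{\pre,\reg}(\beta,\lambda_\mu)$, and regularity is an open condition, so the regular locus contains a neighbourhood of $\{e=0\}$ in $\scrT_{g,h}$. Lemma~\ref{propregular} (which rests on the parametrised gluing theorem, Corollary~\ref{cor curvemap}/Corollary~\ref{cor algfam}) then equips $\scrT^{\pre,\reg}\to\ccMbar'_{g,h+r}$ with an $S_r$-equivariant $C^1_{loc}$-structure over the smooth base; in particular a neighbourhood of $\{e=0\}$ is a topological $S_r$-manifold. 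Assembling the previous two paragraphs then exhibits $(S_r,\scrT_{g,h},\scrT_{g,h}\times W_\mu,\cdot)$ (restricted to that neighbourhood of $\{e=0\}$, equivalently to $\scrT^{\pre,\reg}$) as a topological Kuranishi chart for the open set $\ccMbar^{\circ}_{g,h}(X,J,\beta)\subset\ccMbar_{g,h}(X,J,\beta)$.

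The main obstacle is the single genuinely analytic ingredient, namely that the perturbed equation cuts out a topological manifold near the nodal curves; this is precisely the content of the parametrised gluing theorem invoked through Corollary~\ref{cor curvemap}, where the non-trivial point is the gluing analysis at nodes (yielding a $C^1_{loc}$- rather than a smooth statement), and the purpose of taking $\mu$ large is exactly to make the regularity hypothesis of that theorem hold uniformly on the relevant compact set via Gromov compactness. Everything else — the $S_r$-equivariance, the vector bundle structure, and the orbispace identification — is formal.
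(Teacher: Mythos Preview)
Your proposal is correct and, like the paper, ultimately rests on the parametrised gluing theorem; the paper's own proof is a single sentence invoking Theorem~\ref{theorem parameterized gluing} directly (noting that $\ccMbar'_{g,h+r}\hookrightarrow\ccMbar_{g,h+r}$ is an \'etale map from a smooth manifold, so that theorem applies with $U=\{\mathrm{pt}\}$), whereas you route through Lemmas~\ref{propcontainscompact} and~\ref{propregular} and Corollary~\ref{cor curvemap}, which unwind to the same place.

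One correction worth flagging: your claim that the $S_r$-stabilisers are \emph{trivial} is not right. Automorphism-freeness of $(\Sigma,p_1,\ldots,p_{h+r})$ only rules out automorphisms fixing \emph{all} marked points; a nontrivial $\sigma\in S_r$ can still stabilise $(\phi,u,0)$ if the underlying stable map $u:(\Sigma,p_1,\ldots,p_h)\to X$ has an automorphism permuting the points of $u^{-1}(D)$ according to $\sigma$. Indeed the $S_r$-stabiliser of $(\phi,u,0)$ is exactly $\Aut(u)$, which is the whole point of the slice construction. This does not damage your argument --- finiteness of stabilisers is automatic since $S_r$ is finite, and your conclusion that the fibres of the forgetting map are $S_r$-orbits remains valid --- but the identification of $\{e=0\}/S_r$ with the open subset of $\ccMbar_{g,h}(X,J,\beta)$ is an isomorphism of \emph{orbispaces} precisely because these possibly nontrivial stabilisers match.
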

	\begin{proof}
		This is a consequence of Theorem \ref{theorem parameterized gluing}.
	\end{proof}

	\subsection{Comparing Global Kuranishi Charts with Slice Charts}
	\label{sec:comp-glob-kuran}
	
	We will now compare appropriate open subsets of our global Kuranishi chart construction from Subsection \ref{Subsec:easier}
	and the slice charts from Section \ref{subsec:FOOOlocalslices}.
	In order to do this, we first need to describe
	the slice chart construction from Section \ref{subsec:FOOOlocalslices} in terms of global
	Kuranishi data (Definition \ref{defnnoncomplexthickening}).
	
	In addition to the choices made in the previous section, let us fix an integer $d \in \bN$. 
	
	We will now construct a  Kuranishi chart
	of curves which intersect $D$ transversally.
	This will be a certain codimension $0$ submanifold
	of the global Kuranishi chart constructed
	in Section \ref{Subsec:easier}.

	Before we do this, we will first construct the open subset of the global Kuranishi data $(\scrT,s)$
	from Example \ref{examplegl} that will be related to our slice chart above.
	We let $G, \scrF, \scrC, \scrF^{(2)}, \Delta, (V_\mu,\lambda_\mu)_{\mu \in \bN}, \scrT, s$
	be as in Example \ref{examplegl}. Let $\mu \gg 1$ be large.
	We let $B_\scrT \subset\scrT^\pre(\beta,\lambda_\mu)$
	be the subspace of tuples $(\phi,u,e)$
	where $u : \Sigma \to X$
	is transverse to $D$ and for which there are
	are marked points $p_1,\cdots,p_{h+r}$ on $\Sigma$
	where $p_1,\cdots,p_h$ are the marked points coming from $\scrC|_\phi$
	and where the additional $r$ marked points
	make $u$ into a $(D,r)$-slice curve as in Definition \ref{defn sliceforcurve}.
	We let $\scrT_D \subset \scrT$ be the preimage of $B_\scrT$ in $\scrT$ under the natural projection map $\scrT \to \scrT^{\pre}$.
	We let $s_D := s|_{\scrT_D}$.
	Then $(\scrT_D,s_D)$ is  Kuranishi chart data describing the moduli space of $J$-holomorphic
	curves that are transverse to $D$ and which are $(D,r)$-slice
	when adding an additional $r$ marked points.

	Let us now describe the the slice chart from Section \ref{subsec:FOOOlocalslices}
	above in terms of  Kuranishi chart data.
	Let $\check{G}$ be the permutation group on $r$ elements.
	We let $\check{\scrF} := \ccMbar_{g,h+r}$
	be the moduli space of genus $g$ curves
	with $h+r$ marked points
	and let $\check{\scrC} \to \check{\scrF}$ the correpsonding universal curve.
	We let $\check{\scrF}^{(2)} := \check{G} \times \check{\scrF}$
	and we let $\check{\scrF}^{(2)} := (\check{\scrF}^{(2)},i,p)$ be the corresponding microbundle
	where $p : \check{\scrF}^{(2)} \to \check{\scrF}$ is the natural projection map and $i : \check{\scrF} \to \check{\scrF}^{(2)}$
	sends $\phi$ to $id \times \phi$.
	Choose a finite dimensional approximation scheme $(\check{\scrT}_\mu,\check{\lambda}_\mu)_{\mu \in \bN}$
	for $\check{\scrC}$.
	Fix $\mu \in \bN$ large.
	Let $B_{\check{\scrT}} \subset\scrT^\pre(\beta,\check{\lambda}_\mu)$
	be the subspace of $(D,r)$-slice curves as in Definition \ref{defn sliceforcurve}.
	We now let $\check{\scrT} \:= \check{G} \times B_{\check{\scrT}}$.
	We let $\check{s} : \check{\scrT} \to \check{\scrF}^{(2)}$
	be the natural projection map to $\check{G} \times \check{\scrF} = \check{\scrF}^{(2)}$. By construction: 

	\begin{lemma}
		The  Kuranishi chart associated to $(\check{\scrT},\check{s})$
		is equal to a $(D,r)$-slice chart as in Definition \ref{prop Kslice}. \qed
	\end{lemma}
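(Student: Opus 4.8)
The statement to prove is that the Kuranishi chart associated to the global Kuranishi chart data $(\check{\scrT},\check{s})$ — where $\check{\scrT} = \check{G} \times B_{\check{\scrT}}$, $\check{s}$ the projection to $\check{\scrF}^{(2)} = \check{G}\times\check{\scrF}$ — is exactly a $(D,r)$-slice chart in the sense of Definition \ref{defn Fukayathickening} / Proposition \ref{prop Kslice}. The plan is to unwind both constructions and observe that all the auxiliary structure entering the general construction of Definition \ref{defnnoncomplexthickening} degenerates to something trivial in this particular case, leaving precisely the slice chart.

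First I would recall the general recipe: given pre-Kuranishi data $(\check{G},\check{E},\check{\lambda}_\mu)$ over the $\check{G}_\bC$-nodal family $\check{\scrC}\to\check{\scrF}$ with microbundle $\check{E} = (\check{\scrF}^{(2)},i,p)$, and global chart data $(\check{\scrT},\check{s})$, the associated global Kuranishi chart is the tuple $(\check{G}, \check{s}^{-1}(U)\cap Q_{\check{\scrT}}, i\mathfrak{g}\oplus\widetilde{N}_{\check{\scrF}}\oplus \check{V}_\mu, \log_{\check{P}}\oplus(\rho^{-1}\circ\check{s})\oplus P_\mu)$. Now I would go through each factor. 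Since $\check{G}$ is the finite permutation group $S_r$, its Lie algebra $\mathfrak{g}$ is zero, so the $i\mathfrak{g}$ summand and the $\log_{\check{P}}$ component disappear, and $Q_{\check{\scrT}} = \check{\scrT}$ (the domain of $\log_{\check P}$ is everything since $G=G_\bC$ is finite), and $P_\bC = P$. Next, the microbundle $\check{\scrF}^{(2)} = \check{G}\times\check{\scrF}$ with $i(\phi) = \mathrm{id}\times\phi$ has normal bundle $N_{\check{\scrF}}$ along $i(\check{\scrF})$ equal to the zero bundle, because $\check G$ is discrete: a neighbourhood of $i(\check\scrF)$ in $\check G\times\check\scrF$ is just (a copy of) $\check\scrF$ itself. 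Hence $\widetilde{N}_{\check{\scrF}} = 0$, the component $\rho^{-1}\circ\check{s}$ is vacuous, and the condition $\check{s}^{-1}(U) = \check{s}^{-1}(i(\check\scrF))$ cuts out exactly the locus in $\check{\scrT} = \check G\times B_{\check\scrT}$ mapping to the identity component, i.e. $\{\mathrm{id}\}\times B_{\check\scrT}\cong B_{\check\scrT}$, acted on freely-transitively on $\check G$-orbits so that $\check{s}^{-1}(U)\cap Q_{\check\scrT} \cong \check G\times B_{\check\scrT}$ with the $\check G$-action permuting the factor as expected. The upshot is that the associated global Kuranishi chart reduces to $(\check G, \check G\times B_{\check\scrT}, \check V_\mu, P_\mu)$, where $P_\mu$ is the projection to $\check V_\mu$.

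Then I would compare this with Definition \ref{defn Fukayathickening}. There the slice chart is $(S_r, \scrT_{g,h}, \scrT_{g,h}\times W_\mu, 0)$, where $\scrT_{g,h}$ is the space of triples $(\phi,u,e)$ with $\phi\in\ccMbar'_{g,h+r}$, $u$ a $(D,r)$-slice curve, $e\in W_\mu$, solving the perturbed Cauchy–Riemann equation, and the section is identically zero. But $B_{\check\scrT}\subset \scrT^{\pre}(\beta,\check\lambda_\mu)$ is by definition precisely the subspace of such triples: $\scrT^{\pre}(\beta,\check\lambda_\mu)$ consists of $(\phi,u,e)$ with $(\phi,u)\in\fC_{\check\scrF}$ and $e\in\check V_\mu$ solving $\overline\partial_J u + \lambda_\mu(e)\circ\Gamma_u = 0$ (Definition \ref{defnprethickenedgeneral}), and $B_{\check\scrT}$ imposes exactly the $(D,r)$-slice condition. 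So $B_{\check\scrT} = \scrT_{g,h}(\beta,D,r,\mu)$ canonically, with matching $S_r$-actions (permutation of the last $r$ marked points, which is how $\check G$ acts both on $\check\scrC\to\check\scrF$ and hence on $\scrT^{\pre}$). Under this identification $\check V_\mu = W_\mu$ and the vector bundle $\check\scrT\times_{\check G}$-pullback-of-$\check V_\mu$ matches $\scrT_{g,h}\times W_\mu$, and the section $P_\mu$ — projection $\check G\times B_{\check\scrT}\to\check V_\mu$ — is, after identifying $\check s^{-1}(U)$ with $B_{\check\scrT}$... wait, here there is a subtlety worth flagging: the slice chart uses the \emph{zero} section, but the general construction yields the section $P_\mu$. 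These agree on zero-loci (both have zero-locus $\{e=0\}$), and in fact $P_\mu$ is the tautological linear section of a trivial bundle, so the pair $(\check V_\mu, P_\mu)$ is equivalent (by a germ / stabilisation move, or just because a trivial bundle with its tautological section deformation retracts onto the zero-section) to $(\check V_\mu, 0)$ restricted to $\{e=0\}$; this is the genuine content rather than a formality. I would make this explicit by noting the general chart is $(\check G, \check G\times B_{\check\scrT}, \check V_\mu, P_\mu)$ and that restricting to the zero section of the trivial summand and discarding it gives $(\check G, \{e=0\}\text{-locus}, 0,0)$, which is what the slice chart records modulo the choice to keep or discard the obstruction $W_\mu$; in the conventions of Definition \ref{defn Fukayathickening} the obstruction bundle $\scrT_{g,h}\times W_\mu$ is kept with zero section, so in fact no discarding is needed and the identification is literal.

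The main obstacle is precisely this last point of bookkeeping — reconciling the tautological section $P_\mu$ of the general construction with the honestly-zero section in Fukaya–Oh–Ohta–Ono's slice chart, and checking that the two conventions for which data is ``thickening'' versus ``obstruction'' line up. Once one observes that $\check G$ being finite kills the Lie-algebra and principal-bundle contributions, and that the microbundle $\check G\times\check\scrF$ has zero normal bundle along the diagonal, everything else is a direct unwinding of definitions, and the comparison of $B_{\check\scrT}$ with $\scrT_{g,h}(\beta,D,r,\mu)$ is immediate from Definitions \ref{defn sliceforcurve}, \ref{defn Fukayathickening} and \ref{defnprethickenedgeneral}. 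I would therefore present the proof as: (i) reduce the general construction using finiteness of $\check G$; (ii) reduce using triviality of the microbundle normal bundle; (iii) identify $B_{\check\scrT}$ with the FOOO slice thickening and match the $S_r$-actions, obstruction bundles and sections, invoking Proposition \ref{prop Kslice} and Theorem \ref{theorem parameterized gluing} for the topological-manifold and Kuranishi-chart properties.
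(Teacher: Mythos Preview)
Your approach is correct and is exactly what the paper intends: the lemma is stated with a bare ``By construction'' and a \qed, so the proof \emph{is} the unwinding of Definition~\ref{defnnoncomplexthickening} in the special case where the group is finite and the microbundle is $\check G\times\check\scrF$. Two small points deserve correction.

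First, you compute $\check s^{-1}(U)$ correctly as $\{\mathrm{id}\}\times B_{\check\scrT}\cong B_{\check\scrT}$, but then assert that the resulting thickening is $\check G\times B_{\check\scrT}$. It is not: since $U=i(\check\scrF)=\{\mathrm{id}\}\times\check\scrF$, the preimage under $\check s$ is just the identity slice $B_{\check\scrT}$, which is $\check G$-invariant for the diagonal (conjugation) action and carries the natural $S_r$-action permuting the last $r$ marked points. So the associated chart is $(\check G, B_{\check\scrT}, \check V_\mu, P_\mu)$, which matches the slice chart on the nose without any free-quotient move.

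Second, the ``subtlety'' you flag about the section is not a mathematical issue but a typo in the paper. In the tuple $(S_r,\scrT_{g,h},\scrT_{g,h}\times W_\mu,0)$ of Definition~\ref{defn Fukayathickening}, the ``$0$'' cannot literally be the zero section: if it were, the zero-locus would be all of $\scrT_{g,h}$, not the locus $\{e=0\}$ of genuine $J$-holomorphic curves, contradicting Proposition~\ref{prop Kslice}. The intended section is the projection $(\phi,u,e)\mapsto e$, i.e.\ exactly $P_\mu$. Your claim that ``both have zero-locus $\{e=0\}$'' is false for the literal zero section, and your attempted reconciliation via stabilisation is unnecessary. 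Once the typo is corrected the two constructions coincide identically, and steps (i)--(iii) of your outline constitute a complete proof.
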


	The following proposition tells us that the global Kuranishi chart
	for our moduli space of curves constructed in Section \ref{Subsec:easier}
	is locally equivalent up to stabilization, germ equivalence and group enlargement
	to the $(D,r)$-slice chart from Definition \ref{prop Kslice}.

	\begin{prop}
		$(G,\scrT,s)$ and $(\check{G},\check{\scrT},\check{s})$
		are related by a sequence of
		group enlargements 
		and general stabilizations (Definition \ref{defngenearlstablization}) and their inverses.
		Hence their associated  Kuranishi charts are isomorphic.
	\end{prop}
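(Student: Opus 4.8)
The statement asserts that the global Kuranishi data $(G,\scrT,s)$ from Example \ref{examplegl} (restricted, implicitly, to the open locus $(\scrT_D,s_D)$ of curves transverse to $D$) and the slice chart data $(\check{G},\check{\scrT},\check{s})$ are connected by a zig-zag of principal bundle enlargements and general stabilisations. The plan is to follow exactly the three-intermediate-chart template used in the proof of Proposition \ref{proprelated} and Proposition \ref{propsplitrelation}, adapted to the present setting. I would first set up notation so that the two charts are genuinely comparable: on the global side one frames $L_{u,\bL}$ over the domain $\Sigma$ (viewed in $\scrF = \scrF_{g,h,d}$), and on the slice side one has no framing at all but instead $r$ extra marked points $p_{h+1},\ldots,p_{h+r}$ cut out by $u^{-1}(D)$, with $\check{G} = S_r$ acting by permutation. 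The restriction to $(\scrT_D, s_D)$ is what makes these two descriptions presentations of \emph{the same} open subset $\ccMbar_{g,h}^{D\text{-}transverse}(X,J,\beta)$ of the moduli space.

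\textbf{Key steps.} (1) Build the first intermediate chart by a principal bundle enlargement of $(\check{\scrT},\check{s})$: over the slice-chart thickening, adjoin the principal $G$-bundle of unitary $\bL$-framings $F$ of $H^0(L_{u,\bL})$. This produces global Kuranishi data for the group $G\times\check{G} = GL_{d-g+1}(\bC)\times S_r$ over a family of $(D,r)$-slice curves now additionally equipped with a projective-space framing; it is a principal bundle enlargement of $(\check{\scrT},\check{s})$ by Definition \ref{defngroupenlargement}. (2) Build the second intermediate chart: a doubly-decorated moduli space of curves mapping to $\bC\bP^{d-g}\times(\text{point})$ which carry \emph{both} an $\bL$-framing \emph{and} the $r$ marked points cut out by $D$, with the appropriate fibre-product finite dimensional approximation scheme (pullback of $(V_\mu,\lambda_\mu)$ and $(W_\mu,\check\lambda_\mu)$ as in Definition \ref{defnpulbackoffdscheme}). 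As in the proofs cited above, this doubly-decorated chart is a general stabilisation (Definition \ref{defngenearlstablization}) followed by an approximation stabilisation (Definition \ref{defnapproxenlargement}) of the first intermediate chart — here one uses that, over the open locus where curves are transverse to $D$, the assignment $u \mapsto u^{-1}(D)$ is a well-defined collection of $r$ unordered marked points depending smoothly (in the fibrewise $C^1_{loc}$-sense, via Remark \ref{remarkelliptic} and elliptic regularity) on $(\phi,u,e)$, so the extra marked points carry no moduli and the forgetful map on thickenings has the clean-zero-locus property required by Definition \ref{defngenearlstablization}(3). (3) Build the third intermediate chart symmetrically, starting instead from $(\scrT_D,s_D)$: adjoin to the global chart thickening the $S_r$-decoration given by the divisor preimage $u^{-1}(D)$; this is a principal bundle (indeed finite-group) enlargement of $(\scrT_D,s_D)$, and the second intermediate chart is a general stabilisation followed by an approximation stabilisation of it. Concatenating (1)–(3) and invoking Lemmas \ref{lemmagroupenlargement} and \ref{lem:factor_equivalence} together with Proposition \ref{propstabilization} yields the desired zig-zag of equivalences, and hence (by Corollary \ref{cor globalKuranishichart} and the smoothing discussion of Section \ref{Subsec:families}) an equivalence of the associated smooth global charts.

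\textbf{Main obstacle.} The genuine work is in step (2), verifying that the doubly-decorated moduli space really is a general stabilisation in the precise sense of Definition \ref{defngenearlstablization}: one must produce the commutative diagram of $C^1_{loc}$-spaces over $\scrF$ with the compatibility of obstruction microbundles, check that $D^{\ver}\check s$ is an isomorphism along the zero locus, and confirm that the map on zero loci is a homeomorphism respecting stabiliser groups. The subtle point specific to the slice construction is that transversality of $u$ to $D$ is only an \emph{open} condition on $\fF_\scrF$ (Remark following the transversality definition in Section \ref{Subsec:easier}), so one genuinely works on the open sub-chart $(\scrT_D,s_D)$ and must check the zero locus of $s_D$ still surjects onto the relevant open subset of $\ccMbar_{g,h}(X,J,\beta)$ with the correct orbifold structure — this is where Theorem \ref{theorem parameterized gluing} (equivalently Proposition \ref{prop Kslice}) and Theorem \ref{thm:basicglobalchart} must be played against each other. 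Once the clean-intersection property of $u^{-1}(D)$ is established, the rest is a routine transcription of the arguments already carried out for Propositions \ref{proprelated} and \ref{propsplitrelation}.
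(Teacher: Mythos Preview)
Your proposal is correct and follows the same three-intermediate-chart template as the paper. The paper builds $(\scrT_1,s_1)$, $(\scrT_2,s_2)$, $(\scrT_3,s_3)$ corresponding exactly to your charts in steps (3), (2), (1), and connects them in the same zig-zag.

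Two organisational differences are worth noting. First, the paper places all three intermediate charts over the \emph{same} base $\scrF_1 := \scrF_{g,h+r,d}$ (stable maps to $\bC\bP^{d-g}$ with $h+r$ marked points), so the transitions $(\scrT_1,s_1)\to(\scrT_2,s_2)\leftarrow(\scrT_3,s_3)$ are pure approximation stabilisations (Definition \ref{defnapproxenlargement}); no general stabilisation in the sense of Definition \ref{defngenearlstablization} is invoked between them. The change of base, and the passage between $h$ and $h+r$ marked points, is absorbed entirely into the two group-enlargement steps at the endpoints. Second, and more substantively, the paper explicitly remarks that the step $(\scrT_D,s_D)\to(\scrT_1,s_1)$ is a group enlargement of the associated Kuranishi charts but is \emph{not} a principal bundle enlargement in the sense of Definition \ref{defngroupenlargement} (and hence Lemma \ref{lemmagroupenlargement} does not directly apply). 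Your description of this step as ``a principal bundle (indeed finite-group) enlargement'' is therefore slightly inaccurate: the $r$ extra marked points are determined by $u^{-1}(D)$ rather than freely varying, so the passage from $\scrF$ to $\scrF_1$ is not literally a principal $S_r$-bundle at the level of pre-Kuranishi data, even though at the level of the resulting global Kuranishi charts it yields a group enlargement in the sense of Lemma \ref{lem:factor_equivalence}(2). This does not affect the validity of your argument, but it is a point the paper is careful to flag.
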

	\begin{proof}
		This is done in three stages.

		We let $G_1 := G \times \check{G}$.
		We let $\scrF_1 := \scrF_{g,h+r,d}$
		and $\scrC_1 \to \scrF_1$ the corresponding universal curve.
		We let $\scrF^{(2)}_1 := \check{G} \times \scrF^{(2)}_{g,h+r,d}$ (see Section \ref{Subsec:moduli}).
		We think of this as a smooth microbundle $\scrF^{(2)}_1 = (\scrF^{(2)}_1,i,p)$ over $\scrF_1$
		where $i$ sends $\phi$ to $(id,\phi)$ and $p$ is the natural projection map to $\scrF^{(2)}_1$.
		The group $G_1$ acts on these moduli spaces as follows: $G$ acts in the natural way and
		$\check{G}$ permutes the last $r$ marked points.
		We let $(V_{\mu,1},\lambda_{\mu,1})_{\mu \in \bN}$ be the pullback of $(V_\mu,\lambda_\mu)_{\mu \in \bN}$
		to $\scrC_1$ (Definition \ref{defnpulbackoffdscheme}).
		We let $B_{\scrT_1} \subset \scrT^\pre(\beta,\lambda_{\mu,1})$
		be the subspace of $(D,r)$-slice curves.
		We let $P' \to B_{\scrT_1}$ be the principal $G$-bundle over $B_{\scrT_1}$
		whose fiber over $(\phi,u)$ is the space of unitary bases of $H^0(L_u^{\otimes k})$.
		We let $P_1 := \check{G} \times P'$ be the corresponding $G_1$-bundle over $B_{\scrT_1}$.
		We let $\scrT_1 := (P_1)_\bC$.
		Then elements of $\scrT_1$ correspond to tuples $(\check{g},\phi,u,e,F)$
		where $\check{g} \in \check{G}$, $(\phi,u) \in\scrT^\pre(\beta,\lambda_{\mu,1})$,
		$e \in V_{\mu,1}$ and $F$ is a basis of $H^0(L_u^{\otimes k})$.
		We let $s_1 : \scrT_1 \to \scrF^{(2)}_1$ send $(\check{g},\phi,u,e,F)$
		to $(id,(\phi,\phi_F))$.
		Then $(\scrT_1,s_1)$ is  Kuranishi data for $(G_1,\scrF^{(2)}_1,\lambda_{\mu,1})$.
		All the curves involved are $(D,r)$-slice curves,
		we have that the  Kuranishi chart associated to $(\scrT_1,s_1)$
		is a group enlargement of the one associated to $(\scrT_D,s_D)$.
		(This group enlargement is not an enlargement obtained from Lemma \ref{lemmagroupenlargement}, i.e. it is not induced by a principal bundle enlargement.)

		Now let $(V_{\mu,2},\lambda_{\mu,2})_{\mu \in \bN}$ be the direct sum of
		$(V_{\mu,2},\lambda_{\mu,2})_{\mu \in \bN}$
		and the pullback of $(\check{\scrT}_\mu,\check{\lambda}_\mu)_{\mu \in \bN}$
		under the natural projection map $\scrF_1 \to \check{\scrF}$ given by sending a curve to its domain.
		We now let $P_2$ be the pullback of $P_1$ to $\scrT^\pre(\beta,\lambda_{\mu,2})$.
		Let $\scrT_2 := (P_2)_\bC$ which is the pullback of $\scrT_1$.
		Let $s_2$ be the composition of the projection map to $\scrT_1$ with $s_1$.
		Then $(\scrT_2,s_2)$ is	an approximation stabilization of $(\scrT_1,s_1)$
		as in Definition \ref{defnapproxenlargement}.

		Let $(V_{\mu,3},\lambda_{\mu,3})_{\mu \in \bN}$ be the pullback of $(\check{\scrT}_\mu,\check{\lambda}_\mu)_{\mu \in \bN}$
		under the natural projection map $\scrF_1 \to \check{\scrF}$ given by sending a curve to its domain.
		We let $B_{\scrT_3} \subset\scrT^\pre(\beta,\lambda_{\mu,3})$
		be the subspace of $(D,r)$-slice curves.
		We let $P_3 \to B_{\scrT_3}$ be the $\check{G}$-equivariant principal $G$-bundle
		whose fiber over $(\phi,u)$ is a unitary basis of $H^0(L_u^{\otimes k})$.
		Let $\scrT_3 = (P_3)_\bC$
		and let $s_1 : \scrT_1 \to \scrF^{(2)}_1$ send $(\check{g},\phi,u,e,F)$
		to $(id,(\phi,\phi_F))$.
		Then $(\scrT_2,s_2)$ is	an approximation stabilization of $(\scrT_3,s_3)$
		as in Definition \ref{defnapproxenlargement}.	In addition,  it is a group enlargement of $(\check{\scrT},\check{s})$.
	\end{proof}

\bibliographystyle{habbrv}
\bibliography{mybib}

\end{document}